\newif\ifPDF
\newtheorem{thm}{Theorem}[section]
\newtheorem{cor}[thm]{Corollary}
\newtheorem{lem}[thm]{Lemma}
\newtheorem{prop}[thm]{Proposition}
\theoremstyle{definition}
\newtheorem{defn}[thm]{Definition}
\theoremstyle{definition}
\newtheorem{rem}[thm]{Remark}
\numberwithin{equation}{section}
\newcommand{\Comp}{\mathbb C}
\newcommand{\eps}{\varepsilon}
\newcommand{\aff}{\mathrm{Aff}}
\newcommand{\red}{\textcolor{red}}
\newcommand{\green}{\color{green}}
\newcommand{\cpc}{completely positive contractive map}
\newcommand{\morp}{contractive completely positive linear map}
\newcommand{\hm}{homomorphism}
\newcommand{\dt}{\delta}
\newcommand{\ep}{\varepsilon}
\newcommand{\td}{\tilde}
\newcommand{\id}{\operatorname{id}}
\newcommand{\la}{\langle}
\newcommand{\ra}{\rangle}
\newcommand{\andeqn}{\,\,\,{\text{and}}\,\,}
\newcommand{\rforal}{\,\,\,{\text{for\,\,\,all}\,\,\,}}
\newcommand{\CA}{C*-algebra}
\newcommand{\SCA}{sub-C*-algebra}
\newcommand{\af}{{\alpha}}
\newcommand{\bt}{{\beta}}
\newcommand{\N}{\mathbb{N}}
\newcommand{\Z}{\mathbb{Z}}
\newcommand{\Q}{\mathbb{Q}}
\newcommand{\R}{\mathbb{R}}
\newcommand{\C}{\mathbb{C}}
\newcommand{\T}{\mathbb{T}}
\newcommand{\bbf}{\boldsymbol}
\newcommand{\diag}{{\text{diag}}}
\newcommand{\wilog}{without loss of generality}
\newcommand{\Wlog}{Without loss of generality}
\newcommand{\beq}{\begin{eqnarray}}
\newcommand{\eneq}{\end{eqnarray}}
\newcommand{\tforal}{\,\,\,\text{for\,\,\,all}\,\,\,}
\newcommand{\tand}{\,\,\,\text{and}\,\,\,}
\newcommand{\p}{\mathfrak{p}}
\newcommand{\q}{\mathfrak{q}}
\newcommand{\Aff}{\mathrm{Aff}}
\title[Simple C*-algebras with trivial K-theory]
{The classification of simple separable KK-contractible C*-algebras with finite nuclear dimension}
\author{George A. Elliott}
\address{{\tiny{Department of Mathematics, University of Toronto, Toronto, Ontario, Canada~\ M5S 2E4}}}
\email{elliott@math.toronto.edu}
\author{Guihua Gong}
\address{{\tiny{School of Mathematics, Jilin University, Changchun, Jilin, 130012, P.R.~China, 
and
\newline \indent
Department of Mathematics, University of Puerto Rico, San Juan, PR 00936, USA }}
}
\email{ghgong@gmail.com}
\author{Huaxin Lin}
\address{
{\tiny{Department of Mathematics, East China Normal University, Shanghai, 200062, P.R.~China, 
\newline\indent  {\it and current}:
\newline\indent Department of Mathematics, University of Oregon, Eugene, OR 97403, USA}}}
\email{hlin@uoregon.edu}
\author{Zhuang Niu}
\address{{\tiny{Department of Mathematics, University of Wyoming, Laramie, WY 82071, USA}}}
\email{zniu@uwyo.edu}
\begin{document}

\begin{abstract}
The class of simple separable KK-contractible 
(KK-equivalent to $\{0\}$)  
\CA s 
which have finite nuclear dimension is shown to be classified by the Elliott invariant. In particular, the class of  \CA s $A\otimes \mathcal W$ is classifiable, where $A$ is a simple separable \CA\, with finite nuclear dimension and $\mathcal W$ is the simple inductive limit of Razak algebras with unique trace, which is bounded (see \cite{Razak-W} and \cite{Jacelon-W}).
\end{abstract}

\maketitle

\section{Introduction}

The classification of unital simple separable \CA s with finite nuclear dimension  which satisfy the UCT has been completed (see, for example, \cite{KP0}, \cite{Ph1}, \cite{GLN-TAS}, 
\cite{EGLN-DR}, and  \cite{TWW-QD}).
As is well known, the case that there exists a non-zero projection in the stabilization of the algebra follows.
In the remaining case, that the algebra {{$A$}} is stably 
projectionless
(i.e., if the algebra is finite, the case $\mathrm{K}_0(A)_+=\{0\}$),
a number of classification results are known (see \cite{Razak-W}, \cite{Tsang-W}, \cite{Robert-Cu}). 

In this paper we consider the general (axiomatically determined) case assuming 
 trivial K-theory. {{Recall that a \CA\, $A$ is said to be KK-contractible if it is 
 KK-equivalent to $\{0\}.$ 
 In the presence of the UCT, it is equivalent to say that $\mathrm{K}_i(A)=\{0\},$ $i=0,1.$}}
 From the order structure of the $\mathrm{K}_0$-group,  
  one sees that the case of stably projectionless simple \CA s is very different from 
 the unital case. In particular,  the proofs in this paper  do not depend on the unital results---and require rather different techniques.

We obtain the following classification theorem:

\theoremstyle{plain}
\newtheorem*{Theorem}{Theorem}
\begin{Theorem}[Theorem \ref{clas-thm}]
The class of KK-contractible stably projectionless simple separable  \CA s with finite nuclear dimension is classified by the invariant $(\widetilde{\mathrm{T}}(A), { \Sigma_A})$. Any \CA\ $A$ in this class is a simple inductive limit of Razak algebras.
\end{Theorem}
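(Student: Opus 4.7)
The overall plan is a standard Elliott intertwining argument adapted to the KK-contractible, stably projectionless setting, with the family of simple inductive limits of Razak algebras playing the role of the model subclass. Concretely, if $A$ and $B$ are two algebras in the class and $\phi\colon (\widetilde{\mathrm{T}}(B),\Sigma_B)\to(\widetilde{\mathrm{T}}(A),\Sigma_A)$ is an isomorphism of invariants, I would build, in the standard zig-zag fashion, approximate $*$-homomorphisms $A\to B$ and $B\to A$ whose induced maps on $(\widetilde{\mathrm{T}},\Sigma)$ converge to $\phi^{-1}$ and $\phi$, and which are approximately inverse to each other up to unitary equivalence on finite subsets. The actual isomorphism is then produced by the usual two-sided approximate intertwining.

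The first main reduction is to show that every $A$ in the class is $\mathcal{W}$-absorbing, i.e.\ $A\otimes\mathcal{W}\cong A$. Finite nuclear dimension together with simplicity and KK-contractibility should force $\mathcal{W}$-stability; the reasonable route is to first pass to $\mathcal{Z}$-stability via Winter-type arguments and then, using that $K_*(A)=0$ (hence $A$ is KK-equivalent to $\mathcal{W}$) together with $\mathcal{W}\otimes\mathcal{W}\cong\mathcal{W}$, to upgrade $\mathcal{Z}$-stability to $\mathcal{W}$-stability. This reduction allows us to replace general contractive completely positive approximations by ones factoring through finite direct sums of Razak building blocks tensored with $\mathcal{W}$, and it is what gives access to tracial approximation techniques on the stably projectionless side.

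The second pillar is an existence theorem: given a compatible morphism of Elliott invariants between $A$ and a model $\lim_{\to}(R_n,\iota_n)$ of Razak algebras, produce an honest homomorphism (or a sufficiently good c.p.c.\ almost-multiplicative map) inducing this morphism on $(\widetilde{\mathrm{T}},\Sigma)$. Because KK vanishes, there is no obstruction from $\mathrm{K}$-theory; the content is to realize prescribed continuous affine maps between cones of traces and scales by concrete maps between finite-stage Razak algebras. This follows the Razak--Robert type construction: use the explicit building blocks, lift affine data by Cuntz-category arguments on the Cuntz semigroup of $\mathcal{W}$-stable algebras, and then perturb to a genuine homomorphism on a finite subset.

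The hardest step, and the one that will carry the bulk of the work, is the uniqueness theorem: two full homomorphisms $\phi,\psi\colon C\to A\otimes\mathcal{W}$ from a Razak building block $C$ that induce the same map on $\widetilde{\mathrm{T}}$ must be approximately unitarily equivalent on prescribed finite sets. In the absence of projections, one cannot invoke the unital machinery, so the uniqueness has to be driven purely by trace comparison together with strict comparison and the Cuntz-semigroup rigidity of $\mathcal{W}$-stable algebras. The plan is to proceed through a Berg-type or Lin-type reduction to maps between finite-dimensional summands of the Razak blocks, use the tracial lower bound provided by fullness to get Cuntz subequivalences of matrix units, and then patch these subequivalences into a unitary via a stability argument using $\mathcal{W}$-absorption. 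Getting the uniform estimates on finite sets, independent of trace, is the delicate point; everything else is the standard assembly into an approximate intertwining and a final appeal to Elliott's intertwining theorem, which yields the isomorphism $A\cong B$ and, applied with $B$ a Razak inductive limit realizing $(\widetilde{\mathrm{T}}(A),\Sigma_A)$, the model-theoretic conclusion.
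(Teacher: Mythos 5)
Your proposal has the right high-level skeleton: an Elliott two-sided approximate intertwining, driven by an existence theorem (realizing prescribed maps on the cone of traces via Cuntz-semigroup lifting in the style of Robert) and a stable uniqueness theorem for maps out of Razak building blocks, with no $K$-theoretic obstruction because $\mathrm{KK}(A,\,\cdot\,)=0$. That part is essentially how the paper's Theorem~\ref{TTMW} is assembled, and your remark that the delicate point is getting uniform, trace-independent estimates from fullness is accurate: in the paper this is done through the $(N,M)$-fullness calculus and the stable uniqueness machinery of Section~\ref{section-stable-uniq}, particularly Corollary~\ref{CLuniq}.

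However, your ``first main reduction'' --- proving $A\otimes\mathcal W\cong A$ as a prerequisite --- is a genuine gap and is not what the paper does. $\mathcal W$ is not unital, hence not strongly self-absorbing, so there is no off-the-shelf machinery (of the Kirchberg/Toms--Winter sort) that upgrades $\mathcal Z$-stability to $\mathcal W$-stability. Worse, the identity $\mathcal W\otimes\mathcal W\cong\mathcal W$ that you would want to invoke is itself Corollary~\ref{cor-WW} of this very classification theorem, so the step is circular. The paper avoids this entirely: the only self-absorption it establishes before the classification is $A\cong A\otimes Q$ for the universal UHF algebra $Q$ (Theorem~\ref{Z-stable}), which is available because $Q$ \emph{is} strongly self-absorbing, and even that requires the $\mathcal Z_{\mathfrak p,\mathfrak q}$ decomposition together with the partial classification already obtained (Theorem~\ref{TTMW} and Theorem~\ref{TMW}).

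A second, and arguably the most important, missing ingredient is quasidiagonality of traces. The paper's reduction of a general $A$ with finite nuclear dimension to the tracial-approximation class $\mathcal D_0$ (tracial approximation by point-line/Razak subalgebras, \emph{not} an ASH inductive-limit structure as you describe) goes through Theorem~\ref{TalWtrace}, whose proof invokes the Tikuisis--White--Winter theorem to conclude that every tracial state is quasidiagonal, and hence a $\mathcal W$-trace. Combined with $Q$-stability and a Winter-type finite-nuclear-dimension dimension-reduction argument (Theorem~\ref{fdim}, Lemma~\ref{LWL1}, Theorem~\ref{TTTAD}), this places $A$ in $\mathcal D_0$, at which point the isomorphism theorem~\ref{TTMW} applies. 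Your proposal has no analogue of the quasidiagonality input, and without it there is no bridge from ``finite nuclear dimension plus KK-contractible'' to a structure where the existence/uniqueness intertwining can actually be carried out. In short: right skeleton, but the route from the hypotheses to that skeleton is missing its two load-bearing pillars (quasidiagonality of traces and the tracial $\mathcal D_0$-approximation), and is replaced by a $\mathcal W$-stability step that is both circular and, absent a classification theorem, unavailable.
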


Here, ${\widetilde{\mathrm T}}(A)$ is the cone of lower semicontinuous traces finite on the 
Pedersen ideal $\mathrm{Ped}(A)$ of $A$, with the topology of pointwise convergence (on $\mathrm{Ped}(A)$), and $\Sigma_A$ is the norm function (the lower semicontinuous extended positive real-valued function on ${\widetilde{\mathrm T}}(A)$ defined by $\Sigma_A(\tau)=\sup\{\tau(a): a\in \mathrm{Ped}(A)_+,  \|a\|\leq 1\}$).

Consider the \CA\, $\mathcal W$, the (unique) simple inductive limit of Razak algebras with a unique 
trace (up to a multiple), which is furthermore bounded 
(see \cite{Razak-W} and \cite{Jacelon-W}; $\mathcal W$ is also sometime  called the Razak-Jacelon algebra).
We will show that ${\cal W}$ is the unique separable simple 
\CA\,  with a unique tracial state and with finite nuclear dimension which is KK-contractible.
Hence $A\otimes \mathcal W$ is KK-contractible for any amenable \CA\, $A$ (see Lemma \ref{KK-Kun}). Thus, if $A$ has finite nuclear dimension,  so that the 
\CA\, $A\otimes\mathcal W$ has finite nuclear dimension as well (see Proposition 2.3(ii) of \cite{WZ-ndim}), then 
$A\otimes\mathcal W$ is classifiable (whether it is finite---Theorem \ref{clas-thm}---or infinite---in which case by \cite{Ph1} it must be $\mathcal O_2\otimes \mathcal K$).

\theoremstyle{plain}
\newtheorem*{Corollary}{Corollary}
\begin{Corollary}[Corollary \ref{cor-WW}]
Let $A$ be a simple separable  
\CA\, with finite nuclear dimension. Then the \CA\, $A\otimes \mathcal W$ is classifiable. In particular, $\mathcal W\otimes \mathcal W \cong \mathcal W$.
\end{Corollary}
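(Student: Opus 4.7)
The plan is to verify that $A\otimes\mathcal{W}$ satisfies the hypotheses of either Theorem~\ref{clas-thm} or the Kirchberg-Phillips classification (via \cite{Ph1}), and then to apply this machinery to compute $\mathcal{W}\otimes\mathcal{W}$ by comparing invariants.

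First I would check permanence properties: $A\otimes\mathcal{W}$ is separable, simple (because $\mathcal{W}$ is simple and nuclear, so the minimal tensor product preserves simplicity), of finite nuclear dimension by Proposition~2.3(ii) of \cite{WZ-ndim}, and KK-contractible by Lemma~\ref{KK-Kun}. Since $\mathrm{K}_*(A\otimes\mathcal{W})=0$, the UCT holds trivially for $A\otimes\mathcal{W}$ (both sides of the UCT sequence vanish).

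Next, by the standard dichotomy for simple separable nuclear \CA s, either $A\otimes\mathcal{W}$ is purely infinite, in which case Phillips' theorem \cite{Ph1} identifies it with $\mathcal{O}_2\otimes\mathcal{K}$ (using trivial K-theory and the UCT), or $A\otimes\mathcal{W}$ is stably finite; in the latter case, vanishing of $\mathrm{K}_0$ forces stable projectionlessness, so Theorem~\ref{clas-thm} applies. In either situation $A\otimes\mathcal{W}$ is classifiable.

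For the particular assertion $\mathcal{W}\otimes\mathcal{W}\cong\mathcal{W}$, the unique bounded tracial state $\tau$ on $\mathcal{W}$ induces a trace $\tau\otimes\tau$ on $\mathcal{W}\otimes\mathcal{W}$, which rules out pure infiniteness. Thus both $\mathcal{W}$ and $\mathcal{W}\otimes\mathcal{W}$ lie in the class of Theorem~\ref{clas-thm} and are determined by their invariants. For $\mathcal{W}$ the invariant is $(\mathbb{R}_{\geq 0}\tau, \Sigma_{\mathcal{W}})$ with $\Sigma_{\mathcal{W}}(\lambda\tau)=\lambda$. The main obstacle is to show that $\widetilde{\mathrm{T}}(\mathcal{W}\otimes\mathcal{W})$ is also a single ray, namely $\mathbb{R}_{\geq 0}(\tau\otimes\tau)$; this should follow from a slice-map argument exploiting simplicity of $\mathcal{W}$, nuclearity, and uniqueness of $\tau$ on each factor, to conclude that every lower semicontinuous semifinite tracial weight on $\mathcal{W}\otimes\mathcal{W}$ is a scalar multiple of $\tau\otimes\tau$. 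Once this is established, matching of the $\Sigma$ functions is immediate from $\|\tau\otimes\tau\|=1$, and the resulting isomorphism of invariants forces $\mathcal{W}\otimes\mathcal{W}\cong\mathcal{W}$ by Theorem~\ref{clas-thm}.
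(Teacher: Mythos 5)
Your proposal reproduces the paper's argument: establish KK-contractibility of $A\otimes\mathcal W$ via Lemma \ref{KK-Kun}, note finite nuclear dimension via \cite{WZ-ndim}, split into the traceful (Theorem \ref{clas-thm}) and purely infinite (\cite{Ph1}) cases, and then for $\mathcal W\otimes\mathcal W$ verify the invariant is the same ray $(\mathbb R_{\geq 0}\tau,\Sigma)$ as for $\mathcal W$; the slice-map argument for uniqueness of the trace on $\mathcal W\otimes\mathcal W$, though only sketched, is routine. One correction worth making: there is no ``standard dichotomy'' saying a simple separable nuclear \CA\ is purely infinite or stably finite (R\o rdam constructed a simple nuclear counterexample containing both a finite and an infinite projection); the dichotomy does hold for $A\otimes\mathcal W$, but the correct reason is that finite nuclear dimension forces $\mathcal Z$-stability, and the dichotomy is a theorem for simple separable $\mathcal Z$-stable \CA s.
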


{\bf Acknowledgements.}   The greater  part of this  research
was done 
while
the second and third named  authors were at the Research Center for Operator Algebras in East China Normal University
in the summer of 2016. Much of the revision from the initial work was done 
while
the 
last three authors were there in the summer of 2017. 
All  authors acknowledge the support of the Research Center
which is partially supported by Shanghai Key Laboratory of PMMP,  The  Science and Technology Commission of Shanghai Municipality (STCSM), grant \#13dz2260400 and a NNSF grant (11531003). The first named author was partially supported by NSERC of Canada.
The second named author was partially supported by the NNSF of China (Grant \#11531003), 
the third named author was partially supported by NSF grants (DMS \#1361431 and  \#1665183), and the fourth named author was partially supported by a Simons Collaboration Grant (Grant \#317222).

\section{The reduction class ${\cal R}$, the tracially approximate point--line class $\mathcal D$, and model algebras}

Let $A$ be a \CA\,.  Denote by $\mathrm{Ped}(A)$ the Pedersen ideal.
Denote by ${\widetilde{\mathrm T}}(A)$ the topological cone of   
lower semicontinuous  positive traces defined (i.e., finite) on $\mathrm{Ped}(A)$,  with the topology 
of pointwise convergence (on the elements of $\mathrm{Ped}(A)$).
Denote by $\mathrm{T}(A)$ the set of all tracial states of $A.$  Denote by $\overline{\mathrm{T}(A)}^{\mathrm{w}}$ the  weak* closure 
of $\mathrm{T}(A)$ in the space of all positive linear functionals on $A.$ 
Let $X$ be a topological convex set, or a topological cone. Denote by $\Aff_+(X)$ the cone of all continuous  positive real-valued affine functions $f$ on $X$ 
which vanish at zero and only at that point, together with zero function. Following \cite{Robert-Cu}, let us denote  
by
$\text{LAff}_+(X)$ the 
cone
of all 
lower semicontinuous affine functions with values in $[0, \infty]$ on $X$ 
which are limits of increasing sequences of functions in $\Aff_+(X).$
We are mostly interested in the case that $X=\widetilde{\mathrm{T}}(A).$
Let $\Sigma_A\in \text{LAff}_+({\widetilde{\mathrm T}}(A))$ denote the (possibly infinite) norm function: 
$\Sigma_A(\tau)=\sup\{\tau(a): a\in \mathrm{Ped}(A)_+, \|a\|\leq 1\}.$
We shall refer to $\Sigma_A$ as the scale of $A$.

{{For $\ep>0,$ let $f_\ep\in C_0((0, \infty))_+$ (throughout the paper) such that
$f(t)=0$ if $t\in (0, \ep/2),$ $f(t)=1$ if $t\in [\ep,\infty)$ and linear in $[\ep/2, \ep).$}}

{{Let $a\in A_+,$ for each $\tau\in {\widetilde{\mathrm T}}(A),$
define ${\mathrm d}_\tau(a)=\lim_{\ep\to 0}  \tau(f_\ep(a)).$ If $e\in A_+$ is a strictly positive element,
then $\Sigma_A(\tau)={\mathrm d}_\tau(e)$ for all $\tau\in {\widetilde{\mathrm T}}(A)$
(independent of the choice of $e$).}}
If $S\subseteq 
{\widetilde{\mathrm T}}(A)\setminus \{0\}$ is a convex subset, 
denote by $\text{LAff}_{0+}(S)$ the cone $\{f|_S: f\in \text{LAff}_+(\widetilde{\mathrm{T}}(A))\}$
of restrictions to $S$ of the functions in ${\widetilde{\mathrm T}}(A)\setminus \{0\}$.
If $S$ is bounded, denote by $\text{LAff}_{b,0+}(S)$ the subset of $\text{LAff}_{0+}(S)$ consisting of those
functions bounded on $S$.
In 
the
case that ${\mathrm{T}}(A)$ is compact, let us  
denote the cone $\text{LAff}_{0+}(\mathrm{T}(A))$ just by $\text{LAff}_{+}(\mathrm{T}(A))$.

\begin{defn}
A simple \CA\ $A$ will be said to be in the reduction class, denoted by $\mathcal R$, if $A$ is separable, has continuous scale (\cite{Lncs1} and \cite{eglnp}), and $\mathrm{T}(A)\not=\O$. 
For any non-zero exact Jiang-Su stable 
separable
simple \CA\, $A,$ by Lemma 6.5 of \cite{ESR-Cuntz} 
(combined with Theorem 1.2 of \cite{Rob-0}; see Remark 5.2 of \cite{eglnp}), there is a non-zero hereditary \SCA\, $A_0\subseteq A$ 
such that $A_0$ has continuous scale---and so, if $\mathrm{T}(A)\not=\O$, belongs to the class $\mathcal R$.
In particular, as $A$ is separable and simple, {it follows from Brown's theorem (\cite{Br1}) that }
 $A\otimes {\cal K}\cong A_0\otimes {\cal K}.$
We will use the fact that 
$\mathrm{T}(A)$ is a compact base for ${\widetilde{\mathrm T}}(A)$ when $A$ 
belongs to the class $\mathcal R$ (see Theorem 5.3 of \cite{eglnp}
and Theorem 3.3 of \cite{Lncs1}). 
(By Theorem 5.3 of \cite{eglnp}, when $A$ is as above,  
with $\mathrm{T}(A)\not=\O$, and $A=\mathrm{Ped}(A)$, 
these two properties are in fact equivalent.)

\end{defn}

\begin{defn}[\cite{point-line}]
Let $E$ and $F$ be finite dimensional \CA s, and let $\phi_0, \phi_1: E \to F$ be homomorphisms (not necessarily unital). The \CA\,
$$A(E, F, \phi_0, \phi_1)=\{(e, f)\in E \oplus \mathrm{C}([0, 1], F): f(0)=\phi_0(e),\ f(1)=\phi_1(e)\}$$
will be called an Elliott-Thomsen algebra or a point--line algebra. (See \cite{point-line}. These algebras are the one-dimensional case of the non-commutative CW-complexes studied in \cite{LPcw}.) The class of point--line algebras will be denoted by $\mathcal C$.
\end{defn}

\begin{defn}[\cite{Razak-W}]\label{DRazak}
Let $k, n\in\mathbb{N}$. Consider the homomorphisms $\phi_0, \phi_1: \mathrm{M}_k(\Comp) \to \mathrm{M}_{k(n+1)}(\Comp)$ defined by 
$$\phi_0(a)=a\otimes \diag(1_n,0_k)=\mathrm{diag}(\underbrace{a, ..., a}_n, 0_k) \quad\mathrm{and}\quad \phi_1(a) = a\otimes 1_{n+1}=\mathrm{diag}(\underbrace{a, ..., a}_{n+1}).$$ The \CA\,
\beq\label{ddraz} 
R(k, n)=A(\mathrm{M}_k(\Comp), \mathrm{M}_{k(n+1)}(\Comp), \phi_0, \phi_1) \in \mathcal C
\eneq
will be called a Razak algebra.  Let $e\in R(k,n)$ be a strictly positive element.
(It is easy to check that
\beq\label{Razaklambda}
\lambda_s(R(k,n))=\inf\{d_\tau(e): \tau\in \mathrm{T}(R(k,n))\}={n\over{n+1}}
\,\,\,---\text{see\,\,\, \ref{DfS}\,\,\, below}).
\eneq

Let us also call a direct sum of such \CA s a Razak algebra, and denote this class of \CA s by $\mathcal R_{\textrm{az}}$. 
\end{defn}

\begin{defn}\label{Dbuild1}
Denote by ${\cal C}_{0}$ the class of all \CA s $A$  in ${\cal C}$ which
satisfy the following conditions:
(1) $\textrm{K}_1(A)=\{0\},$
(2) $\textrm{K}_0(A)_+=\{0\}$, and 
(3) $0\not\in \overline{\textrm{T}(A)}^\mathrm{w}.$
(What (2) says is that the \CA s  in  ${\cal C}_0$ are stably 
projectionless. What (3) says is that the spectrum of $A$ is compact.)

Denote by ${\cal C}_0^0$ the subclass of \CA s in ${\cal C}_0$ with $\textrm{K}_0(A)=\{0\}.$ 
Then  every Razak algebra is in  ${\cal C}_0^0.$ 
Let ${\cal C}_0'$ denote the class of all full hereditary \SCA s of \CA s in ${\cal C}_0$
and let ${{\cal C}_0^{0}}'$ denote the class of all full hereditary \SCA s of \CA s in ${\cal C}_0^0.$

\end{defn}

In what follows, for $r>0,$ we will use $f_r$ to denote the continuous positive function 
defined on $[0,\infty)$ by $f_r(t)=0$ if $t\in [0, r/2],$ $f_r(t)=1$ if $t\in [r, 1]$, and $f_r$ is
linear on $(r/2, r).$

\begin{defn}[see  8.1 and 8.11 of \cite{eglnp}]\label{DDD}
Recall that a simple \CA\,
is said to be in the class ${\cal D}$ (or ${\cal D}_0),$  if the following conditions hold:
There are a strictly positive element $e\in A$ with $\|e\|\le 1$ 
and a real number $1>{\mathfrak{f}}_e>0,$ 
such that for any $\ep>0,$  any 
finite subset ${\cal F}\subseteq A$, and any $a\in A_+\setminus \{0\},$  there are ${\cal F}$-$\ep$-multiplicative \cpc s $\phi: A\to A_0$ and  $\psi: A\to D$  for orthogonal 
\SCA s $A_0,\,D\subseteq A$ with $D\in {\cal C}_0$ (or ${{\cal C}_0^{0}}$), satisfying
\beq\label{DNtr1div-1++}
&&\|x- (\phi(x) + \psi(x))\|<\ep\rforal x\in {\cal F},\\ \label{DNtrdiv-2}
&&\phi(e)\lesssim a,\quad\mathrm{and} \\ \label{DNtrdiv-4}
&&\tau(f_{1/4}(\psi(e)))\ge \mathfrak{f}_e\rforal \tau\in \textrm{T}(D). 
\eneq
In fact $\mathfrak{f}_e$ can be chosen 
to be $\inf\{\tau(f_{1/4}(e)): \tau\in \textrm{T}(A)\}/2$ (see 9.2 of \cite{eglnp}).
{{Note that, if $A\in {\cal D}$ is a separable \CA\, and $B$ is a hereditary \SCA\, of $A,$ then
$B\in {\cal D}$ (see  8.6 of \cite{eglnp}).}} 
We refer to \cite{eglnp} for a detailed discussion of the definition of the class ${\cal D}.$

\end{defn}
\begin{defn}\label{DM0}
 Let us denote by ${\cal M}_0$ the class of simple separable \CA s which are inductive limits
of sequences of \CA s in ${\cal C}_0^0$, with respect to maps which are injective and 
take strictly positive elements to strictly positive elements.
This class is closed under tensoring with full matrix 
algebras (as the class of Razak algebras is), and hence is
closed under tensoring with
any unital simple AF algebra (as the tensor product of a map between
two Razak algebras and a unital map between two finite-dimensional algebras
is injective and preserves strictly positive elements if both maps
have these properties).



\end{defn}

\begin{defn}\label{DW}
Recall that the \CA\, ${\cal W}$ is the simple inductive limit of a sequence of Razak algebras
with injective connecting maps (\eqref{ddraz}---see \cite{Razak-W},
\cite{Tsang-W}, and \cite{Jacelon-W}) with a unique trace, which is bounded. By Theorem 1.1 of \cite{Razak-W},
it is the unique such C*-algebra. (Unique meaning in the Razak limit class, with unique trace which is bounded;
in particular it follows that this algebra---indeed any simple such limit of Razak algebras---is isomorphic to its
tensor product with a full matrix algebra.)
The C*-algebra ${\cal W}$ belongs to the class ${\cal M}_0$
by Lemma 3.3 of \cite{Jacelon-W}.

Furthermore,
${\cal W}$
has continuous scale (and so belongs to the class ${\cal R}$), by the first part of Proposition 5.4 of \cite{eglnp}
(the required property of strict comparison holds by {Theorem 4.6 of \cite{Toms-ASH}}). 
Hence by Theorem 3.3 of \cite{Lncs1}, ${\cal W}$ is
algebraically simple. Hence by the remark in Definition 9.5 of \cite{eglnp},  $\mathcal W\in {\cal D}_0$.
\end{defn}

{\begin{thm}\label{R2}
For any non-empty metrizable Choquet simplex $\Delta,$
there exists a 
non-unital simple \CA\, $A\in 
{\cal R}$  (Definition 2.1)
such that 
$A=\lim_{n\to\infty}(B_n, \imath_n)$
 where each $B_n$ is a finite direct sum
of copies of ${\cal W}$ and  each $\imath_n$ 
preserves strictly positive elements
(takes strictly positive elements into strictly positive elements),
every trace of $A$ is bounded, and
$$
(\mathrm{K}_0(A),  \mathrm{K}_1(A), \mathrm{T}(A))=(\{0\}, \{0\}, \Delta).
$$

Moreover, $A$ may be chosen so that $A\in {\cal M}_0$ 
%
%
%
(Definition 2.6), 
and $A\in {\cal D}_0$ 
(Definition 2.5).
\end{thm}

\begin{proof}
By 3.10 of \cite{Btrace}, there exists 
a unital  simple AF algebra  $D$ with $\mathrm{T}(D)=\Delta.$
As we shall now show, the
C*-algebra $A=D\otimes {\cal W}$ 
has the desired properties.
By \ref{DW}, $A\in {\cal M}_0.$
%
Since ${\mathrm{M}}_n({\cal W})\cong {\cal W}$ (see \ref{DW}), it follows easily that $A=\lim_{n\to\infty}(B_n,\iota_n),$ where each $B_n$ is 
a finite direct sum of copies of ${\cal W}.$  

Let $e\in {\cal W}$ be a strictly positive element. Since $W$ is algebraically simple (see \ref{DW}),
$e\in {{\mathrm{Ped}}(\mathcal W)}.$  By the definition of {the} Pedersen ideal, $1\otimes e\in {\mathrm{Ped}}(A).$ 
It follows from Proposition 5.6.2 of \cite{Pbook}
 that ${\mathrm{Ped}}(A)=A$ as the hereditary  \SCA\, generated by $1\otimes e$ is $A$ itself. 
 In other words, $A$ is algebraically simple.
{{By (the end of) Definition 9.5 of \cite{eglnp}, $A\in {\cal D}_0.$}}
Consequently, all traces of $A$ are bounded, {{and by  Theorem 9.4 of \cite{eglnp},
$A$ has strict comparison for positive elements.}}
 It is clear that $\mathrm{K}_0(A) =  \mathrm{K}_1(A) = \{0\}.$
Note that the natural affine map from the simplex $\Delta = \mathrm{T}(D)$ to
$\mathrm{T}(A)$, consisting of tensoring with the unique tracial state
of ${\cal W}$, is weak* continuous and bijective and therefore a homeomorphism.
It remains to show that $A$ has continuous scale (and so belongs to the class ${\cal R}$).
 This follows from the established facts that $A$ is algebraically simple and 
${\mathrm{T}}(A)$ is compact and 
Theorem
5.3 of \cite{eglnp}.
 \end{proof}

\begin{cor}[{\cite{Razak-W}, \cite{Tsang-W}}]
\label{Ctsang}
Let ${\widetilde T}$ be a non-zero topological cone with a 
compact 
base 
which is a metrizable Choquet simplex
and let $\gamma: {\widetilde T}\to (0, \infty]$ be a lower semicontinuous affine function,
%
zero at $0\in\widetilde{T}$, but only there.
There exists
a 
simple \CA\, $A$ which is an inductive limit {{of
Razak}} 
algebras
such that
$$
({\widetilde{\mathrm T}}(A), \Sigma_A)= ({\widetilde T}, \gamma).
$$

Moreover, $A$ may be chosen to be an inductive limit of finite direct sums of copies of ${\cal W}.$ 
\end{cor}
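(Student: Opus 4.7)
The plan is to realize $(\widetilde{T},\gamma)$ in two stages: first produce a simple algebra with the prescribed trace cone, and then pass to a hereditary subalgebra of its stabilization to adjust the norm function to $\gamma$. Fix a compact metrizable base $\Delta$ of $\widetilde{T}$. By Theorem \ref{R2} applied to $\Delta$, there is a simple $A_0=\lim_n(B_n,\imath_n)$ with each $B_n$ a finite direct sum of copies of $\mathcal{W}$, with $\mathrm{K}_0(A_0)=\mathrm{K}_1(A_0)=\{0\}$, and with $\widetilde{\mathrm{T}}(A_0)$ canonically identified with $\widetilde{T}$. Since $A_0\otimes\mathcal{K}$ has the same trace cone, the problem reduces to producing a positive contraction $a\in A_0\otimes\mathcal{K}$ with $d_\tau(a)=\gamma(\tau)$ for every $\tau\in\widetilde{T}$; then $A:=\overline{a(A_0\otimes\mathcal{K})a}$ will satisfy $\widetilde{\mathrm{T}}(A)=\widetilde{T}$ and $\Sigma_A=\gamma$ automatically, and will be simple because $a$ is full in $A_0\otimes\mathcal{K}$ (by the strict positivity of $\gamma$ on $\widetilde{T}\setminus\{0\}$).

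To produce $a$, I would invoke the description of the Cuntz semigroup of $A_0$ from \cite{Robert-Cu}: for a simple separable stably projectionless inductive limit of direct sums of $\mathcal{W}$, the dimension-function map $\mathrm{Cu}(A_0)\to\mathrm{LAff}_+(\widetilde{\mathrm{T}}(A_0))$ is surjective onto the sub-semigroup of lower semicontinuous affine functions $\widetilde{T}\to[0,\infty]$ which vanish only at $0$ (together with the zero function). Since $\gamma$ lies in this set by hypothesis, the required $a$ exists.

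It remains to present $A$ as an inductive limit of the asserted form. Approximating $a$ by positive contractions $a_n\in B_n\otimes\mathcal{K}$ and telescoping, $A$ becomes an inductive limit of finite direct sums of full hereditary subalgebras of $\mathcal{W}\otimes\mathcal{K}$; each such hereditary subalgebra is itself an inductive limit of Razak algebras (as $\mathcal{W}$ is, by \cite{Razak-W} and \cite{Jacelon-W}), and a diagonal argument produces the first presentation. The stronger second presentation---as an inductive limit of finite direct sums of \emph{copies of $\mathcal{W}$}---follows by reindexing, using that matrix amplifications of $\mathcal{W}$ are isomorphic to $\mathcal{W}$, so that each stage can be re-expressed as a direct sum of copies of $\mathcal{W}$ itself.

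The main obstacle will be the middle step: realizing a $\gamma$ which may be unbounded or take the value $+\infty$ as a dimension function requires the surjectivity of the Cuntz-semigroup-to-$\mathrm{LAff}_+$ map for $A_0$, which in turn rests on the known structure of $\mathrm{Cu}(\mathcal{W})$ together with continuity of $\mathrm{Cu}$ under inductive limits. The subordinate difficulty in the third step---passing from ``inductive limit of Razak algebras'' to ``inductive limit of copies of $\mathcal{W}$''---is handled by the $\mathcal{W}$-absorption noted above and does not require the full classification theorem of the paper.
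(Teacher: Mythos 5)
Your proposal is correct in outline but reverses the order of operations relative to the paper, and that reversal creates a nontrivial gap at the end which the paper's order avoids entirely.

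The paper realizes $\gamma$ as a dimension function \emph{inside the AF algebra} $B\otimes\mathcal K$ (with $B$ unital simple AF and $\mathrm{T}(B)=\Delta$), sets $B_1=\overline{a(B\otimes\mathcal K)a}$, and only then forms $A=B_1\otimes\mathcal W$. Since $B_1$ is itself AF, $A=\lim_n (F_n\otimes\mathcal W)$ with $F_n$ finite dimensional, and each $F_n\otimes\mathcal W$ is manifestly a finite direct sum of $\mathrm M_k(\mathcal W)\cong\mathcal W$; the ``moreover'' clause of the statement is thus immediate. You instead first pass to $A_0=D\otimes\mathcal W$ via Theorem~\ref{R2} and then cut down by $a\in A_0\otimes\mathcal K$. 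That produces the right algebra (indeed it is isomorphic to the paper's $A$: both are hereditary subalgebras of $D\otimes\mathcal W\otimes\mathcal K$ generated by positive elements with the same dimension function, hence Cuntz-equivalent, hence isomorphic since the ambient algebra has almost stable rank one). But you now have to \emph{prove} that $\overline{a(A_0\otimes\mathcal K)a}$ is an inductive limit of finite direct sums of copies of $\mathcal W$. Your sketch---approximate $a$ by $a_n\in B_n\otimes\mathcal K$, telescope, and identify each resulting hereditary subalgebra of $\bigoplus\mathcal W\otimes\mathcal K$ with a finite direct sum of copies of $\mathcal W$---is the correct idea, but it is a genuine argument: one must arrange the $a_n$ to be compatible so that the hereditary subalgebras embed into one another, and one must invoke the Razak classification to recognize each full hereditary subalgebra of $\mathcal W\otimes\mathcal K$ with bounded trace as a copy of $\mathcal W$. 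None of this is difficult, but it is not free, and you do not carry it out; the paper's choice to tensor with $\mathcal W$ \emph{after} the cut-down makes exactly this step disappear. The rest of your argument (the use of Theorem~\ref{R2}, the identification of the trace cone and norm function of a hereditary subalgebra, and the surjectivity of $\mathrm{Cu}(A_0)\to\mathrm{LAff}_+(\widetilde{\mathrm T}(A_0))$ via \cite{Robert-Cu} or \cite{ESR-Cuntz}) matches the paper in substance.
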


\begin{proof}
By Theorem 5.1 of \cite{Tsang-W}, there is a simple \CA\, $B$ which is an inductive limit 
of Razak algebras  such that ${\widetilde{\mathrm T}}(B)={\widetilde T}$ and 
the lower semicontinuous function $\omega(\tau)=\|\tau\|$ (allow values in $[0,\infty]$) is equal to $\gamma.$
Let $e\in B_+$ be a strictly positive element of $B$ with $\|e\|=1.$ Then  
$d_\tau(e)=\|\tau\|$ for each $\tau\in {\widetilde{\mathrm T}}(A).$  Thus, $({\widetilde{\mathrm T}}(B), \Sigma_B)= ({\widetilde T}, \gamma).$

To see the last part of the theorem, let  $b\in {\mathrm{Ped}}(B)_+\setminus \{0\}$ with $\|b\|=1.$
Define $\Delta=\{\tau\in {\widetilde{\mathrm T}}(B): \tau(b)=1\}.$ Since $B$ is separable, by  Proposition 2.6 of \cite{T-0-Z}, $\Delta$ is a base for ${\widetilde T}$ 
and it is a metrizable Choquet simplex.   Moreover $0\not\in \Delta.$
Choose a unital simple 
AF algebra $C$ with $\mathrm{T}(C)=\Delta$ (\cite{Btrace}).
Since $\Delta$ is compact, $\inf\{\gamma(\tau): \tau\in \Delta\}>0.$ 
It follows from Corollary I.1.4 of \cite{Alf} that there is an increasing sequence of continuous affine 
functions $f_n$ converging to $\gamma$ on $\Delta.$  By a compactness argument, we may assume 
that each $f_n\in \Aff_+(\Delta).$  (In other words, $\gamma\in {\mathrm{LAff}}_+({\tilde T}).$)
Since $\rho_C({\mathrm{K}}_0(C))$ is dense in $\Aff(\Delta)$ (see III.3.4 of \cite{BH}),  {there is}  
 an element  $a\in (C\otimes {\cal K})_+$ such that ${\mathrm d}_\tau(a)=\gamma(\tau)$ for all $\tau\in \Delta=\mathrm{T}(C)$
(see, for example,  Theorem 15.2 of   \cite{Gd}  and also  the proof of III 3.3 of \cite{BH}).  Set $\overline{a(C\otimes {\cal K})a}=C_1;$ the hereditary \SCA\,  $C_1$ is also AF. Note that  the topological cone ${\tilde{\mathrm{T}}}(A),$  being completely determined by the compact base  $\Delta$ (which does not contain zero), is isomorphic to the cone ${\tilde{\mathrm{T}}}$  which also has $\Delta$ as a base.
The tensor product $A=C_1\otimes {\cal W}$ has the desired properties.

\end{proof}

\section{A stable uniqueness theorem}\label{section-stable-uniq}

The following lemma, concerning extensions with non-unital quotient, is 
a consequence of, and in fact
equivalent to, {the second part of Corollary 16 of \cite{EK-BDF} and Theorem 2.1 of \cite{Gabe-BDF}}, in the case of
a trivial extension (which is all 
that 
we need---this restriction can
easily be removed, in the nuclear setting, by working with Choi-Effros liftings). The analogous, purely unital
setting---both quotient and extension unital---is dealt with in
Theorem 6 of \cite{EK-BDF}. As pointed out in \cite{Gabe-BDF},
the mixed case, unital quotient but non-unital extension, while
discussed in Section 16 of \cite{EK-BDF}, is not correctly
dealt with there, and a corrected statement of the 
first part of Corollary 16 of \cite{EK-BDF}
was given in Theorem 2.3 of \cite{Gabe-BDF}. Closely related earlier results are contained in \cite{DE-classification} and \cite{Lnsuniq}.

\begin{lem}\label{abs-no-1}
Let $A$ and $B$ be 
C*-algebras with $B$ stable and $A$ {separable and} non-unital. Let $\pi: A\to \mathrm{M}(B)$ be a 
faithful
homomorphism such that the 
composition with the quotient map to
$\mathrm{M}(B)/B$ 
is also faithful
and the induced (trivial) extension
is purely large
(in the sense of \cite{EK-BDF}).
Then, for any nuclear homomorphism $\sigma: A\to \mathrm{M}(B)$, 
there is a sequence  $(u_n)$ in $\mathrm{M}(\mathrm{M}_2(B))$ with $u_n^*u_n=1_{{\mathrm{M}(B)}}\otimes e_{11}$ and $u_nu_n^*=1_{\mathrm{M}(\mathrm{M}_2(B))}$
such that
\begin{enumerate}
\item $\pi(a) - u_n^*(\sigma(a)\oplus\pi(a))u_n\in B$, $n=1, 2, ...$, $a\in A$, and
\item $\lim_{n\to\infty}(\pi(a) - u_n^*(\sigma(a)\oplus\pi(a))u_n) = 0$, $a\in A$.
\end{enumerate}
\end{lem}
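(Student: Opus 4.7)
The plan is to reduce to the unital case (Lemma \ref{abs-1}) by unitizing $A$ and extending both $\pi$ and $\sigma$ to unital homomorphisms into $\mathrm{M}(B)$. Concretely, let $\widetilde{A}$ be the minimal unitization of $A$, and define $\widetilde{\pi}, \widetilde{\sigma}: \widetilde{A}\to\mathrm{M}(B)$ by $\widetilde{\pi}(a+\lambda 1)=\pi(a)+\lambda\cdot 1_{\mathrm{M}(B)}$ and $\widetilde{\sigma}(a+\lambda 1)=\sigma(a)+\lambda\cdot 1_{\mathrm{M}(B)}$. These are unital $*$-homomorphisms. The map $\widetilde{\sigma}$ is nuclear: a nuclear approximation of $\sigma$ on bounded subsets of $A$ extends to one of $\widetilde{\sigma}$ on bounded subsets of $\widetilde{A}$ by adding the scalar piece (which factors through $\mathbb{C}$).

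The key verification is that the extension induced by $\widetilde{\pi}$, namely $0\to B\to \widetilde{\pi}(\widetilde{A})+B\to \widetilde{A}\to 0$, is purely large in the sense of \cite{EK-BDF}. I expect this to be the main obstacle, although it is essentially routine: the extension induced by $\pi$, i.e.\ $0\to B\to \pi(A)+B\to A\to 0$, is purely large by hypothesis, and unitizing only adjoins the scalars $\mathbb{C}\cdot 1_{\mathrm{M}(B)}$ to the middle algebra. Pure largeness is a condition on elements $x\in \widetilde{\pi}(\widetilde{A})+B$ not in $B$; any such $x$ differs from an element of $\pi(A)+B$ (also not in $B$, after adjusting by a scalar that must lie in $\mathrm{M}(B)\setminus B$) by a multiple of $1_{\mathrm{M}(B)}$, and the hereditary subalgebra of $B$ needed for the pure-largeness test can be taken from the original extension. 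One should cite (or invoke) the standard fact from \cite{EK-BDF} that unitization preserves pure largeness.

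Having established these hypotheses, I would invoke Lemma \ref{abs-1} applied to $\widetilde{\pi}$ and $\widetilde{\sigma}$ to obtain a sequence of partial isometries $(u_n)\subseteq \mathrm{M}(\mathrm{M}_2(B))$ with $u_nu_n^*=1_{\mathrm{M}(\mathrm{M}_2(B))}$ and $u_n^*u_n=1_{\mathrm{M}(B)}$ such that
\[
\widetilde{\pi}(\tilde a)-u_n^*(\widetilde{\sigma}(\tilde a)\oplus \widetilde{\pi}(\tilde a))u_n\in B
\]
for every $\tilde a\in \widetilde{A}$, and this element tends to $0$ in norm. Since $\widetilde{\pi}|_A=\pi$ and $\widetilde{\sigma}|_A=\sigma$, the restriction to $A\subseteq\widetilde{A}$ is exactly the conclusion of the lemma, completing the proof.
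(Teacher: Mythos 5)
Your proposal is correct and matches the paper's proof exactly: unitize $A$ and the two maps, observe that the extension induced by $\widetilde{\pi}$ is still purely large (a standard fact in \cite{EK-BDF} or \cite{Gabe-BDF}, which the paper also just cites), verify $\widetilde{\sigma}$ remains nuclear, and then apply Lemma \ref{abs-1} and restrict to $A$. Your heuristic sketch of why pure largeness survives unitization glosses over the case $x=\lambda 1+b$ with $a=0$, but since you fall back on the cited reference anyway, this is not a gap in the argument as written.
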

\begin{proof}
This follows immediately from { the second part of Corollary 16 of \cite{EK-BDF} and Theorem 2.1 of \cite{Gabe-BDF}}, in the case
of a trivial extension, with the ideal
of that theorem taken to be the C*-algebra direct sum of a countable infinity of
copies of the present ideal, $B$, and the (trivial) extension to be that induced by the
infinite repetition of the map $\pi$ into the Cartesian product of copies of the
multiplier algebra $\mathrm{M}(B)$. (This ostensibly special case of 2.1 of \cite{Gabe-BDF} is
interesting in that it is in fact a stronger result, in the case of trivial extensions---this observation is also
valid in the case of a general (non-trivial) extension, in the nuclear setting---again, on considering Choi-Effros liftings.)
\end{proof}

Let $A$ and $B$ be \CA s, let $\gamma: A \to B$ be a homomorphism, and consider the ampliated homomorphism $$\gamma_\infty:=\gamma\oplus \gamma\oplus\cdots: A \to\mathrm M(\mathcal K\otimes B),$$
where $\mathcal K$ is the algebra of compact operators on a separable infinite-dimensional Hilbert space, and $\mathrm M(\mathcal K\otimes B)$ is the multiplier algebra.

\begin{lem}\label{absorbing}
With $A$ and $B$ and $\gamma$ and $\gamma_\infty$ as above,
assume
that $A$ and $B$ are separable and $\gamma$ is faithful, and that $A$ is not unital.
If $\gamma: A\to B$ is full, i.e., 
if $\overline{B\gamma(a)B}=B$, $a\in A\setminus\{0\}$, then for any nuclear homomorphism 
$\sigma: A\to \mathrm M(\mathcal K\otimes B)$,
there is a sequence $(u_n)$ in ${\mathrm{M}}(\mathrm{M}_2(\mathcal K\otimes B))$
with $u_n^*u_n=1_{\mathrm{M}(\mathcal K\otimes B)}\otimes e_{11}$ and $u_nu_n^*=1_{\mathrm{M}_2(\mathrm{M}(\mathcal K\otimes B))}$
such that
\begin{enumerate}
\item $\gamma_\infty(a) - u_n^*(\sigma(a)\oplus\gamma_\infty(a))u_n\in \mathcal K\otimes B$, $n=1, 2, ...$, $a\in A$, and
\item $\lim_{n\to\infty}(\gamma_\infty(a) - u_n^*(\sigma(a)\oplus\gamma_\infty(a))u_n) = 0$, $a\in A$.
\end{enumerate}

\end{lem}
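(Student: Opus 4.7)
The plan is to apply Lemma \ref{abs-no-1} with $B$ replaced by the stable \CA\ $B \otimes \mathcal{K}$, with $\pi$ replaced by $\gamma_\infty$, and with $\sigma$ regarded as a nuclear homomorphism into $\mathrm{M}(B \otimes \mathcal{K})$ via a corner embedding. Once the hypotheses of Lemma \ref{abs-no-1} are verified in this setting, its conclusion is exactly the statement required here (using the identification $\mathrm{M}_2(\mathrm{M}(B \otimes \mathcal{K})) = \mathrm{M}(\mathrm{M}_2(B \otimes \mathcal{K}))$).

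The one non-routine point is to verify that $\gamma_\infty : A \to \mathrm{M}(B \otimes \mathcal{K})$ is purely large in the sense of \cite{EK-BDF}. Given a non-zero $a \in A_+$, identify $B \otimes \mathcal{K}$ with the \CA\ of compact $B$-valued matrices; then $\gamma_\infty(a)$ is the block-diagonal multiplier $\diag(\gamma(a),\gamma(a),\ldots)$, equivalently $\gamma(a) \otimes 1_{B(H)}$. A direct computation on elementary tensors $b \otimes k \in B \otimes \mathcal{K}$ yields
$$
\overline{\gamma_\infty(a)\,(B \otimes \mathcal{K})\,\gamma_\infty(a)} \;\cong\; \overline{\gamma(a) B \gamma(a)} \otimes \mathcal{K}.
$$
Since $\gamma$ is full, $\overline{\gamma(a)B\gamma(a)}$ is a full hereditary \SCA\ of $B$, so its stabilization is both stable and full in $B \otimes \mathcal{K}$. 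This is (a strong form of) the purely large condition.

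To place $\sigma : A \to \mathrm{M}(B)$ into the right setting, fix a rank-one projection $e \in \mathcal{K}$ and use the corner embedding $\mathrm{M}(B) \hookrightarrow \mathrm{M}(B \otimes \mathcal{K})$, $m \mapsto m \otimes e$; this is a $*$-homomorphism, so its composition with $\sigma$ is again a nuclear homomorphism, now mapping into $\mathrm{M}(B \otimes \mathcal{K})$. Applying Lemma \ref{abs-no-1} to this data produces a sequence $(u_n) \subseteq \mathrm{M}(\mathrm{M}_2(B \otimes \mathcal{K}))$ with $u_n^* u_n = 1_{\mathrm{M}(B \otimes \mathcal{K})}$, $u_n u_n^* = 1_{\mathrm{M}_2(\mathrm{M}(B \otimes \mathcal{K}))}$, and the two required absorption properties.

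The main---but still routine---obstacle is the identification of the hereditary subalgebra above and the bootstrapping of fullness from $\gamma$ to $\gamma_\infty$; once this is clear, the rest is bookkeeping around the passage from $B$ to $B \otimes \mathcal{K}$ and the preservation of nuclearity under composition with a $*$-homomorphism.
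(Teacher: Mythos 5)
Your plan is the right one — reduce to Lemma \ref{abs-no-1} with $B\otimes\mathcal K$ in place of the stable algebra — and your identification
$$
\overline{\gamma_\infty(a)\,(B\otimes\mathcal K)\,\gamma_\infty(a)} \cong \overline{\gamma(a)B\gamma(a)}\otimes\mathcal K
$$
for $a \in A_+\setminus\{0\}$ is correct, as is the observation that this hereditary subalgebra is stable and full in $B\otimes\mathcal K$. But this does not yet prove pure largeness, and that is precisely the "one non-routine point" you flagged. Pure largeness (in the sense of \cite{EK-BDF}) is a property of the extension, i.e., of \emph{every} element $c$ of $C := \gamma_\infty(A) + (B\otimes\mathcal K)$ with $c \notin B\otimes\mathcal K$: one must show that $\overline{c\,(B\otimes\mathcal K)\,c^*}$ contains a stable full subalgebra for every $c = \gamma_\infty(a) + b$ with $a \in A\setminus\{0\}$ and $b \in B\otimes\mathcal K$. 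Your computation only handles the case $b = 0$; a nonzero compact perturbation $b$ could, a priori, shrink the hereditary subalgebra $\overline{c(B\otimes\mathcal K)c^*}$ in a way that destroys stability or fullness, so there is a genuine gap.

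The paper closes this gap by running the proof of Theorem 17(iii) of \cite{EK-BDF}. The key extra ingredient is a sequence of unitaries $u_n = 1\otimes v_n \in \mathrm M(B\otimes\mathcal K)$ that commute with $\gamma_\infty(a) = \gamma(a)\otimes 1$ (since they act only on the $\mathcal K$ tensor factor) and satisfy $\lim_n\|b_1 u_n b_2\| = 0$ for all $b_1, b_2 \in B\otimes\mathcal K$; here the $v_n$ are shift-type unitaries in $\mathrm M(\mathcal K)$. Conjugating $c = \gamma_\infty(a) + b$ by $u_n$ fixes $\gamma_\infty(a)$ while pushing the perturbation $b$ "to infinity", and this is what lets one pass from your $b=0$ case to arbitrary $b$. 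Without this shifting argument (or an equivalent one) the pure largeness of $\gamma_\infty$ is not established, so this is a real missing step. The remaining parts of your proposal — viewing $\sigma$ as a nuclear homomorphism into $\mathrm M(B\otimes\mathcal K)$ via a corner embedding, applying Lemma \ref{abs-no-1} with the stable algebra $B\otimes\mathcal K$, and the identification $\mathrm M_2(\mathrm M(B\otimes\mathcal K)) = \mathrm M(\mathrm M_2(B\otimes\mathcal K))$ — are all fine and match what the paper does implicitly.
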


\begin{proof}
The lemma follows from Lemma \ref{abs-no-1}
immediately once one checks that the extension $\gamma_\infty$ is purely large.

To see $\gamma_\infty$ is purely large, let 
 $B_s=B\otimes {\cal K}.$ 
Let $c\in \gamma_{\infty}(A)+B_s$ be a non-zero element which is not in $B_s.$ One may write $c=\gamma_{\infty}(a)+b$ for some $a\not=0$ and $b\in B_s.$ Let us consider $\overline{cBc^*}.$ Replacing $c$ by $cc^*,$  one may assume that $c\ge 0.$ Therefore
one may assume that $a\ge 0.$ 
It is  clear that 
$\overline{\gamma_{\infty}(a)B_s\gamma_{\infty}(a)}\cong \overline{\gamma(a)B\gamma(a)}\otimes {\cal K}.$ 
Since $\overline{B\gamma(a)B}=B,$  $\overline{B\overline{\gamma_{\infty}(a)B_s\gamma_{\infty}(a)}B}=B.$
Thus $B\subset \overline{\gamma_{\infty}(a)B_s\gamma_{\infty}(a)}.$ 
It follows 
$\overline{\gamma_{\infty}(a)B_s\gamma_{\infty}(a)}=B_s.$ In other words, $\overline{\gamma_{\infty}(a)B_s\gamma_{\infty}(a)}$ is full in $B_s.$ In what follows, we assume $\|c\|\le 1$ and $\|a\|\le 1.$

We now follows the proof of Theorem 17 (iii) of \cite{EK-BDF}. Since there are some typos there, 
we will add some details concerning the current situation. 

 We first show that $\overline{cB_sc}$ is full. 
Put  $c_1=\gamma_\infty(a)=\gamma(a)\otimes 1$ and  choose $u_n=1\otimes v_n,$ where $v_n\in M({\cal K})$ and $(v_n)$ is   a sequence of unitaries  corresponding to some permutations of 
an orthonormal basis such that
$\lim_{n\to\infty}\|b_1u_nb_2\|=0$ for any $b_1, b_2\in B_s.$
Note  that $u_nc_1^{1/2}=c_1^{1/2}u_n$ as $c_1=\gamma_\infty(a),$ and 
$u_ncu_n^*\to c_1$   strictly in $M(B_s)$ exactly as on the page 405 of \cite{EK-BDF}.
Hence
\beq\nonumber
\hspace{0.2in}u_n(c(u_n^*b'u_n)c)u_n^*=(u_ncu_n^*)b'(u_ncu_n^*)
\to c_1 b' c_1\rforal b'\in B
\eneq
(converges in norm).
Since $\overline{c_1B_sc_1}=\overline{\gamma_\infty(a)B_s\gamma_\infty(a)}$ is full 
in $B_s,$ It follows that $\overline{cB_sc}$ is full in $B_s.$

Put 
$c'=c^{1/2}c_1c^{1/2}$ and $c''=c_1^{1/2} cc_1^{1/2}.$
Put $x=c^{1/2}c_1^{1/2}.$ 
Then $xx^*=c'$ and $x^*x=c''.$ 
Let $C_1:=\overline{c'B_sc'}$ and $C_2:=\overline{c''B_sc''}.$ 
(Note, since $c''=\gamma_\infty(a^2)+b'$ for some $b'\in B_s,$
$C_2$ is also full in $B_s.$)
We will show that 
$C_2:=\overline{c''B_sc''}$  is stable. 
Since $C_1\cong C_2,$ $C_1$ is then  also stable.
(Note also, since the  (closed) ideal generated by $\overline{xx^*B_sxx^*}$ contains 
that of $\overline{x^*xB_sx^*x}=C_2,$ $C_1$ is also full.) 
Since $0\le c'\le c^{1/2},$ this implies that $\overline{cB_sc}$ contains 
a stable \SCA\, $C_1.$  In other words, $\gamma_\infty$ is purely large.

%

To show that $C_2$ is stable, 
we write $c''=c_1^2+b_1$ for some $b_1\in B_s.$ 
We will verify condition (b) of Proposition 2.2 of \cite{HR} which 
by Proposition 2.2 and Theorem 2.1 of \cite{HR} is equivalent to the stability of a $\sigma$-unital 
\CA. 
Fix an element $a \in C_2$  with $0\le a_1\le 1$ and $\ep>0.$    Since $C_2\subset \overline{c_1B_sc_1},$ 
one may choose an integer 
$k\ge 4$ such that
\beq
\|(c'')^{1/2k}a_1^{1/2}-a_1^{1/2}\|<\ep/8 \andeqn \|(c_1^{1/k}a_1^{1/2}-a_1^{1/2}\|<\ep/8.
\eneq
Put $d=c_1^{1/k}$ and $d_1=(c'')^{1/2k}.$ Then $d-d_1\in B_s.$
Since $d=\gamma(a)^{1/k}\otimes 1,$ $u_nd=du_n.$
Recall that $\lim_{n\to\infty}\|b_1u_nb_2\|=0$ for any $b_1, b_2\in B_s.$
Hence, there is an integer $n_1\ge 1$ such that, for all $n\ge n_1,$ 
\beq\label{20208-3-1}
&&du_na_1^{1/2}\approx_{\ep/8} d_1u_na_1^{1/2}=u_nd_1a_1^{1/2}\approx_{\ep/8} u_na_1^{1/2}, \andeqn\\\label{20208-3-2}
&& a_1^{1/2}du_na_1^{1/2}\approx_{\ep/4} 0.
\eneq
Put $y_n=du_na_1^{1/2}\in C_2.$ Then  (see \eqref{20208-3-1} and \eqref{20208-3-2})
\beq
&&y_n^*y_n=a_1^{1/2}u_n^*ddu_na_1^{1/2}\approx_{\ep/4} (a_1^{1/2}u_nd)a_1^{1/2}\approx_{\ep/4} a\andeqn\\
&&(y_n^*y_n)(y_ny_n^*)=y_n^*(du_na_1^{1/2}du_na_1^{1/2})y_n=(y_ndu_n)(a_1^{1/2} du_na_1^{1/2})y_n\approx_{\ep/4} 0.
\eneq
By  2.2 (b) of \cite{HR}, $C_2$ is stable.  
As mentioned above, it follows that $C_1$ is stable and is full in $B_s.$
This shows that the extension $\gamma_{\infty}$ is purely large.
%
%
%
%
\end{proof}

{\begin{rem}
One may prove directly that the map $\gamma_\infty$ absorbs any $\sigma$ as stated in Lemma \ref{absorbing}
without using the notion of purely large.
\end{rem}
}

\begin{thm}{{{\rm{(Theorem 4.2 of \cite{DE-KK-Asy})}}}}\label{no-1-uniq-hom-0}
Let $A$ be a separable \CA\, without unit, and let $B$ be a separable \CA\,. Let $\gamma: A\to B$ be a full homomorphism. 

Let $\phi, \psi: A\to B$ be nuclear homomorphisms with $[\phi]=[\psi]$ in $\mathrm{KK}_\mathrm{nuc}(A, B)$. Then for any finite set $\mathcal F\subseteq A$ and $\eps>0$, there exist an integer $n$ and a unitary $u\in {\widetilde{{\mathrm{M}_{n+1}(B)}}}$ such that 
$$\|u^*(\phi(a)\oplus(\underbrace{\gamma(a)\oplus\cdots\oplus{\gamma}(a)}_n)u - \psi(a)\oplus(\underbrace{\gamma(a)\oplus\cdots\oplus\gamma(a)}_n)\| < \eps,\quad a\in\mathcal F.$$
\end{thm}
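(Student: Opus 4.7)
My approach is to combine the absorption result Lemma \ref{absorbing} with the hypothesis $[\phi]=[\psi]$ in $\mathrm{KK}_\mathrm{nuc}(A,B)$ to obtain asymptotic unitary equivalence of the stabilized maps $\phi\oplus\gamma_\infty$ and $\psi\oplus\gamma_\infty$, and then truncate to a finite ampliation. This follows the Dadarlat--Eilers stable uniqueness paradigm, adapted to the non-unital setting with $\gamma_\infty$ playing the role of an abstractly absorbing map.

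First, I view $\phi$ and $\psi$ as maps into $\mathrm{M}(B\otimes{\cal K})$ via the upper-left corner embedding $B\hookrightarrow B\otimes{\cal K}$. Since $\gamma$ is full and $\phi,\psi$ are nuclear, Lemma \ref{absorbing} applied once with $\sigma=\phi$ and once with $\sigma=\psi$ yields the absorption equivalences $\phi\oplus\gamma_\infty\sim\gamma_\infty\sim\psi\oplus\gamma_\infty$, implemented by partial isometries in $\mathrm{M}(\mathrm{M}_2(B\otimes{\cal K}))$ valid pointwise in norm modulo $B\otimes{\cal K}$. The hypothesis $[\phi]=[\psi]$ then enters: the pair $(\phi,\psi)$ is a nuclear Cuntz pair representing the zero class in $\mathrm{KK}_\mathrm{nuc}(A,B)$, and in the presence of an absorbing nuclear summand --- which is exactly what Lemma \ref{absorbing} provides --- this triviality upgrades to an asymptotic unitary equivalence via the standard Cuntz-picture conclusion: there are unitaries $w_n\in\widetilde{B\otimes{\cal K}}$ with
$$\lim_{n\to\infty}\|w_n^*(\phi(a)\oplus\gamma_\infty(a))w_n-(\psi(a)\oplus\gamma_\infty(a))\|=0\rforal a\in A.$$

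Finally, given $\mathcal F$ finite and $\ep>0$, I choose $n$ so that the approximation above holds within $\ep/3$ on $\mathcal F$. Because $\phi(\mathcal F)\cup\psi(\mathcal F)\cup\gamma(\mathcal F)$ is a finite subset of $B$ and $w_n-1\in B\otimes{\cal K}$, the unitary $w_n$ can be approximated within an additional $\ep/3$ on $\mathcal F$ by an element $\bar w$ supported in a finite block $\mathrm{M}_{N+1}(B)$ for $N$ large. Using the shift identification $\gamma^{(N)}\oplus\gamma_\infty\cong\gamma_\infty$ (obtained from the decomposition $\gamma_\infty=\gamma\oplus\gamma\oplus\cdots$) to decouple the first $N$ coordinates from the tail, and a small polar-decomposition perturbation to convert $\bar w$ into a genuine unitary, I obtain $u\in\widetilde{\mathrm{M}_{N+1}(B)}$ implementing the desired approximation with $n:=N$. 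The main obstacle is this truncation step: converting the multiplier-algebra unitary $w_n$ into a true unitary in the finite matrix algebra $\widetilde{\mathrm{M}_{n+1}(B)}$ without spoiling the approximation on $\mathcal F$. The essential tools are the shift/absorption identity $\gamma^{(N)}\oplus\gamma_\infty\cong\gamma_\infty$, which allows any residual tail mismatch to be re-absorbed into the infinite repeat, together with a standard polar-decomposition perturbation to produce the honest unitary in the finite matrix algebra.
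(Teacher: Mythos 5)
Your proposal follows essentially the same strategy as the paper's proof: establish that $\gamma_\infty$ is nuclearly absorbing via Lemma \ref{absorbing}, feed the hypothesis $[\phi]=[\psi]$ (equivalently $[\phi,\psi,1]=0$) into a Dadarlat--Eilers--type Cuntz-pair asymptotic uniqueness theorem to obtain intertwining unitaries of the form scalar-plus-compact, and then compress those unitaries to a finite matrix corner. The paper's proof is more explicit about two points you elide: first, the asymptotic uniqueness theorem cited (Proposition 3.6 of \cite{DE-KK-Asy}) is a statement about unital maps on unital algebras, so one must unitize and form $\Phi^\sim=\phi\oplus\gamma'_\infty$, $\Psi^\sim=\psi\oplus\gamma'_\infty$; second, that proposition produces an auxiliary representation $\sigma$, and it is precisely the absorption of Lemma \ref{absorbing} that lets one replace $\gamma'_\infty\oplus\sigma$ by $\gamma_\infty$ afterward, so absorption is used to re-normalize the Dadarlat--Eilers summand rather than to directly produce the chain $\phi\oplus\gamma_\infty\sim\gamma_\infty\sim\psi\oplus\gamma_\infty$ (that chain, while true, is by itself KK-blind and produces unitaries that need not be scalar-plus-compact, so it cannot feed the truncation step). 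Your truncation step is in spirit the paper's: since $w_n$ lies in $(B\otimes\mathcal K)^\sim$, it approximately commutes with the finite corner projections $e_m$, and compressing $\phi\oplus\gamma_\infty$, $\psi\oplus\gamma_\infty$ and $w_n$ by $e_m$ yields, after a polar-decomposition perturbation, the desired unitary in $\widetilde{\mathrm M_{m}(B)}$. In short, the proof is correct modulo these bookkeeping details, and it is the same argument as the paper's.
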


\begin{proof}

Since $[\phi]=[\psi]$ in $\mathrm{KK}_\mathrm{nuc}(A, B)$, one has that $[\phi, \psi, 1]=0$ in $\mathrm{KK}_\mathrm{nuc}(A, B)$ in the sense of \cite{DE-KK-Asy}.
Set $\mathrm{M}(\mathcal K(H)\otimes {B})=D,$ $\mathrm{M}({\cal K}(\Comp\oplus H)\otimes B)=D_1$, $\mathrm{M}(\mathcal K(H\oplus H)\otimes {B})=D_2$, 
{{where $H=l^2.$}}

Consider the projection $e_n=f_n\otimes 1_{\widetilde{B}}\in\mathrm{M}(\mathcal K(H)\otimes B),$ $n=1,2,...$, where $f_n$ is the projection onto the first $n$ basis elements of $H$.

Consider the unital maps $\Phi^\sim, \Psi^\sim: {\widetilde A}\to \mathrm{M}(\mathcal K(H)\otimes {B})$ defined by 
\beq
\Phi^\sim(a)=\phi(a)\oplus\gamma'_{\infty}(a)\quad\mathrm{and}\quad
\Psi^\sim(a)=\psi(a)\oplus\gamma'_{\infty}(a)\rforal a\in A,
\eneq
where $\gamma'_{\infty}(a)$ 
is considered to be a 
map from $A$ to $(1_{D}-e_1)\mathrm{M}({\cal K}(H)\otimes B)(1_{D}-e_1).$ 
One checks $[\Phi^\sim, \Psi^\sim, 1]=0$ in $\mathrm{KK}_{\mathrm{nuc}}({\widetilde A}, B).$
 By Proposition 3.6 of \cite{DE-KK-Asy}, there is a unital strictly nuclear representation $\sigma: {\widetilde A} \to \mathrm{M}(\mathcal K(H)\otimes {B})$ and a continuous path of unitaries $u: [0, \infty) \to \mathrm U(\mathcal K(H{\oplus} H)\otimes B + \Comp 1_{D_2})$ such that for any $a\in A$,  
\begin{enumerate}
\item[] $\lim_{t\to\infty} \|u_t(\Phi^\sim(a)\oplus\sigma(a){)}u_t^* - (\Psi^\sim(a) \oplus \sigma(a)) \| =0$, and
\item[] $u_t(\Phi^\sim(a)\oplus\sigma(a))u_t^* - (\Psi^\sim(a) \oplus \sigma(a)) \in\mathcal K(H\oplus H) \otimes B.$
\end{enumerate}

In particular, there is a sequence of unitaries $(u_n)$ in $\mathrm U(\mathcal K(H\oplus H)\otimes{B} + \Comp 1_{D_2})$ such that
\begin{equation}\label{uni-1}
\lim_{n\to\infty} \|u_n(\phi(a)\oplus \gamma_{\infty}'(a)\oplus\sigma(a))u_n^* - (\psi(a) \oplus \gamma_{\infty}'(a)\oplus\sigma(a)) \| =0,\quad a\in \widetilde{A}.
\end{equation}

Since $\gamma$ is full, by Lemma \ref{absorbing}, the map $\gamma_\infty$ is (non-unital) nuclearly absorbing.
Therefore $\gamma'_\infty\oplus \sigma\sim \gamma_{\infty}$; that is, there is a sequence of isometries $(v_n) $ in 
${\mathrm M}(\mathcal K(H_1\oplus H)\otimes B)$, with $v_nv_n^*=1_{D}$,
such that,  for any $a\in A$,
$$\lim_{n\to\infty} \| \gamma'_\infty(a)\oplus \sigma(a) - {v_n^*} \gamma_{\infty}(a) v_n\|=0,\quad \mathrm{and}$$
$$\gamma'_\infty(a)\oplus \sigma(a)- v_n^* \gamma_{\infty}(a) v_n \in \mathcal K(H_1\oplus H)\otimes {B},$$
where $H_1=(1_{D}-e_1)H.$

Consider the unitaries $w_n=(e_1\oplus v_{n})u_{n}(e_1\oplus v^*_n)$ in $\mathrm{M}(\mathcal K(\Comp\oplus H)\otimes B)$, in fact in $\mathcal K(\Comp\oplus H)\otimes{B} + \Comp 1_{D_1}$.
For any {contraction} $a\in A$,
\begin{eqnarray*}
&& \| w_n(\phi(a)\oplus\gamma_{\infty}(a))w^*_n - \psi(a)\oplus\gamma_{\infty}(a) \| \\
& = & \| (e_1\oplus v_n)u_n(e_1\oplus v^*_n)(\phi(a)\oplus\gamma_{\infty}(a))(e_1\oplus v_n)u^*_n(e_1\oplus v^*_n) - \psi(a)\oplus\gamma_{\infty}(a) \| \\
&\approx & \| (e_1\oplus v_n)u_n(\phi(a)\oplus(\gamma_{\infty}'(a)\oplus \sigma(a)))u^*_n(e_1\oplus v^*_n) - \psi(a)\oplus\gamma_{\infty}(a) \| \\
& \approx & \| (e_1\oplus v_n)(\psi(a)\oplus(\gamma'_\infty(a)\oplus \sigma(a)))(e_1\oplus v^*_n) - \psi(a)\oplus\gamma_{\infty}(a) \| \\
& \approx & \| \psi(a)\oplus\gamma_{\infty}(a) - \psi(a)\oplus\gamma_{\infty}(a) \| = 0.
\end{eqnarray*}
That is,  there is 
a sequence of unitaries $(w_k)$ in $\mathrm U(\mathcal K(\Comp\oplus H)\otimes{B} + \Comp 1_{D_1})$ such that
$$\lim_{k\to\infty} \|w_k(\phi(a)\oplus\gamma_{\infty}(a))w_k^* - (\psi(a) \oplus \gamma_{\infty}(a)) \| =0,\quad a\in A.$$ 

Since $w_k \in \mathcal K({\Comp}\oplus H)\otimes B + \Comp 1_{D_1}$, one has that $[w_k, e_n]\to 0$, as $n\to\infty$. Then, for sufficiently large $k$, and then sufficiently large $n$, the element $e_nw_ke_n$ of ${\mathrm{M}_{n}(B)}+\Comp 1_{n}$ can be perturbed to a unitary $u$ verifying the conclusion of the theorem.
\end{proof}

\begin{rem}
The unital version of \ref{no-1-uniq-hom-0} can be found in {4.2} of \cite{DE-KK-Asy} (see an earlier version  in \cite{Lnsuniq}). 
A different approach could also be found in an earlier version of this paper (see \cite{GLpp1}).
\end{rem}

\begin{prop}[Proposition 2.1 of \cite{Bla-WEP}]\label{lem-of-bla}
Let $A$ be a separable \CA\, (with or without unit). Then there is a countable subset $S$ of $A$ such that if $J$ is any ideal of $A$, then $S\cap J$ is dense in $J$.
\end{prop}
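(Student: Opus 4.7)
The plan is to construct $S$ by truncating elements of a countable dense subset of the positive cone $A_+$, using a Cuntz--R\o rdam comparison lemma to force the truncations into any ideal that already contains a nearby positive element, and then to pass from positive elements of $J$ to arbitrary elements of $J$ via $\Q$-linear combinations with the four sign/imaginary parts.

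Concretely, I would fix a countable dense subset $\{a_n\}_{n=1}^\infty\subseteq A_+$ (available because $A$ is separable, e.g.\ by applying $x\mapsto (x^*x)^{1/2}$ to a dense sequence of $A$) and set
\begin{align*}
S_0 &:= \{(a_n-r)_+ : n\in\N,\ r\in\Q_{>0}\}\subseteq A_+,\\
S   &:= \{s_1-s_2+i(s_3-s_4) : s_1,s_2,s_3,s_4\in S_0\},
\end{align*}
both of which are manifestly countable. The key external input is the standard comparison lemma of R\o rdam: if $a,b\in A_+$ with $\|a-b\|<r$, then $(a-r)_+=d^*bd$ for some $d\in A$, so in particular $(a-r)_+$ lies in the closed two-sided ideal $\overline{AbA}$ generated by $b$.

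To verify that $S\cap J$ is dense in $J$ for every closed two-sided ideal $J$, I would first observe that any $b\in J$ decomposes as $b=b_1-b_2+i(b_3-b_4)$ with each $b_j\in J_+$ (take real and imaginary parts, then positive and negative parts; each piece stays in $J$ because $J$ is selfadjoint and hereditary on the positive cone). It therefore suffices to show that each $b\in J_+$ admits, for every $\eps>0$, an element $s\in S_0\cap J$ with $\|s-b\|<\eps$: the four such approximants then combine via the definition of $S$ into a member of $S\cap J$ within $4\eps$ of the original $b$. Given $b\in J_+$ and $\eps>0$, pick $a_n$ with $\|a_n-b\|<\eps/4$ and then a rational $r\in(\|a_n-b\|,\,\eps/2)$. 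Then $(a_n-r)_+\in S_0$; R\o rdam's lemma produces $d\in A$ with $(a_n-r)_+=d^*bd$, so $(a_n-r)_+\in\overline{AbA}\subseteq J$; and the spectral estimate $\|(a_n-r)_+-a_n\|\le r$ together with the triangle inequality yields $\|(a_n-r)_+-b\|<3\eps/4<\eps$.

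The only non-trivial ingredient is R\o rdam's comparison lemma---the passage from the norm inequality $\|a-b\|<r$ to the ideal-membership $(a-r)_+\in\overline{AbA}$---but this is a standard tool. Everything else is a routine reduction from $J$ to $J_+$ and functional-calculus bookkeeping, so there is no serious obstacle to the argument.
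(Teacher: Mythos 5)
The paper does not actually prove this statement; it simply cites Proposition~2.1 of \cite{Bla-WEP}, so there is no in-paper argument to compare against. Your proof is correct and captures what is surely the essential idea (truncations $(a_n-r)_+$ of a dense sequence in $A_+$, with rational cut-off, assembled into general elements via the four positive parts). The one place where you reach for more machinery than is needed is the ideal-membership step: R\o rdam's comparison lemma postdates Blackadar's 1978 paper by more than a decade, and you do not need it. The fact that $(a_n-r)_+\in J$ whenever $b\in J_+$ and $\|a_n-b\|<r$ follows from a one-line quotient argument: letting $\pi\colon A\to A/J$ be the quotient map, $\pi(b)=0$ gives $\|\pi(a_n)\|=\|\pi(a_n-b)\|\le\|a_n-b\|<r$, so $\mathrm{sp}(\pi(a_n))\subseteq[0,r)$, whence $(\pi(a_n)-r)_+=0$, i.e.\ $(a_n-r)_+\in\ker\pi=J$. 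This replaces your external comparison lemma with elementary functional calculus, and it also sidesteps the (resolvable but distracting) question of whether the element $d$ with $d^*bd=(a_n-r)_+$ lies in $A$ or only in $\widetilde{A}$. Everything else in your write-up --- the countability of $S$, the decomposition $b=b_1-b_2+i(b_3-b_4)$ with $b_j\in J_+$, the estimate $\|(a_n-r)_+-b\|\le r+\|a_n-b\|<\eps$ --- is correct as stated.
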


\begin{lem}\label{sep-cond}
Let $D$ be a \CA\,. Let $A\subseteq D$ be a separable sub-\CA\, such that 
$$
\overline{DaD}=D\tforal a\in A\setminus\{0\},
$$ 
and let $B\subseteq D$ be another separable sub-\CA\,. Then, there is a separable sub-\CA\, $C$ of $D$ such that
\beq
A, B \subseteq C\tand
\overline{CaC}=C\tforal a\in A\setminus\{0\}
\eneq
(i.e., such that the inclusion map $A\to C$ is full).
\end{lem}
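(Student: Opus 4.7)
\textbf{Plan for Lemma \ref{sep-cond}.} The plan is to build $C$ as the closure of an increasing union $\bigcup_n C_n$ of separable sub-\CA s of $D$, starting from $C_0:=C^*(A\cup B)$, and at each step adjoining countably many elements of $D$ that certify fullness of a fixed countable ``test set'' of elements of $A$. The countably many obligations are dealt with directly; the uncountable remainder (arbitrary $a\in A\setminus\{0\}$) is handled at the end by invoking Proposition~\ref{lem-of-bla}.

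First, apply Proposition~\ref{lem-of-bla} to $A$ to fix a countable set $S=\{a_1,a_2,\dots\}\subseteq A$ such that $S\cap J$ is dense in $J$ for every closed two-sided ideal $J$ of $A$. Set $C_0:=C^*(A\cup B)$, which is separable. Inductively, assuming $C_n$ is separable, choose a countable dense subset $\{c_k^{(n)}\}$ of $C_n$, and for each triple $(j,k,m)\in\mathbb{N}^3$ with $a_j\neq 0$ use the hypothesis $\overline{Da_jD}=D$ to pick a finite sum $y_{j,k,m}^{(n)}=\sum_i d_{j,k,m,i}^{(n)}\,a_j\,e_{j,k,m,i}^{(n)}$ with $d,e\in D$ that is within $1/m$ of $c_k^{(n)}$. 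Let $C_{n+1}$ be the sub-\CA{} of $D$ generated by $C_n$ together with all these countably many $d$'s and $e$'s; it is separable, and $C:=\overline{\bigcup_n C_n}$ is a separable sub-\CA{} of $D$ containing both $A$ and $B$.

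The fullness verification splits into two steps. For $a_j\in S\setminus\{0\}$: given $c\in C$ and $\varepsilon>0$, approximate $c$ within $\varepsilon/2$ by some $c_k^{(n)}\in C_n$, then by construction $\|c_k^{(n)}-y_{j,k,m}^{(n)}\|<1/m<\varepsilon/2$ for large $m$, and $y_{j,k,m}^{(n)}\in C_{n+1}\,a_j\,C_{n+1}\subseteq C a_j C$; hence $\overline{Ca_jC}=C$. For general $a\in A\setminus\{0\}$, consider $J_a:=\overline{CaC}\cap A$, which is a closed ideal of $A$ (since $A\subseteq C$) and is non-zero because $a\in J_a$. By the defining property of $S$, the set $S\cap J_a$ is dense in $J_a$ and so contains some non-zero $a'$. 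Then $a'\in\overline{CaC}$ gives $\overline{Ca'C}\subseteq\overline{CaC}$, and combining with the first step, $C=\overline{Ca'C}\subseteq\overline{CaC}\subseteq C$, as required.

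The only delicate ingredient is the reduction from the uncountable family $\{a\in A\setminus\{0\}\}$ to the countable set $S$, which is precisely the role of Proposition~\ref{lem-of-bla}; the inductive construction itself is a routine bootstrap along a countable to-do list.
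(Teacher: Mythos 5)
Your proof is correct and follows essentially the same route as the paper's: both proofs build an increasing chain of separable sub-\CA s starting from $\textrm{C*}(A\cup B)$, adjoining at each stage countably many elements of $D$ that witness fullness of the Blackadar test set $S$ against a countable dense subset of the previous stage, and then both reduce the case of arbitrary $a\in A\setminus\{0\}$ to elements of $S$ via the ideal $\overline{CaC}\cap A$ and Proposition~\ref{lem-of-bla}. The only differences are notational (indexing from $C_0$ versus $C_1$, slightly different bookkeeping for the approximation parameter).
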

\begin{proof}
The proof follows an idea of Blackadar.
Applying Proposition \ref{lem-of-bla}, one obtains a  countable set $$\{a_0, a_1, a_2, ...\}\subseteq A$$ such that
$\{a_0, a_1, a_2, ...\}\cap J$ is dense in $J$ for any ideal $J$ of $A$. We may assume that $$a_0=0,\quad \textrm{so that}\quad a_j\neq 0,\quad j=1, 2, ...\ .$$

Set $$C_1=\textrm{C*}(A \cup B)\subseteq D.$$ It is clear that $C_1$ is separable. Pick a dense set $\{c_1, c_2, ...\}$ in $C_1$. Since $\overline{Da_jD}=D$, $j=1, 2, ...$, for any $\eps>0$ and any $c_i$, there are finitely non-zero sequences $x_{c_i, a_j, \eps, 1}, x_{c_i, a_j, \eps, 2}, ...$ and $y_{c_i, a_j, \eps, 1}, y_{c_i, a_j, \eps, 2}, ...$ in $D$ such that 
$$ \| c_i - (x_{c_i, a_j, \eps, 1} a_j y_{c_i, a_j, \eps, 1}+  x_{c_i, a_j, \eps, 2}a_jy_{c_i, a_j, \eps, 2}+ \cdots )\|<\eps.$$

Set $$C_2=\textrm{C*}(C_1, x_{c_i, a_j, \frac{1}{n}, k}, y_{c_i, a_j, \frac{1}{n}, k} : i, j, n, k=1, 2, ... ).$$
Then
$$\overline{C_2a_jC_2} \supseteq C_1,\quad j=1, 2, ...\ .$$
Repeating the construction above, one obtains a sequence of separable \CA s $$C_1\subseteq C_2\subseteq \cdots \subseteq C_n \subseteq \cdots\subseteq D$$
such that 
$$\overline{C_{n+1}a_jC_{n+1}} \supseteq C_n,\quad j=1, 2, ...,\ n=1, 2, ...\ .$$
Setting $\overline{\bigcup_{n=1}^\infty C_n} = C$, one has 
$$\overline{Ca_jC} = C,\quad j=1, 2, ...\ .$$

Then the separable sub-\CA\, $C$ satisfies the requirements of the lemma. Indeed, let $a \in A\setminus\{0\}$. Consider the ideal $J:=\overline{CaC} \cap A$. Since $a\in J$, one has $J\neq\{0\}$. By Proposition \ref{lem-of-bla}, one has that $\{a_0, a_1, a_2, ...\}\cap J$ is dense in $J$, and in particular, the ideal $J$ contains some $a_j\neq 0$. Since 
$C=\overline{Ca_jC} \subseteq \overline{CJC}=\overline{CaC}$, one has $\overline{CaC}=C$, as desired.
\end{proof}

\begin{rem}
If $A$ is simple, then, in the proof above, one only needs to pick one non-zero element of $A$ and does not need Proposition \ref{lem-of-bla}. 
\end{rem}


\begin{lem}\label{Lsep1}
Let $B$ be a 
$\sigma$-unital \CA\, and let $A$ be a separable amenable
\CA\, which is a \SCA\,  of  $B.$ 
Let $h_1, h_2: A\to B$ be \hm s such that
$[h_1]=[h_2]$ in $\mathrm{KK}(A,B)$ {{  (which we  
regard as  $\mathrm{KK}^1(A, SB)$)}}.
There exists a separable \SCA\, $C\subseteq B$ such that
$A, h_1(A), h_2(A)\subseteq C$ and $[h_1]=[h_2]$ in $\mathrm{KK}(A, C).$
If the inclusion of $A$ in $B$ is full (in other words, $\overline{BaB}=B$ for any $0\neq a\in A$), then $C$ may be chosen such that 
the inclusion of $A$ in $C$ is full.
\end{lem}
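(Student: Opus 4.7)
The equality $[h_1]=[h_2]$ in $\mathrm{KK}(A,B)$ is, for separable $A$, a piece of data that can always be witnessed in a separable \SCA\ of $B$, by continuity of the bifunctor $\mathrm{KK}(A,-)$ under directed colimits (Rosenberg--Schochet; compare also the Cuntz/asymptotic-morphism model used in Proposition 3.6 of \cite{DE-KK-Asy} invoked in the proof of Theorem~\ref{no-1-uniq-hom-0}). My plan is to extract such a witness and then, for the fullness clause, enlarge it by Lemma~\ref{sep-cond}.

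Set $C_0:=\mathrm{C}^*(A\cup h_1(A)\cup h_2(A))\subseteq B$, a separable \SCA\ containing $A$, $h_1(A)$ and $h_2(A)$. Order the separable \SCA s of $B$ that contain $C_0$ by inclusion; this directed family has \CA\ inductive limit equal to $B$, since every $b\in B$ lies in $\mathrm{C}^*(C_0,b)$, itself a separable \SCA\ of $B$ containing $C_0$. Continuity of $\mathrm{KK}(A,-)$ then gives
$$\mathrm{KK}(A,B)=\varinjlim_\lambda \mathrm{KK}(A,C_\lambda).$$
Since $[h_1]-[h_2]$ vanishes in the colimit, there is some $\lambda$ with $[h_1]-[h_2]=0$ already in $\mathrm{KK}(A,C_\lambda)$. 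Set $C:=C_\lambda$; this handles the first assertion.

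If the inclusion $A\subseteq B$ is full, apply Lemma~\ref{sep-cond} with ambient \CA\ $D=B$, ``full'' separable \SCA\ $A$, and ``other'' separable \SCA\ equal to the $C$ just produced. This yields a separable \SCA\ $C'\subseteq B$ containing both $A$ and $C$ with $\overline{C'aC'}=C'$ for every nonzero $a\in A$. By functoriality of $\mathrm{KK}(A,-)$ along the inclusion $C\hookrightarrow C'$, the equality $[h_1]=[h_2]$ in $\mathrm{KK}(A,C)$ passes to $\mathrm{KK}(A,C')$, so $C'$ meets every requirement of the lemma.

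\textbf{Main obstacle.} The substantive input is the continuity of $\mathrm{KK}(A,-)$ under directed colimits for separable $A$, with no separability or $\sigma$-unitality hypothesis on $B$. This is standard but wants some care; the cleanest route is to represent both $[h_1]-[h_2]$ and a null-homotopy witnessing its vanishing by a Cuntz pair (or asymptotic morphism) whose generating data lies in a separable \SCA\ of $B$ (or $B\otimes\mathcal K$), which is then absorbed into a sufficiently large $C_\lambda$.
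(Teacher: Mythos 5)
Your reduction to ``continuity of $\mathrm{KK}(A,-)$ under directed colimits'' is the right instinct, and your flagged ``main obstacle'' correctly identifies where the real work lies — but the argument as written is close to circular, because that continuity statement, at the level of generality needed here (arbitrary non-separable $B$, uncountable directed family of separable subalgebras), is essentially a reformulation of the lemma itself rather than a citable black box. The genuine obstruction is that KK-data lives in multiplier algebras, and for a sub-$\mathrm{C}^*$-algebra $C\subseteq B$ there is no natural inclusion $\mathrm{M}(\mathrm{S}C\otimes\mathcal K)\hookrightarrow\mathrm{M}(\mathrm{S}B\otimes\mathcal K)$: a Cuntz pair or Kasparov cycle for $(A,B)$ (and a nullhomotopy of it) involves operators on a Hilbert $B$-module, and there is no a priori reason these restrict to a Hilbert $C$-module for any separable $C$ you name. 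Your colimit claim would require precisely the fact that they do, after $C$ is enlarged appropriately.

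The paper's proof supplies the missing content. It passes to the extension picture via mapping tori, lifts to completely positive maps $H_i:A\to\mathrm{M}(\mathrm{S}B)$, picks $\phi_0$ and $w$ witnessing $[h_1]=[h_2]$, and then — and this is the crucial device your sketch omits — builds $C$ so that it contains a full approximate unit $(E_n)$ of $\mathrm{S}B\otimes\mathcal K$ together with the products $wE_n$, $E_nw$, $E_n\phi_0(a)E_n$. Having $(E_n)\subseteq\mathrm{S}C_1\otimes\mathcal K$ serve as an approximate unit of the larger algebra is exactly what forces $\mathrm{M}(\mathrm{S}C_1\otimes\mathcal K)\subseteq\mathrm{M}(\mathrm{S}B\otimes\mathcal K)$ and lets one conclude that $w$, $\phi_0$, and the Cuntz-pair differences all live over $C_1$. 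Your final paragraph proposes exactly this strategy (``absorb the generating data of a Cuntz pair into a sufficiently large $C_\lambda$''), so the plan is right; what is missing is the realization that ``generating data'' must include an approximate unit and the various products with $w$ and $\phi_0(a)$, without which the multiplier-algebra inclusion fails and the descent of the nullhomotopy does not go through. The use of Lemma~\ref{sep-cond} for the fullness clause is correct and matches the paper.

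Also a small bookkeeping point: your citation of Rosenberg--Schochet is not the relevant reference for a continuity statement; their paper concerns the UCT. The precedents actually in play are the separabilization arguments underlying results such as Proposition~\ref{lem-of-bla} and the constructions in \cite{DE-KK-Asy}, which is indeed what the paper leans on.
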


\begin{proof}
Consider the extensions $\tau_1, \tau_2: A\to \mathrm{M}(\mathrm{S}B)/\mathrm{S}B$ given by the mapping tori
\beq\label{Lsep-n1}
M_{h_i}=\{(f,a)\in \mathrm{C}([0,1], B)\oplus A: f(0)=a\ \mathrm{and}\ f(1)=h_i(a)\},\,\,\, i=1,2.
\eneq
Let $H_i: A\to \mathrm{M}(\mathrm{S}B)$ be a completely positive contractive lifting of $\tau_i,$ $i=1,2.$ 
There are a monomorphism  $\phi_0: A\to \mathrm{M}(\mathrm{S}B\otimes {\cal K})$
and a unitary $w\in \mathrm{M}(\mathrm{S}B\otimes {\cal K})$ such that
$$
w^*(H_1(a)\oplus\phi_0(a))w-(H_2(a)\oplus\phi_0(a))\in \mathrm{S}B\otimes {\cal K}\rforal a\in A.
$$
Let $C_{000}$ denote the (separable) \SCA\, of $B$ generated by $A$, $h_1(A)$, and $h_2(A).$

Choose a system of matrix units $(e_{i,j})$ for ${\cal K}$, and 
choose
a dense sequence $(t_n)$ in $(0,1)$.
Choose an 
increasing
approximate unit $(E_n)$ for $\mathrm{S}B\otimes {\cal K}$
such that $E_n\in \mathrm{M}_{k(n)}(\mathrm{S}B),$ $n=1,2,....$

Denote by $D_0$ the (separable) \SCA\, of $\mathrm{S}B\otimes {\cal K}$
generated by
\beq\label{Lsep-1}
w^*(\diag(H_1(a), \phi_0(a))w-\diag(H_2(a), \phi_0(a)),\quad a\in A.
\eneq

Denote by $D_{00}$ the \SCA\, of $\mathrm{S}B\otimes {\cal K}$ generated by
$$
\{E_n, wE_n, E_nw, {E_n\phi_0(a), \phi_0(a)E_n}: a\in A, n\in \N\}.
$$
Let $D_{000}$ denote the (separable) \SCA\, of $\mathrm{S}B\otimes {\cal K}$ generated by $D_{00}$ and $D_0.$
Denote by $\pi_{t}: \mathrm{S}B\otimes {\cal K}\to B\otimes {\cal K}$
the point evaluation at $t\in(0, 1)$, and by $C_{00}$ the \SCA\, of $B\otimes {\cal K}$ generated by
$$
\{\pi_{t_n}(D_{000})+C_{000}\otimes {e_{1,1}}: n=1,2,...\}.
$$
Denote by $C_{0,n}\subseteq B\otimes \mathcal K$ the \SCA\, generated by
$
{\{(1\otimes e_{1,i})C_{00}(1\otimes e_{j,1}):1\le i,\, j\le n\}}, 
$
$n=1,2,....$  
{{Let $C'$ denote the (separable) sub-C*-algebra of $B$ generated by $\bigcup_{n=1}^{\infty} C_{0,n}.$}} 
Choose
a separable \SCA\, $C$ of $B$ containing $A$ and $C'$. By Lemma \ref{sep-cond}, if the inclusion $A\to B$ is full, then we 
may
choose $C$ such that the inclusion $A\to C$ is full. 
Note that (as $C_{000}\subseteq {C_{0,1}}$), $h_1(A), h_2(A)\subseteq C'\subseteq C$.
Consider the \SCA\, $C_{1}=C\otimes {\cal K}$ of $B\otimes\mathcal K$.
Fix
\beq\label{Lsep-n2}
b\in \{E_n, wE_n, E_nw, E_n\phi_0(a), \phi_0(a)E_n: a\in A, n\in \N\}\subseteq \mathrm{S}B\otimes\mathcal K.
\eneq
Keep in mind that $E_n\in \mathrm{S}B\otimes {\cal K}=\mathrm{C}_0((0, 1), B\otimes\mathcal K)$, in particular, $E_n(0)=E_n(1)=0,$ $n\in \N.$
Then, for each $t_n,$ $n=1, 2, ...$,
$$
\pi_{t_n}(b)\in \pi_{t_n}(\mathrm{S}C_{1}\otimes {\cal K}).
$$
It follows that
$b\in \mathrm{S}C_{1}\otimes\mathcal K.$  To see this, 
fix $\ep>0,$ 
and
choose a finite sequence $t_{n_i}\in (t_n)$, $i=1,2,...,k,$ such that
$$
0=t_{n_0}<t_{n_1}<t_{n_2}<\cdots {  <} t_{n_k}<t_{n_{k+1}}=1
$$
and 
$$
\|b(t)-b(t_{n_i})\|<\ep/4\rforal t\in(t_{n_i}, t_{n_{i+1}}),\,\,i=0,1,...,k.
$$
Set
$$c(t) = \left\{
\begin{array}{ll}
\displaystyle{(\frac{t}{t_{n_1}}) b(t_{n_1})}, & t\in (0,t_{n_1}) \\
\displaystyle{(\frac{t_{n_{i+1}}-t}{t_{n_{i+1}}-t_{n_i}})b(t_{n_i})+(\frac{t-t_{n_i}}{t_{n_{i+1}}-t_{n_i}})b(t_{n_{i+1}})}, & t\in [t_{n_i}, t_{n_{i+1}}), i=1,2,...,k, \\
\displaystyle{(\frac{1-t}{1-t_{n_k}})b(t_{n_k})}, & t\in (t_{n_k},1).
\end{array}
\right.$$
Then $c\in \mathrm{S}C_{1}\otimes {\cal K}.$
On the other hand,
$$
\|b(t)-c(t)\|<\ep\rforal t\in (0,1).
$$
Since $\eps>0$ is arbitrary, this implies that $b\in \mathrm{S}C_{1}\otimes\mathcal K.$

In particular, $E_n\in \mathrm{S}C_{1}\otimes\mathcal K$ and
$(E_n)$ is an approximate unit for
$\mathrm{S}C_{1}\otimes\mathcal K$, and so $\mathrm{M}(\mathrm{S}C_1\otimes\mathcal K) \subseteq \mathrm{M}(\mathrm{S}B\otimes\mathcal K)$.
Since also $wE_n, E_nw \in \mathrm{S}C_{1}\otimes {\cal K}$, and $w\in\mathrm{M}(\mathrm{S}B\otimes\mathcal K)$, 
it follows that $w\in \mathrm{M}(\mathrm{S}C_{1}\otimes {\cal K}).$  Similarly, since
$\phi_0(a)E_n, E_n\phi_0(a) \in \mathrm{S}C_{1}\otimes {\cal K}$ for all $a\in A$ and $\phi_0(w)\in\mathrm{M}(\mathrm{S}B\otimes\mathcal K)$, we may view $\phi_0$ as a monomorphism from
$A$ to $\mathrm{M}(\mathrm{S}C_{1}\otimes\mathcal K)\subseteq \mathrm{M}(\mathrm{S}B\otimes\mathcal K).$

A similar argument shows that $D_0\subseteq \mathrm{S}C_{1}\otimes\mathcal K.$

We now have
$$w, H_1(a)\oplus\phi_0(a), H_2(a)\oplus \phi_0(a) \in\mathrm{M}(\mathrm{S}C_1\otimes\mathcal K),$$
$$
w^*(H_1(a)\oplus\phi_0(a))w- (H_2(a)\oplus\phi_0(a))\in \mathrm{S}C_{1}\otimes {\cal K}
$$
for all $a\in A.$
This implies that $[h_1]=[h_2]$ in $\mathrm{KK}(A, C).$
\end{proof}

In a similar way (using Lemma \ref{sep-cond}), one also has the 
following result:
\begin{lem}\label{Lsep2}
Let $A, D$ be C*-algebras, with $A$ separable. Let $\phi, \psi, \sigma: A \to D$ be homomorphisms such that 
\begin{enumerate}
\item[] $[\phi] = [\psi]$ in $\mathrm{Hom}_{\Lambda}({\underline{\mathrm{K}}(A), \underline{\mathrm{K}}(D)})$, and
\item[] $\overline{D\sigma(a)D} = D$,  $0\neq a\in A$. 
\end{enumerate}
Then there is a separable sub-\CA\, $C\subseteq D$ such that
\begin{enumerate}
\item[] $\phi(A), \psi(A), \sigma(A)\subseteq C$,
\item[] $[\phi] = [\psi]$ in {$\mathrm{Hom}_{{\Lambda}}(\underline{\mathrm{K}}(A), \underline{\mathrm{K}}(C))$}, and
\item[] $\overline{C\sigma(a)C} = C$, $0\neq a\in A$.
\end{enumerate}
\end{lem}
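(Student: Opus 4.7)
The proof mirrors that of Lemma \ref{Lsep1}: we build a separable sub-\CA\ containing $\phi(A), \psi(A), \sigma(A)$ together with all ``witnesses'' of the K-theoretic equality $[\phi] = [\psi]$, then apply Lemma \ref{sep-cond} to enforce the fullness condition.

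Start by setting $C_{000} = \mathrm{C}^*(\phi(A) \cup \psi(A) \cup \sigma(A))$, which is separable. Because $A$ is separable, the abelian groups $\mathrm{K}_0(A)$, $\mathrm{K}_1(A)$, and the coefficient groups $\mathrm{K}_i(A;\Z/n\Z)$ ($n \ge 2$, $i = 0,1$) appearing in $\underline{\mathrm{K}}_*(A)$ are all countable, so fix a countable generating set $\{x_j\}_{j \ge 1}$ for $\underline{\mathrm{K}}_*(A)$ as a $\Lambda$-module.

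Each $K$-group and each $K$-group with coefficients is continuous under inductive limits of \CA s, and $D$ is the inductive limit of the upward-directed family of its separable sub-\CA s containing $C_{000}$. Hence, for each $j$, the equality $\phi_*(x_j) = \psi_*(x_j)$ in the appropriate component of $\underline{\mathrm{K}}_*(D)$ persists already at some separable level: there is a separable sub-\CA\ $E_j$ of $D$ with $C_{000} \subseteq E_j$ in which the images of $\phi_*(x_j)$ and $\psi_*(x_j)$ coincide (concretely, one absorbs into $E_j$ the finitely many matrix entries from $D$ witnessing the Murray--von Neumann equivalences and homotopies of unitaries, or the analogous witnesses for the coefficient groups). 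Let $C_{00}$ denote the (separable) sub-\CA\ of $D$ generated by $\bigcup_{j \ge 1} E_j$. By construction, $\phi_*(x_j) = \psi_*(x_j)$ in $\underline{\mathrm{K}}_*(C_{00})$ for every $j$; since the induced maps $\phi_*, \psi_* : \underline{\mathrm{K}}_*(A) \to \underline{\mathrm{K}}_*(C_{00})$ are both $\Lambda$-morphisms (coming from \CA\ homomorphisms) and agree on a generating set, $[\phi] = [\psi]$ in $\mathrm{Hom}_{\Lambda}(\underline{\mathrm{K}}_*(A), \underline{\mathrm{K}}_*(C_{00}))$.

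Finally, apply Lemma \ref{sep-cond} with its ``$A$'' replaced by the full separable sub-\CA\ $\sigma(A) \subseteq D$ (full by hypothesis, and separable since $A$ is separable; note that the fullness condition forces $\sigma$ to be injective), and its ``$B$'' replaced by $C_{00}$; this yields a separable sub-\CA\ $C$ of $D$ containing both $\sigma(A)$ and $C_{00}$ and satisfying $\overline{C \sigma(a) C} = C$ for every $0 \neq a \in A$. Functoriality of $\underline{\mathrm{K}}_*$ applied to the inclusion $C_{00} \hookrightarrow C$ preserves the equality $[\phi] = [\psi]$ in $\mathrm{Hom}_{\Lambda}(\underline{\mathrm{K}}_*(A), \underline{\mathrm{K}}_*(C))$, completing the proof. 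The main step to verify is the continuity assertion for $\underline{\mathrm{K}}_*$, but it is a standard consequence of the fact that each component of $\underline{\mathrm{K}}_*$ is continuous on inductive limits of \CA s.
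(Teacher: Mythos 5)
Your proof is correct and takes essentially the same approach as the paper's: absorb the $K$-theoretic witnesses of $[\phi]=[\psi]$ into a separable intermediate subalgebra, then invoke Lemma~\ref{sep-cond} to enlarge it to one making $\sigma(A)$ full. The only stylistic difference is that you appeal to continuity of $\underline{\mathrm{K}}_*$ (including the coefficient groups) under inductive limits to carry out the separable reduction in one stroke, whereas the paper constructs the separable subalgebra $D_1$ by hand, explicitly capturing the projections, partial isometries, unitary paths, and $C_m\otimes{-}$-witnesses for each coefficient $\mathbb{Z}/m\mathbb{Z}$; both arguments prove the same thing, and your observation that $\sigma$ is forced to be injective by fullness (so that Lemma~\ref{sep-cond} applies directly with $\sigma(A)$ in the role of its ``$A$'') is the same point the paper uses implicitly via Lemma~\ref{Lsep1}.
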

\begin{proof}
The proof is in the same spirit 
as
that of \ref{Lsep1}. We sketch 
it
below.
Since $A$ is separable, it is easy to find a separable \CA\, $B_1\subseteq D$ such that
$\phi(A), \psi(A)\subseteq B_1$ and  $\phi_{*i}=\psi_{*i}$ ($i=0,1$) viewing $\phi$ and $\psi$  as maps from 
$A$ to $B_1.$ For each $m\ge 2,$ let $C_m\cong \mathrm{C}_0(X_m)$ for some locally compact and $\sigma$-compact metric space
$X_m$ such that $\mathrm{K}_0(C_m)=\Z/m\Z$ and $\mathrm{K}_1(C_m)=\{0\}.$ Denote by $Y_m$ the one-point compactification 
of $X_m$ with the point $\xi_0$ as the additional point.   Note $Y_m$ is separable. 

Let $\phi^{(m)}, \psi^{(m)}: C_m\otimes A\to C_m\otimes D$ be the 
natural
extensions of $\phi$ and $\psi.$
Suppose that $p$ and $q$ are two projections in $\mathrm{M}_l((C_m\otimes D)^\sim)$ for some $l\ge 1$
such that there exists $v\in \mathrm{M}_{l+k}((C_m\otimes D)^\sim)$ with $v^*v=p\oplus 1_k$ and 
$vv^*=q\oplus 1_k.$  We now view $p, q, v$ as functions in $C(Y_m, \mathrm{M}_{l+k}({\widetilde D})).$
Let $(y_n)$ be a dense 
sequence of $Y_m$ such that $y_1=\xi_0.$ Consider
the
 \SCA\, $B_{m,0}''$ of  $\mathrm{M}_{l+k}({\widetilde D})$ which is generated by $p(y_n)$, $q(y_n)$, and $v(y_n)$ for all $n\ge 1.$ Then $B_{m,0}''$ is separable. One then easily constructs a separable \SCA\, $B_{m,0}'$ of $D$ 
such that $p, q, v$ are in $\mathrm{M}_{l+k}((C_m\otimes B_{m,0}')^\sim).$ 
Similarly, if $u, w$ are unitaries in $\mathrm{M}_l((C_m\otimes D)^\sim)$ which are connected by a continuous path of unitaries,
then one may also 
construct
a separable \SCA\, $B_{m,1}'$ of $D$ such that
$u, w$ are in $\mathrm{M}_l((C_m\otimes D)^\sim)$ and are connected by a continuous path of unitaries in 
$\mathrm{M}_l((C_m\otimes D)^\sim).$

From this, one concludes that  there is  a separable \SCA\, $B_m\subseteq D$ such that 
$\phi^{(m)}(C_m\otimes A), \psi^{(m)}(C_m\otimes A)\subseteq C_m\otimes B_m$ and 
$\phi^{(m)}_{*i}=\psi^{(m)}_{*i}$ ($i=0,1$) viewing $\phi^{(m)}$ and $\psi^{(m)}$ as maps from $C_m\otimes A$ to 
$C_m\otimes B_m,$ $m=2,3,...$ 
Let $D_1$ be the \SCA
generated by $B_m,$ $m=1,2,....$ Then $D_1$ is separable.
By \ref{Lsep1}, there is a separable \SCA\, 
$C\supseteq D_1, \sigma(A)$ such that 
${\overline{C\sigma(a)C}}=C$ for all $a\in A\setminus \{0\}.$
Note now that $[\phi]=[\psi]$ in ${\mathrm{Hom}}_{\Lambda}(\underline{\mathrm{K}}(A), \underline{\mathrm{K}}(C))$
as $\phi^{(m)}_{*i}=\psi^{(m)}_{*i}$ 
with 
$\phi^{(m)}$ and $\psi^{(m)}$ 
viewed
as maps from 
$A$ into $C.$

\end{proof}

\begin{defn}\label{fullunif}
Let $M: {{(A_+\setminus \{0\})}} \times (0, 1) \to (0, +\infty)$ and 
$N: {{(A_+\setminus \{0\})}} \times (0, 1) \to \mathbb N$  be maps.
A positive map $\phi: A\to B$ will be said to be $(N,M)$-full if for any $1>\eps>0$, any $a\in A_+\setminus\{0\}$, and any $b\in B^+$ with $\|b\| \leq 1$, there are $b_1, b_2, ..., b_{N(a, \eps)}\in B$ with $\|b_i\| \leq M(a, \eps)$, $i=1, 2, ..., N(a, \eps)$, such that
$$\|b- (b_1^*\phi(a)b_1 + b_2^*\phi(a)b_2 + \cdots + b_{N(a, \eps)}^*\phi(a)b_{N(a, \eps)})\|\le \eps.$$

Write $F:=(N,M): (A_+\setminus \{0\})\times (0,1)\to \N\times \R_+$, and let ${\cal H}\subseteq A_+\setminus \{0\}.$
A positive map $L: A\to B$ will be said to be $F$-${\cal H}$-full if, 
for any $a\in {\cal H}$, any $b\in B_+$ with $\|b\|\le 1,$ and any $\ep>0,$
there are $x_1, x_2,...,x_m\in B$ with
$m\le N(a, \ep)$ and $\|x_i\|\le M(a, \ep)$ 
such that
\beq\label{localfull-1}
\|\sum_{i=1}^mx_i^*L(a)x_i-b\|\le \ep.
\eneq

The map $L$ will be said to be {\it uniformly} $(N,M)$-full if $N$ and $M$ are independent of $\ep$, 
(i.e., $N: A_+\setminus \{0\}\to \N$ and $M: A_+\setminus \{0\}\to \R_+\setminus \{0\}$) and to be {\it strongly uniformly} $(N,M$)-full, if, in addition, $\ep$
can be replaced by zero. 
The map $L$ will be said to be uniformly $F$-${\cal H}$-full, if $F$ is independent of $\ep.$ 

{{Let $B$ be any \CA\, and $D\subset B$ be a $\sigma$-unital \SCA.
Let $F=(N,M): (A_+\setminus \{0\})\times (0,1)\to \N\times \R_+$ be a map described above.
We would like to make the following remark:  If $L:  A\to D$ is $(F, {\cal H})$-full, 
then $j\circ L: A\to \overline{DBD}$ is $(F^{(1/2)}, {\cal H})$-full,
where $F^{(1/2)}((a, \ep))=F((a, \ep/2))$ and $j: D\to \overline{DBD}$ is the embedding.
In fact, for any $\ep>0,$ given any $b\in \overline{DBD}_+$ with $\|b\|\le 1,$ there is $d\in D_+$ with $\|d\|=1$ such 
that $\|b^{1/2} d b^{1/2}-b\|<\ep/2.$   Fix $a\in {\mathcal H \subseteq} A_+\setminus \{0\}.$ 
There  are $x_1, x_2,...,x_m$ with $m\le N(a, \ep/2)$ and $\|x_i\|\le M(a, \ep/2)$ such that
$$
\|\sum_{i=1}^m x_i^* L(a)x_i-d\|\le \ep/2.
$$
It follows that, for $a\in {\cal H},$ 
$$
\|\sum_{i=1}^m b^{1/2}x_i^*L(a) x_ib^{1/2}-b\|<\ep/2+\ep/2=\ep.
$$
Note that $\|x_ib^{1/2}\|\le \|x_i\|.$  So $j\circ L$ is ${{(F^{(1/2)},{\cal H})}}$-full. Note also, if $F(a, t)=F(a, t')$
for all $t, t'\in (0,1),$ (uniformly full), then $F^{(1/2)}=F,$ whence $j\circ L$ is still $(F, {\cal H})$-full.}}

\end{defn}

Let $A$ and $B$ be \CA s and $d: A\to B$ a map.
For each integer $n\ge 1,$ denote by $d_n: A\to {\mathrm{M}}_n(B)$ the map 
$d_n: a\mapsto \underbrace{d(a)\oplus d(a)\oplus\cdots\oplus d(a)}_n$ (for $a\in A$).

\begin{thm}{\rm{(cf. Theorem 3.9 of \cite{Lnauct})}} \label{T39}
Let $A$ be a separable amenable \CA\, and 
 let $B$ be a $\sigma$-unital \CA.
Let $h_1, h_2: A\to B$ be \hm s such that
$$
[h_1]=[h_2]\,\,\, {\text{in}}\,\,\, \mathrm{KL}(A, B).
$$
Suppose that there is an embedding $d: A\to B$ which is $(N, M)$-full for some $N: A_+\setminus \{0\} \times (0, 1) \to \mathbb N$ and $M: A_+\setminus \{0\} \times (0, 1) \to \R_+\setminus \{0\}.$

Then, for any $\ep>0$ and finite subset ${\cal F}\subseteq A,$  there is an integer $n\ge 1$ and
a unitary $u\in {\widetilde{\mathrm{M}_{n+1}(B)}}$  such that
\beq
\|u^*{\mathrm{diag}}(h_1(a), d_n(a))u-{\mathrm{diag}}(h_2(a), d_n(a))\|<\ep\rforal a\in {\cal F}.
\eneq
\end{thm}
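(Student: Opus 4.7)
My plan is to combine the $\mathrm{KK}_{\mathrm{nuc}}$-stable-uniqueness theorem \ref{no-1-uniq-hom-0} with a sequence-algebra argument that promotes the hypothesis $[h_1]=[h_2]$ in $\mathrm{KL}(A,B)$ to a genuine $\mathrm{KK}$-equality. The overall route is: first reduce to the case that $B$ is separable; then pass to the sequence algebra $B_\infty:=\ell^\infty(B)/c_0(B)$, in which the $\mathrm{KL}$-class agreement upgrades to a $\mathrm{KK}$-class agreement; reduce once more inside $B_\infty$ to a separable sub-C*-algebra so that Theorem \ref{no-1-uniq-hom-0} applies; and finally lift the resulting sequence-algebra unitary back to a single index, obtaining the required $u\in\widetilde{\mathrm{M}_{n+1}(B)}$.

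For the first reduction I would apply Lemma \ref{Lsep1} in a form adapted to $\mathrm{KL}$ (the countably many witnesses implicit in any of the standard definitions of $\mathrm{KL}$---for instance, via $\mathrm{KK}(A,-)$ of the sequence algebra, in the sense of Dadarlat--Loring and R\o rdam---are each captured in a separable subalgebra), together with Lemma \ref{sep-cond} to retain fullness of $d$. This yields a separable $C\subseteq B$ containing $h_1(A)\cup h_2(A)\cup d(A)$ with $[h_1]=[h_2]$ in $\mathrm{KL}(A,C)$ and with $d\colon A\to C$ still $(N,M)$-full, so we may replace $B$ by $C$. Now let $\hat h_i,\hat d\colon A\to B_\infty$ denote the constant-sequence maps. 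A standard characterization of $\mathrm{KL}$ is precisely that $\mathrm{KK}(A,B)\to\mathrm{KK}(A,B_\infty)$ factors through $\mathrm{KL}(A,B)$, which gives $[\hat h_1]=[\hat h_2]$ in $\mathrm{KK}(A,B_\infty)$; this equals $\mathrm{KK}_{\mathrm{nuc}}(A,B_\infty)$ because $A$ is amenable. The $(N,M)$-fullness of $d$ transfers componentwise to $\hat d$: any positive contraction in $B_\infty$ has a positive-contraction lift $(b_n)$, and the uniform bounds from Definition \ref{fullunif} assemble into elements of $\ell^\infty(B)$, so $\hat d$ is full in $B_\infty$.

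Since $B_\infty$ is not separable, I next apply Lemma \ref{Lsep1} again inside $B_\infty$ to obtain a separable $B_\infty'\subseteq B_\infty$ containing $\hat h_1(A),\hat h_2(A),\hat d(A)$, with $[\hat h_1]=[\hat h_2]$ in $\mathrm{KK}_{\mathrm{nuc}}(A,B_\infty')$ and $\hat d\colon A\to B_\infty'$ full. Theorem \ref{no-1-uniq-hom-0} applied with $\gamma:=\hat d$, target $B_\infty'$, tolerance $\epsilon/2$, and finite set $\mathcal F$ then yields an integer $n$ and a unitary $u\in\widetilde{\mathrm{M}_{n+1}(B_\infty')}$ realizing the approximate intertwining to within $\epsilon/2$ on $\mathcal F$. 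Lifting $u$ to an element of $\widetilde{\mathrm{M}_{n+1}(\ell^\infty(B))}$, extracting coordinates, and repairing each coordinate by a polar-decomposition step yields unitaries $u_k\in\widetilde{\mathrm{M}_{n+1}(B)}$; for all sufficiently large $k$, $u_k$ intertwines $\mathrm{diag}(h_1,d_n)$ and $\mathrm{diag}(h_2,d_n)$ on $\mathcal F$ up to error at most $\epsilon$, completing the proof. The main obstacle is the middle step: the precise translation between $\mathrm{KL}$-equality of the original maps and $\mathrm{KK}_{\mathrm{nuc}}$-equality of the constant-sequence maps in $B_\infty$, which depends on choosing a workable definition of $\mathrm{KL}$ and on maintaining nuclearity throughout the two separability reductions, with careful bookkeeping to preserve the uniform $(N,M)$-fullness constant of $d$.
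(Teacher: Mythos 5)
Your proposal is correct and follows essentially the same route as the paper: pass to the sequence algebra $\prod B/\bigoplus B$, upgrade the $\mathrm{KL}$-equality of $h_1,h_2$ to a $\mathrm{KK}$-equality of the induced maps there, check the induced fullness of $d$, cut down to a separable subalgebra via Lemma \ref{Lsep1}, apply Theorem \ref{no-1-uniq-hom-0}, and lift the resulting unitary coordinatewise. The one genuine deviation is that you insert an extra preliminary reduction to the case of separable $B$, which you handle with a ``$\mathrm{KL}$-adapted'' version of Lemma \ref{Lsep1}. The paper avoids needing any such $\mathrm{KL}$-separability lemma by doing the steps in the opposite order: it forms the sequence algebra directly out of the $\sigma$-unital $B$, invokes 3.5 of \cite{Lnauct} to upgrade $\mathrm{KL}(A,B)$-equality to $\mathrm{KK}(A,\prod B/\bigoplus B)$-equality of the constant-sequence maps, and only then applies Lemma \ref{Lsep1} (now in its stated $\mathrm{KK}$ form) inside the quotient. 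Your ordering works but obliges you to develop the $\mathrm{KL}$-separability machinery; the paper's ordering gets this for free.
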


\begin{proof}
Write $C=\prod_{k=1}^{\infty} B,$  $C_0=\bigoplus_{k=1}^{\infty} B$, and let
$\pi: C\to C/C_0$ denote the quotient map.
Let $H_i=(h_i): A\to C$ be defined by $H_i(a)=(h_i(a))$ for all $a\in A,$ $i=1,2.$
Define $H_0: A\to C$ by $H_0(a)=(d(a))$ for all $a\in A.$
It follows from 3.5 of \cite{Lnauct} that
\beq\label{T39-2}
[\pi\circ H_1]=[\pi\circ H_2]\,\,\,{\text{in}}\,\,\, \mathrm{KK}(A, C/C_0).
\eneq
Since $d: A\to B$ is 
$(N, M)$-full, for any $a\in A_+\setminus\{0\},$ let
$M(a,\ep)$ and $N(a,\ep)$ be as in Definition \ref{fullunif}.
Let $(b_n)\in (\prod_{n=1}^{\infty} B)_+$ with
$\|(b_n)\|\le 1.$
Then, for any $\ep>0,$ 
there are $b_{1,n}, b_{2,n},..., b_{N(a, \ep),n}\in B$ with
$\|b_{i,n}\|\le M(a,\ep)$ such that
$$
\|\sum_{i=1}^{N(a,\ep)}{ b_{i, n}^*}{d(a)}{ b_{i, n}}-b_n\|<\ep.
$$
Set $(b_{i,n})=z_i,$ $i=1,2,...,N(a,\ep).$ Then
$\|z_i\|=\sup\{\|b_{i,n}\|: n\in \N\}\le M(a,\ep),$ $i=1,2,...,N(a, {\eps}).$
Therefore, $z_i\in \prod_{n=1}^{\infty}B.$
We have
$$
\|\sum_{i=1}^{N(a, {\eps})} z_i^*H_0(a) z_i-(b_n)\|<\ep.
$$
This implies that the map $H_0: A\to \prod_{n=1}^{\infty}B$ is full.

It follows that the embedding  $\pi\circ H_0: A\to C/C_0$ is  full.
Combining this with \eqref{T39-2}, and applying Lemma \ref{Lsep1}, we obtain
a separable \SCA\, $D\subseteq C/C_0$ such that
$\pi\circ H_0(A), \pi\circ H_1(A), \pi\circ H_2(A)\subseteq D,$ the map
$\pi\circ H_0: A\to D$ is  full, and
$$
[\pi\circ H_1]=[\pi\circ H_2]\,\,\, {\text{in}}\,\,\, \mathrm{KK}(A, D).
$$
By Theorem \ref{no-1-uniq-hom-0}
there exist an integer $n\ge 1$ and a unitary $U\in {{\mathrm{M}_{n+1}(D)^\sim}}$ such that
$$
\|U^*\diag(\pi\circ H_1(a), d_n(\pi\circ H_0(a)))U-\diag(\pi\circ H_2(a), d_n(\pi\circ H_0(a)))\|<\ep,\quad a\in {\cal F}.
$$
Note that $U\in {{\mathrm{M}_{n+1}(C/C_0)^\sim}}.$
Therefore (by stable relations) there is a unitary $V=(v_k)\in {{M_{n+1}(C)^\sim}}$ such that
$\pi(V)=U.$
Then, for all sufficiently large $k,$
$$
\|v_k^*\diag(h_1(a), d_n(a))v_k-\diag(h_2(a), d_n(a))\|<\ep
\rforal a\in {\cal F}.
$$
Thus, the unitary $u=v_k$ with $k$ sufficiently large satisfies the conclusion of the theorem.
\end{proof}

\begin{defn}[Definition 2.1 of \cite{GL1}]\label{Blbm}
Fix a map $r_0: \N\to \Z_+,$  a map $r_1: \N\to \Z_+,$ a map $T: \N\times \N\to \N,$ and integers
$s\ge 1$ and $R\ge 1.$
  We shall say a   \CA\, $A$ belongs to the class
${\boldsymbol{C}}_{(r_0, r_1,T, s, R)}$ if

(a) for any integer $n\ge 1$ and any {pair} of projections $p,\, q\in {\mathrm{M}}_n({\widetilde A})$ with
$[p]=[q]$ in $\mathrm{K}_0(A),\,\,\,$  ${{p\oplus 1_{M_{r_0(n)}(\widetilde A)}}}$ and
$q\oplus 1_{\mathrm{M}_{r_0(n)}({\widetilde A})}$ are Murr{a}y-von Neumann equivalent, and
moreover, if $p\in \mathrm{M}_n({\widetilde A})$ and $q\in \mathrm{M}_m({\widetilde A})$ and
$[p]-[q]\ge 0,$ then there exists $p'\in \mathrm{M}_{n+r_0(n)}({\widetilde A})$
such that $p'\le p\oplus 1_{M_{r_0(n)}}$ and $p'$ is equivalent to $q\oplus 1_{M_{r_0(n)}};$

(b) if $k\ge 1,$ and $x\in K_0(A)$ such that  $-n[1_{\widetilde A}]\le kx \le n[1_{\widetilde A}]$ for some integer $n\ge 1,$ then
$${-}T(n,k)[1_{\widetilde A}]\le x \le T(n,k)[1_{\widetilde A}];$$

(c) the canonical map $\mathrm{U}(\mathrm{M}_s({\widetilde A}))/\mathrm{U}_0(\mathrm{M}_s({\widetilde A}))\to \mathrm{K}_1(A)$ is surjective;

(d) {i}f $u\in {\mathrm{U}(\mathrm{M}_n({\widetilde A}))}$ and $[u]=0$ in $\mathrm{K}_1({\widetilde A}),$ then $u\oplus 1_{\mathrm{M}_{r_1(n)}}\in \mathrm{U}_0(\mathrm{M}_{n+r_1(n)}({\widetilde A}));$

(f) {${\text{cer}}(\mathrm{M}_m({\widetilde A}))\le R$} 
for all $m\ge 1$ \,\,\,{{(\text{see 2.15 of \cite{GLN-TAS}, for example}).}}

If $A$ has stable rank one, and (a) to (f) hold, then they hold with $r_0=r_1=0$. \end{defn}

Let $A$ be a unital \CA\, and let $x\in A.$ Suppose 
that $\|x^*x-1\|<1/2$ and $\|xx^*-1\|<1/2.$ Then $x$ is invertible and $x|x|^{-1}$ is a unitary.
Let us use $\lceil x \rceil $ to denote $x|x|^{-1}.$ We will use this notation in the next statement (see \ref{Lauct-1}).

\begin{thm}[{cf.}~5.3 of \cite{Lnsuniq}, Theorem 3.1 of \cite{GL1},
Theorem 4.15 of \cite{DE-classification}, 5.9 of \cite{Lnauct}, and
Theorem 7.1 of \cite{Ln-hmtp}]\label{Lauct2}
Let $A$ be a 
non-unital 
separable amenable  \CA\, which satisfies the UCT,  let
$r_0, r_1: \N\to \Z_+,$ $T: \N\times \N\to \N$ be three maps, let $s, R\ge 1$ be integers,
and let $F: A_+\setminus \{0\}\to \N\times \R_+\setminus \{0\}$  and ${\boldsymbol{L}}: \mathrm{U}(\mathrm{M}_{\infty}({\widetilde A}))\to \R_+$ be  two additional  maps.
  For any $\ep>0$ and any finite subset ${\cal F}\subseteq A,$ there exist
$\dt>0,$ a finite subset ${\cal G}\subseteq A,$
a finite subset ${\cal P}\subseteq \underline{\mathrm{K}}(A),$ a finite subset
${\cal U}\subseteq \mathrm{U}(\mathrm{M}_{\infty}({\widetilde A})),$ a finite subset ${\cal H}\subseteq A_+\setminus \{0\}$, and an integer $K\ge 1$ satisfying the following condition:
For any two ${\cal G}$-$\dt$-multiplicative \morp s $\phi, \psi: A\to B,$  where
$B\in {\boldsymbol{C}}_{r_0, r_1, T, s, R},$ and any
${\cal G}$-$\dt$-multiplicative \morp\, $\sigma: A\to \mathrm{M}_l(B)$ (for any integer $l\ge 1$) which is (uniformly)
{{$T$-${\cal H}$-full}} and  such that
\beq\label{Lauct-1}
\hspace{0.3in} &&{\mathrm{cel}}(\lceil \phi(u)\rceil  \lceil \psi(u^*)\rceil)\le {\boldsymbol{L}}(u)\tforal u\in {\cal U},\ \textrm{and}\\\label{Lauct-1n}
&&
[\phi]|_{\cal P}=[\psi]|_{\cal P},
\eneq
{\rm (}see 1.1 of \cite{GL1} and \cite{Rn}  for the  definition  of $cel${\rm)} there exists a unitary $U\in {\widetilde{\mathrm{M}_{1+Kl}(B)}}$ such that
\beq\label{Lauct-2}
\|{\mathrm{Ad}}\, U\circ (\phi\oplus \sigma_K)(a)-(\psi\oplus \sigma_K)(a)\|<\ep\tforal a\in {\cal F},
\eneq
where
\vspace{-0.14in} $$
\sigma_K:=\overbrace{\sigma\oplus\sigma\oplus\cdots\oplus \sigma}^K : A \to \mathrm{M}_{Kl}(B).
$$
\end{thm}

\begin{proof}
Let us also use $\phi$ and $\psi$ for $\phi\otimes {\text{id}}_{\mathrm{M}_m}$ and
$\psi\otimes {\text{id}}_{\mathrm{M}_m},$ respectively.
Fix $A,$ $r_0, r_1, T, s, R,$ $F$, and ${\boldsymbol{ L}}$ as described above.
Suppose that the conclusion of the theorem is false for 
these data. Then there exist  $\ep_0>0$ and a finite subset ${\cal F}\subseteq A$
such that there are a sequence of positive numbers $(\dt_n)$ with $\dt_n\searrow 0,$ an increasing sequence
$({\cal G}_n)$ of finite subsets of $A$ such that $\bigcup_n {\cal G}_n$ is dense in $A,$
an increasing sequence $({\cal P}_n)$ of finite subsets of $\underline{\mathrm{K}}(A)$ such that
$\bigcup_{{n}}{\cal P}_n=\underline{\mathrm{K}}(A),$  an increasing sequence $({\cal U}_n)$ of finite subsets of
$\mathrm{U}(\mathrm{M}_{\infty}({\widetilde A}))$ such that $\bigcup_{{n}}{\cal U}_n\cap \mathrm{U}(\mathrm{M}_m({\widetilde A}))$ is dense
in $\mathrm{U}(\mathrm{M}_m({\widetilde A}))$ for each integer $m\ge1,$  an increasing sequence $({\cal H}_n)$ of finite subsets of 
$A_+^{\boldsymbol{1}}\setminus \{0\}$ such that, if $a\in {\cal H}_n$
and $f_{1/2}(a)\not=0,$ then 
$f_{1/2}(a)\in {\cal H}_{n+1}$, and $\bigcup_{{n}}{\cal H}_n$ is dense in
$A^{\boldsymbol  1}$, and 
(use {\ref{lem-of-bla}})
has dense intersection with the {{unit}} ball of each closed two-sided ideal of $A$, a sequence of integers $(k(n))$ with
$\lim_{n\to\infty} k(n)=+\infty$,  a sequence of unital \CA s $B_n\in {\boldsymbol C}_{r_0, r_1, T, s, R},$
two sequences of ${\cal G}_n$-$\dt_n$-multiplicative completely positive contractive maps $\phi_{n}, \psi_{n}: A\to B_n$
such that
\beq\label{stableun2-1}
[\phi_{n}]|_{{\cal P}_n}=[\psi_{n}]|_{{\cal P}_n}\quad\mathrm{and}\quad 
{\text{cel}}(\lceil \phi_{n}(u)\rceil \lceil \psi_{n}(u^*)\rceil)\le {\boldsymbol L}(u),\quad \textrm{for all $u\in {\cal U}_n$},
\eneq
a sequence of 
 ${\cal G}_n$-$\dt_n$-multiplicative completely positive contractive linear maps $\sigma_n: A\to
\mathrm{M}_{l(n)}(B_n)$ which are $F$-${\cal H}_n$-full and satisfy, for each $n=1, 2,...$,
\beq\label{stableun2-2}
&&\hspace{0.3in} \inf\{\sup\|v_n^*( \phi_{n}(a)\oplus {(\sigma_n)_{k(n)}(a)})v_n-(\psi_n(a)\oplus {(\sigma_n)_{k(n)}(a)})\|: a\in {\cal F}\}\ge \ep_0,
\eneq
where the infimum is taken among all unitaries $v_n\in {\widetilde{\mathrm{M}_{k(n)l(n)+1}(B_n)}}$ and ${(\sigma_n)_{k(n)}}: A \to \mathrm{M}_{k(n)l(n)}(B_n)$ is as above.

Set $\mathrm{M}_{l(n)}(B_n)=B_n',$ $\bigoplus_{n=1}^{\infty}B_n'=C_0,$ $\prod_{n=1}^{\infty}B_n'=C,$ 
and $C/C_0=Q(C),$ and denote by 
$\pi: C\to Q(C)$ the quotient map. Consider the maps $\Phi, \Psi, S: A\to C$ defined by
$\Phi(a)=(\phi_n(a))_{n\ge 1},$ $\Psi(a)=(\psi_n(a))_{n\ge 1}$, and $S(a)=(\sigma_n(a))_{n\ge 1}$, $a\in A.$  
Note that $\pi\circ \Phi,$
$\pi\circ \Psi$ and $\pi\circ S$ are \hm s.  Consider also the truncations
$\Phi^{(m)}, \Psi^{(m)}, S^{(m)}: A\to \prod_{{n\ge m}} B_n'$ defined by
$\Phi^{(m)}(a)=(\phi_n(a))_{n\ge m},$ $\Psi^{(m)}(a)=(\psi_n(a))_{n\ge m},$
and $S^{(m)}(a)=(\sigma_n(a))_{n\ge m}$, $a\in A$. 

For each $u\in {\cal U}_m,$ we have 
$u\in \mathrm{M}_{L(m)}({\widetilde A})$ {for some integer $L(m)\ge 1$.}  When $n\ge m,$  by 
hypothesis,
there exists a continuous path of unitaries  $\{u_n(t): t\in [0,1]\}\subseteq M_{L(m)}({\widetilde B_n'})$ such that
\vspace{-0.1in} \beq \nonumber
u_n(0)=\lceil \phi_n(u)\rceil, u_n(1)=\lceil \psi_n(u)\rceil\, \mathrm{and}\ 
{\text{cel}}(\{u_n(t)\})\le {\boldsymbol L}(u).\nonumber
\eneq
It follows from Lemma 1.1 of \cite{GL1} that, for all $n\ge m,$ there exists a continuous path
$\{U(t): t\in [0,1]\}\subseteq \mathrm{U}_0(\prod_{n{\ge}m} {\widetilde B_n'})$ such that
$U(0)=({\lceil \phi_n(u)\rceil})_{n\ge m}$ and $U(1)=({\lceil \psi_n(u)\rceil})_{n\ge m}.$
This in particular implies that
\beq\label{stableun2-4}
&&
{\lceil\Phi^{(m)}(u)\rceil \lceil \Psi^{(m)}(u^*) \rceil} \in \mathrm{U}_0(\mathrm{M}_{L(m)}(\prod_{n{\ge}m} {\widetilde B_n'}))\quad\mathrm{and}\quad [\pi\circ \Phi]_{*1}=[\pi\circ \Psi]_{*1}.
\eneq
By ({\ref{stableun2-1}}),  for all $n\ge m,$
\beq\label{stableun2-5}
[\phi_n]|_{{\cal P}_m}=[\psi_n]|_{{\cal P}_m}.
\eneq
By hypothesis and by \cite{GL1},  $\mathrm{K}_0(C)=\prod_b\mathrm{K}_0(B_n')$, 
it follows that
\beq\label{stableun2-6}
[\Phi^{(m)}]|_{\mathrm{K}_0(A)\cap {\cal P}_m}=[\Psi^{(m)}]|_{\mathrm{K}_0(A)\cap {\cal P}_m},\,\,m=1,2,....
\eneq
In particular,
\beq\label{stableun2-7}
[\pi\circ \Phi]_{*0}=[\pi\circ \Psi]_{*0}.
\eneq

Now let $x_0\in {\cal P}_m\cap \mathrm{K}_0(A,\Z/k\Z)$ for some $k\ge 2.$ Denote by ${\tilde x_0}\in \mathrm{K}_1(A)$ the image
of $x_0$ under the map $\mathrm{K}_0(A, \Z/k\Z)\to \mathrm{K}_1(A).$
We may assume that ${\tilde x_0}\in {\cal P}_{m_0}$ for some
$m_0\ge m.$  By  \eqref{stableun2-4},
$[\Phi^{(m_0)}]({\tilde x_0})=[\Psi^{(m_0)}]({\tilde x_0}).$
Set $y_0=[\Phi^{(m_0)}](x_0)-[\Psi^{(m_0)}](x_0).$ Then
$y_0\in \mathrm{K}_0((\prod_{n{\ge}m_0}B_n'), \Z/k\Z)$ must be in the image of $\mathrm{K}_0(\prod_{n{\ge}m_0}B_n')$,
which may be identified with
$\mathrm{K}_0(\prod_{n{\ge}m_0}B_n')/k\mathrm{K}_0(\prod_{n{\ge}m_0}B_n')$
(see \cite{GL1}).
However, by  (\ref{stableun2-5}),
$$y_0\in {\text{ ker}}\,\psi_0^{(k)},$$
where $\psi_0^{(k)}: \mathrm{K}_0(\prod_{n{\ge}m_0}B_n', \Z/k\Z)\to \prod_{n{\ge}m_0}\mathrm{K}_0(B_n', \Z/k\Z)$
is  as in 4.1.4 of \cite{Lncbms}.
By \cite{GL1},
$y_0=0.$ In other words,
$${[\Phi^{(m_0)}](x_0)=[\Psi^{(m_0)}](x_0)},$$
which implies that
\vspace{-0.1in} \beq\label{stableun2-10}
[\pi\circ \Phi]|_{\mathrm{K}_0(A,\Z/k\Z)}=[\pi\circ \Psi]|_{\mathrm{K}_0(A, \Z/k\Z)},\,\,k=2,3,....
\eneq

Now let $x_1\in \mathrm{K}_1(A,\Z/k\Z).$ Then $x_1\in {\cal P}_m$ for some $m\ge 1.$
Denote by ${\tilde x_1}\in \mathrm{K}_0(A)$ the image of $x_1$ under the map $\mathrm{K}_1(A,\Z/k\Z)\to \mathrm{K}_0(A).$
There is $m_1\ge m$ such that ${\tilde x}_1\in {\cal P}_{m_1}.$
By (\ref{stableun2-6}), $[\Phi^{(m_1)}]({\tilde x}_1)=[\Psi^{(m_1)}]({\tilde x}_1).$
Put $y_1=[\Phi^{(m_1)}](x_1)-[\Psi^{(m_1)}](x_1).$ Then
$y_1\in \mathrm{K}_1(\prod_{n=m_1}B_n')/k\mathrm{K}_1(\prod_{n=m_1}B_n')$ (see (\cite{GL1})).
However, by (\ref{stableun2-4}), $y_1\in {\text{ ker}}\psi_1^{(k)}$ (see 4.1.4 of \cite{Lncbms}).
It follows from
\cite{GL1} that $y_1=0.$ In other words,
$${[\Phi^{(m_1)}](x_1)=[\Psi^{(m_1)}](x_1)}.$$
Thus,
\vspace{-0.14in} \beq\label{stableun2-12}
[\pi\circ \Phi]|_{\mathrm{K}_1(A,\Z/k\Z)}=[\pi\circ \Psi]|_{\mathrm{K}_1(A, \Z/k\Z)}.
\eneq
Combining (\ref{stableun2-4}), (\ref{stableun2-7}), (\ref{stableun2-10}), and (\ref{stableun2-12}), we have
$$[\pi\circ \Phi]=[\pi\circ \Psi]\,\,\,{\text{in}} \,\,\,  \mathrm{Hom}_{\Lambda}(\underline{\mathrm{K}}(A), \underline{\mathrm{K}}(Q(C)),
$$
{where $C=\prod_{n\geq m} B_n'$.}
For each $a\in {\cal H}_m\subseteq A_+^{\boldsymbol 1}\setminus \{0\},$
any $(b_n)\in C_+^{\boldsymbol 1},$ and any $\eta>0,$  since $\sigma_n$ is $F$-${\cal H}_n$-full,
for all $n\ge m,$
there are $x_{i,n}(a)\in B_n'$ with $\|x_{i,n}\|\le M(a),$ $i=1,2,...,N(a),$
where $F(a)=M(a)\times N(a),$ such that
$$\|\sum_{i=1}^{N(a)} x_{i,n}(a)^*\sigma_n(a) x_{i,n}(a)- b_n\|<\eta.$$
Define $x(i,a)=(x_{i,n}(a)).$ Then $x(i,a)\in C.$
It follows
that
$$\|\sum_{i=1}^{N(a)}x(i,a)^*S^{(m)}(a)x(i,a)-(b_n)_{{n\ge m}}\|<\eta.
$$
This shows that $\pi\circ S(a)$ is a full element of $Q(C)$ for any $0\neq a \in \bigcup_{n=1}^{\infty} {\cal H}_n.$
Let $I$ be an ideal of $Q(C)$ and
consider the pre-image
$$
J=\{a\in A: \pi\circ S(a)\in I\}.
$$
By the choice of $({\cal H}_n),$ 
$J=\{0\}.$ It follows
that the map $\pi\circ S: A\to Q(C)$ is full.

By Lemma \ref{Lsep2}, there exists a separable \SCA\,
$D\subseteq Q(C)$ such that
$\pi\circ S(A), \pi\circ \Phi(A), \pi\circ \Psi(A)\subseteq D$, the map $\pi\circ S:A\to D$ is full, and
$$
[\pi\circ \Phi]=[\pi\circ \Psi]\,\,\, {\text{in}}\,\,\, {\mathrm{Hom}_{{\Lambda}}(\underline{\mathrm{K}}(A), \underline{\mathrm{K}}(D))}.
$$

Since $A$ satisfies the UCT, by \cite{DL},  {{$[\pi\circ \Phi]=[\pi\circ \Psi]$ in $KL(A,D).$}}
{{Then,  by Theorem \ref{T39}}}
(as at the end of the proof  of Theorem \ref{T39}), there exists an integer $K\ge 1$ and a unitary $V\in {\widetilde{\mathrm{M}_{K+1}(Q(C))}}$  such that
$$\|V^*(\pi\circ \Phi(a)\oplus\Sigma(a))V-(\pi\circ \Psi(a)\oplus\Sigma(a))\|<\ep_0/4,\quad a\in\mathcal F$$
where,
as above,
\vspace{-0.12in} $$\Sigma(a)=\overbrace{\pi\circ S(a)\oplus \pi\circ S(a)\oplus\cdots\oplus \pi\circ S(a)}^K,\quad a\in A.$$
Therefore, there exists a sequence of  unitaries $(v_n)\subseteq {\widetilde{\mathrm{M}_{K+1}(C)}}$ and an integer
$N_1$ such that $k(n)\ge K$ for all $n\ge N_1$ and
$$\|v_n^*(\phi_n(a)\oplus(\sigma_n)_K(a))v_n-(\psi_n(a)\oplus(\sigma_n)_K(a))\|<\ep_0/2,\quad a\in\mathcal F,$$
where
\vspace{-0.13in} $$
(\sigma_n)_K(a)=\overbrace{\sigma_n(a)\oplus\sigma_n(a)\oplus\cdots\oplus\sigma_n(a)}^K,\quad a\in A.
$$
This contradicts  
(\ref{stableun2-2}). 
\end{proof}

\begin{rem}\label{RRLuniq}
Suppose that $\mathrm{K}_1(A)\cap {\cal P}
=\{z_1,z_2,...,z_m\}.$ Then, by choosing sufficiently large ${\cal P},$
we can always choose ${\cal U}=\{w_1, w_2,...,w_m\}$ so that
$[w_i]=z_i,$ $i=1,2,...,m.$  In other words, we do not need to consider
unitaries in $\mathrm{U}_0(\mathrm{M}_{\infty}({\widetilde A})).$ In particular, if $\mathrm{K}_1(A)=\{0\},$
then we can omit the condition \eqref{Lauct-1}.
Moreover, if $B$ is restricted in the class of \CA s of real rank zero, then
one can choose ${\boldsymbol L} \equiv 2\pi +1$ and \eqref{Lauct-1} always holds if
${\cal P}$ is sufficiently large. In other words, in this case, condition \eqref{Lauct-1} can
also be dropped.

{{Let $B_0$ be a \CA\, with a strictly positive element $e_0$ and $B=\overline{e_b(M_{K+1}(B_0))e_b},$ where 
{{$e_b\in M_{K+1}(B_0)_+$ and}} $e_b\ge (\sum_{i=1}^K(e_0\otimes e_{ii})$
and $\{e_{i,j}: 0\le i,j\le K\}$ is a matrix unit for $M_{K+1}.$ 
Let $B_1=\overline{e_1Be_1},$ where 
$e_1\in \overline{(e_0\otimes e_{0,0})M_{K+1}(B_0)(e_0\otimes e_{0,0})}_+.$ 
We may  view $B_1\subset B_0.$
Suppose 
$B_0\in {\boldsymbol{C}}_{r_0, r_1, T, s, R}.$ 
Suppose that $\phi, \psi: A\to B_1\subset B$ and $\sigma: A\to B_0\subset B$ are as in Theorem \ref{Lauct2} 
($\phi$ and $\psi$ are ${\cal G}$-$\dt$-multiplicative, and $\sigma$ is $T$-${\cal H}$-full in $B_0$), and  
that ${\mathrm{cel}}(\lceil \phi(u)\rceil  \lceil \psi(u^*)\rceil)\le {\boldsymbol{L}}(u)\tforal u\in {\cal U}$
(viewing $\phi$ and $\psi$ as maps to  $B_0$ instead of $B_1$) and \eqref{Lauct-1n} holds. Then there exists $u\in {{{\widetilde{B}}}}$ such that
\beq
\|u^*\diag(\phi(a), \sigma_K(a))u-\diag(\psi(a), \sigma_K(a))\|<\ep\rforal a\in {\cal F},
\eneq
where $\sigma_K(a)=\diag(\sigma(a),..., \sigma(a)),$ where $\sigma(a)$ repeats $K$ times (see also below).}}

\end{rem}

\begin{cor}\label{CLuniq}
Let $A$ be a non-unital separable amenable  \CA\, which is $\mathrm{KK}$-contractible
and let $T: A_+\setminus \{0\}\to \N\times \R_+\setminus \{0\}$
be a  map.
  For any $\ep>0$ and any finite subset ${\cal F}\subseteq A,$ there exists
$\dt>0,$ a finite subset ${\cal G}\subseteq A,$
a finite subset ${\cal H}\subset A_+\setminus \{0\}$ and an integer $K\ge 1$ satisfying the following:

{{Let  $B_0$ be any \CA\, with a strictly positive element $e_0$ and $B=\overline{e_bM_{K+1}(B_0)e_b},$ where $
e_b\in M_{K+1}(B_0),$ $e_b\ge \sum_{i=1}^K(e_0\otimes e_{ii})$
and $\{e_{i,j}: 0\le i,j\le K\}$ is a matrix unit for $M_{K+1}.$ 
Let $B_1=\overline{e_1Be_1},$ where 
$e_1\in \overline{(e_0\otimes e_{0,0})M_{K+1}(B_0)(e_0\otimes e_{0,0})}_+.$}} 
For any two ${\cal G}$-$\dt$-multiplicative \morp s $\phi, \psi: A\to B_1,$  
and any
${\cal G}$-$\dt$-multiplicative \morp\, $\sigma: A\to  B_0$
 which is also
$T$-${\cal H}$-full in $B_0,$ 
there exists a unitary $U\in {{{\widetilde B}}}$
such that
\beq\label{CLauct-2}
\|{\mathrm{Ad}}\, U\circ (\phi\oplus \sigma_K)(a)-(\psi\oplus \sigma_K)(a)\|<\ep\tforal a\in {\cal F},
\eneq
where, as earlier,
\vspace{-0.1in} $$
\sigma_K=\overbrace{\sigma \oplus\sigma\oplus\cdots\oplus \sigma}^K: A\to\mathrm{M}_{K}(B_0)\subset B.
$$
\end{cor}

\begin{proof}
{ In Theorem \ref{Lauct2},} the only reason that the restriction has to be placed on $B$ is for the computation
of the K-theory of the maps $\phi$ and $\psi.$  {{More precisely, 
the restriction  is used to obtain
$$[\pi\circ \Phi]=[\pi\circ \Psi]\,\,\,{\text{in}} \,\,\,  \mathrm{Hom}_{\Lambda}(\underline{\mathrm{K}}(A), \underline{\mathrm{K}}(Q(C))$$
in the proof of \ref{Lauct2}.}} {{Since $A$ is $KK$-contractible, $\underline{K}(A)=\{0\}.$
Hence $[\pi\circ \Phi]=[\pi\circ\Psi]=0.$}}
Note that, since $\mathrm{KK}(A, A)=0$, $A$ satisfies the UCT.
\end{proof}

\begin{lem}\label{KK-Kun}
If a separable 
\CA\,
$B$ is KK-contractible, then $A\otimes B$ is KK-contractible for any  separable 
amenable
\CA\, $A$.
\end{lem}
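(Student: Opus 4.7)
The plan is to reduce the statement to a simple manipulation with the identity class in $\KK$-theory and the exterior Kasparov product. First, I would recall the standard fact that a separable \CA\ $D$ is $\KK$-contractible if and only if the identity class $\mathrm{id}_D$ vanishes in $\KK(D,D)$. Indeed, since $\KK(D,0)=\KK(0,D)=0$, the existence of the mutually inverse $\KK$-elements between $D$ and $\{0\}$ collapses to the single requirement $\mathrm{id}_D=0$ in $\KK(D,D)$ (the condition $\mathrm{id}_0=0$ in $\KK(0,0)$ being automatic).

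Next, I would invoke Kasparov's exterior product. Because $A$ is separable and nuclear, the minimal and maximal $\mathrm{C}^{*}$-tensor products with $A$ coincide, and there is a well-defined bilinear pairing
\[
\tau\colon \KK(A,A)\times \KK(B,B)\longrightarrow \KK(A\otimes B,\, A\otimes B),\qquad (x,y)\longmapsto x\otimes_{\C} y,
\]
which is associative with Kasparov products and satisfies $\tau(\mathrm{id}_A,\mathrm{id}_B)=\mathrm{id}_{A\otimes B}$ (see, e.g., Kasparov's fundamental paper or \S18--19 of Blackadar's \emph{K-Theory for Operator Algebras}).

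Finally, combine the two observations: by hypothesis $\mathrm{id}_B=0$ in $\KK(B,B)$, so by bilinearity of $\tau$,
\[
\mathrm{id}_{A\otimes B}=\tau(\mathrm{id}_A,\mathrm{id}_B)=\tau(\mathrm{id}_A,0)=0 \text{ in }\KK(A\otimes B,A\otimes B),
\]
which shows that $A\otimes B$ is $\KK$-contractible.

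There is no substantive obstacle here; the proof is a one-line consequence once the correct framework is invoked. The only thing to be careful about is the tensor product convention, which is why the hypothesis of nuclearity of $A$ is recorded: it guarantees an unambiguous $A\otimes B$ and the existence of the exterior product on the nose, without needing to pass to a $\KK^{\mathrm{nuc}}$ variant.
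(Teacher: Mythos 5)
Your proof is correct, but it takes a genuinely different route from the paper's. You invoke the Kasparov exterior product $\tau\colon \KK(A,A)\times\KK(B,B)\to\KK(A\otimes B,A\otimes B)$ as a black box, reduce KK-contractibility to $\mathrm{id}_B=0$ in $\KK(B,B)$, and conclude by bilinearity and the compatibility $\tau(\mathrm{id}_A,\mathrm{id}_B)=\mathrm{id}_{A\otimes B}$. The paper instead works entirely at the level of Cuntz pairs: it takes a strictly continuous homotopy $(\phi_t^+,\phi_t^-)$ of pairs of homomorphisms $B\to\mathrm M(B\otimes\mathcal K)$ witnessing $\mathrm{id}_B\sim_{\mathrm{KK}}0$, and tensors it by $\mathrm{id}_A$ on each leg to produce an explicit homotopy $(\Phi_t^+,\Phi_t^-)$ witnessing $\mathrm{id}_{A\otimes B}\sim_{\mathrm{KK}}0$; nuclearity of $A$ enters to make $A\otimes\mathrm M(B\otimes\mathcal K)\hookrightarrow\mathrm M(A\otimes B\otimes\mathcal K)$ and $A\otimes B$ unambiguous. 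In effect the paper constructs, by hand, exactly the class your argument writes as $\tau(\mathrm{id}_A,[\phi_t^+,\phi_t^-])$. Your approach is shorter and cleaner once the exterior product and its bilinearity/unitality properties are taken as known; the paper's approach is more self-contained and stays in the concrete picture of KK via pairs of homomorphisms, which is the language used throughout Section~\ref{section-stable-uniq}. Both are valid; neither has a gap.
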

\begin{proof}
Since $B$ is KK-contractible,
i.e.,
$\mathrm{id}_{B} \sim_{\mathrm{KK}} 0_{B}$,
there is a continuous path (in the strict topology) of pairs $(\phi^+_t, \phi^-_t)$, $t\in[0, 1]$, where $$\phi^{\pm}_t: B \to \mathrm M(B\otimes\mathcal K),\quad t\in[0, 1],$$
are homomorphisms such that 
$$\phi_t^+(a)-\phi_t^-(a) \in B\otimes\mathcal K,\quad t\in[0, 1],\ a\in B,$$
$$(\phi_0^+, \phi_0^-) = (\mathrm{id}_{B}, 0)\quad\mathrm{and}\quad (\phi_1^+, \phi_1^-) = (0, 0).$$

Let $A$ be a separable 
amenable
C*-algebra. Consider the two families of elements
$$\Phi_t^{\pm}(a\otimes b)=a\otimes\phi_t^{\pm}(b)\in A\otimes \mathrm M( B\otimes\mathcal K)\subseteq \mathrm M(A\otimes B\otimes\mathcal K),\quad a\in A,\ b\in B,\ t\in[0, 1].$$ (Nuclearity of $A$ implies that the two tensor products are unambiguous.)
Then $\Phi_t^{\pm}(a\otimes b)$, $t\in[0, 1]$, are continuous paths (in the strict topology) in $\mathrm M(A\otimes B \otimes\mathcal K)$, and
$$\Phi_t^+(a\otimes b) - \Phi_t^-(a\otimes b) = a\otimes (\phi_t^+(b) - \phi_t^-(b)) \in A\otimes B \otimes\mathcal K.$$
Moreover, 
$(\Phi^+_0, \Phi^-_0) = (\mathrm{id}_{A \otimes B}, 0)$ and $(\Phi^+_1, \Phi^-_1) = (0, 0)$. Therefore, $\mathrm{id}_{A\otimes B} \sim_{\mathrm{KK}} 0$, 
i.e.,
$A\otimes B$ is KK-contractible, as asserted.
\end{proof}

\section{ An
isomorphism theorem}

Recall that a non-unital \CA\, $A$ is said to have almost stable rank one
if the closure of  
the set of
invertible elements 
in 
${\widetilde A}$ contains  $A$, and if this holds also for each hereditary sub-C*-algebra of $A$ in place of $A$ 
(see \cite{Rob-0}).

Recall also that if $A\in {\cal D}$ {{is a separable simple \CA,}}  then $A$ has (Blackadar) strict comparison for positive elements, 
$A$
has stable rank one,
and the map from $\mathrm{Cu}(A)$ to  $\text{LAff}_{0+}(\overline{\mathrm{T}(\overline{aAa})}^{\mathrm{w}})$ is an isomorphism of ordered semigroups (for any non-zero element $a\in {\mathrm{Ped}}(A)$)
 (see 11.8 and 11.3  of \cite{eglnp})

In what follows, if $A$ is a \CA, we use $A^{\bbf 1}$ for the unit ball  of $A.$ 
We will use the following reformulation of Definition \ref{DDD} given by 11.10 of \cite{eglnp} when $\mathrm{K}_0(A)=\{0\}.$
\begin{prop}[11.10 and {10.8}  of \cite{eglnp}]\label{Cuniformful}
Let $A$ be a {{separable}} \CA\, in ${\cal D}$  with $\mathrm{K}_0(A)=\{0\}$. Let the strictly positive element $e\in A$
with $\|e\|\leq 1$ and the number $1>\mathfrak{f}_e>0$ be as in \ref{DDD}.
There is a map $T: A_+\setminus \{0\}\to \N\times \R_+\setminus \{0\}$
with the following property:
For any  finite subset ${\cal F}_0\subseteq A_+\setminus \{0\}$,
any $\ep>0,$  any
finite subset ${\cal F}\subseteq A$, any $b\in A_+\setminus \{0\}$, and any integer
$n\ge 1,$  there are ${\cal F}$-$\ep$-multiplicative \cpc s $\phi: A\to A$ and  $\psi: A\to D$  for some
\SCA\, $D = D\otimes e_{11}\subseteq \mathrm{M}_n(D)\subseteq A$ such that 
{$\psi(e)$} is strictly positive in $D$
and $T$-$\mathcal F_0\cup\{f_{1/4}(e)\}$-full as a map $A \to D$,
\beq\label{CDtad-1}
&&\|x-(\phi(x)\oplus \overbrace{\psi(x)\oplus \psi(x)\oplus\cdots\oplus \psi(x)}^n)\|<\ep\rforal x\in {\cal F}\cup \{e\},\\\label{CDtrdiv-2}
&& D\in {\cal C}_0,\  
\phi(e)\lesssim b,\,\,\, \phi(A)\perp \mathrm{M}_n(D),
\eneq
\beq\label{106+}
\phi(e)\lesssim \psi(e)\tand t\circ f_{1/4}(\psi(e))>\mathfrak{f}_e\tforal t\in T(D).
\eneq

\end{prop}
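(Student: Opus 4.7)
The plan is to upgrade the approximate decomposition supplied by Definition \ref{DDD} in two respects: to insert an explicit $n$-fold matrix structure $\mathrm{M}_n(D)$ into the image of the approximating map, and to promote the trace lower bound on $\psi(e)$ into uniform $T$-$\mathcal F_0$-fullness of $\psi$.

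First, fix an auxiliary $\varepsilon_1\le\varepsilon/(n+1)$ and an element $b_1\in A_+\setminus\{0\}$ with $b_1\lesssim b$ and $d_\tau(b_1)$ arbitrarily small on $\mathrm{T}(A)$ (possible by strict comparison in $A$, cf.~11.8 of \cite{eglnp}). Apply Definition \ref{DDD} to $A$ with tolerance $\varepsilon_1$, finite subset $\mathcal F_1=\mathcal F\cup\mathcal F_0\cup\{e,f_{1/4}(e)\}$, and comparison element $b_1$; choose $\mathfrak{f}_e$ small enough at the outset that Definition \ref{DDD} still delivers a bound of $n\mathfrak{f}_e$ rather than $\mathfrak{f}_e$ (using the explicit value $\inf_\tau\tau(f_{1/4}(e))/2$). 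This produces orthogonal sub-C*-algebras $A_1,\widetilde D\subseteq A$ with $\widetilde D\in\mathcal C_0$, together with $\mathcal F_1$-$\varepsilon_1$-multiplicative \cpc\ maps $\phi:A\to A_1$ and $\widetilde\psi:A\to\widetilde D$ satisfying $\|x-(\phi(x)+\widetilde\psi(x))\|<\varepsilon_1$ on $\mathcal F_1$, $\phi(e)\lesssim b_1\lesssim b$, and $\tau(f_{1/4}(\widetilde\psi(e)))\ge n\mathfrak{f}_e$ for all $\tau\in\mathrm{T}(\widetilde D)$.

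Next, extract the matrix structure. Since $\widetilde D\in\mathcal C_0$ is (a full hereditary sub-C*-algebra of) a point-line algebra $A(E,F,\pi_0,\pi_1)$, whose building blocks $E,F$ can be arranged with matrix sizes divisible by $n$ by an initial refinement of $\widetilde D$, there is a sub-C*-algebra $\mathrm{M}_n(D)\subseteq\widetilde D$ with $D\in\mathcal C_0$ carrying a compatible system of matrix units $\{e_{ij}\}$. After compressing $\widetilde\psi$ into $\mathrm{M}_n(D)$ along the diagonal corners $e_{ii}$, the off-diagonal error (small on $\mathcal F_1$ by orthogonality of the $e_{ii}$ together with almost-multiplicativity) is absorbed into $\phi$, producing a map of the form $\bigoplus_{i=1}^n\psi$ with $\psi:A\to D=e_{11}\mathrm{M}_n(D)e_{11}$. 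This yields $\|x-(\phi(x)\oplus\bigoplus_{i=1}^n\psi(x))\|<\varepsilon$ on $\mathcal F$, the orthogonality $\phi(A)\perp\mathrm{M}_n(D)$, and the trace bound $t(f_{1/4}(\psi(e)))>\mathfrak{f}_e$ on $\mathrm{T}(D)$ (descending from the $n\mathfrak{f}_e$-bound on $\mathrm{T}(\widetilde D)$ via the matrix-unit rescaling). The dominance $\phi(e)\lesssim\psi(e)$ then follows from strict comparison, as $d_\tau(\phi(e))\le d_\tau(b_1)$ can be made arbitrarily small while $d_t(\psi(e))$ is uniformly bounded below.

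Finally, construct the fullness data. For each $a\in A_+\setminus\{0\}$, almost-multiplicativity of $\widetilde\psi$ forces $\|\psi(a)\|$ to be close to $\|a\|$ on $\mathcal F_1$, and combined with the trace lower bound on $f_{1/4}(\psi(e))$ and the structure of $D\in\mathcal C_0$, one obtains a uniform trace lower bound on $d_t(\psi(a))$ on $\mathrm{T}(D)$ depending only on $a$. Since algebras in $\mathcal C_0$ have strict comparison with well-behaved Cuntz semigroups, this yields explicit fullness constants $N(a),M(a)$ for $\psi(a)$ in $D$ independent of $\varepsilon$; set $T(a)=(N(a),M(a))$. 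The main obstacle is the second step---extracting the matrix-unit system compatible with $\widetilde\psi$ so that the factorization $\widetilde\psi\approx\bigoplus_{i=1}^n\psi$ actually holds on $\mathcal F_1$---which requires either an initial choice of $\widetilde D$ with NCCW dimensions divisible by $n$ or an averaging over matrix-unit conjugations at the cost of a controlled perturbation absorbed by $\phi$.
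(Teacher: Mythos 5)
The central step of your argument --- extracting a matrix subalgebra $\mathrm{M}_n(D)$ from the given $\widetilde D \in {\cal C}_0$ and then compressing $\widetilde\psi$ to a diagonal $n$-fold sum --- does not hold up. Definition~\ref{DDD} hands you $\widetilde D$ with no control over its matrix dimensions; there is no ``initial refinement'' available that forces them to be divisible by $n$. Even granting a decomposition $\widetilde D = \mathrm M_n(D)$ with matrix units $(e_{ij})$, there is no reason for $\widetilde\psi$ to lie near the diagonal $\{d\otimes 1_n: d\in D\}$, nor for the compressions $e_{ii}\widetilde\psi(\,\cdot\,)e_{ii}$ to agree with one another; these are $n$ a priori unrelated completely positive contractive maps into $D$. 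Absorbing the mismatch into $\phi$ would destroy the Cuntz-smallness condition $\phi(e)\lesssim b$. You flag this as the main obstacle yourself, but neither remedy you suggest resolves it.

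What is actually needed, and what the cited 10.7 and 11.10 of \cite{eglnp} supply, is that the $n$-fold matrix structure comes from the ambient algebra $A$, not from $\widetilde D$. One first shows that a simple \CA\ in ${\cal D}$ is tracially approximately divisible: there are orthogonal hereditary \SCA s $A_0,\ \mathrm M_n(A_1)\subseteq A$, with $A_0$ small in the Cuntz sense, such that the elements of ${\cal F}$ lie close to $A_0 \oplus \{y\otimes 1_n : y\in A_1\}$. Since a hereditary corner of $A_1$ is again in ${\cal D}$ (cf.\ 8.6 of \cite{eglnp}), Definition~\ref{DDD} is then applied \emph{inside} $A_1$ to produce $D\in{\cal C}_0$ and $\psi: A\to D$, and the small remainder of $A_1$ is folded together with $A_0$ into $\phi$. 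The $\mathrm M_n(D)$-structure and the $n$-fold repetition $\psi^{\oplus n}$ arise only at this point. Two further notes on the fullness step: ``almost-multiplicativity of $\widetilde\psi$ forces $\|\psi(a)\|$ close to $\|a\|$'' is simply false --- almost-multiplicativity imposes nothing on norms; what actually produces a uniform $T$ depending only on $A$ and $e$ is the approximate trace-preservation of the decomposition (the trace lost to $\phi$ is at most $d_\tau(b_1)$), together with the uniform lower bound $\inf_{\tau\in\mathrm T(A)}\tau(f_\delta(a))>0$ for each $a\neq 0$ (compactness of $\mathrm T(A)$) and strict comparison inside algebras of the class ${\cal C}_0$.
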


\begin{defn}\label{DAq}
Let $A$ be a \CA\, with $\mathrm{T}(A)\not={\O}$ such that $0\not\in \overline{\mathrm{T}(A)}^{\mathrm{w}}.$
There is an affine  map
$r_{\aff}: A_{\mathrm{s.a.}}\to {\mathrm{Aff}}(\overline{\mathrm{T}(A)}^{\mathrm{w}})$ defined by
$$
r_{\aff}(a)(\tau)=\hat{a}(\tau)=\tau(a),\quad \tau\in \overline{\mathrm{T}(A)}^{\mathrm{w}},\ a\in A_{\mathrm{s.a.}}.
$$
Denote by 
$A^{\mathrm{q}}$ 
the space  $r_{\aff}(A_{\mathrm{s.a.}}),$ 
$A_+^{\mathrm{q}}=r_{\aff}(A_+)$ and $A^{\bbf{1},\mathrm{q}}_+=r_{\aff}(A_+^{\bbf 1})$.

\end{defn}

\begin{thm}\label{TTMW}
Let $A$ and $B$ be two separable simple amenable \CA s in the class $\mathcal D$
with continuous scale. 
Suppose that both $A$ and $B$ are $\mathrm{KK}$-contractible. 
Then  $A\cong B$ if and only if
there is an affine homeomorphism $\gamma: \mathrm{T}(B)\to \mathrm{T}(A).$
Moreover,  the isomorphism $\phi: A\to B$ can be chosen such that
$\phi_\mathrm{T}=\gamma,$ where $\phi_\mathrm{T}$ is the map from $\mathrm{T}(B)$ to $\mathrm{T}(A)$ induced by $\phi.$
\end{thm}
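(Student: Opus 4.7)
The ``only if'' direction is immediate: any isomorphism $\phi:A\to B$ induces an affine homeomorphism $\phi_{\mathrm T}:\mathrm T(B)\to\mathrm T(A)$, $\tau\mapsto \tau\circ\phi$, and both simplices are compact by the continuous scale hypothesis. For the converse my plan is Elliott's two-sided approximate intertwining. Fix exhausting sequences of finite subsets ${\cal F}_n\subseteq A$ and ${\cal G}_n\subseteq B$ and a summable sequence $(\ep_n)$ of tolerances. The goal is to construct ${\cal F}_n$-$\ep_n$- (resp.\ ${\cal G}_n$-$\ep_n$-) multiplicative contractive c.p.\ maps $\alpha_n:A\to B$ and $\beta_n:B\to A$, together with unitaries $u_n\in\widetilde B$, $v_n\in\widetilde A$, such that $\mathrm{Ad}\,u_n\circ \alpha_{n+1}\approx_{\ep_n}\alpha_n$ on ${\cal F}_n$, $\mathrm{Ad}\,v_n\circ \beta_{n+1}\approx_{\ep_n}\beta_n$ on ${\cal G}_n$, and $\beta_n\circ\alpha_n\approx_{\ep_n}\mathrm{id}_A$ on ${\cal F}_n$ (and symmetrically), with each $\alpha_n$, $\beta_n$ implementing $\gamma^{-1}$ and $\gamma$ on traces to within $\ep_n$. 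Summability then forces $\mathrm{Ad}(u_n\cdots u_1)\circ\alpha_n$ and $\mathrm{Ad}(v_n\cdots v_1)\circ\beta_n$ to converge to mutually inverse $*$-homomorphisms realising $\gamma$.

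Each inductive step splits into an existence half and a uniqueness half. For \emph{existence}, I would invoke Proposition~\ref{Cuniformful}: since $A\in{\cal D}_0$ with $\mathrm K_0(A)=\{0\}$, the identity on ${\cal F}_n$ can be approximated to within $\ep_n$ by $\phi(x)\oplus\psi(x)^{\oplus k}$, where $\phi(A)$ has arbitrarily small Cuntz class in $A$ and $\psi:A\to D$ is a proper, $T$-full c.p.c.\ map into a point--line algebra $D\in{\cal C}_0^0$ sitting orthogonally in $A$. To build $\alpha_{n+1}$, I construct a $*$-homomorphism $\eta:D\to B$ whose induced map on traces matches the data prescribed by $\gamma$ composed with $\psi^*$. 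Since $B\in{\cal D}_0$ has strict comparison, stable rank one, and $\mathrm{Cu}(B)\cong\mathrm{LAff}_{0+}(\mathrm T(B))$ by 11.8 of \cite{eglnp}, and $D$ is a one-dimensional NCCW with trivial $\mathrm K$-theory, such $\eta$ exists by the standard existence theorem for maps out of Elliott--Thomsen algebras into simple $\mathcal D$-algebras. The orthogonal ``small'' summand $\phi(A)$ is routed into an orthogonal hereditary sub-$\mathrm{C}^*$-algebra of $B$ by any $T$-${\cal H}$-full c.p.c.\ map of negligible Cuntz size, and $\alpha_{n+1}$ is assembled as the direct sum; $\beta_{n+1}$ is constructed symmetrically using the tracial decomposition of $B$.

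The \emph{uniqueness} half is supplied by Corollary~\ref{CLuniq}. Because $A$ is $\mathrm{KK}$-contractible, any two sufficiently multiplicative c.p.c.\ maps $\alpha,\alpha':A\to B$ become approximately unitarily equivalent in $\widetilde{\mathrm M_{1+Kl}(B)}$ after direct-summing with $K$ copies of any sufficiently multiplicative $T$-${\cal H}$-full c.p.c.\ map $\sigma:A\to\mathrm M_l(B)$; no $\mathrm K$-theoretic side condition is needed. I apply this to $\alpha_{n+1}$ and a conjugate of $\alpha_n$ to extract the intertwining unitary $u_n$. To ensure $u_n$ lives in $\widetilde B$ rather than merely in a matrix amplification, I choose $\sigma$ to be a cut-down of the bulk map $\eta$ constructed above: an orthogonal copy of $\sigma_K$ is then already sitting inside $B$ by Proposition~\ref{Cuniformful} applied to $B$, so the ancillary unitary descends to a unitary in $\widetilde B$, in the form demanded by the intertwining.

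The main obstacle is the \emph{simultaneous} coordination of the two tracial decompositions. At stage $n$ the orthogonal summand $\mathrm M_k(D)$ inside $A$ must admit a tracially faithful embedding into $B$ governed by $\gamma$, while $B$ itself must split off a matching orthogonal copy of $\sigma_K$ so that the uniqueness machinery of Corollary~\ref{CLuniq} can be absorbed back inside $B$; furthermore, both splittings have to respect the intertwining inherited from the previous stage and leave enough Cuntz-theoretic room for the next. Managing these competing demands while keeping the two streams of tracial errors summable is the delicate bookkeeping that the bulk of the argument consists of.
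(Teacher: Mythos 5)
You attempt a direct two-sided Elliott intertwining between $A$ and $B$, building approximately multiplicative maps $\alpha_n\colon A\to B$ and $\beta_n\colon B\to A$ and coordinating them. The paper does not do this: it first fixes a concrete model algebra $C=\lim(C_n,\imath_n)$, a simple inductive limit of finite direct sums of copies of $\mathcal W$ with $\mathrm T(C)\cong\mathrm T(A)$ (Theorem~\ref{R2}), proves $A\cong C$, and then gets $A\cong B$ by symmetry. That detour is not cosmetic; it supplies an ingredient your proposal lacks.

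The missing ingredient is an \emph{honest} $*$-homomorphism going in one direction. In the paper's Step~2, Theorem~1.0.1 of \cite{Robert-Cu} produces a genuine $*$-homomorphism $H\colon C\to A$ with $\mathrm{Cu}^\sim(H)=(\Gamma^\sim)^{-1}$. Robert's existence theorem applies precisely because $C$ is an inductive limit of one-dimensional NCCW complexes and $A$ has stable rank one. The crucial commutation $H\circ L_1\approx\mathrm{id}_A$ is then verified by comparing ${\text{Ad}}\, u_0\circ H\circ h_1\circ j_1$ with $\imath_1\circ j_1$ — two \emph{$*$-homomorphisms} from the point--line algebra $D_{1,1}$ into $A'$ — via Robert's uniqueness theorem (Theorem~3.3.1 of \cite{Robert-Cu}), which yields a unitary in $\widetilde{A'}$ directly. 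In your scheme, neither $A$ nor $B$ is a priori an inductive limit of one-dimensional NCCW complexes, so Robert's Theorem~1.0.1 does not give you a $*$-homomorphism $\beta\colon B\to A$ realising $\gamma$. With only an approximately multiplicative $\beta$ in hand, the composite $\beta\circ h_1\circ j_1$ is no longer a $*$-homomorphism, so Robert~3.3.1 cannot be used at this step.

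Your fallback — invoke Corollary~\ref{CLuniq} (which compares merely approximately multiplicative maps, using $\mathrm{KK}$-contractibility) — does not repair this. The conclusion of \ref{CLuniq} places the conjugating unitary in $\widetilde{\mathrm M_{1+Kl}(B)}$ after absorbing $K$ copies of a full comparison map; the matrix amplification has to be absorbed back inside $B$ itself. In the paper this absorption works because the ancillary summand $\imath_1\circ j_1\circ\psi_0$ has already been lodged orthogonally inside $A$ via the $1+2K_1$-fold decomposition of Proposition~\ref{Cuniformful}, and the ``anchor'' map $H\circ h_1\circ j_1$ is an honest homomorphism whose unitary equivalence to $\imath_1\circ j_1$ is obtained \emph{inside $\widetilde{A'}$} by Robert~3.3.1 before \ref{CLuniq} is ever invoked. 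Without this anchor, the ``delicate bookkeeping'' you defer is not bookkeeping but a missing step: you have not explained how the matrix amplification of \ref{CLuniq} is to be absorbed, nor how the approximate commutation $\beta_{n+1}\circ\alpha_n\approx\mathrm{id}_A$ is to be verified when $\beta_{n+1}$ is only approximately multiplicative. The role of the model $C$ is precisely to make one side of the intertwining exactly multiplicative so that these steps go through.
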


\begin{proof}
By Theorem \ref{R2},
there exists a simple \CA\, $C=\lim_{n\to\infty} (C_n, \imath_n)$, where each
$C_n$ is a finite direct sum of copies of ${\cal W}$ and $\imath_n$ 
maps strictly positive elements to
strictly positive 
elements,
which has continuous scale, and is such that
$$
\mathrm{T}(A)\cong\mathrm{T}(C).
$$
It suffices to show that $A\cong C.$ (By symmetry, then also $B\cong C$.)
We will use $\Gamma: \mathrm{T}(C)\to \mathrm{T}(A)$ for  the  affine homeomorphism given above.
We will use the approximate intertwining argument of Elliott (\cite{Ell-AT-RR0}).
We would like recall that ${\cal W}$ is an inductive limit of Razak algebras with injective connecting maps
and the fact that $A$ has stable rank one (see 11.5 of \cite{eglnp}). 
{{Fix two sequences, $\{x_1,x_2,...,x_n,...\}$ of  $A$  and $\{y_1,y_2,...,y_n,...\}$ of $C$, which are dense in 
the unit ball of $A$ and $B,$ respectively.}}

{\bf Step 1}: Construction of $L_1.$

\noindent
Fix a finite subset ${\cal F}_1\subseteq A$ and $\ep>0.$
\Wlog, we may assume that ${{x_1}}\in {\cal F}_1\subseteq A^{\boldsymbol{1}}.$ 

 Since $A$ has continuous scale,  $A=\mathrm{Ped}(A)$ (3.3 of \cite{Lncs1}).
Choose  a strictly positive element $a_0\in A_+$ with $\|a_0\|=1$ and ${{\mathfrak{f}_{a_0}}}>0$ as in Definition \ref{DDD}.
We may assume, \wilog,
that
\beq\label{TC0k-nn1}
a_0y=ya_0=y,\,\, a_0\ge y^*y\andeqn  a_0\ge yy^* \rforal y\in {\cal F}_1.
\eneq
Let $T: A_+\setminus \{0\}\to \N\times \R_+\setminus \{0\}$
 with $T(a)=(N(a), M(a))$ ($a\in A_+\setminus \{0\}$) be as given
 by Proposition \ref{Cuniformful} (11.10 and {10.8} of \cite{eglnp}).

 Let $\dt_1>0$ (in place of  $\dt$), let
 ${\cal G}_1\subseteq A$ (in place of ${\cal G}$) be a finite subset,
 let ${\cal H}_{1,0}\subseteq A_+\setminus \{0\}$ (in place of ${\cal H}$)
 be a finite subset, and let $K_1\ge 1$ (in place of $K$) be an integer
as 
given by \ref{CLuniq} 
  for the above $T$, 
  $\ep/16$ (in place of $\ep$), and ${\cal F}_1.$  {{We may assume that $\dt_1<\ep.$}}

 \Wlog, we may assume that ${\cal F}_1\cup {\cal H}_{1,0}\subseteq {\cal G}_1\subseteq A^{\boldsymbol{ 1}}.$

 Choose $b_0\in A_+\setminus \{0\}$ with
 $d_\tau(b_0)<1/8(K_1+1).$

 It follows from Proposition \ref{Cuniformful} 
 that there are ${\cal G}_1$-$\dt_1/64$-multiplicative \cpc s
 $\phi_0: A\to A$ and $\psi_0: A\to D$ for some $D = D\otimes e_{1,1}\subseteq D\otimes {\mathrm {M}_{2K_1+1}}\subseteq A$ with
 $D\in {\cal C}_0'$  such that
$(D\otimes {\mathrm {M}_{2K_1+1}})\phi_0(A) = 0$ and 
 \beq\label{TCzeroK1-1}
&&\hspace{-0.4in} \|x-(\phi_0\oplus\overbrace{\psi_0\oplus\psi_0\oplus\cdots\oplus\psi_0}^{2K_1+1})(x)\|<\min\{\ep/128, \dt_1/128\}\rforal x\in {\cal G}_1,\\\label{TCzeroK1-1+20015}
&&  \phi_0(a_0)\lesssim b_0,  \,\,\, \phi_0(a_0)\lesssim \psi_0(a_0),
\eneq
$\psi_0(a_0)$ is strictly positive
in $D$, and,
moreover,
$\psi_0$ is $T$-${\cal H}_{1,0}\cup \{f_{1/4}(a_0)\}$-full
as a map from $A$ to $D$.
%

{{By \eqref{TCzeroK1-1+20015}, replacing $\phi_0$ by $f_\eta(\phi_0(a_0))\phi_0 f_\eta(\phi_0(a_0))$ for some sufficiently small 
$\eta,$ applying a result of R\o rdam (see also Lemma 3.2 of \cite{eglnp}), as $A$ has stable rank one (see 11.5 of \cite{eglnp}), one may assume 
that there is a unitary $w_0\in {\widetilde{A}}$ 
such that 
\beq\label{43-200106-1}
w_0^*\phi_0(a)w_0\in \overline{DAD}.
\eneq}}
Define $\phi_0': A\to A$ by $\phi_0'(a)=\diag(\phi_0(a), \psi_0(a))$ for all $a\in A.$
\noindent
 {{Let}} $D_{1,1}=\mathrm{M}_{2K_1}(D)$ and $D_{1,1}'=\mathrm{M}_{2K_1+1}(D).$
 Let $j_1: D\to \mathrm{M}_{2K_1}(D)$ be defined by
 $$
 j_1(d)={{\overbrace{d\oplus d\oplus\cdots\oplus d}^{2K_1}}} \rforal d\in D.
 $$
  Let
\vspace{-0.12in} \beq\nonumber
 &&{{d_{00}'=\overbrace{\psi_0(a_0)\oplus\psi_0(a_0)\oplus\cdots\oplus\psi_0(a_0)}^{1+2K_1}\in D_{1,1}'.}}
 \eneq	
 Let $\imath_1: D_{1,1}'\to A$ denote the embedding {{map}}, and  {{let}}
 $\mathrm{Cu}^{\sim}(\imath_1): \mathrm{Cu}^{\sim}(D_{1,1}')\to \mathrm{Cu}^{\sim}(A)$ {{denote}} the induced map. 
By 6.2.3 of \cite{Robert-Cu}, $\mathrm{Cu}^\sim(A)
=\mathrm{L}\aff_+^\sim(\mathrm{T}(A))$
(see also 7.3 
 and 11.8  
 of \cite{eglnp}).
This also holds {{with $C$}} in place of A.
  Let $\Gamma^{\sim}: \mathrm{Cu}^{\sim}(A)\to \mathrm{Cu}^{\sim}(C)$ be
 the isomorphism given by
 $\Gamma^{\sim}(f)(\tau)=f(\Gamma(\tau))$ for all $f\in \mathrm{L}\aff_+^\sim(\mathrm{T}(A))$ and $\tau\in \mathrm{T}(A)$ (see 7.3  of \cite{eglnp}).
 By Theorem 1.0.1 of \cite{Robert-Cu}, 
there  is a \hm\, $h_1':
 D_{1,1}'\to C$ such
that
 \beq\label{TC0k1-5}
 \mathrm{Cu}^{\sim}(h_1')=\Gamma^{\sim}\circ \mathrm{Cu}^{\sim}(\imath_1),\,\,\, {\mathrm{in\,\,\, particular}},
\la  h_1'(d_{00}')\ra =\Gamma^{\sim}\circ \mathrm{Cu}^{\sim}(\imath_1)(\la d_{00}'\ra).
 \eneq
Write $h_1=(h_1')|_{D_{1,1}},$ and $C'=\{c\in C: ch_1(d)=h_1(d)c=0\rforal d\in D_{1,1}\}.$
Note that
 $$
h_1'(\psi_0(a)\oplus \overbrace{0\oplus 0\oplus\cdots\oplus 0}^{2K_1}){\in} C'\rforal a\in A.
$$
Define  $h_0': A\to C'$ by
\vspace{-0.12in}$$
h_0'(a) =h_1'(\psi_0(a)\oplus \overbrace{0\oplus 0\oplus\cdots\oplus 0}^{2K_1})\rforal a\in A.
$$
Define $L_1: A\to C$ by
\vspace{-0.16in}\beq\label{43-20107-2}
L_1(a)=h_0'(a)\oplus h_1(\overbrace{\psi_0(a)\oplus \psi_0(a)\oplus\cdots\oplus\psi_0(a)}^{2K_1})\rforal a\in A.
\eneq
{{Note that $L_1$ is ${\cal G}_1$-$\dt_1/64$-multiplicative (see \eqref{TCzeroK1-1}).}}

{\bf Step 2}: Construct $H_1$ and the first approximate commutative diagram.

It follows from Theorem 1.0.1 of \cite{Robert-Cu}, as $A$ has stable rank one (by 11.5 of \cite{eglnp}),  
that there is a
\hm\, $H: C\to  A$ such that 
\vspace{-0.1in}\beq\label{TC0k1-6}
\mathrm{Cu}^{\sim}(H)=(\Gamma^{\sim})^{-1}.
\eneq
Note that (by \eqref{TC0k1-6}, {{\eqref{TCzeroK1-1+20015}}} and the definition of $h_0'$) 
\beq\label{TC0k1-9}
\la  H\circ h_0'(a_0)\ra \le \la \psi_0(a_0)\ra
\eneq
in the C*-algebra $A$.
Choose $\dt_1/4>\eta_0>0$ such that
\beq\label{TC0k1-10}
\hspace{-0.2in}\|f_{\eta_0}(H\circ h_0'(a_0)) x-x\|,\,\,\|x-xf_{\eta_0}( H\circ h_0'(a_0))\|<
\min\{\ep/128, \dt_1/128\}
\eneq
for all $x\in  H\circ h_0'({\cal G}_1).$
Again, since $A$  has stable rank one (11.5 of \cite{eglnp}), by a result of R\o rdam
(see also 3.2 of \cite{eglnp}), there is a unitary $u_0\in {\widetilde A}$ such that
\beq\label{TC0k1-11}
u_0^*f_{\eta_0}( H\circ h_0'(a_0))u_0\in \overline{\psi_{00}(a_0)A\psi_{00}(a_0)}{{=\overline{DAD}}},
\eneq
where
\vspace{-0.13in}$$
\psi_{00}(a)=\psi_0(a)\oplus \overbrace{0\oplus 0\oplus\cdots\oplus 0}^{2K_1} \in \mathrm{M}_{1+2K_1}(D)\subseteq A\rforal a\in A.
$$
{{Set $A_{0,1}'=\overline{u_0^*f_{\eta_0}( H\circ h_0'(a_0))u_0Au_0^*f_{\eta_0}( H\circ h_0'(a_0))u_0}.$}}
Define $H': A\to  {{A_{0,1}'\subset \overline{DAD}}}$
by
$$
H'(a)=u_0^*(f_{\eta_0}(H\circ h_0'(a_0)))H\circ h_0'(a)(f_{\eta_0}(H\circ h_0'(a_0)))u_0\rforal a\in A.
$$
Note that $H'$ is a ${\cal G}_1$-$\dt_1/32$-multiplicative \cpc.
Moreover, by \eqref{TC0k1-10},
\vspace{-0.02in}\beq\label{TC0k1-11+}
\|{\text{ Ad}}\, u_0\circ H\circ h_0'(a)-{H'(a)}\|<\min\{\ep/128, \dt_1/128\}\rforal a\in {\cal G}_1.
\eneq

Consider the \hm s
${\text{Ad}}\, u_0\circ H\circ h_1\circ j_1$ and $\imath_1\circ j_1$ (or rather $\imath_1|_{D_{1,1}}\circ j_1$).
Then, by \eqref{TC0k1-5} and \eqref{TC0k1-6},
\beq\label{TC0k1-7}
\mathrm{Cu}^{\sim}({\text{Ad}}\, u_0\circ H\circ h_1\circ j_1)={\mathrm{Cu}}^{\sim}(\imath_1\circ j_1).
\eneq
{{Put $A'=\{a\in A: a\perp   A_{0,1}'\}.$}}
{{Then $A'$ is a hereditary \SCA\, of $A.$}} 
{{Thus}} $A'\in {\cal D}$ and $K_0(A')=0.$  
Note that 
we may view 
both $\imath_1\circ j_1$ and ${\text{Ad}}\, u_0\circ H\circ h_1\circ j_1$
as
maps into $A'$ {{(recall $h_0'(A)\perp h_1(D_{1,1})$).}} 
{{By}} Theorem 3.3.1 of \cite{Robert-Cu} (as any hereditary sub-C*-algebra of $A$ has stable rank one)
{{and by \eqref{TC0k1-7},}} 
there exists a unitary $u_1\in {\widetilde A'}$ such that
\beq\label{TC0k1-8}
\|u_1^* ({\text{Ad}}\, u_0\circ H\circ h_1\circ j_1(x))u_1-\imath_1\circ j_1(x)\|<\min\{\ep/16,\dt_1/16\}\rforal x\in {{\psi_0({\cal G}_1)}}.
\eneq
Writing $u_1=\lambda+z$ with $z\in A'$, we may view $u_1$ is a unitary in ${\widetilde A}.$ Note that, for any
$b\in {{A_{0,1}',}}$
$u_1^*bu_1=b.$
In particular, for any $a\in A,$
\beq\label{TC0k1-15}
{\text{Ad}}\, u_1\circ H'(a)=H'(a)\rforal a\in A.
\eneq
{{Note that the map $\imath'\circ \psi_0: A\to \overline{DAD}$ is $T$-${\cal H}_{1,0}\cup \{f_{1/4}(a_0)\}$-full (see the last remark 
of \ref{fullunif}), where $\imath': D\to \overline{DAD}$ is the embedding.}}
By
Corollary 
\ref{CLuniq}, 
there is $u_2\in {\widetilde A}$  {{(see \eqref{43-200106-1})}}
such that
\beq\label{TC0k1-16}
\|{\text{Ad}}\, u_2\circ (H'(a)\oplus \imath_1\circ j_1\circ \psi_0(a))-({\phi_0'(a)}\oplus \imath_1\circ j_1\circ \psi_0(a))\|<\ep/16
\eneq
 for all $a\in {\cal F}_1.$
 {{Recall that $H\circ L_1(a)=H\circ h_0'(a) \oplus H\circ h_1\circ j_1\circ \psi_0(a)$
 for $a\in A$ (see \eqref{43-20107-2}).}}
 Combining  {{with \eqref{TC0k1-11+}, 
 \eqref{TC0k1-8}, 
  and \eqref{TC0k1-15},}}
 we have
 \beq\label{TC0k1-17}
 \|{\text{Ad}}\, (u_0 u_1u_2)\circ H\circ L_1(a)-{\text{Ad}}\, u_2\circ (H'(a)\oplus \imath_1\circ j_1\circ \psi_0(a))\|<{{\ep/128+\ep/16}}
 \eneq
 for all $ a\in {\cal F}_1.$
On the other hand,  by \eqref{TCzeroK1-1},
 \beq\label{TC0k1-18}
 \|{\text{id}}_A(a)-({\phi_0'(a)}\oplus {{\imath_1\circ}} j_1\circ \psi_0(a))\|<\ep/16\rforal a\in {\cal F}_1.
 \eneq
 Put $U_1=u_0u_1u_2.$  By {{\eqref{TC0k1-18},   \eqref{TC0k1-16},  and \eqref{TC0k1-17},}}
 we conclude that
 \beq\label{TC0k1-19}
 \|{\text{id}}_A(a)-{\text{Ad}}\, U_1\circ H\circ L_1(a)\|<\ep\rforal a\in {\cal F}_1.
 \eneq

 Put $H_1={\text{Ad}}\, U_1\circ H$ {{(note that $H_1$ is a \hm).}}
 Then we have the diagram
 \begin{displaymath}
\xymatrix{
A \ar[r]^{\id} \ar[d]_{L_1} & A\\
C \ar[ur]_{H_1}
}
\end{displaymath}
which is approximately commutative on the subset ${\cal F}_1$ to within $\ep.$

 {\bf Step 3}: Construct $L_2$ and the second approximately commutative diagram.

 We first return to $C.$
 Define $\Delta: {{C^{{\boldsymbol{1}}, q}_+}}\setminus \{0\}\to (0,1)$ by
 \beq\label{TC0k1-20}
 \Delta(\hat{a})=(1/2) \inf\{\tau(a): \tau\in T(C)\}
 \eneq
 (Recall that  $\mathrm{T}(C)$ is compact, by 5.3 of \cite{eglnp}  since $C$ has continuous scale.)

 Fix any $\eta_1>0$ and a finite subset ${\cal S}_1\subseteq C.$
 We may assume that $y_1\in {\cal S}_1\subseteq C^{\boldsymbol1}$ and $L_1({\cal F}_1)\subseteq {\cal S}_1.$

 Let ${{\cal G}_{2,C}}\subseteq C$ (in place of ${\cal G}$), ${\cal H}_{1,1}\subseteq C_+^{\boldsymbol 1}\setminus \{0\}$
 (in place of ${\cal H}_1$), and ${\cal H}_{1,2}\subseteq C_{s.a.}$ (in place
 of ${\cal H}_2$) be finite subsets,
and
 $\dt_2>0$ (in place of $\dt$) and  $\gamma_1>0$ (in place of $\gamma$)
 be real numbers as 
provided
by 7.8 of \cite{eglnp}
 for $C,$ $\eta_1/16$ (in place of $\ep$), and ${\cal S}_1$ (in place of ${\cal F}$),
 as well as $\Delta$ above.

 \Wlog, we may assume that ${\cal S}_1\cup {\cal H}_{1,2}\subseteq {{\cal G}_{2,C}}\subseteq C^{\boldsymbol 1}.$ 

 Fix $\ep_2>0$ {{(with $\ep_2<\ep/2$)}} and a finite subset ${\cal F}_2$ such that
 $\{x_1, x_2\}\cup H_1({\cal S}_1)\cup {\cal F}_1\subseteq {\cal F}_2.$
We may assume that ${\cal F}_2\subseteq A^{\boldsymbol 1}.$
 Let
 $$
 \gamma_0=\min\{\gamma_1, \inf\{\Delta(\hat{a}): a\in {\cal H}_{1,1}\cup {\cal H}_{1,2}\}\}.
 $$

\noindent
Fix a strictly positive element $a_1$ of $A$ with $\|a_1\|=1.$
We may assume, \wilog,
that
\beq\label{TC0k-n1}
a_1y=ya_1=y,\,\, a_1\ge y^*y\andeqn  a_1\ge yy^* \rforal y\in {\cal F}_2.
\eneq
Let the map $T: A_+\setminus \{0\}\to \N\times \R_+\setminus\{0\}$
 with $T(a)=(N(a), M(a))$ ($a\in A_+\setminus \{0\}$), be as in  \ref{Cuniformful} 
 (see 11.10 and {10.8} of \cite{eglnp})
 as mentioned in  {\bf Step 1}.

 Let $\dt_2'>0$ (in place of  $\dt$), let
 ${\cal G}_2\subseteq A$ (in place of ${\cal G}$) be a finite subset,
 let ${\cal H}_{2,0}\subseteq A_+\setminus \{0\}$ (in place of ${\cal H}$)
 be a finite subset, and let $K_2'\ge 1$ (in place of $K$) be an integer
as
 given by \ref{CLuniq} 
 for the above $T,$ $\ep_1/16$ (in place of $\ep$), and ${\cal F}_2.$

 \Wlog, we may assume that ${{H_1(\mathcal G_{2, C}), H_1({\cal H}_{1,1}\cup {\cal H}_{1,2})}}, {\cal H}_{2,0}\subseteq {\cal G}_2\subseteq A^{\boldsymbol 1}$  {{and 
 $\dt_2'<\min\{\dt_2, \gamma_0, \dt_1/2\}.$}}
 Choose $K_2\ge K_2'$ such that
 $1/K_2<\gamma_0/8.$
 Choose $b_{2,0}\in A_+\setminus \{0\}$ with
 \beq\label{TC0k-n2}
 \mathrm d_\tau(b_{2,0})<1/8(K_2+1).
 \eneq

 It follows from  Proposition \ref{Cuniformful} (11.10  and 10.7 of \cite{eglnp})
 that there are ${\cal G}_2$-$\dt_2'/64$-multiplicative \cpc s
 $\phi_{2,0}: A\to A$ and $\psi_{2,0}: A\to D_2$ for some ${{D_2=}}
 D_2\otimes e_{11}\subseteq D_2\otimes \mathrm {M_{2K_{2}+1}}\subseteq A$ with
 $D_2\in {\cal C}_0$  such that
$(D_2\otimes \mathrm {M_{2K_{2}+1}})\phi_{2,0}(A) = 0,$
 \beq\label{TCzeroK1-31}
&&\hspace{0.2in}\hspace{-0.4in} \|x-(\phi_{2,0}(x)\oplus\overbrace{\psi_{2,0}(x)\oplus \psi_{2,0}(x)\oplus\cdots\oplus \psi_{2,0}(x)}^{2K_2+1})\|<\min\{\ep_2/128, \dt_2'/128\},\quad x\in {\cal G}_2,	\\\label{TCzeroK1-31+}
&& \phi_{2,0}(a_1)\lesssim b_{2,0},\,\,\,\phi_{2,0}(a_1)\lesssim \psi_{2, 0}(a_1),
\eneq
and  $\psi_{2,0}(a_1)$ is strictly positive in $D_2,$  {{and, moreover
$\psi_{2,0}$ is $T$-${\cal H}_{2,0}\cup \{f_{1/4}(a_1)\}$-full in $D_2.$}}
{{As in {\bf Step 1}, we may assume that there is a unitary $w_1\in {\widetilde{A}}$ such
that 
\beq\label{43-20108-1}
w_1^*\phi_{2,0}(a_0)w_1\in \overline{D_2AD_2}
\,\,\,\,\,\,\text{(see  \eqref{43-200106-1}).}
\eneq
}}
 Define $\phi_{2,0}': A\to A$ by $\phi_{2,0}'(a)=\phi_{2,0}(a)\oplus  \psi_{2,0}(a)$ for all $a\in A.$
 {{Let}} $D_{2,1}=\mathrm{M}_{2K_2}(D_2)$ and $D_{2,1}'=\mathrm{M}_{2K_2+1}(D_2).$
 Let $j_2: D_2\to \mathrm{M}_{2K_2}(D_2)$ be defined by
 $$
 j_2(d)=\diag(\overbrace{d,d,...,d}^{2K_2})\rforal d\in D_2.
 $$


 Set
\beq\nonumber
 &&{{d_{2,00}'=\overbrace{\psi_{2,0}(a_1)\oplus \psi_{2,0}(a_1)\oplus\cdots\oplus \psi_{2,0}(a_1)}^{2K_2+1}\in D_{2,1}'.}}
 \eneq	
With $\imath_2: D_{2,1}'\to A$ the inclusion map, consider the induced map $\mathrm{Cu}^{\sim}(\imath_2): \mathrm{Cu}^{\sim}(D_{2,1}')\to \mathrm{Cu}^{\sim}(A)$.
 It follows {{from}} Theorem 1.0.1 of \cite{Robert-Cu} (as $C$ has stable rank one)
 that there  is a \hm\, $h_2':
 D_{2,1}'\to C$ such
 that
 \beq\label{TC0k1-35}
 \mathrm{Cu}^{\sim}(h_2')=\Gamma^{\sim}\circ \mathrm{Cu}^{\sim}(\imath_2),\,\,\,{\mathrm{in\,\,\,particular,}}\,\,\,
\la  h_2'(d_{2, 00}')\ra =\Gamma^{\sim}\circ \mathrm{Cu}^{\sim}(\imath_2)(\la d_{2, 00}'\ra).
 \eneq

 Let $h_2=(h_2')|_{D_{2,1}}.$  Denote by $C''=\{c\in C: ch_2(d)=h_2(d)c=0, \rforal d\in D_{2,1}\}.$
 Note that
 $$
h_2'(\psi_{2,0}(a)\oplus \overbrace{0\oplus 0\oplus\cdots\oplus 0}^{2K_2}){\in} {{C''}},
{{\rforal}}
a\in A.
$$
Define  $h_{2,0}': A\to C''$ by
\vspace{-0.12in}$$
h_{2,0}'(a) =h_2'(\psi_{2,0}(a)\oplus \overbrace{0\oplus 0\oplus \cdots \oplus 0}^{2K_2}), {{\rforal}}
a\in A.
$$
Define $L'_2: A\to C$ by, {{for all $a\in A,$}}
\vspace{-0.12in}\beq\label{43-20108-n2}
L'_2(a)=h_{2,0}'(a)\oplus h_2((\overbrace{\psi_{2,0}(a)\oplus \psi_{2,0}(a)\oplus \cdots \oplus \psi_{2,0}(a)}^{2K_2})
=h_{2,0}'(a)\oplus h_2\circ j_2(\psi_{2,0}).
\eneq
{{By \eqref{TC0k1-35},
\eqref{TCzeroK1-31}, \eqref{TCzeroK1-31+},  
and $1/K_2<\gamma_0/8,$ we 
have, for all $a\in {\cal G}_2,$
 \beq
 |\tau(h_2\circ j_2(\psi_{2,0}(a)))-\Gamma(\tau)(a)|
 =|\Gamma(\tau)(j_2(\psi_{2,0}(a))-\Gamma(\tau)(a)|<\gamma_0/128+\gamma_0/8.
 \eneq
It follows (see \eqref{43-20108-n2}) that
\beq\label{43-20107n10}
\hspace{0.2in}|\tau(L_2'(a))-\Gamma(\tau)(a)|<\gamma_0/128+\gamma_0/8+\gamma_0/8, \rforal a\in {\cal G}_2\andeqn \rforal \tau\in T(C).
\eneq}}
 {{Since ${\mathrm{Cu}^\sim(H_1)=\mathrm{Cu}}^\sim (H)=(\Gamma^\sim)^{-1},$ 
 $\Gamma(\tau)(H_1(x))=\tau(x)$ for all $x\in C$ and $\tau\in T(C).$
 Thus}}
\beq\label{TC0k1-36}
\sup\{ |\tau\circ L'_2\circ 
{{H_1}}(x)-\tau(x)|: \tau\in T(C)\}<\gamma_0 {{\le \gamma_1,\rforal  x\in {\cal H}_{1,1}\cup {\cal H}_{1,2}.}}
\eneq
This implies that, in particular, 
\beq\label{TCk1-37}
\tau(L'_2\circ
{{H_1}}(b))\ge \Delta(\hat{b}),\quad b\in {\cal H}_{1,1}.
\eneq
Note also that, by construction of $C$, $\mathrm{K}_0(C)=\mathrm{K}_1(C)=\{0\},$
and so we may apply 
7.8 of \cite{eglnp}.
In this way, {{by \eqref{TC0k1-36} and 
\eqref{TCk1-37},}} we obtain a unitary $V_1\in {\widetilde C}$ such that
\beq\label{TCk1-38}
\|{\text{Ad}}\, V_1\circ L'_2\circ H_1(a)-{\text{id}}_{C}(a)\|<\eta_1/2, {{\rforal}}
a\in {\cal S}_1.
\eneq
Set $L_2={\text{Ad}}\, V_1\circ L'_2.$
We have the diagram
 \begin{displaymath}
\xymatrix{
A \ar[r]^{\id} \ar[d]_{L_1} & A \ar[d]^{L_2}\\
C \ar[ur]_{H_1}\ar[r]_\id & C,
}
\end{displaymath}
with the upper triangle approximately commuting on $\mathcal F_1$ to within $\ep$ and the lower triangle approximately commuting on $\mathcal S_1$ to within $\eta_1$.  {Also note that $L_2$ is ${\cal G}_2$-$\dt_2'/64$-multiplicative.}

{\bf Step 4}: Show that the process continues.

We will repeat the argument of {\bf Step 2}.

{{Recall}}
\vspace{-0.1in}\beq\label{TC0k1-6n}
\mathrm{Cu}^{\sim}(H)=(\Gamma^{\sim})^{-1}.
\eneq
{{Thus}}
\vspace{-0.1in}\beq\label{TC0k1-49}
\la  H\circ {{h_{2,0}''(a_1)}}\ra \le \la \psi_{2,0}(a_1)\ra
\eneq
in the C*-algebra $A,$
{{where $h_{2,0}''={\mathrm{Ad}}\, V_1\circ h_{2,0}'.$
Put $h_2^\sim={\mathrm{Ad}}\, V_1\circ h_2.$}}

Choose $\dt_2/4>\eta_1>0$ such that
\beq\label{TC0k1-50}
\hspace{-0.2in}\|f_{\eta_1}(H\circ {{h_{2,0}''(a_1)}}) x-x\|,\,\,\|x-xf_{\eta_1}( H\circ {{h_{2,0}''(a_1)}})\|<
\min\{\ep_2/128, \dt_2'/128\}
\eneq
for all $x\in  H\circ h_{2,0}'({\cal G}_2).$
Since $A$ has stable rank one, by a result of R\o rdam
(see also 3.2 of \cite{eglnp}), there is a unitary $u_{2,0}\in {\widetilde A}$ such that
\beq\label{TC0k1-51}
u_{2,0}^*f_{\eta_1}( H\circ {{h_{2,0}''(a_1)}})u_{2,0}\in
\overline{\psi_{2,00}(a_1)A\psi_{2,00}(a_1)}= \overline{D_2AD_2},
\eneq
where
\vspace{-0.1in}$$
\psi_{2,00}(a_1)=(\psi_{2,0}(a_1) \oplus \overbrace{0\oplus 0\oplus\cdots\oplus 0}^{2K_2})).
$$
{{Set $A_{2,0}'=\overline{u_{2,0}^*f_{\eta_1}( H\circ h_{2,0}''(a_1))u_{2,0}Au_{2,0}^*f_{\eta_1}( H\circ h_{2,0}''(a_1))u_{2,0}}.$
Note that $A_{2,0}'$ is a hereditary \SCA\, of $A.$}} 
Define $H'': A\to   {{A_{2,0}'\subset \overline{D_2AD_2}}}$
by
$$
H''(a)=u_{2,0}^*(f_{\eta_1}(H\circ h_{2,0}''(a_1)))H\circ h_{2,0}''(a)(f_{\eta_1}(H\circ h_{2,0}''(a_1)))u_{2,0} {{\rforal}} a\in A.
$$
Note that $H''$ is a ${\cal G}_2$-$\dt_2'/32$-multiplicative \cpc.
Moreover, by \eqref{TC0k1-50},
\beq\label{TC0k1-52+}
\|{\text{Ad}}\, u_{2,0}\circ H\circ h_{2,0}''(a)-{H''(a)}\|<\min\{\ep_2/128, \dt_2'/128\} {{\rforal}} a\in {\cal G}_2.
\eneq

Consider the two \hm s
${\text{Ad}}\, u_{2,0}\circ H\circ {{h_2^\sim}}\circ j_2$ and $\imath_2\circ j_2.$
Then, by \eqref{TC0k1-35} and \eqref{TC0k1-36},
\beq\label{TC0k1-57}
\mathrm{Cu}^{\sim}({\text{Ad}}\, u_{2,0}\circ H\circ {{h_2^\sim}}\circ j_2)=\mathrm{Cu}^{\sim}(\imath_2\circ j_2).
\eneq
Put $A''=\{a\in A: a\perp   {{A_{2,0}'}}\}.$
Note that we may view 
both $\imath_2\circ j_2$ and ${\text{Ad}}\, u_{2,0}\circ H\circ h_2^\sim\circ j_2$
as maps into $A''.$
It follows from Theorem 3.3.1 of \cite{Robert-Cu}, as $A''$,
a hereditary subalgebra,
 has stable rank one, that
there exists a unitary $u_{2,1}\in {\widetilde A''}$ such that
\beq\label{TC0k1-8n}
\hspace{0.3in}\|u_{2,1}^* ({\text{Ad}}\, u_{2,0}\circ {{H\circ h_2^\sim}}\circ j_2(x))u_{2,1}-\imath_2\circ j_2(x)\|<\min\{\ep_2/16,\dt_2'/16\}
{{\rforal x\in \psi_{2,0}({\cal G}_2).}}
\eneq
\noindent
Writing $u_{2,1}=\lambda+z'$ for some $z'\in A''.$ Therefore
we may view $u_{2,1}$ as a unitary in ${\widetilde A}.$ Note that, for any
$b\in {{\overline{D_2AD_2},}}$
 $u_{2,1}^*bu_{2,1}=b.$
In particular, for any $a\in A,$
\beq\label{TC0k1-15n}
{\text{Ad}}\, u_{2,1}\circ H''(a)=H''(a)\rforal a\in A.
\eneq
{{Note that the map $\imath''\circ \psi_{2,0}: A\to \overline{D_2AD_2}$ is $T$-${\cal H}_{1,0}\cup \{f_{1/4}(a_1)\}$-full (see the last remark 
of \ref{fullunif}), where $\imath'': D_2\to \overline{D_2AD_2}$ is the embedding.}}
By Corollary \ref{CLuniq},  there is a unitary $u_{2,2}\in {\widetilde A}$ {{(see \eqref{43-20108-1})}}
such that
\beq\label{TC0k1-16n}
\|{\text{Ad}}\, u_{2,2}\circ (H''(a)\oplus \imath_2\circ j_2\circ \psi_{2,0}(a))-({\phi_{2,0}'}(a)\oplus  j_2\circ \psi_{2,0}(a))\|<\ep_2/16
\eneq
 for all $a\in {\cal F}_2.$  {{Recall that $H\circ L_2(a)=H\circ h_{2,0}''(a) \oplus H\circ h_2^\sim\circ j_1\circ \psi_0(a)$
 for $a\in A$ (see \eqref{43-20108-n2}) and the line after \eqref{TC0k1-49}.}}
 Combining {{with  \eqref{TC0k1-52+},}} \eqref{TC0k1-8n},  and \eqref{TC0k1-15n},
 we have
 \beq\label{TC0k1-17n}\hspace{0.5in} \|{\text{Ad}}\, (u_{2,0}u_{2,1} u_{2,2})\circ H\circ L_2(a)-{\text{Ad}}\, u_{2,2}\circ (H''(a)\oplus \imath_2\circ j_2\circ \psi_{2,0}(
a))\|<{{\ep_2/128+\ep_2/16}},
\eneq
for all $a\in {\cal F}_2.$
On the other hand,  by \eqref{TCzeroK1-31},
 \beq\label{TC0k1-18n}
 \|{\text{id}}_A(a)-({\phi_{2,0}'}(a)\oplus j_2\circ \psi_{2,0}(a))\|<\ep_2/16\rforal a\in {\cal F}_2.
 \eneq
 Set $U_2=u_{2,0}u_{2,1}u_{2,2}.$  By \eqref{TC0k1-18n},
\eqref{TC0k1-16n}, 
 and \eqref{TC0k1-17n},
 we conclude that
 \beq\label{TC0k1-19n}
 \|{\text{id}}_A(a)-{\text{Ad}}\, U_2\circ H\circ L_2(a)\|<\ep_2\rforal a\in {\cal F}_2.
 \eneq
 Thus, we have expanded the diagram above to the diagram
  \begin{displaymath}
\xymatrix{
A \ar[r]^{\id} \ar[d]_{L_1} & A \ar[d]^{L_2} \ar[r]^{\id}   & A\\
C \ar[ur]_{H_1}\ar[r]_\id & C
\ar[ur]_{H_2}
}\,\,,
\end{displaymath}
where $H_2:=\mathrm{Ad}{{U_2}}\circ H$ {{(which is a \hm),}} with the last triangle approximately commuting on ${\cal  F}_2$ to within $\ep_2{{(<\ep/2)}}.$

After continuing in this way (to construct $L_3$ and so on), the Elliott approximate intertwining argument (see \cite{Ell-AT-RR0}, Theorem 2.1)
shows that $A$ and $C$ are isomorphic.
\end{proof}

\begin{cor}\label{CZtW}
Let $A$ be a non-unital simple separable amenable \CA\, with continuous scale  and
satisfying the UCT. Suppose that $A\in {\cal D}$ and $K_0(A)=\ker\rho_A$, {where $\rho_A$ is the canonical map $K_0(A) \to \mathrm{Aff}(\mathrm{T}(A))$.}
Suppose that $B\in {\cal D}_{0}$ satisfies the UCT, has continuous scale  and
satisfies $\mathrm{K}_0(B)=\mathrm{K}_1(B)=\{0\},$ and suppose
that there is an affine homeomorphism $\gamma: \mathrm{T}(B)\to \mathrm{T}(A).$
Then there is an embedding $\phi: A\to B$ such that $\phi_T=\gamma.$
\end{cor}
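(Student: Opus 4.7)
The plan is to construct $\phi$ as a point-norm limit, after unitary conjugations, of a sequence of approximately multiplicative maps $L_n: A \to B$, adapting only the ``downward'' half of the two-sided intertwining from the proof of Theorem~\ref{TTMW}. No reverse maps $H_n: B \to A$ are needed since we want only an embedding, not an isomorphism.

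First I would fix an increasing sequence of finite subsets $\mathcal F_n \subseteq A^{\bbf 1}$ with dense union, and a summable sequence $\varepsilon_n > 0$. For each $n$, $L_n$ is built exactly as $L_1$ is built in Step 1 of the proof of Theorem~\ref{TTMW}. Apply Proposition~\ref{Cuniformful} (using $A \in \mathcal D$) to decompose $\mathrm{id}_A$ approximately on $\mathcal F_n$ as $\phi_n \oplus \psi_n^{\oplus(2K_n+1)}$, with $\psi_n: A \to D_n$, $D_n \in \mathcal C_0'$, and $\phi_n(a_0) \lesssim \psi_n(a_0)$. The hypothesis that $\gamma: \mathrm{T}(B) \to \mathrm{T}(A)$ is an affine homeomorphism, together with the identification of $\mathrm{Cu}^{\sim}$ with $\mathrm{LAff}_+^{\sim}$ of the trace cone (see 7.3 and 11.8 of \cite{eglnp}), gives an isomorphism $\gamma^{\sim}: \mathrm{Cu}^{\sim}(A) \to \mathrm{Cu}^{\sim}(B)$. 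By Robert's theorem (Theorem 1.0.1 of \cite{Robert-Cu}), applicable because $D_n \in \mathcal C_0$ and $B$ has stable rank one (by 11.5 of \cite{eglnp}), one obtains a $\ast$-homomorphism $h_n: \mathrm M_{2K_n+1}(D_n) \to B$ realizing $\gamma^{\sim} \circ \mathrm{Cu}^{\sim}(\imath_n)$, where $\imath_n$ is the inclusion into $A$. Set $L_n := h_n \circ \psi_n^{\oplus(2K_n+1)}$.

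Next, I would force the sequence $(L_n)$ to be Cauchy. By $\mathrm{KK}$-contractibility of $B$ (coming from the UCT and $\mathrm K_*(B) = 0$), we have $\mathrm{KL}(A,B) = 0$, so $[L_n] = [L_{n+1}]$ automatically, and both $L_n, L_{n+1}$ approximately realize $\gamma^{-1}$ on traces by construction. A stable uniqueness theorem (Corollary~\ref{CLuniq}, or Theorem~\ref{Lauct2} directly) then provides a unitary equating $L_n \oplus \sigma_K$ and $L_{n+1} \oplus \sigma_K$ on $\mathcal F_n$ within $\varepsilon_n$, where $\sigma$ is a full auxiliary map into some $\mathrm M_l(B)$. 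By taking $K_{n+1}$ large in the construction of $L_{n+1}$ so that its Cuntz data dominates $\sigma_K$, one absorbs $\sigma_K$ into $L_{n+1}$ using Theorem 3.3.1 of \cite{Robert-Cu}, as in the transition from \eqref{TC0k1-8n} and \eqref{TC0k1-16n} to \eqref{TC0k1-17n} in Theorem~\ref{TTMW}. This yields unitaries $v_n \in \widetilde B$ with $\|\mathrm{Ad}(v_n) \circ L_{n+1}(a) - L_n(a)\| < \varepsilon_n$ for $a \in \mathcal F_n$, and $(\mathrm{Ad}(v_1 \cdots v_n) \circ L_{n+1})_n$ converges in point-norm to a $\ast$-homomorphism $\phi: A \to B$. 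Simplicity of $A$ yields injectivity, and $\phi_T = \gamma$ follows from the trace-preserving construction at each stage.

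The main obstacle is precisely this absorption step: the stable uniqueness theorem naturally yields equivalences in an amplification $\mathrm M_{1+Kl}(B)$, whereas convergence requires equivalences inside $B$ itself. A subsidiary difficulty is that Corollary~\ref{CLuniq} is stated for $\mathrm{KK}$-contractible $A$, whereas here $A$ satisfies only $\mathrm K_0(A) = \ker\rho_A$ and may have nontrivial $\mathrm K_1$; one must apply Theorem~\ref{Lauct2} directly, observing that the triviality of $\mathrm K_*(B)$ forces the $\mathrm K$-theoretic side conditions (including the $\mathrm{cel}$-bound of \eqref{Lauct-1}) to trivialize for maps into $B$.
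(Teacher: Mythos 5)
Your high-level plan — one-sided intertwining via maps $L_n: A \to B$ built from the ${\cal D}$-decomposition of $A$ together with Robert's theorem — matches the paper's strategy, and you correctly note that only the forward half of the intertwining from Theorem~\ref{TTMW} is needed. However, the mechanism you propose for forcing the sequence $(L_n)$ to be Cauchy has genuine gaps that are not just bookkeeping.

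First, the absorption step does not transfer from Theorem~\ref{TTMW} the way you suggest. In Steps 2 and 4 of that proof, the stable uniqueness theorem is applied to $H\circ L_n$ and $\mathrm{id}_A$, \emph{both} maps from $A$ into $A$, and the auxiliary $\sigma_K$ is precisely the ``$j_n\circ\psi_{n,0}$'' piece of the ${\cal D}$-decomposition of $\mathrm{id}_A$ — a concrete orthogonal summand already sitting inside $A$, so the unitary from $\widetilde{\mathrm M_{1+Kl}(\,\cdot\,)}$ is automatically realized in $\widetilde A$. When you instead compare $L_n$ and $L_{n+1}$ directly as maps into $B$, the ranges are images under two \emph{unrelated} Robert homomorphisms $h_n, h_{n+1}$, and there is no a priori orthogonal decomposition inside $B$ of which $\sigma_K$ is a slice. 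Saying ``take $K_{n+1}$ large so its Cuntz data dominates $\sigma_K$'' and invoking Theorem 3.3.1 of \cite{Robert-Cu} does not produce the needed corner of $\widetilde B$ containing the amplification; you would need to decompose via the ${\cal D}_0$-structure of $B$ itself (not of $A$) and align the two $L_n$-decompositions, which you do not do.

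Second, your fallback to Theorem~\ref{Lauct2} (since $A$ is not $\mathrm{KK}$-contractible) is more problematic than you indicate. Theorem~\ref{Lauct2} requires the target $B\in \boldsymbol{C}_{(r_0,r_1,T,s,R)}$; condition (f) of Definition~\ref{Blbm} is a uniform exponential-rank bound $\mathrm{cer}(\mathrm M_m(\widetilde B))\le R$, which is not a consequence of $\mathrm K_*(B)=0$ and is not established for an abstract $B\in{\cal D}_0$. Relatedly, the $\mathrm{cel}$-hypothesis~\eqref{Lauct-1} is a condition on the maps, not on $B$: triviality of $\mathrm K_1(B)$ only puts $\lceil L_n(u)\rceil\lceil L_{n+1}(u^*)\rceil$ in $\mathrm U_0$, it does not bound its exponential length. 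Remark~\ref{RRLuniq} lets one drop~\eqref{Lauct-1} when $\mathrm K_1(A)=\{0\}$ or when $B$ has real rank zero — neither holds here ($B$ is stably projectionless, and $\mathrm K_1(A)$ may be nonzero). A smaller point: since $\mathrm K_0(A)=\ker\rho_A$ may be nonzero while $\mathrm K_0(B)=\{0\}$, the map $\gamma^\sim$ you describe (the extension of $\Gamma$ by zero on $\mathrm K_0(A)$) is a morphism of $\mathrm{Cu}^\sim$, not an isomorphism.

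The paper avoids all of this by a different key lemma. It first maps $A$ into the concrete Razak inductive limit $C$ (then identifies $C\cong B$ via Theorem~\ref{TTMW}), and compares successive $L_n$ with the ``unamplified'' uniqueness theorem 7.8 of \cite{eglnp}, which for approximately multiplicative c.p.c.\ maps into $C$ (using $\mathrm K_0(C)=\mathrm K_1(C)=\{0\}$ and the trace data) produces a unitary directly in $\widetilde C$. No amplification arises, so no absorption is needed, and no $\boldsymbol{C}$-class membership or $\mathrm{cel}$-bounds on $B$ must be verified. Your plan would need to be reorganized around that lemma — or else you would have to develop the $B$-side decomposition to make the absorption legitimate.
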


\begin{proof}
Since $\ker\rho_A=\mathrm{K}_0(A),$
then, in the previous proof,  $\Gamma$ (extended 
to be zero on $\mathrm{K}_0(A)$) 
now
gives a \hm\, from
$\mathrm{Cu}^{\sim}(A)$, which is equal to $\mathrm{K}_0(A)\sqcup {\mathrm{LAff}}_+^{\sim}({\mathrm{T}}(A)),$
by 
6.2.3 
of \cite{Robert-Cu} and 7.3 of \cite{eglnp},  to $\mathrm{Cu}^{\sim}(C),$ 
where $C$ is a simple inductive limit of Razak algebras with continuous scale such that $ \mathrm{T}(A)\cong \mathrm{T}(C)$. Note that it follows from Theorem \ref{TTMW} that $C\cong B$.
We simply omit the construction of $H_1$ and keep Step 1  and Step 3
(in the 
(new) 
first step now we ignore anything related to Step 2).  A one-sided Elliott intertwining
yields a \hm\, from $A$ to $C.$ 
\end{proof}

\section{Tracial approximation and  non-unital versions of some results of Winter}

\begin{lem}{{\rm{(Prop. 2.1 of \cite{Winter-TA})}}}\label{comp}
Let $A$ be a simple \CA\, (with or without unit) belonging to the reduction class $\mathcal R$, and assume that $A$ has strict comparison. 

Let $F$ be a finite dimensional \CA\,, and let
\beq
\phi: F\to A\tand
\phi_i: F\to A\tforal  i\in \mathbb N
\eneq
be {{c.p.c.}} order-zero maps such that for each $c\in F_+$ and $f\in\mathrm{C}_{0}^{+}((0, 1])$,
\beq
\lim_{i\to\infty}\sup_{\tau\in{\mathrm{T(A)}}} |\tau({{f(\phi)}}(c)-{f(\phi_i)(c)}| = 0\tand\\
\limsup_{i\to\infty} \|{f(\phi_i)(c)}\| \leq \|{f(\phi)(c)}\|. 
\eneq
It follows that there are contractions
$$s_i\in \mathrm{M}_4 \otimes  A\tforal  i\in\mathbb N$$
such that 
\beq
&&\lim_{i\to\infty}\|s_i(1_4\otimes \phi(c)) - (e_{1, 1} \otimes \phi_i(c))s_i\| = 0\tforal  c\in {{F_+}}\tand\\
&&\lim_{i\to\infty}\|(e_{1, 1} \otimes \phi_i(c))s_is_i^*- e_{1, 1} \otimes \phi_i(c)\| = 0. 
\eneq
\end{lem}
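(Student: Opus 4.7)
The plan is to convert the hypothesized trace convergence into Cuntz subequivalences via the strict comparison enjoyed by $\mathcal D$-algebras, then extract contractive witnesses using stable rank one, and finally assemble them into elements of $M_4\otimes A$ compatible with the order-zero structure of $\phi,\phi_i$ on the finite-dimensional $F$.

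First I would fix $c\in F_+$ and a small $\eta>0$ and apply the first hypothesis to $f=f_\eta\in \mathrm{C}_0^+((0,1])$. Together with compactness of $\mathrm{T}(A)$, which holds since $A\in\mathcal R$ has continuous scale, this gives $d_\tau((\phi_i(c)-\eta)_+)\le d_\tau(\phi(c))+o(1)$ uniformly in $\tau$. Strict comparison (a property of $\mathcal D$-algebras, recalled in the preamble to this section) then yields the Cuntz subequivalence $(\phi_i(c)-\eta)_+\lesssim \phi(c)$ for $i$ large. By R\o rdam's lemma there exists $r_{i,\eta}\in A$ with $r_{i,\eta}^*\phi(c)r_{i,\eta}=((\phi_i(c)-\eta)_+-\eta)_+$. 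The second hypothesis, controlling $\|f(\phi_i(c))\|$ from above by $\|f(\phi(c))\|$, is essential here to prevent spectral mass of $\phi_i(c)$ from escaping and to ensure the Cuntz estimates are uniform in $\tau$.

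Next, I would pass to a contractive intertwiner by replacing $r_{i,\eta}$ by a suitable normalization (for instance, the partial-isometry component of its polar decomposition in $A^{**}$, or the product $r_{i,\eta}\,f_\eta(\phi(c))^{1/2}$), and then invoke the order-zero structure: since $\phi,\phi_i$ are order-zero with $F$ finite-dimensional, by the Winter--Zacharias structure theorem both factor through the cone $CF=\mathrm{C}_0((0,1])\otimes F$, so it suffices to construct $s_i$ that approximately intertwines on the strictly positive element of each matrix block of $F$; continuous functional calculus through $CF$ then yields the intertwining for every $c\in F_+$. I would place $s_i$ in the first row of $M_4\otimes A$, writing $s_i=\sum_{k=1}^{4} e_{1,k}\otimes r_i^{(k)}$, where the four slots provide enough room for a 2-by-2 matrix trick applied twice (hence the factor $4$) to simultaneously enforce the intertwining $s_i(1_4\otimes\phi(c))\approx (e_{1,1}\otimes\phi_i(c))s_i$ and the range condition $(e_{1,1}\otimes\phi_i(c))s_is_i^*\approx e_{1,1}\otimes\phi_i(c)$; the latter uses that $\sum_k r_i^{(k)}(r_i^{(k)})^*$ approximately dominates the support projection of $\phi_i(c)$ by construction.

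The main obstacle will be the coherent choice of a single sequence $(s_i)$ that works for every $c\in F_+$, every $f\in\mathrm{C}_0^+((0,1])$, and every cutoff $\eta\to 0$ simultaneously. Cuntz subequivalence produces witnesses one positive element at a time, so one needs a diagonal-sequence argument, and keeping the construction compatible with the order-zero structure (so that intertwining of the generators of each block of $F$ propagates to all of $F_+$ via $CF$) requires careful bookkeeping; the norm-domination hypothesis is again essential, as it forces the spectrum of $\phi_i(c)$ to eventually lie in any open neighbourhood of that of $\phi(c)$, making the continuous functional calculus argument work uniformly.
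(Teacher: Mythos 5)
The paper's proof of this lemma is a one-line citation to Proposition~2.1 of Winter's paper \emph{Classifying crossed product C*-algebras}, together with the remark that unitality is inessential because strict comparison alone carries the argument. Your sketch follows the same broad strategy as that proof: convert the tracial hypotheses into Cuntz subequivalences via strict comparison, extract contractive witnesses, and exploit the order-zero structure of maps out of the finite-dimensional $F$. So in outline you are on the right road.

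There is, however, a concrete flaw in the central comparison step. From the uniform convergence of $\tau\mapsto\tau(f_\eta(\phi_i(c)))$ you deduce $d_\tau((\phi_i(c)-\eta)_+)\le d_\tau(\phi(c))+o(1)$ and then assert that strict comparison gives $(\phi_i(c)-\eta)_+\lesssim\phi(c)$ for $i$ large. Strict comparison requires the \emph{strict} inequality $d_\tau(a)<d_\tau(b)$ for every $\tau$, and an estimate carrying an $o(1)$ error does not supply it: nothing prevents $d_\tau((\phi_i(c)-\eta)_+)$ from equalling or slightly exceeding $d_\tau(\phi(c))$ at some $\tau$, however large $i$ is. To make strict comparison bite one should compare against an amplified copy $1_k\otimes\phi(c)$ for some $k\ge 2$, using $\inf_{\tau\in\mathrm{T}(A)}d_\tau(\phi(c))>0$ (simplicity of $A$ plus compactness of $\mathrm{T}(A)$) to produce a uniform slack $(k-1)d_\tau(\phi(c))$ that eventually dominates the error. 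A good part of the ambient $\mathrm{M}_4$ is consumed here, not exclusively by the ``$2\times 2$ trick applied twice'' you posit. Beyond this, the issues you yourself flag --- choosing a single $s_i$ serving all of $F_+$ at once, securing the range condition $(e_{1,1}\otimes\phi_i(c))s_is_i^*\approx e_{1,1}\otimes\phi_i(c)$, and the diagonalization --- are left as intentions rather than executed steps; they are precisely the content of Winter's Proposition~2.1 that a proof must actually supply.
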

{{(See 4.2 of \cite{WZ-OR0} for the definition of $f(\psi)$ where $\psi$ is an order zero map.)}}
\begin{proof}
The proof is the same as for Proposition 2.1 of \cite{Winter-TA} (the argument does not require the \CA\, to be unital; the hypothesis of strict comparison is sufficient for the argument to proceed).
\end{proof}

The following 
lemma
is a slight modification of 
4.2 of \cite{Winter-Z-stable-02}.

\begin{lem}\label{app-unit}
Let $A$ be a separable \CA\, with nuclear dimension at most $m$. Let $(e_n)$ be an increasing approximate unit for $A$.
Then there is a sequence of $(m+1)$-decomposable completely positive approximations
$$
\xymatrix{
\widetilde{A} \ar[r]^-{\tilde{\psi}_j} & F_j^{(0)}\oplus F_j^{(1)}\oplus\cdots\oplus F_j^{(m)} \oplus \Comp \ar[r]^-{\tilde{\phi}_j} & \widetilde{A},\quad j=1, 2, ...
}
$$ 
(i.e., each ${\tilde{\phi}_j}|_{F_j^{(l)}}$ is of order zero) such that, for each $j=1, 2, ...$,
\begin{equation}\label{m-decom-cond-1}
\tilde{\phi}_j(F_j^{(l)})\subseteq A,\quad l=0, 1, ..., m,
\end{equation}
\begin{equation}\label{m-decom-cond-2}
\tilde{\phi}_j|_\Comp(1_{\Comp}) = 1_{\widetilde{A}}-e_{n_j},\quad\textrm{for some $e_{n_j}$ in the approximate unit $(e_n)$,
and}
\end{equation} 
\begin{equation}\label{refine-0} 
 \lim_{j\to\infty}\|\tilde{\phi}_j\tilde{\psi}_j(a)-a\|=0, \,\,\,
\lim_{j\to\infty}\|\tilde{\phi}^{(l)}_j\tilde{\psi}^{(l)}_j(1_{\widetilde{A}}) a - \tilde{\phi}_j^{(l)}\tilde{\psi}^{(l)}_j(a)\|=0,\quad l=0,
 1, ..., m, \ a\in \widetilde{A},
\end{equation} 
where $\tilde{\phi}^{(l)}_j$ and $\tilde{\psi}^{(l)}_j$ are the restriction of $\tilde{\phi}_j$ to $F_j^{(l)}$ and the projection of $\tilde{\psi}_j$ to $F_j^{(l)}$, respectively.
\end{lem}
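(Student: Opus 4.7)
The plan is to adapt Winter's unital construction (Lemma 4.2 of \cite{Winter-Z-stable-02}) to the unitisation $\widetilde A$: each CP approximation of $A$ arising from $\dim_{\mathrm{nuc}}(A)\le m$ is enlarged by a one-dimensional summand $\mathbb C$ whose $\widetilde\phi$-image is the complementary positive element $1_{\widetilde A}-e_{n_j}$, and the decomposition is then refined so that each coloured summand behaves unitally on its image.

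First I would fix $\epsilon_j\searrow 0$ and an increasing chain of finite subsets $\mathcal F_j\subseteq A$ with dense union, and use $\dim_{\mathrm{nuc}}(A)\le m$ to extract CPC maps $\psi_j:A\to F_j:=F_j^{(0)}\oplus\cdots\oplus F_j^{(m)}$ together with CP maps $\phi_j:F_j\to A$ such that each $\phi_j^{(l)}:=\phi_j|_{F_j^{(l)}}$ is CPC order zero and $\|\phi_j\psi_j(a)-a\|<\epsilon_j$ for $a\in\mathcal F_j$. I would then choose $e_{n_j}$ from the given approximate unit of $A$ so that $\|e_{n_j}b-b\|<\epsilon_j$ and $\|[e_{n_j},b]\|<\epsilon_j$ for every $b\in\mathcal F_j\cup\phi_j(F_j)$.

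Letting $\chi:\widetilde A\to\mathbb C$ be the canonical character with kernel $A$, I would define
\[
\widetilde\psi_j(x)=\bigl(\psi_j(e_{n_j}^{1/2}xe_{n_j}^{1/2}),\,\chi(x)\bigr),\qquad \widetilde\phi_j|_{F_j^{(l)}}=\phi_j^{(l)},\quad \widetilde\phi_j(1_{\mathbb C})=1_{\widetilde A}-e_{n_j}.
\]
Both coordinates of $\widetilde\psi_j$ are CPC and the codomain is a direct sum, so $\widetilde\psi_j$ is itself CPC; each $\widetilde\phi_j^{(l)}$ ($l\le m$) inherits the CPC order-zero property from $\phi_j^{(l)}$; and $\lambda\mapsto\lambda(1_{\widetilde A}-e_{n_j})$ is trivially CPC order zero, so \eqref{m-decom-cond-1}--\eqref{m-decom-cond-2} are automatic. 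The first half of \eqref{refine-0} follows by writing $x=a+\lambda 1$ ($a\in A$) and computing
\[
\widetilde\phi_j\widetilde\psi_j(x)=\phi_j\psi_j\bigl(e_{n_j}^{1/2}ae_{n_j}^{1/2}+\lambda e_{n_j}\bigr)+\lambda(1_{\widetilde A}-e_{n_j});
\]
the $\lambda$-terms recombine to $\lambda\phi_j\psi_j(e_{n_j})+\lambda(1_{\widetilde A}-e_{n_j})$, which converges to $\lambda 1_{\widetilde A}$, while $\phi_j\psi_j(e_{n_j}^{1/2}ae_{n_j}^{1/2})\to a$ by the choice of $e_{n_j}$ and the approximation property of $\phi_j\psi_j$.

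The main obstacle is the multiplicativity-on-units clause of \eqref{refine-0}, which unfolds to $\phi_j^{(l)}\psi_j^{(l)}(e_{n_j})\,a\approx \phi_j^{(l)}\psi_j^{(l)}(e_{n_j}^{1/2}ae_{n_j}^{1/2})$ on prescribed finite subsets. I would address this via the standard Winter refinement based on the cone factorisation of each order-zero map (see \cite{WZ-ndim}, 2.3): by pre-composing $\phi_j^{(l)}$ with the functional calculus of a suitable $g\in C_0((0,1])$ close to the identity on $[0,1]$ and passing to a subsequence, one can arrange that $\phi_j^{(l)}\psi_j^{(l)}(e_{n_j})$ acts as an asymptotic left unit for the range of $\phi_j^{(l)}\psi_j^{(l)}$, which together with the compression already built into the first coordinate of $\widetilde\psi_j$ yields the required estimate. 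Apart from this compression, the refinement step is identical in form to its unital counterpart in \cite{Winter-Z-stable-02}.
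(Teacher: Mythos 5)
Your construction of $(\widetilde\psi_j,\widetilde\phi_j)$ is precisely the paper's: compress $\widetilde A$ by $e_{n_j}^{1/2}\,\cdot\,e_{n_j}^{1/2}$, apply a completely positive $(m+1)$-colourable approximation of $A$ coming from $\dim_{\mathrm{nuc}}A\le m$, and append a scalar summand sent to $1_{\widetilde A}-e_{n_j}$; the first clause of \eqref{refine-0} follows exactly as you compute.

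Where the two arguments part ways is the refinement step. The paper follows the proof of Proposition~4.2 of \cite{Winter-Z-stable-02} literally: it replaces $\widetilde\psi_j$ by $\hat\psi_j(a)=\widetilde\psi_j(1_{\widetilde A})^{-1/2}\widetilde\psi_j(a)\widetilde\psi_j(1_{\widetilde A})^{-1/2}$ (inverse in the hereditary subalgebra of $\widetilde\psi_j(1_{\widetilde A})$) and correspondingly $\widetilde\phi_j$ by $\hat\phi_j(x)=\widetilde\phi_j(\widetilde\psi_j(1_{\widetilde A})^{1/2}x\widetilde\psi_j(1_{\widetilde A})^{1/2})$; the point is that $\hat\psi_j$ becomes approximately unital, and it is this normalisation that makes $\hat\phi_j^{(l)}\hat\psi_j^{(l)}(1_{\widetilde A})$ act as an approximate multiplier. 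Your substitute—pre-composing the order-zero pieces with $g\in C_0((0,1])$ close to the identity, i.e.\ the cone-factorisation device of \cite{WZ-ndim}—produces, as you state, that $\phi_j^{(l)}\psi_j^{(l)}(e_{n_j})$ is an asymptotic unit \emph{on the range of} $\phi_j^{(l)}\psi_j^{(l)}$. That is strictly weaker than the multiplier clause of \eqref{refine-0}, which asks that $\widetilde\phi_j^{(l)}\widetilde\psi_j^{(l)}(1_{\widetilde A})\,a\approx\widetilde\phi_j^{(l)}\widetilde\psi_j^{(l)}(a)$ for \emph{arbitrary} $a$: to deduce the latter from your property you would need to write $a\approx\sum_k\widetilde\phi_j^{(k)}\widetilde\psi_j^{(k)}(a)$ and control the cross-colour products $\widetilde\phi_j^{(l)}\widetilde\psi_j^{(l)}(1_{\widetilde A})\widetilde\phi_j^{(k)}\widetilde\psi_j^{(k)}(a)$ for $k\ne l$, which the $g$-calculus does not touch. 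You have the right template and ultimately cite the right source, but the device you name is a truncation of $\widetilde\phi_j$ when what is actually needed is a normalisation of $\widetilde\psi_j$; as written, your refinement step does not deliver \eqref{refine-0}.
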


\begin{proof}
Let $\mathcal F\subseteq \widetilde{A}$ be a finite set of positive elements with norm one,  and let $\eps>0$ be arbitrary. 
{{Each element $a\in {\cal F}$ may be written as 
$\pi(a)\cdot 1_{\widetilde{A}}+x(a),$ where  $\pi: \widetilde{A} \to \Comp$ is the canonical quotient map and 
and $x(a)\in A.$}}
{{Let $\{e_n\}$ be an approximate identity of $A$ with $e_{n+1}e_n=e_ne_{n+1},$ $n=1,2,....$
Choose $N$ such that
\beq
\|e_Nx(a)e_N-x(a)\|<\eps/4\andeqn \|x(a)\|\le 2\rforal a\in {\cal F}.
\eneq
Set $e=e_{N+1}$ and, for $a\in {\cal F},$ $a'=\pi(a)\cdot 1_{\tilde A}+e_Nx(a)e_N.$}}
{{It follows that $a-a'\in A.$ Moreover,}}
\beq\label{52-200110-1}
\|a-a'\|<\eps/4,\quad a'e=ea',\quad\mathrm{and}\quad 
(a'-\pi(a')\cdot 1_{\widetilde{A}})(1-e)=0
\eneq
where $\pi: \widetilde{A} \to \Comp$ is the canonical quotient map.
Denote by $\mathcal F'$ the set of such $a'$.
{{Let ${\cal F}_1=\{e^{\frac{1}{2}} a' e^{\frac{1}{2}}, e^{\frac{1}{2}} (a-a') e^{\frac{1}{2}}: a\in {\cal F}, a'\in {\cal F}'\}.$}}
Then choose a factorization
$$
\xymatrix{
A \ar[r]^-{\psi} & F^{(0)}\oplus F^{(1)}\oplus\cdots\oplus F^{(m)} \ar[r]^-{\phi} & A
}
$$ 
such that 
\beq\label{2020-728-1}
{{\|\phi(\psi(x)-x\|<\ep/4\rforal x\in {\cal F}_1,}}
\eneq
%
and
the restriction of $\phi$ to each direct summand $F^{(l)}$, $l=0, 1, ..., m$, is of order zero.

Then, define maps
\beq
\tilde{\psi}: \widetilde{A}\ni a \mapsto \psi(e^{\frac{1}{2}}ae^{\frac{1}{2}})\oplus \pi(a)  \in (F^{(0)}\oplus F^{(1)}\oplus\cdots\oplus F^{(m)})\oplus\Comp, \ \mathrm{and}\\
\tilde{\phi}: (F^{(0)}\oplus F^{(1)}\oplus\cdots\oplus F^{(m)})\oplus\Comp\ni (a, \lambda) \mapsto \phi(a) + \lambda(1-e). 
\eneq
For any $a\in\mathcal F$, one has,
\begin{eqnarray*}
\|\tilde{\phi}(\tilde{\psi}(a)) - a\|  &=& {{\|\tilde{\phi}(\tilde{\psi}(a')+\tilde{\phi}(\tilde{\psi}(a-a'))-a'-(a-a')\|}}\\
&<& {{\|\tilde{\phi}(\tilde{\psi}(a')) - a'\|+\|\tilde{\phi}(\tilde{\psi}(a-a'))-(a-a')\|}}\\
&=&{{\|\tilde{\phi}(\tilde{\psi}(a')) - a'\|+\|{\phi}({\psi}(a-a'))-(a-a')\|}}\hspace{0.3in} {{(\text{recall}\,\,a-a'\in A)}}\\
& < & \|\tilde{\phi}(\tilde{\psi}(a')) - a'\|+{{\eps/4}}\hspace{1.7in} {{(\text{see}\,\, \eqref{2020-728-1})}} \\
& = &\|\phi(\psi(e^{\frac{1}{2}}a'e^{\frac{1}{2}})) +\pi(a')(1-e) -a' \| + \eps/4 \\
& < & \| e^{\frac{1}{2}}a'e^{\frac{1}{2}} + \pi(a')(1-e) - a'\| + \eps/2<\eps\hspace{0.5in} {{(\text{see}\,\,\eqref{52-200110-1}).}} 
\end{eqnarray*}
It is clear that the restriction of $\tilde{\phi}$ to each direct summand $F^{(l)}$ of $F^{(0)}\oplus F^{(1)}\oplus\cdots\oplus F^{(m)}\oplus\Comp$, $l=0, 1, ..., m$, has order zero. 

Since $\mathcal F$ and $\eps$ are arbitrary, one obtains the $(m+1)$-decomposable  completely positive approximations $(\tilde{\psi}_j, \tilde{\phi}_j)$, $j=1, 2, ...$, which satisfy \eqref{m-decom-cond-1} and \eqref{m-decom-cond-2} of the lemma.

In the same way as in the proof of Proposition 4.2 of \cite{Winter-Z-stable-02}, $\tilde{\psi}_j$ and $\tilde{\phi}_j$ can be modified to satisfy \eqref{refine-0}. Indeed, consider
the maps
$$\hat{\psi}_j: \widetilde{A} \ni a \mapsto \tilde{\psi}_j(1_{\widetilde{A}})^{-\frac{1}{2}}\tilde{\psi}_j(a) \tilde{\psi}_j(1_{\widetilde{A}})^{-\frac{1}{2}} \in (F_j^{(0)}\oplus F_j^{(1)}\oplus\cdots\oplus F_j^{(m)})\oplus\Comp,$$
where the inverse is taken in the hereditary sub-\CA\, generated by $\tilde{\psi}(1_{\widetilde{A}})$,
and
$$\hat{\phi}_j: (F_j^{(0)}\oplus F_j^{(1)}\oplus\cdots\oplus F_j^{(m)})\oplus\Comp \ni a\mapsto {\tilde \phi}_j(\tilde{\psi}_j(1_{\widetilde{A}})^{\frac{1}{2}} a \tilde{\psi}_j(1_{\widetilde{A}})^{\frac{1}{2}}) \in \widetilde{A}.$$
Then the proof of Proposition 4.2 of \cite{Winter-Z-stable-02} shows that
$$\lim_{j\to\infty}\|\hat{\phi}_j^{(l)}\hat{\psi}^{(l)}_j(a) - \hat{\phi}^{(l)}_j\hat{\psi}^{(l)}_j(1_{\widetilde{A}}){{ \td{\phi}_j\td{\psi}_j(a)}}\|
=0,\quad a\in A, \quad l = 0, 1, ... ,m.$$

Note that $\pi(1_{\widetilde{A}}) = 1_{\Comp}$. One has that $\pi(1_{\widetilde{A}})\Comp\pi(1_{\widetilde{A}})=\Comp$, and the restriction of $\hat{\phi}$ to $\Comp$ is the map $\lambda \mapsto \lambda(1-e)$. It follows that the decompositions $(\hat{\psi}_j, \hat{\phi}_j)$ satisfy the requirements of the lemma.
\end{proof}
 
 \begin{defn}\label{DfS}
 In the next statement, 
 denote by ${\cal S}$ a  fixed  
class
of  
non-unital separable amenable \CA s $C$ 
 such that $\mathrm{T}(C)\not={\O}$ and $0\not\in \overline{\mathrm{T}(C)}^\mathrm{w}.$ 
 If $C\in \mathcal S$ and $e_C\in C$ is a strictly positive element, 
 define $\lambda_s(C)=\inf\{\mathrm{d}_\tau(e_C): \tau\in \overline{\mathrm{T}(C)}^\mathrm{w}\},$ {{where $\mathrm{d}_\tau(e_C) := \lim_{\ep\to 0} \tau(f_\ep(e_C)).$}}
 
 {{Suppose that $C=\overline{\cup_{n=1}^\infty C_n}$ is a simple \CA\, such that 
 $C_n\subset C_{n+1}$ and $C_n\in {\cal S},$ $n\in \N.$  Suppose that $C$ has continuous scale.
 In the following statement, we assume that there are $e_n\in {C_n}_+$ with $\|e_n\|=1$ 
 satisfies}}
 
  {{(1) $\{e_n\}$ forms an approximate identity for $C$ and 
  $d_t(e_n)>1-1/n$ for all $t\in T(C_m)$ for all $m\ge n.$}}
 
{{ This, in fact, is always the case when $C_n\in {\cal S}$ and $C$ has continuous scale. 
 Let $c_n\in C_n$ be a strictly positive element with $\|c_n\|=1.$ 
 Then $c=\sum_{n=1}^\infty c_n/2^{n+1}$ is a strictly positive element of $C.$ 
 Thus $\{c^{1/k}\}$ forms an approximate identity for $C.$  Since $C$ has continuous scale,
 $\tau(c^{1/k})\nearrow 1$ uniformly on $T(C).$   Put $d_n=\sum_{j=1}^n c_j/2^{j+1}.$
 Then $d_n^{1/k}\le d_m^{1/k_1}$ if $n\le m$ and $k<k_1.$ Note that $d_n^{1/k}\in C_n.$
 It follows that a  choice of subsequence of the form $\{d_n^{1/k}\}$ forms an approximate identity.
 So, passing to a subsequence, we relabel it as $c_n\in C_n.$ 
 Note that $\tau(c_n)\to 1$ uniformly to 1 on $T(C).$  We may assume that $\tau(c_n)>1-1/2n$ for all 
 $\tau\in T(C).$ One then shows that, for each fixed $n,$ there is $N(n)\ge n$ such
 that $\tau(c_n)>1-1/n$ for all $\tau\in T(C_m)$ for all $m\ge N(n),$ using a weak * compactness argument.
 This, by passing to another subsequence, implies (1) holds.}}
 
 {{Note also condition (1) implies that $\lambda_s(C_n)\ge 1-1/n.$}}

 \end{defn}
 
 The following is a non-unital version of 2.2 of \cite{Winter-TA}.

\begin{thm}\label{fdim}

Let $A$ be a stably  projectionless separable simple \CA\, in ${\cal R}$ 
with $\mathrm{dim}_{\mathrm{nuc}} A=m<\infty.$

Fix a 
 positive element $e\in A_+$ with $0\le e\le 1$
such that $\tau(e),\,\tau(f_{1/2}(e))\ge r_0>0$ for all $\tau\in \mathrm{T}(A).$
Let $C=\overline{\bigcup_{n=1}^{\infty} C_n}$ be a non-unital simple \CA\, with continuous scale,  
where $C_n\subseteq C_{n+1}$ and $C_n\in {\cal S}$ {{which also satisfies condition (1) in \ref{DfS}.}}
Suppose that there is an affine homeomorphism $\Gamma:  \mathrm{T}(C)\to \mathrm{T}(A)$ 
and suppose that there 
are sequences
of \cpc s $\sigma_n: A\to C$ and
\hm s $\rho_n: C\to A$ such that
\beq\label{TWv-1}
&&\lim_{n\to\infty}\|\sigma_n(ab)-\sigma_n(a)\sigma_n(b)\|=0,\quad a, b\in A,\\\label{TWv-2}
&&\lim_{n\to\infty}\sup\{|t\circ \sigma_n(a)-\Gamma(t)(a)|: t\in \mathrm{T}(C)\}=0,\quad a\in A,\\\label{TWv-2+}
&&\lim_{n\to\infty}\sup\{|\tau(\rho_n\circ \sigma_n(a))-\tau(a)|:\tau\in \mathrm{T}(A)\}=0,\quad a\in A, \tand
\eneq
$\sigma_n(e)$ is strictly positive in $C$ for all $n\in \N.$

Then $A$ has the following property: For any finite set $\mathcal F\subseteq A$ and any $\eps>0$, there are a projection $p\in \mathrm{M}_{4(m+2)}(\widetilde{A})$, a sub-\CA\, $S\subseteq p\mathrm{M}_{4(m+2)}(A)p$ with $S\in\mathcal {\cal S},$ and 
an  ${\cal F}$-$\ep$-multiplicative \cpc\, $L: A\to S$ such that
\begin{enumerate}
\item $\|[p, 1_{4(m+2)} \otimes a]\| < \eps$, $a\in\mathcal F$,
\item $p(1_{4(m+2)} \otimes a) p\in_\eps S$, $a\in\mathcal F$,
\item $\|L(a)-p(1_{4(m+2)} \otimes a) p\|<\ep,$ $a\in \mathcal F,$
\item $p\sim e_{11}$ in $\mathrm{M}_{4(m+2)}(\widetilde{A})$,
\item $\tau(L(e)),\, \tau(f_{1/2}(L(e)))>7r_0/32(m+2)$ for all $\tau\in \mathrm{T}(\mathrm{M}_{4(m+2)}(A)),$
\item ${{(1_{4(m+2)}-p)\mathrm{M}_{4(m+2)}(A)(1_{4(m+2)}-p)}} \in \mathcal R$, and
\item $t(f_{1/4}(L(e)))\ge (3r_0/8)\lambda_s(C_1)\tforal t\in \mathrm{T}(S).$
\end{enumerate}
\end{thm}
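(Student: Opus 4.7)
The approach mirrors Winter's tracial approximation argument from Proposition 2.2 of \cite{Winter-TA}, but carried out in the nonunital framework by combining the nonunital $(m+1)$-decomposable factorization of Lemma \ref{app-unit} with the strict-comparison lemma \ref{comp}. The overall strategy is: decompose a cpc approximation of the identity into $m+2$ order-zero pieces, transport each piece through $C$ via the given $\sigma_n$ and $\rho_n$, and then use strict comparison to absorb each piece into a matrix summand. The projection $p$ will arise as the direct sum of the resulting ``support projections''.

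First I would apply Lemma \ref{app-unit} to $\mathcal F\cup\{e\}$ and some $\delta\ll\epsilon$ to obtain $\mathcal G$-$\delta$-multiplicative cpc maps $\tilde\psi: \tilde A\to F^{(0)}\oplus\cdots\oplus F^{(m)}\oplus\mathbb C$ and $\tilde\phi$ back, with each $\tilde\phi^{(l)}$ of order zero and $\tilde\phi|_{\mathbb C}(1)=1_{\tilde A}-e_{n_0}$ for some $e_{n_0}$ in a fixed approximate unit of $A$. Labelling the $\mathbb C$-summand as index $m+1$, there are $m+2$ order-zero pieces $\tilde\phi^{(l)}: F^{(l)}\to\tilde A$ whose images (for $l\le m$) lie in $A$, while the last takes values in $\mathbb C\cdot(1_{\tilde A}-e_{n_0})$. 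Next, for each $l\le m$, I would compose with $\sigma_n$ to produce the approximate order-zero cpc map $\sigma_n\circ\tilde\phi^{(l)}: F^{(l)}\to C$, correct it by functional calculus (as for approximately order-zero maps) to an exact order-zero map $\psi_n^{(l)}: F^{(l)}\to C$, and arrange, by finite-dimensionality of $F^{(l)}$ together with $C=\overline{\bigcup C_k}$, that $\psi_n^{(l)}(F^{(l)})\subseteq C_{k(n)}$ for some $k(n)\ge 1$ (and in particular $\psi_n^{(l)}(F^{(l)})$ generates a hereditary subalgebra in $\mathcal S$). The trace hypotheses \eqref{TWv-2} and \eqref{TWv-2+} guarantee that the composition $\rho_n\circ\psi_n^{(l)}: F^{(l)}\to A$ tracially approximates $\tilde\phi^{(l)}$ as $n\to\infty$; this is precisely the hypothesis of Lemma \ref{comp}, which (using strict comparison, valid since $A\in\mathcal D$) provides contractions $s_n^{(l)}\in \mathrm M_4\otimes A$ asymptotically intertwining $1_4\otimes\tilde\phi^{(l)}$ with $e_{1,1}\otimes\rho_n\circ\psi_n^{(l)}$ and absorbing the latter.

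With these contractions in hand I would assemble $p$ as follows: put $p_n^{(l)}:=s_n^{(l)}(s_n^{(l)})^*$ into the $l$-th $\mathrm M_4$-block of $\mathrm M_{4(m+2)}(\tilde A)$, with the $(m+1)$-block dealing with the scalar summand giving a projection equivalent to $1_{\tilde A}-e_{n_0}$, and let $p$ be the block diagonal sum of these. The subalgebra $S\subseteq p\mathrm M_{4(m+2)}(A)p$ is built from the images $\psi_n^{(l)}(F^{(l)})\subseteq C_{k(n)}\in\mathcal S$, transported into $p\mathrm M_{4(m+2)}(A)p$ using $s_n^{(l)}$; the map $L$ is defined by $L(a)=\bigoplus_l s_n^{(l)}(e_{1,1}\otimes\rho_n\circ\psi_n^{(l)}\circ\tilde\psi^{(l)}(a))(s_n^{(l)})^*$. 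Items (1), (2), (3) follow from the near-identity property $\tilde\phi\circ\tilde\psi\approx_\delta\id_{\tilde A}$ combined with the asymptotic intertwining from Lemma \ref{comp}. Item (4) holds because the sum of the pieces $\tilde\phi^{(l)}\tilde\psi^{(l)}(1_{\tilde A})$ is approximately $1_{\tilde A}$ and each $p_n^{(l)}$ is equivalent to the corresponding $e_{1,1}\otimes\tilde\phi^{(l)}\tilde\psi^{(l)}(1_{\tilde A})$. Items (5) and (7) follow from the lower bounds on traces of $e$ and $f_{1/2}(e)$ together with the estimate $\lambda_s(C_1)$ that transfers through $\psi_n^{(l)}$; the factor $4(m+2)$ in the denominator of (5) absorbs the rank increase, and the factor $7/32$ (resp.\ $3/8$) accommodates the perturbation losses. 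Item (6) follows since $A\in\mathcal R$ implies $\mathrm M_{4(m+2)}(A)\in\mathcal R$ up to stabilization, and cutting by the complement of a ``rank-one'' projection preserves simplicity, continuous scale, and nonempty trace space.

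The principal obstacle is the careful orchestration of three layers of approximation—the nuclear-dimension factorization, the almost multiplicative transport $\sigma_n$ into $C$, and the homomorphism $\rho_n$ back into $A$—so that the traces of $\rho_n\circ\psi_n^{(l)}$ and $\tilde\phi^{(l)}$ align closely enough to feed into Lemma \ref{comp}, while simultaneously keeping the image $\psi_n^{(l)}(F^{(l)})$ inside a single $C_{k(n)}\in\mathcal S$. A secondary subtlety is the bookkeeping required to extract the explicit constants $7r_0/32(m+2)$ in (5) and $(3r_0/8)\lambda_s(C_1)$ in (7) from the various perturbation losses; these constants are consistent with the standard factor-of-two margins introduced by absorbing approximate unit gaps and functional-calculus gaps on $f_{1/2}$ and $f_{1/4}$.
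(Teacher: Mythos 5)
Your overall strategy mirrors the paper's: apply the nonunital $(m+1)$-decomposable factorization (Lemma \ref{app-unit}), push each order-zero piece through $C$ via $\sigma_n$ and $\rho_n$, invoke Lemma \ref{comp} to intertwine, assemble a partial isometry $v$ whose source projection is $p$, and define $L$ by compressing $\rho_n\circ\sigma_n$. This is the right skeleton. However, there is a genuine gap in how you treat the extra scalar summand $F^{(m+1)}=\Comp$, and it is not a minor detail.

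For the scalar summand, the factorization gives $\tilde\phi_j^{(m+1)}(1)=1_{\tilde A}-e_j$, an element of $\tilde A$ rather than $A$. Its image under $\sigma_i$ is $1_{\tilde A}-\sigma_i(e_j)\in\tilde S_i$. Lemma \ref{comp} applies only to order-zero maps into $A$ and would not give you an intertwiner for these unit-corner pieces; you need instead a unitary $s_{j,i}^{(m+1)}\in\tilde A$ with $\|s_{j,i}^{(m+1)}e_j-\sigma_i(e_j)s_{j,i}^{(m+1)}\|\to 0$. The paper obtains such a unitary by noting $\mathrm{sp}(e_j)=[0,1]$ and applying the appendix uniqueness theorem (Lemma \ref{Cuniqcone}) for homomorphisms from the cone $C_0((0,1])$, exploiting the tracial estimates. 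Your phrase ``the $(m+1)$-block dealing with the scalar summand giving a projection equivalent to $1_{\tilde A}-e_{n_0}$'' does not produce an intertwining of $1_{\tilde A}-e_j$ with $1_{\tilde A}-\sigma_i(e_j)$; without that, the $(m+1)$-th component of $L(a)$ does not land in (or near) $S\subseteq p\mathrm{M}_{4(m+2)}(A)p$, and condition (2) fails. So a genuinely new ingredient — a cone-map uniqueness result — is needed and is not present in your argument.

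A secondary point of divergence: to make $S\in\mathcal S$ and to obtain the constant $\lambda_s(C_1)$ in item (7), you push the images $\psi_n^{(l)}(F^{(l)})$ into some $C_{k(n)}\in\mathcal S$ directly, whereas the paper first defines $L_i: A\to S_i\cong C$ and only at the end composes with a cpc map $\Phi_n: S_i\to S_{i,n}$ that moves the image into a finite stage. The paper's route matters because the derivation of (7) uses a weak* compactness argument (comparing $t_k\circ\Phi_{n(k)}$ with a limiting trace $t_0$) that hinges on $e_C\in S_{i,n}$ having $t(e_C)\ge\lambda_s(C_1)/2$; your version would need to reproduce an analogous argument after pushing into $C_{k(n)}$, and this is not addressed. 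The constants you quote are correct, but the proof as sketched would not deliver them without these two additions.
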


\begin{proof}
Since $A$ has finite nuclear dimension, one has that $A\cong A\otimes \mathcal Z$ (\cite{Winter-Z-stable-02} for the unital case and \cite{T-0-Z} for the non-unital case). Therefore, $A$ has strict comparison for positive elements (Corollary 4.7 of \cite{Ror-Z-stable}). 

The proof is essentially the same as that of Theorem 2.2 of \cite{Winter-TA}. We give the proof in the present very much analogous situation for the convenience of the reader. 
Let  $e\in A_+$ with 
$\|e\|=1$, {{$\tau(e) > r_0$}}, and $\tau(f_{1/2}(e))>r_0$ for all $\tau\in \mathrm{T}(A).$ 

 Let $(e_n)$ be an 
(increasing)
 approximate unit for  $A$. Since $A\in\mathcal R$, and
 since $A$ is also assumed to be projectionless, one may assume that $\mathrm{sp}(e_n)=[0, 1]$. Since $\mathrm{dim}_\mathrm{nuc}(A)\leq m$, by Lemma \ref{app-unit}, there is a system of $(m+1)$-decomposable completely positive approximations
$$
\xymatrix{
\widetilde{A} \ar[r]^-{\psi_j} & F_j^{(0)}\oplus F_j^{(1)}\oplus\cdots\oplus F_j^{(m)} \oplus \Comp \ar[r]^-{\phi_j} & \widetilde{A},\quad j=1, 2, ...
}
$$ 
such that 
\beq
&&\phi_j(F_j^{(l)})\subseteq A, \quad l=0, 1, ..., m,\andeqn\\
\label{defn-phi-j}
&&\phi_j|_\Comp(1_\Comp) = 1_{\widetilde{A}}-e_{j},
\eneq
where $e_j$ is an element of $(e_n)$.

Write
$$\phi_{j}^{(l)} = \phi_j|_{F_j^{(l)}}\quad\mathrm{and}\quad {\phi_{j}^{(m+1)}}= \phi_j|_\Comp,\quad l=0, 1, ..., m.$$
As in Lemma \ref{app-unit}, one may assume that
\begin{equation}\label{refine}
\lim_{j\to\infty}\|\phi^{(l)}_j\psi^{(l)}_j(1_{\widetilde{A}})a - \phi_j^{(l)}\psi^{(l)}_j(a)\|=0,\quad l=0 ,1 , ..., m, \ a\in A.
\end{equation}
Note that $\phi_{j}^{(l)}: F_j^{(l)} \to A$ is of order zero, 
and
the relation for 
an
order zero map is weakly stable 
(see  (${\mathcal P}$) and (${\mathcal P} 1$) of 2.5 of \cite{KW}. 
On the other hand, if $i$ 
is
large enough, then $ \sigma_i\circ\phi_{j}^{(l)}$ satisfies the relation for order zero 
to within an arbitrarily small 
tolerance, since $\sigma_i$ will be sufficiently multiplicative. It 
follows that there are order zero maps $$\tilde{\phi}_{j, i}^{(l)}: F_j^{(l)} \to C$$
such that
$$\lim_{i\to\infty}\| \tilde{\phi}_{j, i}^{(l)}(c) - \sigma_i(\phi_{j}^{(l)}(c))\|=0,\quad c\in F_{j}^{(l)}.$$
We will identify $C$ with  $S_i=\rho_i(C)\subseteq A$, $\sigma_i: A \to C$ with $\rho_i\circ \sigma_i: A \to S_i\subseteq A$, and $\tilde{\phi}_{j, i}^{(l)}$ with $\rho_i\circ \tilde{\phi}_{j, i}^{(l)}$. There is 
a positive linear map (automatically order zero)
$$\tilde{\phi}_{j, i}^{(m+1)}: \Comp\ni 1 \mapsto 1_{\widetilde{A}}-\sigma_i(e_j)\in \tilde{S_i}=\textrm{C*}(S_i, 1_{\widetilde{A}})\subseteq \widetilde{A},\quad i\in\mathbb N.$$
Note that
\begin{equation}\label{defn-phi-j-p}
 \tilde{\phi}_{j, i}^{(m+1)}(\lambda) = \sigma_i(\phi_{j}^{(m+1)}(\lambda)),\quad \lambda \in F_{j}^{(m+1)}=\Comp,
\end{equation} 
where one still uses $\sigma_i$ to denote the induced map $\widetilde{A} \to \widetilde{S_i}$.

Note that for each $l=0, 1, ..., m$,
$$\lim_{i\to\infty}\|{f(\tilde{\phi}_{j, i}^{(l)})(c)} - \sigma_i ({f(\phi_{j}^{(l)})(c)})\| = 0,\quad c\in (F_j^{(l)})_+, \  f\in\mathrm{C}_0((0, 1])_+,$$
{{(see the comment  before the proof  of \ref{comp}  for the notation $f(\tilde{\phi}_{j, i}^{(l)})$ and
$f(\phi_j^{(l)})$)}} and hence, from \ref{TWv-2+},
$$\lim_{i\to\infty}\sup_{\tau\in{\mathrm{T}(A)}}| \tau({f(\tilde{\phi}_{j, i}^{(l)})(c)} - {f(\phi_{j}^{(l)})(c)})| = 0,\quad c\in (F_j^{(l)})_+, \  f\in\mathrm{C}_0((0, 1])_+.$$
Also note that
$$\limsup_{i\to\infty} \|f(\tilde{\phi}_{j, i}^{(l)})(c)\| \leq \| f(\phi_j^{(l)})(c)\|, \quad c\in (F_j^{(l)})_+, \  f\in\mathrm{C}_0((0, 1])_+.$$

Applying Lemma \ref{comp} to $(\tilde{\phi}_{j, i}^{(l)})_{i\in\mathbb N}$ and $\phi_j^{(l)}$ for each $l=0, 1, ..., m$, we obtain contractions $$s^{(l)}_{j, i} \in \mathrm{M}_4(A) \subseteq \mathrm{M}_4(\widetilde{A}),\quad i\in\mathbb N,$$
such that
\beq
&&\lim_{i\to\infty}\|s_{j, i}^{(l)}(1_4\otimes \phi_j^{(l)}(c)) - (e_{1, 1}\otimes\tilde{\phi}_{j, i}^{(l)}(c)) s_{j, i}^{(l)}\| = 0,\quad c\in F_j^{(l)}, \andeqn\\
&&\lim_{i\to\infty}\|(e_{1, 1} \otimes {\tilde \phi}_{j,i}^{(l)}(c))s_{j,i}^{(l)}(s_{j, i}^{(l)})^*- e_{1, 1} \otimes {\tilde \phi}_{j,i}^{(l)}(c)\| = 0. 
\eneq

Note that $\mathrm{sp}(e_j)=[0,1].$  Put $C_0=C_0((0,1]).$ Define 
\beq
\Delta_j(\hat{f})=\inf\{\tau(f(e_j)): \tau\in \mathrm{T}(A)\}\rforal f\in (C_0)_+\setminus\{0\}.
\eneq
Since $A$ is assumed to have continuous scale, $\mathrm{T}(A)$ is compact and $\Delta_j(\hat{f})>0$
for all $f\in {{(}}C_0)_+\setminus\{0\}.$
For $l=m+1$, since $\mathrm{sp}(e_j) = [0, 1]$, by considering $\Delta_j$ 
for each $j,$ since $i$ is chosen after $j$ is fixed,  by applying \ref{Cuniqcone}  in the appendix,
one obtains unitaries $$s^{(m+1)}_{j, i} \in \widetilde{A},\quad i\in\mathbb N,$$ such that $$\lim_{i\to\infty}\|s_{j, i}^{(m+1)}e_j-\sigma_i(e_j)s_{j, i}^{(m+1)}\|=0,$$ and hence
$$\lim_{i\to\infty}\|s_{j, i}^{(m+1)}(1_{\widetilde{A}}-e_j)-(1_{\widetilde{A}}-\sigma_i(e_j))s_{j, i}^{(m+1)}\|=0.$$
By \eqref{defn-phi-j} and \eqref{defn-phi-j-p}, one has
\begin{equation*}
\lim_{i\to\infty}\|s_{j, i}^{(m+1)}\phi_j^{(m+1)}(c) - \tilde{\phi}_{j, i}^{(m+1)}(c) s_{j, i}^{(m+1)}\| = 0,\quad c\in F_j^{(m+1)}=\Comp.
\end{equation*}

Considering the 
element
$e_{1, 1}\otimes s_{j, i}^{(m+1)} \in \mathrm{M_4}\otimes \widetilde{A}$, and still denoting it by $s_{j, i}^{(m+1)}$, we have
\begin{equation*}
\lim_{i\to\infty}\|s_{j, i}^{(m+1)}(1_4\otimes \phi_j^{(m+1)}(c)) - (e_{1, 1}\otimes\tilde{\phi}_{j, i}^{(m+1)}(c)) s_{j, i}^{(m+1)}\| = 0,\quad c\in F_j^{(l)}
\end{equation*}
and $$(e_{1, 1} \otimes {\tilde \phi}_{j,i}^{(m+1)}(c))s_{j,i}^{(m+1)}(s_{j, i}^{(m+1)})^* = e_{1, 1} \otimes 
{\tilde \phi}^{(m+1)}_{j,i}(c). $$

Therefore,
\begin{equation}\label{pre-s-1}
\lim_{i\to\infty}\|s_{j, i}^{(l)}(1_4\otimes\phi_j^{(l)}(c)) - (e_{1, 1} \otimes \tilde{\phi}_{j, i}^{(l)}(c)) s_{j, i}^{(l)}\| = 0,\quad c\in F_j^{(l)},\ l=0, 1, ..., m+1.
\end{equation}

\begin{equation}\label{pre-s-1-1}
\lim_{i\to\infty}\|(e_{1, 1}\otimes {\tilde \phi}_{j,i}(c))s_{j,i}^{(l)}(s_{j, i}^{(l)})^*- {\tilde \phi}_{j,i}(c)\| = 0, \quad c\in F_j^{(l)},\ l=0, 1, ..., m+1.
\end{equation}

Let $\tilde{\sigma}_i: \widetilde{A} \to 
\widetilde C $ and $\tilde{\rho}_i: 
\widetilde C \to\widetilde{A}$ denote the unital maps induced by 
$\sigma_i: {A} \to C$ and $\rho_i:  C\to {A}$, respectively.

Consider the contractions
$$s_j^{(l)} := (s_{j, i}^{(l)})_{i\in\mathbb N}\in (\mathrm{M}_4\otimes \widetilde{A})_\infty,\quad l=0, 1, ..., m+1, \quad j=1, 2, ....$$ 
By \eqref{pre-s-1} and \eqref{pre-s-1-1}, these satisfy
$$s_j^{(l)}(1_4\otimes \bar\iota(\phi_j^{(l)}(c))) = (e_{1, 1}\otimes \bar\rho\bar\sigma({\phi}_j^{(l)}(c))) s_{j}^{(l)}\andeqn$$
$$(e_{1,1}\otimes \bar{\rho}\circ \bar{\sigma}(\phi_j^{(l)}(c)))s_j^{(l)}(s_j^{(l)})^* =
 (e_{1,1}\otimes \bar{\rho}\circ {\bar{\sigma}}(\phi_j^{(l)}(c))),$$
where 
$$\bar{\sigma}: \widetilde{A}_\infty \to  \prod \widetilde{C}/\bigoplus \widetilde{C}~~~~\mbox{and}~~~~ \bar\rho: \prod \widetilde{C}/\bigoplus \widetilde{C} \to \widetilde{A}_\infty$$
are the homomorphisms induced by $\tilde{\sigma}_i$ and $\tilde{\rho}_i$,  
and the map
$$\bar\iota: (\widetilde{A})_\infty \to ((\widetilde{A})_\infty)_\infty$$ is the embedding induced by the canonical embedding $\iota: \widetilde{A}\to (\widetilde{A})_\infty$.

Let $$\bar\gamma: \widetilde{A}_\infty \to ((\widetilde{A})_\infty)_\infty$$ denote the homomorphism induced by
the composed map
$$\bar{\rho}\bar{\sigma}: {{\widetilde{A}_\infty}} \to (\widetilde{A})_\infty,$$
For each $l=0, 1, ..., m+1$, let 
\beq
&&\bar{\phi}^{(l)}: \prod_j F_j^{(l)}/\bigoplus_j F_j^{(l)} \to A_\infty\quad\mathrm{and}\\
&&\bar{\psi}^{(l)}: A \to \prod_j F_j^{(l)}/\bigoplus_j F_j^{(l)}
\eneq	
denote the maps induced by $\phi_j^{(l)}$ and $\psi_j^{(l)}$. 

Consider the  contraction
$$\bar{s}^{(l)} = (s_j^{(l)}) \in (\mathrm{M}_4\otimes \widetilde{A}_\infty)_\infty.$$
Then
$$
{\bar{s}^{(l)}}(1_4\otimes \bar\iota\bar{\phi}^{(l)}\bar\psi^{(l)}(a)) = (e_{1, 1} \otimes \bar\gamma\bar\phi^{(l)}\bar\psi^{(l)}(a)) {\bar{s}^{(l)}},\quad a\in \widetilde{A},\andeqn$$
$$
\hspace{-0.7in}(e_{1, 1} \otimes \bar\gamma\bar\phi^{(l)}\bar\psi^{(l)}(a)) {\bar{s}^{(l)}}({\bar{s}^{(l)}})^* = (e_{1, 1} \otimes \bar\gamma\bar\phi^{(l)}\bar\psi^{(l)}(a)).
$$

By \eqref{refine}, one has
$$\bar\phi^{(l)}\bar\psi^{(l)}(1_{\widetilde A})\iota(a) = \bar\phi^{(l)}\bar\psi^{(l)}(a),\quad a\in A.$$
In particular, 
$$((\bar\phi^{(l)}\bar\psi^{(l)}(1_{\widetilde A}))^{\frac{1}{2}}\iota(a) \in \textrm{C*}(\bar\phi^{(l)}\bar\psi^{(l)}(A)),$$
and hence
\begin{eqnarray}\label{twrist}
\bar{s}^{(l)}(1_4 \otimes (\bar{\iota}\bar{\phi}^{(l)}\bar{\psi}^{(l)}(1_{\widetilde{A}}))^{\frac{1}{2}})(1_4\otimes \bar{\iota}\iota(a)) &= & \bar{s}^{(l)}(1_4\otimes \bar{\iota}\bar{\phi}^{(l)}\bar{\psi}^{(l)}(1_{\widetilde{A}})^{\frac{1}{2}}\iota(a)) \nonumber \\
& = & (e_{1, 1} \otimes \bar{\gamma}(\bar{\phi}^{(l)}\bar{\psi}^{(l)}(1_{\widetilde{A}}))^{\frac{1}{2}}\iota(a)) \bar{s}^{(l)} \nonumber \\
& = & (e_{1, 1} \otimes \bar{\gamma}\iota(a)(\bar{\phi}^{(l)}\bar{\psi}^{(l)}(1_{\widetilde{A}}))^{\frac{1}{2}}) \bar{s}^{(l)} \nonumber \\
& = & (e_{1, 1} \otimes \bar{\gamma}(\iota(a)))(e_{1, 1}\otimes \bar{\gamma}(\bar{\phi}^{(l)}\bar{\psi}^{(l)}(1_{\widetilde{A}}))^{\frac{1}{2}})\bar{s}^{(l)}.
\end{eqnarray}
Set
\vspace{-0.1in}\begin{eqnarray*}
\bar{v} & = & \sum_{l=0}^{m+1} e_{1, l} \otimes ((e_{1, 1} \otimes \bar{\gamma}\bar{\phi}^{l}\bar{\psi}^{(l)}(1_{\widetilde{A}}))^{\frac{1}{2}}\bar{s}^{(l)}) \\
& = & \sum_{l=0}^{m+1} e_{1, l} \otimes (\bar{s}^{(l)}(1_4\otimes \bar{\iota}\bar{\phi}^{l}\bar{\psi}^{(l)}(1_{\widetilde{A}}))^{\frac{1}{2}}) \in \mathrm{M}_{m+2}(\Comp) \otimes \mathrm{M}_4(\Comp)\otimes ({\widetilde{A}}_\infty)_\infty.
\end{eqnarray*}
Then
\begin{eqnarray*}
\bar{v}\bar{v}^* & = & \sum_{l=0}^{m+1} e_{1, 1} \otimes (e_{1, 1} \otimes \bar{\gamma}\bar{\phi}^{l}\bar{\psi}^{(l)}(1_{\widetilde{A}})) =  e_{1, 1} \otimes e_{1, 1} \otimes \bar{\gamma}(1_{\widetilde{A}}).
\end{eqnarray*}
Thus, $\bar{v}$ is an partial isometry. 
Moreover, for any $a\in \widetilde{A}$,
\begin{eqnarray*}
\bar{v}(1_{(m+2)}\otimes 1_4 \otimes\bar{\iota}\iota(a)) & = & \sum_{l=0}^{m+1} e_{1, l} \otimes (\bar{s}^{(l)}(1_4\otimes \bar{\iota}\bar{\phi}^{l}\bar{\psi}^{(l)}(1_{\widetilde{A}})^{\frac{1}{2}})(1_4\otimes \bar{\iota}\iota(a)))\\
& = & \sum_{l=0}^{m+1} e_{1, l} \otimes (e_{1, 1} \otimes \bar{\gamma}(\iota(a)))(e_{1, 1} \otimes \bar{\gamma}(\bar{\phi}^{(l)}\bar{\psi}^{(l)}(1_{\widetilde{A}})^{\frac{1}{2}}\bar{s}^{(l)}) \quad\quad\textrm{(by \eqref{twrist})}\\
& = & (e_{1, 1}\otimes e_{1, 1} \otimes\bar{\gamma}(\iota(a))) \sum_{l=0}^{m+1} e_{1, l} \otimes e_{1, 1} \otimes \bar{\gamma}(\bar{\phi}^{(l)}\bar{\psi}^{(l)}(1_{\widetilde{A}})^{\frac{1}{2}}\bar{s}^{(l)}) \\
& = & (e_{1, 1}\otimes e_{1, 1} \otimes \bar{\gamma}(\iota(a)))\bar{v}.
\end{eqnarray*}
Hence
\begin{equation*}
\bar{v}^*\bar{v} (1_{m+2}\otimes 1_4 \otimes \bar\iota\iota(a))  =  \bar{v}^*(e_{1, 1}\otimes e_{1, 1} \otimes \bar\gamma\iota(a)) \bar{v} =  (1_{m+2}\otimes 1_4\otimes\bar\iota\iota(a)) \bar{v}^*\bar{v},\quad a\in \widetilde{A}.
\end{equation*}

Then, for any finite set $\mathcal G \subseteq \widetilde{A}$ and any  $\dt>0,$ there are 
{{$i \in \mathbb N$}} and ${{v_i}}\in \mathrm{M}_{m+2}(\Comp)\otimes \mathrm{M}_{4}(\Comp) \otimes \widetilde{A}$ such that
\beq\label{53-200110-10}
&&{{v_iv_i^*}}= e_{1, 1}\otimes e_{1, 1}\otimes \tilde{\rho}_i(1_{\tilde{S_i}})= e_{1, 1}\otimes e_{1, 1}\otimes 1_{\widetilde{A}},\\
&&\|[{{v_i^*v_i}}, 1_{m+2} \otimes1_4\otimes  a]\| < \dt \rforal 
a\in \mathcal G,\\
&&\|{{v_i^*v_i}}(1_{m+2}\otimes 1_4\otimes  a) - {{v_i}}^*(e_{1, 1}\otimes e_{1, 1}\otimes\tilde{\rho}_i\tilde{\sigma}_i(a)){{v_i}}\|< \dt
\rforal a\in \mathcal G\andeqn\\\label{53-200110-11}
&& \tau(\rho_i\circ \sigma_i(e)),\, \tau(f_{1/2}(\rho_i\circ \sigma_i(e)))\ge 15r_0/16\rforal \tau\in \mathrm{T}(A).
\eneq

Define
${{\kappa_i}}: \widetilde{S}_i \to \mathrm{M}_{m+2}\otimes \mathrm{M}_{4} \otimes \widetilde{A}$
by
$${{\kappa_i}}(s)={v_i}^*(e_{1, 1}\otimes e_{1, 1} \otimes{{\rho_i(s)}}){ v_i}.$$
Note that
$${{\kappa_i}}(S_i) \subseteq \mathrm{M}_{m+2}\otimes \mathrm{M}_{4}\otimes A.$$

Then ${{\kappa_i}}$ is an embedding; and on setting $p_i=1_{{\kappa_i}(\tilde{S}_i)}=v_i^*v_i$, one has
\begin{enumerate}
\item[(i)] ${p_i} \sim e_{1, 1}\otimes e_{1, 1}\otimes 1_{\widetilde A}$,
\item[(ii)] $\|[{p_i}, 1_{m+2}\otimes 1_4 \otimes a]\| <\dt$, $a\in \mathcal G$,
\item[(iii)] ${p_i}(1_{m+2}\otimes 1_4 \otimes a){p_i} \in_{\dt} {{\kappa_i}}(\tilde{S}_i)$, $a\in \mathcal G$.
\end{enumerate}
%
%
Note that $A$ is ${\cal Z}$-stable (by \cite{Winter-Z-stable-02}) and hence has strict comparison
(by \cite{Ror-Z-stable}).  Let $e\in (1_{4(m+2)}-{p_i})\mathrm{M}_{4(m+2)}(A)(1_{4(m+2)}-{p_i})$ 
be a strictly positive element.
By (i), ${\mathrm d}_\tau(e)=\tau(1_{4(m+2)}-p_i)=\tau(1_{4(m+2)}-e_{1, 1}\otimes e_{1, 1}\otimes 1_{\widetilde A})$
for all $\tau\in T(A)$, where $\tau$ is naturally extended to $\tilde{A}.$
Since $A$ and $M_{4(m+2)}(A)$ have continuous scale, $\tau\mapsto {\text{d}}_\tau$
is continuous on $T(A).$ Hence 
$(1_{4(m+2)}-{p_i})\mathrm{M}_{4(m+2)}(A)(1_{4(m+2)}-{p_i})$ also
has continuous scale (see 5.4 of \cite{eglnp}) and 
is still in the reduction class $\mathcal R$ (so condition (6) holds).
%
%

Define $L_i: A\to {{\kappa_i(S_i)}}$ by  $L_i(a)={v_i}^*(e_{1, 1}\otimes e_{1, 1} \otimes{\rho}_i(\sigma_i(a))){v_i}$
for all $a\in A.$  Then 
\begin{enumerate}
\item[(iv)] $\|L_i(a)-{p_i}(1_{4(m+2)}\otimes a){p_i}\|<\dt\rforal a\in {\cal G}$ and 
\item[(v)] $\tau(L_i(e)),\, \tau(f_{1/2}(L_i(e)))\ge \displaystyle{15r_0\over{64(m+2)}}\rforal \tau\in \mathrm{T}(\mathrm{M}_{4(m+2)}(A)).$
\end{enumerate}
{{Let $\tau_i\in \mathrm{T}(\kappa_i(S_i)).$ 
Then  $\tau_i\circ L_i$ is a positive linear functional. Let ${{\bar t}}$ be a weak *-limit of $\{\tau_i\circ L_i\}.$ 
Note that, for any $1/2>\ep>0,$ since $A$ has continuous scale, there is $e_A\in A$  with $\|e_A\|=1$
such that $\tau(e_A)>1-\ep/2$ for all $\tau\in T(A).$
By \eqref{TWv-2+} (see also \eqref{53-200110-10}),  we may assume that
$\tau_i\circ L_i(e_A)>1-\ep$ for all large $i.$ It follows that ${{\bar t}}(e_A)\ge 1-\ep.$ Hence $\|{{\bar t}}\|\ge 1-\ep$
for any $1/2>\ep>0.$ It follows that ${{\bar t}}$ is a state of $A.$  Then, by \eqref{TWv-1} and \eqref{TWv-2+},
${{\bar t}}$ is a tracial state of $A.$ 
Therefore,}} with sufficiently small $\dt$ and large ${\cal G}$ (and sufficiently large $i$), by also \eqref{53-200110-11}, 
we may assume that
\beq\label{lambdas-2}
t(f_{1/4}(L_i(e)))\ge 7r_0/8\rforal t\in \mathrm{T}({{\kappa_i(S_i)}}).\quad 
\eneq

Since ${{\kappa_i(S_i)}}\cong C,$ we may write ${{\kappa_i(S_i)}}=\overline{\bigcup_{n=1}^{\infty} S_{i,n}},$
where each $S_{i,n}\cong C_n$ and, 
{{by  condition (1) of \ref{DfS}, 
there exists a positive element $e_C\in S_{i,1}\subset S_{i,n}$
with $\|e_C\|=1$   such that $t(e_C)>\lambda_s(C_1)/2$ for all $t\in T(S_{i,n})$
for all $n\ge 1.$}}
%
Since each $S_{i,n}$ is amenable, 
there exists \cpc\, $\Phi_n: {{\kappa_i(S_i)}}\to S_{i,n}$ such 
that 
\beq\label{lambdas-1}
\lim_{n\to\infty}\|\Phi_n(s)-s\|=0
\rforal s\in {{\kappa_i(S_i)}}\,\,\,\mathrm{and}\,\,\,
\|\Phi_n(e_C)-e_C\|<1/2^{n+1}.
\eneq
We assert that, for all sufficiently large $n$,
for fixed $i$,
\beq\label{Claim518}
t(f_{1/4}(\Phi_n\circ L_i(e)))> (3r_0/8)\lambda_s(C_1)\rforal t\in \mathrm{T}(S_{i,n}).
\eneq
Otherwise, there exists a sequence $(n(k))$ and $t_k\in \mathrm{T}(S_{i,n(k)})$
such that
\beq\label{claim518-2}
t_k(f_{1/4}(\Phi_{n(k)}\circ L_i(e)))<{{(3r_0/8)}}\lambda_s(C_1).
\eneq
Note that, since $e_C\in S_{i,n(k)},$ $t_{k+1}|_{S_{i,n(k)}}\in T(S_{i,n(k)}).$
Let $t_0$ be a weak* limit of $\{t_k\circ \Phi_{n(k)}\}.$
Then, by \eqref{claim518-2},
\beq\label{claim518-3}
t_0(f_{1/4}(L_i(e)))\le (3r_0/8)\lambda_s(C_1).
\eneq
Note that $t_k(e_C)\ge \lambda_s(C_1)/2$ for all $k.$ Thus, by \eqref{lambdas-1}, one computes that 
$t_0(e_C)\ge  \lambda_s(C_1)/2.$  It follows that $t_0$ is a trace  of $S_i$ with $\|t_0\|\ge \lambda_s(C_1)/2.$ 
Then, by \eqref{lambdas-2},
\beq
t_0(f_{1/4}(L_i(e)))\ge (7r_0/8)(\lambda(C_1)/2).
\eneq
This contradicts \eqref{claim518-3} and so the assertion \eqref{Claim518} holds.
We then define $L=\Phi_n\circ L_i$ for some sufficiently large $n$ (and $i$). 
The conclusion of the theorem follows from (i),(ii), (iii), (iv), (v),
and \eqref{Claim518}.
\end{proof}

\begin{lem}\label{LWL1}
Let $A$ be a stably projectionless simple separable \CA\ with almost stable rank one (recall 
that
by definition this includes hereditary 
\SCA s).
Suppose that $A$ has continuous scale and
has 
strict comparison for positive element. Suppose also 
that map $\imath: \mathrm{W}_+(A)\to {\mathrm{LAff}}_{b,+}(\mathrm{T}(A))$ is surjective. 
Suppose that  there {are} $1>\eta>0$  and $1>\lambda>0$ such that every   hereditary \SCA\, $B$  with continuous scale has the following property:

Let {$r_0>0$ and} let ${a_0}\in B_+$ be a 
positive element
with $\|{a_0}\|=1$  with $\tau({a_0})\ge r_0$ and $\tau(f_{1/2}({a_0}))\ge r_0>0$ for all $\tau\in \mathrm{T}(B).$ Suppose that,
for any $\ep>0,$  any
finite subset ${\cal F}\subseteq B,$ 
there are ${\cal F}$-$\ep$-multiplicative \cpc s $\phi: B\to B',$ where $B'$ is a hereditary \SCA\, of $B,$  and  $\psi: B\to D$  for some
\SCA\, $D\subseteq B$,    and $D\perp B',$ 
such that
\beq\label{LWLtr1div-1}
&&\|x- (\phi(x)+\psi(x))\|<\ep\rforal x\in {\cal F}\cup \{{a_0}\},\\\label{LWLn-2}
&&d_\tau(\phi({a_0}))<1-\eta\tforal \tau\in {\mathrm{T}}(A),\\\label{LWLn-2+1}
&& \tau'(\phi({a_0})), \tau'(f_{1/2}(\phi({a_0}))\ge r_0-\ep\rforal \tau'\in T(B')\\\label{LWLn-2+2}
&&D\in {\cal C}_0' (\in {{\cal C}_0^{0}}'),\\\label{LWLn-3}
&&\tau(\psi({a_0}))\ge r_0\eta\tforal \tau\in \mathrm{T}(B),\\\label{LWLn-4}
&&t(f_{1/4}(\psi({a_0})))\ge r_0\lambda\tforal t\in T(D).
\eneq

Then $A\in {\cal D}$ (or ${\cal D}_0$).
\end{lem}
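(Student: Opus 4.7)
The goal is to verify the axiom of Definition~\ref{DDD} (or its $\mathcal D_0$ variant) for $A$. Fix a strictly positive contraction $e\in A_+$; since $A$ has continuous scale, $\mathrm T(A)$ is compact and $r_0:=\inf_{\tau\in\mathrm T(A)}\tau(f_{1/2}(e))>0$. Given $\ep>0$, a finite set $\mathcal F\subseteq A$, and $a\in A_+\setminus\{0\}$, the strategy is to iterate the hypothesis a controlled number of times $n$, each time cutting the ``$\phi$-part'' further down, and then glue the resulting ``$\psi$-parts'' into a single map targeting a direct sum.

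\textbf{Setting $n$ and iterating.} Since $A$ is simple and $\mathrm T(A)$ is compact, $\alpha:=\inf_{\tau}\tau(f_\delta(a))>0$ for suitable $\delta>0$, which lower-bounds $d_\tau(a)$ uniformly. Choose $n\in\N$ with $(1-\eta)^n<\alpha$. Apply the hypothesis to $B_0:=A$ with element $e$, obtaining cpc maps $\phi_1:A\to B_1'$ and $\psi_1:A\to D_1$ satisfying (\ref{LWLtr1div-1})--(\ref{LWLn-4}); in particular $d_\tau(\phi_1(e))<1-\eta$ on $\mathrm T(A)$ and $\tau'(f_{1/2}(\phi_1(e)))\ge r_0-\ep$ on $\mathrm T(B_1')$. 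A small cut-down, replacing $B_1'$ by $\overline{f_{\delta_1}(\phi_1(e))Af_{\delta_1}(\phi_1(e))}$ for suitable $\delta_1>0$ and appealing to continuous scale of $A$ and 4.7--4.9 of \cite{eglnp}, ensures that the new $B_1'$ itself has continuous scale and carries a distinguished norm-one positive element $a_1$ (a normalization of $\phi_1(e)$) with $\tau'(a_1),\tau'(f_{1/2}(a_1))\ge r_1>0$ uniformly on $\mathrm T(B_1')$ for some $r_1$ close to $r_0$. Reapply the hypothesis to $B_1'$ with $a_1$, and iterate $n$ times. The output is a composed cpc $\phi:=\phi_n\circ\cdots\circ\phi_1:A\to A_0:=B_n'$ and a sequence of cpc maps $\psi_i:B_{i-1}'\to D_i$ with $D_1,\ldots,D_n,B_n'$ pairwise orthogonal in $A$; strict comparison together with the choice of $n$ yields $\phi(e)\lesssim a$.

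\textbf{Assembly and trace bound.} Put $D:=D_1\oplus\cdots\oplus D_n$ and define $\psi:A\to D$ by $\psi(x):=\bigoplus_{i=1}^n \psi_i(\phi_{i-1}\circ\cdots\circ\phi_1(x))$. Telescoping (\ref{LWLtr1div-1}) at each stage, with stage-$i$ multiplicativity tolerances $(\mathcal F_i,\ep_i)$ chosen by backward induction to be small enough on the propagated finite subsets, gives $\|x-(\phi(x)+\psi(x))\|<\ep$ for all $x\in\mathcal F\cup\{e\}$, and makes $\phi$ $\mathcal F$-$\ep$-multiplicative. Any $\tau\in\mathrm T(D)$ decomposes as $(t_1,\ldots,t_n)$ with $t_i$ positive on $D_i$ and $\sum_i\|t_i\|=1$, so by (\ref{LWLn-4})
\[
\tau(f_{1/4}(\psi(e)))=\sum_i t_i(f_{1/4}(\psi_i(\phi_{i-1}\cdots\phi_1(e))))\;\ge\;\lambda\min_{1\le i\le n}r_{i-1}=:\mathfrak f_e>0,
\]
a constant depending only on $r_0,\eta,\lambda,n$. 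Membership $D\in\mathcal C_0$ (resp.\ $\mathcal C_0^0$) is obtained by absorbing each $D_i\in\mathcal C_0'$ (resp.\ ${{\cal C}_0^{0}}'$) back into a point--line building block, which is possible given almost stable rank one of $A$ and surjectivity of $\imath$.

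\textbf{Main obstacle.} The delicate technical point is propagating continuous scale and a uniform positive lower bound on $\tau'(f_{1/2}(\cdot))$ through all $n$ iterations without degeneration, while simultaneously choosing the multiplicativity tolerances so that they compose cleanly over the $n$-fold composition $\phi_n\circ\cdots\circ\phi_1$. Both are handled by inserting cut-offs $f_{\delta_i}$ with $\delta_i$ decreasing at a controlled rate, together with a backward induction on the $(\mathcal F_i,\ep_i)$ from the last stage back to the first; a secondary point is ensuring the uniform constant $\mathfrak f_e$ can be computed in advance from $r_0,\eta,\lambda,n$, as the definition of $\mathcal D$ demands a single $\mathfrak f_e$ that works for all triples $(\ep,\mathcal F,a)$.
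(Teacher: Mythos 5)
Your proposal is correct and follows essentially the same approach as the paper: iterate the hypothesis a finite number of times, refine the $\phi$-part at each stage to an hereditary subalgebra with continuous scale (using surjectivity of $\imath$, applied via 7.2 of \cite{eglnp} to make $d_\tau(\phi_i(\cdot))$ continuous), assemble the $\psi$-parts into a direct sum $D=\bigoplus D_i$, and use strict comparison of the residual against $a$ to get property $\mathcal D$. The one place you need to tighten is exactly what you call the "secondary point": as written your $\mathfrak{f}_e:=\lambda\min_{i\le n}r_{i-1}$ is defined \emph{after} $n$ (and hence after $a$), whereas Definition \ref{DDD} quantifies $\mathfrak{f}_e$ \emph{before} the triple $(\ep,\mathcal F,a)$; the paper closes this by declaring $\mathfrak{f}_a=(r_0/2)\lambda$ up front (with $r_0=1-1/64$) and then, given the triple, taking the per-stage tolerances $\ep_1,\dt_2$ proportional to $1/(k+1)$ so the accumulated loss satisfies $r_0-k(\dt_2/16+\ep_1)\ge r_0/2$, which makes $\min_i r_{i-1}\ge r_0/2$ uniformly in $k$.
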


\begin{proof}
Let $b_0\in A_+\setminus \{0\}$ with $\|b_0\|=1.$
Choose $k\ge 1$ such that
\beq\label{LWLn-180228}
(1-\eta/2)^k<\inf\{\mathrm{d}_\tau(b_0): \tau\in {\mathrm{T}}(A)\}.
\eneq

Choose a strictly positive element  $a_0\in A$ with $\|a_0\|=1$
such that
$\tau(a_0),\, \tau(f_{1/2}(a_0))\ge 1-1/64$ for all $\tau\in \mathrm{T}(A).$ 
Put $r_0=1-1/64$ and put ${\mathfrak{f}}_a=(r_0/2){\lambda}.$ 
 
Fix $1>\ep>0.$ Put
$\ep_1=\min\{r_0\ep/2(k+1), r_0\eta/4(k+1)\}.$ 
We choose $\dt_1>0$  small enough 
such that
\beq
\|f_{\sigma'}(a')-f_{\sigma'}(b')\|<\ep_1,
\eneq
whenever $\|a'-b'\|<\dt_1$
for any $0\le a', b'\le 1$ in any \CA, where $\sigma'\in \{1/2, 1/4\}.$

Fix a finite subset ${\cal F}\subseteq A^{\bbf 1}.$
Let $\dt_2=\min\{\dt_1/2(k+1), \ep_1/2(k+1)\}.$
Choose some $g\in \mathrm{C}_0((0,1])$ with $0\le g\le 1$ and let $a_1=g(a_0)$ such that
$a_1\ge a_0$ and
\vspace{-0.1in}\beq\label{LWL1-1-}
\|a_1xa_1-x\|<{{\dt_2/64}} \rforal x\in {\cal F}\cup \{a_0\}.
\eneq

Let ${\cal F}_1$ be a finite subset containing ${\cal F}\cup \{a_i,f_{1/4}(a_i),\, f_{1/2}(a_i): i=0,1\}.$

By hypothesis, there are ${\cal F}_1$-$\dt_2/{{64}}$-multiplicative \cpc s $\phi_1': A\to B',$ where $B'$ is a 
hereditary \SCA\, of $A,$ and  $\psi_1: A\to D_1$  for some
\SCA\, $D_1\subseteq A$ such that $D_1\in {\cal C}_0'$ 
(or $\in {\cal C}_0^{0'}),$ $ D_1\perp \phi_1'(A),$ and
\beq\label{LWL-1}
&&\|x-(\phi_1'(x)+\psi_1(x))\|<\dt_2/16\rforal x\in {\cal F}_1,\\\label{LWL-2}
&&\mathrm{d}_\tau({\phi_1'}(a_0))<1-\eta\rforal \tau\in \mathrm{T}(A),\\\label{LWL-2+1}
&& \tau'(\phi_1'(a_0)), \tau'(f_{1/2}(\phi_1'(a_0))\ge r_0-\dt_2/16\rforal \tau'\in {\mathrm{T}}(B')\\\label{LWL-2+2}
&&\tau(\psi_1(a_0)), \tau(f_{1/2}(\psi_1(a_0)))\ge r_0\eta\tforal \tau\in \mathrm{T}(A),\\\label{LWL-4}
&&t(f_{1/4}(\psi_1(a_0)))\ge r_0\lambda\rforal t\in {\mathrm{T}}(D_1).
\eneq
We have, {by \eqref{LWL1-1-},}
\beq\label{LWL-10}
\|\phi_1'(a_1)\phi_1'(x)\phi_1'(a_1)-\phi_1'(x)\|<\dt_2/8\rforal x\in {\cal F}_1.
\eneq
Therefore, for some $\sigma>0,$
\beq\label{LWL-11}
\|f_{\sigma}(\phi_1'(a_1))\phi_1'(x)f_{\sigma}(\phi_1'(a_1))-\phi_1'(x)\|<\dt_2/4\rforal x\in {\cal F}_1.
\eneq

By 7.2 of \cite{eglnp},
there exists $0\le e\le 1$ such that
\beq\label{LWL-12}
f_{\sigma}(\phi_1'(a_1))\le e\le f_{2\sigma'}(\phi_1'(a_1))
\eneq
 and $\mathrm{d}_\tau(e)$ is continuous on $\overline{\mathrm{T}(A)}^\textrm{w},$
 where $0<\sigma' <\sigma/4.$
Define
${\phi_1}: A\to A$ by
\beq\label{LWL-13}
\phi_1(a)=e^{1/2}\phi_1'(a)e^{1/2}\rforal a\in A.
\eneq
We also {{have}}
\beq\label{LWL-14}
e^{1/2}((\phi_1'(a_1)-\sigma'/2)_+)e^{1/2}\le e^{1/2}\phi_1'(a_1)e^{1/2}\le e.
\eneq
But
\beq\label{LWL-14+}
e&=&e^{1/2}f_{\sigma'}(\phi_1'(a_1))e^{1/2}\le   e^{1/2}( (2/\sigma')(\phi_1'(a_1)-\sigma'/2)_+)e^{1/2}\\
&=&(2/\sigma')(e^{1/2}((\phi_1'(a_1)-\sigma'/2)_+)e^{1/2}).
\eneq
Combining these two inequalities, we conclude that
$d_\tau(\phi_1(a_1))=d_\tau(e)$ for all $\tau\in \mathrm{T}(A).$
In particular,
$d_\tau(\phi_1(a_1))$  is continuous on $\mathrm{T}(A).$
By  \eqref{LWL-11}, we have 
\beq\label{LWL-15}
\|\phi_1'(a)-\phi_1(a)\|<\dt_2/4\rforal a\in {\cal F}_1.
\eneq
By the choice of $\dt_1,$ we have 
\beq
\|f_{1/2}(\phi_1(a_0))-f_{1/2}(\phi_1'(a_0))\|<\ep_1.
\eneq
It follows that
\beq\label{560-2001110-n2}
\tau'(f_{1/2}({\phi_1}(a_0)))\ge r_0-\dt_2/16-\ep_1\rforal \tau'\in {\mathrm{T}}(B').
\eneq
Put $B_1=\overline{\phi_1(a_1)A\phi_1(a_1)}.$ 
Then 
by 5.4 of \cite{eglnp}
$B_1$ has continuous scale.  
Note $\phi_1$ maps $A$ into $B_1.$ 
We also have
\beq\label{561-200110-n1}
\|x-(\phi_1(x)+\psi_1(x))\|<\dt_2/2\rforal x\in {\cal F}_1.
\eneq
Moreover, since  $B_1\subset B',$  we have  $B_1\perp D_1,$ and 
\beq
\tau'({\phi_1(a_0)}), \tau'(f_{1/2}({\phi_1(a_0)})\ge r_0-\dt_2/16-\ep_1\rforal \tau'\in {\mathrm{T}}(B_1).
\eneq

{{Since $B_1\in {\cal D}$ (or in ${\cal D}_0$),  we can repeat the process above for $B_1.$}}
{{ Therefore we may now apply }}
the
hypothesis to $B_1$ in place of $A,$  and 
continue the process and stop at stage $k.$

In this way, we obtain hereditary \SCA s $B_1,B_2,...,B_k,$ 
and \SCA s $D_1, D_2,...,D_k$ such that
$B_{i+1}\subseteq B_i,$ $B_i\perp D_i,$ $D_{i+1}\subseteq B_i,$
$D_i\in {\cal C}_0'$ (or ${\cal C}_0^{0'}$), ${\cal F}_i$-$\dt_2/16\cdot 2^{i+1}$-multiplicative
\cpc s $\phi_{i+1}: B_i\to B_{i+1}$ and $\psi_{i+1}: B_i\to D_{i+1}$
such that
$${\cal F}_{i+1}=\{\phi_i(x); x\in {\cal F}_i, c_j,\,f_{1/2}(c_j), f_{1/4}(c_j),\, j=0,1\},$$
where $c_j=\phi_i\circ \phi_{i-1}\circ \cdots \circ \phi_{{1}}(a_j),$ $j=0,1,$ $i=1,2,...,k-1,$ 
\beq\label{LWL-20}
&&\|x-(\phi_{i+1}(x) \oplus \psi_{i+1}(x))\|<\dt_2/2^{i+1}\rforal x\in  {\cal F}_{i+1}\hspace{0.3in} {{\text{(as in \eqref{561-200110-n1}),}}}\\
&&d_\tau({{\phi_{i+1}(c_{i,0})}})<(1-\eta)^{i+1}\rforal \tau\in \mathrm{T}(A), \hspace{0.9in}{{\text{(as in \eqref{LWL-2}, see also \eqref{LWL-13}),}}}\\
&&\tau'({{\phi_{i+1}(c_{i,0}}})),
\tau'(f_{1/2}({{\phi_{i+1}(c_{i,0})}})
\ge (r_0- (i+1)(\dt_2/16+\ep_1))
\\\nonumber
&&\rforal \tau'\in {\mathrm{T}}(B_{i+1}) \hspace{2.5in}{{\text{(as in \eqref{560-2001110-n2}),}}} \\\
&&\tau({{\psi_{i+1}(c_{i,0})}}) \ge  (r_0-(i+1)(\dt_2/16+\ep_1)) \eta\rforal \tau\in \mathrm{T}(B_i)\hspace{0.4in} {{\text{(as in \eqref{LWL-2+2})}}},\\
&&t{{(f_{1/4}(\psi_{i+1}(c_{i,0})))}}
\ge (r_0-(i+1)(\dt_2/16+\ep_1)) \lambda\rforal t\in T(D_{i+1})\hspace{0.4in} {{\text{(as in \eqref{LWL-4}),}}}
\eneq

and $B_{i+1}$ has continuous scale,
$i=1,2,...,k-1.$
Note that $(r_0-k(\dt_2/16+\ep_1))\ge r_0/2.$
Let $D=\bigoplus_{i=1}^kD_i$ and let
$\Psi: A\to D$ be defined by
$$
\Psi(a)=(\psi_1(a)\oplus \psi_2(\phi_1((a))\oplus\cdots\oplus\psi_k(\phi_{k-1}\circ \cdots \circ \phi_1(a)))\rforal a\in A.
$$
By \eqref{LWL-20}, with $\Phi=\phi_k\circ \phi_{k-1}\circ \cdots \phi_1: A\to B_k,$ 
\beq\label{LWLn-28+1}
\|x-(\Phi(x)\oplus  \Psi(x))\|<\ep\rforal x\in {\cal F},\\\label{LWLn-28+2}
t(f_{1/4}(\Psi(a_0)))\ge (r_0/2)\lambda={\mathfrak{f}}_a\tforal t\in {\mathrm{T}}(D),
\eneq
We also have $D\in {\cal C}_0',$ or $D\in {\cal C}_0^{0'}.$

Moreover, 
$$
d_\tau(\Phi(a_0))\le (1-\eta)^k\tforal \tau\in \mathrm{T}(A).
$$
This implies, by \eqref{LWLn-180228},  that
\beq\label{LWLn-28+3}
\Phi(a_0)\lesssim b_0,
\eneq
since $A$ is assumed to have  strict comparison for positive elements.
By  \eqref{LWLn-28+1}, \eqref{LWLn-28+2}, and \eqref{LWLn-28+3},
we conclude that $A$ is in ${\cal D}$ or in ${\cal D}_0.$

\end{proof}

\begin{defn}[10.1 of \cite{eglnp}]\label{DDiv}
Let $A$ be a non-unital and $\sigma$-unital simple \CA.
 $A$ is said to be tracially approximately divisible in the non-unital sense 
if the following property holds:

For any $\ep>0,$ any finite subset ${\cal F}\subseteq A,$ any
$b\in A_+\setminus \{0\},$ and any
integer $n\ge 1,$ there are $\sigma$-unital \SCA s
$A_0, A_1$ of $A$ such that
$$
{ \mathrm{dist}}(x, B_d)<\ep \rforal x\in {\cal F},
$$
where $B_d\subseteq B:=A_0\oplus  \mathrm{M}_n(A_1)\subseteq A,$
$A_0\perp {\mathrm{M}}_n(A_1),$ 
\vspace{-0.1in} \beq\label{Dappdiv-1}
B_d=\{(x_0, \overbrace{x_1,x_1,...,x_1}^n): x_0\in A_0, x_1\in A_1\}
\eneq
and $a_0\lesssim  b,$ where $a_0$ is a strictly positive element of $A_0.$

\end{defn}

\begin{thm}\label{TTTAD}
Let $A$ be a stably projectionless  separable simple \CA\, in the class $\mathcal R$
with $\mathrm{dim}_{\mathrm{nuc}} A=m<\infty$.

Suppose that every hereditary \SCA\, $B$ of $A$ with continuous scale   has the following properties:
Let $e_B\in B$ be a strictly positive element with $\|e_B\|=1$ and $\tau(e_B)>1-1/64$
for all $\tau\in \mathrm{T}(B).$ 
With
$C$
the unique non-unital simple \CA\, 
$C$ 
in ${\cal M}_0\cap\mathcal{R}$ such that $\mathrm{T}(C)\cong\mathrm{T}(B)$, for each affine homeomorphism $\gamma: {\mathrm{T}(B)\to \mathrm{T}(C)},$ 
there 
exist sequences
of \cpc s $\sigma_n: B\to C$ and 
\hm s $\rho_n: C\to B$ 
such that
\beq\label{TTWv-1}
&&\lim_{n\to\infty}\|\sigma_n(ab)-\sigma_n(a)\sigma_n(b)\|=0\tforal a, b\in B,\\\label{TTWv-2}
&&{{\lim_{n\to\infty}\sup\{|t\circ \sigma_n(a)-\gamma^{-1}(t)(a)|: t\in T(C)\}=0\tforal a\in A,}}\\\label{TTWv-3}
&&\lim_{n\to\infty}\sup\{|\tau (\rho_n\circ \sigma_n(b))-{\tau(b)}|:\tau\in \mathrm{T}(B)\}{{=0.}}
\eneq
%
Suppose also that every hereditary \SCA\, $A$ is tracially approximately divisible.
Then $A\in {\cal D}_{0}.$ 
\end{thm}

\begin{proof}
By \cite{T-0-Z} (see also \cite{Winter-Z-stable-02}),
$A'\otimes {\cal Z}\cong A'$ for every hereditary \SCA\, $A'$ of $A.$ It follows from \cite{Rob-0}
that $A$ has almost stable rank one.
Let $B$ be a hereditary  \SCA\, with continuous scale. Then $B$ has finite nuclear dimension (see \cite{WZ-ndim}).
By \cite{T-0-Z} again, $B$ is ${\cal Z}$-stable. 
It follows from 6.6 of \cite{ESR-Cuntz}
that the map from $\mathrm{Cu}(A)$ to ${\mathrm{LAff}}_+(\mathrm{{\tilde T}}(B))$ is surjective.
Note that
{{the map}} from $\mathrm{W}(A)_+$ to ${\mathrm{LAff}}_{b,+}(\mathrm{T}(A))$ is surjective.
We will apply Theorem \ref{fdim} and Lemma \ref{LWL1}.

Fix a strictly positive element $e\in B$ with $\|e\|=1$ and positive element $e_1\in B$ 
such that $f_{1/2}(e)e_1=e_1=f_{1/2}(e)e_1$ with $d_\tau(e_1)> 1-1/64(m+2)$ for all $\tau\in \mathrm{T}(B).$
Let $1>\ep>0,$ ${\cal F}\subseteq B$ be a finite subset.
and let $b\in B_+\setminus \{0\}.$
Choose $b_0\in B_+\setminus \{0\}$ and $64(m+2)\la b_0\ra \le {\la b \ra}$ in 
$\mathrm{Cu}(A).$ 
{{Since we assume that $A$ is tracially approximately divisible (see \eqref{DDiv}),}}
{{there}} are 
$e_0\in B_+$ and a   hereditary \SCA\,
$A_0$ of $A$ such that $e_0\perp M_{4(m+2)}(A_0),$ $e_0\lesssim b_0$ and 
$$
{\mathrm{dist}}(x, B_{1,d})<\ep/64(m+2)\rforal x\in {\cal F}\cup \{e\},
$$
where $B_{1,d}\subseteq  \overline{e_0Be_0}\oplus \mathrm{M}_{4(m+2)}(A_0)\subseteq  B$ and 
\beq\label{TTTAD-1}
B_{1,d}=\{ x_0\oplus (\overbrace{x_1\oplus x_1\oplus \cdots \oplus x_1}^{4(m+2)}):x_0\in \overline{e_0Be_0},\, x_1\in A_0\}.
\eneq
Therefore,  we  may assume, \wilog,  that 
$\|e_0x-xe_0\|<\ep/64(m+2),$ 
and there is $e_1\in M_{4(m+2)}(A_0)$ with $0\le e_1\le 1$ such that
$\|e_1x-xe_1\|<\ep/64(m+2)$ 
for all $x\in {\cal F}\cup \{e, f_{1/2}(e), f_{1/4}(e)\}.$
Moreover,  since  the map from $W(A)_+$ to ${\mathrm{LAff}}_{b,+}(\mathrm{T}(A))$
is surjective, as in the proof of \ref{LWL1} (when 7.2 of \cite{eglnp} is applied), \wilog, we may assume 
that $A_0$ has continuous scale. 

\Wlog, we may further assume that ${\cal F}\cup \{e\}\subseteq B_{1,d}.$
Write 
$$
x=x_0+\overbrace{x_1\oplus  x_1\oplus\cdots\oplus x_1}^{4(m+1)}.
$$
Let ${\cal F}_1=\{x_1: x\in \mathcal F\cup \{e\}\}.$ 
Note that we may write $\overbrace{x_1\oplus  x_1\oplus\cdots\oplus x_1}^{4(m+2)}=x_1\otimes 1_{4(m+2)}.$
Then ${ \mathrm{dim}}_{\mathrm{nuc}}A_0=m$ (see \cite{WZ-ndim}).
Also, $A_0$ is a non-unital separable simple \CA\, 
which has continuous scale.
We may then apply
\ref{fdim} to $A_0$ with ${\cal S}={\cal R}_{\mathrm{az}}.$ 
By \ref{R2},  in \ref{fdim}, we may choose  $C=\overline{\bigcup_{n=1}^{\infty} W_n},$ where each $W_n$ is a finite direct sum of 
${\cal W}$'s, $W_n\subset W_{n+1}$ and strictly positive elements of $W_n$ are strictly positive elements of $W_{n+1}$ for all $n.$
Since (see  9.6 of  \cite{eglnp}) ${\cal W}=\overline{\bigcup_{k=1}^{\infty} R_k},$ where $R_m\subset R_{m+1},$ 
strictly positive elements of $R_m$ are strictly positive elements of $R_{k+1},$ and 
each $R_k$ are Razak algebras (as in \ref{DRazak}), where $\lambda_s(R_k)\to 1,$ as ${k}\to\infty$ (see  for 
$\lambda_s$ in \ref{DfS}, and also \eqref{Razaklambda}),  we may 
write  $C=\overline{\bigcup_{n=1}^{\infty} C_n},$ where $C_n\subset  C_{n+1},$ strictly positive elements of $C_n$ are strictly 
positive elements of $C_{n+1}.$ Moreover, $\lambda_s(C_n)\ge 1/2$ for all $n.$ 
Put $r_0=(1-1/64(m+2)).$
Choose $\eta_0=7/32(m+2)$ and $\lambda=3/16.$
Thus, by applying \ref{fdim}, 
we have, with $\phi_1(b)=(E-p)b(E-p)$ for all $b\in M_{4(m+2)}(A_0),$
where $E=1_{M_{4(m+2)}({\widetilde{A_0}})},$ and $p\in M_{4(m+2)}({\widetilde{A_0}})$
 is a projection given by \ref{fdim},
and $L: A_0\to D_1$ is an ${\cal F}_1$-$\ep$-multiplicative \cpc\, 
\beq
&&\|x-(\phi_1(x)+L(x))\|<\ep/4\rforal x\in {\cal F}_1,\\
&&{d_\tau}(\phi_1(e))\le 1-1/4(m+2)\rforal \tau\in \mathrm{T}(A_0),\\
&&\tau'(\phi_1(e)), \tau'(f_{1/2}(\phi_1(e))\ge r_0-\ep/4\rforal \tau'\in {\mathrm{T}}((1-p)M_{4(m+1)}(A_0)(1-p)),\\\label{751831+}
&&D_1\in {\cal C}_0^0, D_1 {\subseteq} pM_{4(m+2)}(A_0)p,\\
&&\tau(L(e))\ge r_0\eta_0\rforal \tau\in  \mathrm{T}( M_{4(m+2)}(A_0))\andeqn\\\label{571831}
&&t(f_{1/4}(L(e))\ge r_0\lambda\rforal t\in {\mathrm{T}}(D_1).
\eneq
Let  $B_1=(1-p)M_{4(m+1)}(A_0))(1-p)\oplus \overline{e_0Be_0}$ and 
$\phi: {B} \to B_1$ be defined by $\phi(a)=\phi_1(e_1ae_1)+e_0ae_0$ for $a\in A.$  Then $\phi$ is ${\cal F}$-$\ep$-multiplicative.
Put $\eta=\eta_0/2<{\eta_0\over{1+\ep/64(m+2)}}.$ 
Then, in addition to  \eqref{571831} and \eqref{751831+},
\beq\nonumber
&&\|x-(\phi(x)+L(x))\|<\ep\rforal x\in {\cal F},\\\nonumber
&&{d_\tau}(\phi(e))\le 1-\eta\rforal \tau\in \mathrm{T}(B),\\\nonumber
&&\tau'(\phi_1(e)), \tau'(f_{1/2}(\phi_1(e))\ge r-\ep\rforal \tau'\in {\mathrm{T}}(B_1),\\\nonumber
&&\tau(L(e))\ge r_0\eta\rforal \tau\in  \mathrm{T}(B).
\eneq
 Note this holds 
for every such $B.$ 
Thus,  the 
hypotheses
of \ref{LWL1} are satisfied.  We then   apply \ref{LWL1}.
\end{proof}

\section{The \CA\  ${\cal W}$ and UHF-stability}

\begin{defn}[12.1 of \cite{eglnp}]\label{DWtrace}
Let $A$ be a non-unital separable \CA. 
Suppose that
$\tau\in \mathrm{T}(A).$
Recall that $\tau$ was said to be a ${\cal W}$-trace in \cite{eglnp}
if there exists a sequence of
\cpc s $(\phi_n)$ from $A$ into ${\cal W}$ such that
\beq\nonumber
&&\lim_{n\to\infty}\|\phi_n(ab)-\phi_n(a)\phi_n(b)\|=0\rforal a,\,b\in A,\andeqn\\
&&\tau(a)=\lim_{n\to\infty}{ \tau_{\mathcal W}}(\phi_n(a))\rforal a\in A,
\eneq
where ${ \tau_{\mathcal W}}$ is the unique tracial state on ${\cal W}.$

\end{defn}
The following two statements
(\ref{Pwtracest}, and \ref{Ttracekerrho})  are 
taken from \cite{eglnp}
(and the proofs are straightforward). 

\begin{prop}[{12.4} of \cite{eglnp}]\label{Pwtracest}
Let $A$ be a {{separable}} simple \CA\,  with a ${\cal W}$-tracial state $\tau\in \mathrm{T}(A).$
Let $0\le a_0\le 1$ be a strictly positive element of $A.$
Then there exists a sequence of \cpc s  $\phi_n: A\to {\cal W}$ such that
$\phi_n(a_0)$ is a strictly positive element,
\beq\nonumber
&&\lim_{n\to\infty}\|\phi_n(a)\phi_n(b)-\phi_n(ab)\|=0\rforal a,\, b\in A\andeqn\\
&&\tau(a)=\lim_{n\to\infty} { \tau_{\mathcal W}}\circ \phi_n(a)\rforal a\in A,
\eneq
where ${ \tau_{\mathcal W}}$ is the unique tracial state of ${\cal W}.$
\end{prop}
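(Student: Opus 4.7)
The strategy is to take the sequence $(\phi_n)$ produced by Definition \ref{DWtrace} and modify each $\phi_n$ so that $\phi_n(a_0)$ becomes strictly positive in $\mathcal W$, without disturbing the approximate multiplicativity or the tracial limit.

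First, I would localize each $\phi_n$ to the hereditary \SCA\ $B_n:=\overline{\phi_n(a_0)\mathcal W\phi_n(a_0)}$ of $\mathcal W$, in which $\phi_n(a_0)$ is strictly positive by construction. Since $a_0$ is strictly positive in $A$, the powers $a_0^{1/m}$ form an approximate unit, so $a_0^{1/m}aa_0^{1/m}\to a$ in norm for every $a\in A$. Combined with the asymptotic multiplicativity of $(\phi_n)$, this shows that $\phi_n(a_0)^{1/m}\phi_n(a)\phi_n(a_0)^{1/m}\in B_n$ approximates $\phi_n(a)$ for $m$ large enough (depending on $n$). A diagonal selection over finite subsets of $A$ and over $m$ then yields \cpc s $\psi_n:A\to B_n$ satisfying $\|\psi_n(a)-\phi_n(a)\|\to 0$ for every $a\in A$, preserving the asymptotic multiplicativity and tracial limit, with $\psi_n(a_0)$ close to the strictly positive $\phi_n(a_0)\in B_n$; after a harmless perturbation one may assume $\psi_n(a_0)$ itself is strictly positive in $B_n$.

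Next, each $B_n$ is a non-zero hereditary \SCA\ of the simple separable stably projectionless KK-contractible algebra $\mathcal W$ of finite nuclear dimension, hence is itself simple, KK-contractible, and of finite nuclear dimension, and admits (after normalisation) a unique bounded tracial state, namely $c_n^{-1}\tau_{\mathcal W}|_{B_n}$, where $c_n:=\|\tau_{\mathcal W}|_{B_n}\|=d_{\tau_{\mathcal W}}(\phi_n(a_0))=\lim_m\tau_{\mathcal W}(\phi_n(a_0)^{1/m})$. By the uniqueness of $\mathcal W$ recorded in Definition \ref{DW} (originally \cite{Razak-W} and \cite{Jacelon-W}), each $B_n$ is isomorphic to $\mathcal W$; fix isomorphisms $\iota_n:B_n\to\mathcal W$ and set $\tilde\phi_n:=\iota_n\circ\psi_n$. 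Then $\tilde\phi_n(a_0)=\iota_n(\psi_n(a_0))$ is strictly positive in $\mathcal W$, and $\tilde\phi_n$ inherits the approximate multiplicativity of $\psi_n$.

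For the tracial condition, since $\iota_n$ sends the unique tracial state $c_n^{-1}\tau_{\mathcal W}|_{B_n}$ of $B_n$ to $\tau_{\mathcal W}$, we have $\tau_{\mathcal W}(\tilde\phi_n(a))=c_n^{-1}\tau_{\mathcal W}(\psi_n(a))\approx c_n^{-1}\tau_{\mathcal W}(\phi_n(a))$. Now $\tau_{\mathcal W}(\phi_n(a_0)^{1/m})\to\tau(a_0^{1/m})$ for every fixed $m$, while $\tau(a_0^{1/m})\to 1$ as $m\to\infty$ (since $\tau$ is a tracial state and $a_0$ is strictly positive in $A$); combined with $c_n\le\|\tau_{\mathcal W}\|=1$, a diagonal argument extracts a subsequence along which $c_n\to 1$, and so $\tau_{\mathcal W}(\tilde\phi_n(a))\to\tau(a)$. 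The principal obstacle is the identification $B_n\cong\mathcal W$, which rests on the uniqueness theorem characterising $\mathcal W$ via its Elliott invariant; the remaining steps are routine perturbation and diagonalization arguments.
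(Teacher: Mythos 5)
Your argument is essentially sound, but it leans on the nontrivial fact that every nonzero hereditary sub-C*-algebra of $\mathcal W$ is again isomorphic to $\mathcal W$. As you cite it, this is to follow from ``the uniqueness theorem characterising $\mathcal W$'' in Definition~\ref{DW}; but that statement only singles out $\mathcal W$ among inductive limits of Razak algebras with a unique bounded tracial state, and $B_n=\overline{\phi_n(a_0)\mathcal W\phi_n(a_0)}$ is not visibly such a limit. To close the gap one must invoke a broader classification --- e.g.\ Robert's classification of inductive limits of one-dimensional NCCW complexes (\cite{Robert-Cu}), or else Theorem~\ref{TTMW} of the present paper after verifying $B_n\in\mathcal D$ --- together with the observation that $B_n$, being a full hereditary subalgebra of $\mathcal W$, has the same invariant $(\widetilde{\mathrm T},\Sigma)$ as $\mathcal W$, namely a ray with $\Sigma$ linear. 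The isomorphism is true (the paper itself uses $\overline{b_1\mathcal W b_1}\cong\mathcal W$ without comment in the proof of Theorem~\ref{TWtrace}), but your citation does not quite support it. A secondary remark: no subsequence is needed to get $c_n\to1$; since $c_n\ge\tau_{\mathcal W}(\phi_n(a_0)^{1/m})$ for every $m$ and $\tau_{\mathcal W}(\phi_n(a_0)^{1/m})\to\tau(a_0^{1/m})$ as $n\to\infty$, one has $\liminf_n c_n\ge\tau(a_0^{1/m})$ for all $m$, whence $c_n\to1$.

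More to the point, the ``straightforward'' proof the statement is meant to have avoids all classification machinery via a convex-combination perturbation. Fix a strictly positive $e\in\mathcal W$ with $\|e\|=1$, take $(\phi_n)$ from Definition~\ref{DWtrace}, choose $\delta_n\searrow0$ with each $\delta_n>0$, and set
\[
\psi_n(a):=(1-\delta_n)\phi_n(a)+\delta_n\,\tau(a)\,e .
\]
Each $\psi_n$ is a convex combination of the cpc map $\phi_n$ and the cpc map $a\mapsto\tau(a)e$, hence cpc. Since $\tau(a_0)>0$ (otherwise $\tau$ would vanish on $\overline{a_0Aa_0}=A$) and $e$ is strictly positive, $\psi_n(a_0)\ge\delta_n\tau(a_0)e$ is strictly positive in $\mathcal W$. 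The bound $\|\psi_n(a)-\phi_n(a)\|\le 2\delta_n\|a\|$ shows $(\psi_n)$ inherits the asymptotic multiplicativity of $(\phi_n)$, and $\tau_{\mathcal W}(\psi_n(a))=(1-\delta_n)\tau_{\mathcal W}(\phi_n(a))+\delta_n\tau(a)\tau_{\mathcal W}(e)\to\tau(a)$. Your hereditary-subalgebra route does work once fully justified and establishes a useful structural fact about $\mathcal W$, but this one-line perturbation is what makes the proposition genuinely elementary.
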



\begin{thm}[{{12.2}} of \cite{eglnp}]\label{Ttracekerrho}
Let $A$ be a separable simple \CA\, 
with
$A=\mathrm{Ped}(A).$
If every tracial state $\tau\in \mathrm{T}(A)$ is a ${\cal W}$-trace, then
$\mathrm{K}_0(A)={\mathrm{ker}}{\rho_A}.$
\end{thm}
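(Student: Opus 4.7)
My plan is to exploit the facts that $\mathrm{K}_0(\mathcal W) = \{0\}$ (since $\mathcal W$ is $\mathrm{KK}$-contractible) and that every $\tau \in \mathrm{T}(A)$ is asymptotically pulled back from $\tau_{\mathcal W}$ along an approximately multiplicative map into $\mathcal W$; together these will force the pairing $\rho_A$ to vanish on each $x \in \mathrm{K}_0(A)$, which is enough, since $\ker \rho_A \subseteq \mathrm{K}_0(A)$ automatically.

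First I would represent $x \in \mathrm{K}_0(A)$ as $x = [p] - [q]$ with $p, q \in \mathrm{M}_k({\widetilde A})$ projections having a common scalar part $\pi(p) = \pi(q) = e$, so that $p - q \in \mathrm{M}_k(A)$ and $\rho_A(x)(\tau) = \tau(p - q)$ for every $\tau \in \mathrm{T}(A)$ (with $\tau$ implicitly amplified to $\mathrm{M}_k$). Fixing such a $\tau$, Proposition \ref{Pwtracest} supplies a sequence of asymptotically multiplicative \cpc s $\phi_n\colon A \to \mathcal W$ with $\tau = \lim_n \tau_{\mathcal W} \circ \phi_n$ pointwise, and I would extend each $\phi_n$ unitally to ${\widetilde \phi}_n\colon {\widetilde A} \to \widetilde{\mathcal W}$ and amplify to $\mathrm{M}_k$.

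The next step is to replace the near-projections ${\widetilde \phi}_n^{(k)}(p), {\widetilde \phi}_n^{(k)}(q)$ by genuine projections $p_n, q_n \in \mathrm{M}_k(\widetilde{\mathcal W})$ via a standard functional-calculus perturbation; because the quotient $\widetilde{\mathcal W} \to \mathbb C$ commutes with the functional calculus, the perturbations can be arranged with $\pi(p_n) = \pi(q_n) = e$ exactly, so that $p_n - q_n$ lies in $\mathrm{M}_k(\mathcal W)$. Since $\mathrm{K}_0(\widetilde{\mathcal W}) \cong \mathbb Z$ is detected by the rank of the scalar part and $\pi(p_n) = \pi(q_n)$, the classes $[p_n]$ and $[q_n]$ coincide in $\mathrm{K}_0(\widetilde{\mathcal W})$, hence $p_n$ and $q_n$ are Murray--von Neumann equivalent after stabilization; extending $\tau_{\mathcal W}$ to $\widetilde{\mathcal W}$ by $\tau_{\mathcal W}(1) = 0$ (which remains a tracial linear functional) then gives $\tau_{\mathcal W}(p_n - q_n) = 0$. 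Combining this with the identity ${\widetilde \phi}_n^{(k)}(p) - {\widetilde \phi}_n^{(k)}(q) = \phi_n^{(k)}(p - q)$ and the $\mathcal W$-trace limit, I would conclude
\[
\rho_A(x)(\tau) = \tau(p - q) = \lim_n \tau_{\mathcal W}(\phi_n^{(k)}(p - q)) = \lim_n \tau_{\mathcal W}(p_n - q_n) = 0.
\]
As $\tau$ was arbitrary, $x \in \ker \rho_A$.

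The hard part will really only be bookkeeping: arranging the projection perturbations so that scalar parts are matched exactly (so that the trace computations genuinely take place inside $\mathrm{M}_k(\mathcal W)$ rather than $\mathrm{M}_k(\widetilde{\mathcal W})$) and checking that the zero-unitization extension of $\tau_{\mathcal W}$ retains the tracial identity. Once these routine points are settled, the argument is a direct limit computation powered by $\mathrm{K}_0(\mathcal W) = \{0\}$.
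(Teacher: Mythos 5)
Your proof is correct and is the natural "straightforward" argument that the paper alludes to when it cites Theorem 12.2 of \cite{eglnp} without reproducing a proof. The essential point — that $\mathrm{K}_0(\mathcal W) = \{0\}$ forces $[p_n] = [q_n]$ in $\mathrm{K}_0(\widetilde{\mathcal W})$ once the scalar parts agree, whence the extended tracial functional $\tilde\tau_{\mathcal W}$ (with $\tilde\tau_{\mathcal W}(1) = 0$) kills the difference — is exactly what makes the hypothesis do its work. Two small checks you flagged as "bookkeeping" do go through: the unital extension $\widetilde\phi_n$ remains approximately multiplicative on $\widetilde A$ by a direct computation (the cross terms cancel), so the amplifications $\widetilde\phi_n^{(k)}(p), \widetilde\phi_n^{(k)}(q)$ really are near-projections for large $n$; and since the scalar part $e$ is already a projection in $M_k(\Comp)$, the continuous-functional-calculus perturbation $p_n = f(\widetilde\phi_n^{(k)}(p))$ satisfies $\pi(p_n) = f(e) = e$ exactly, as you claim. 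Note that the injectivity of $\pi_*\colon \mathrm{K}_0(\widetilde{\mathcal W}) \to \mathrm{K}_0(\Comp)$ requires only $\mathrm{K}_0(\mathcal W) = \{0\}$ (the connecting map $\mathrm{K}_0(\Comp) \to \mathrm{K}_1(\mathcal W)$ vanishes because the unitization sequence is split), so $\mathrm{K}_1(\mathcal W) = \{0\}$ is not actually needed here, though of course it holds. Also, Proposition \ref{Pwtracest} is more than you need — Definition \ref{DWtrace} already supplies the approximately multiplicative $\phi_n$ realizing $\tau$; the strict-positivity refinement plays no role in this argument.
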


\begin{prop}\label{Pwtrace}
Let $A$ be a separable \CA\,  with $A=\mathrm{Ped}(A)$ such that  every tracial state $\tau$ of $A$ is
quasidiagonal.
Let $Y\in {\cal D}_0$ be a  simple \CA\, which is an inductive limit of \CA s in ${\cal C}_0'$
such that  $\mathrm{K}_0(Y)={ \ker}\rho_Y$, and $Y$ has a unique
trace, which is bounded.
Then all  tracial states of $A\otimes Y$  are  ${\cal W}$-tracial states. In particular, all  tracial states of $A\otimes { \cal W}$  are  
${\cal W}$-tracial states.
\end{prop}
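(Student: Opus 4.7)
The plan is to decompose any tracial state on $A\otimes Y$ using uniqueness of $\tau_Y$, combine the quasidiagonal approximation of the resulting trace on $A$ with a trace-preserving embedding $Y\hookrightarrow\mathcal{W}$, and then invoke the absorption property $M_k(\Comp)\otimes\mathcal{W}\cong\mathcal{W}$ from the proof of Theorem \ref{R2}.

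First, since $Y$ is nuclear (as a simple inductive limit of the nuclear one-dimensional NCCW complexes in $\mathcal{C}_0'$) and has a unique tracial state $\tau_Y$, any tracial state $\tau$ of $A\otimes Y$ admits a decomposition $\tau=\sigma\otimes\tau_Y$ for some positive trace $\sigma$ on $A$: for each $a\in A_+$, the functional $y\mapsto\tau(a\otimes y)$ is a bounded tracial functional on $Y$, hence by uniqueness a non-negative scalar multiple of $\tau_Y$, and linearity in $a$ supplies $\sigma$. After normalizing to a tracial state on $A$, the hypothesis that every tracial state on $A$ is quasidiagonal yields approximately multiplicative c.p.c.\ maps $\psi_n\colon A\to M_{k(n)}(\Comp)$ with $\sigma(a)=\lim_{n\to\infty}\mathrm{tr}_{k(n)}(\psi_n(a))$ for all $a\in A$, where $\mathrm{tr}_{k(n)}$ is the unique tracial state on $M_{k(n)}(\Comp)$.

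Second, I would construct a trace-preserving embedding $\iota\colon Y\hookrightarrow\mathcal{W}$. $Y$ satisfies the UCT, being an inductive limit of one-dimensional NCCW complexes; has $\mathrm{K}_0(Y)=\ker\rho_Y$ by hypothesis; and has $\mathrm{K}_1(Y)=0$, inherited from the building blocks (each algebra in $\mathcal{C}_0'$ is Morita equivalent to one in $\mathcal{C}_0$, where $\mathrm{K}_1$ vanishes by Definition \ref{Dbuild1}). On the other side, $\mathcal{W}\in\mathcal{D}_0$ satisfies the UCT and has continuous scale and $\mathrm{K}_0(\mathcal{W})=\mathrm{K}_1(\mathcal{W})=0$. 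After replacing $Y$ by a full hereditary subalgebra with continuous scale and then lifting back via Morita equivalence, the hypotheses of Corollary \ref{CZtW} are met, and that corollary produces an embedding $\iota\colon Y\to\mathcal{W}$ with $\tau_{\mathcal{W}}\circ\iota=\tau_Y$ (after fixing normalizations so the affine homeomorphism between the one-point trace simplices is the canonical one).

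Finally, I set $\Phi_n:=\psi_n\otimes\iota\colon A\otimes Y\to M_{k(n)}(\Comp)\otimes\mathcal{W}$. These maps are c.p.c.\ and approximately multiplicative (since $\iota$ is a homomorphism and $\psi_n$ is approximately multiplicative, using nuclearity of $Y$ so the tensor product is unambiguous). By the proof of Theorem \ref{R2}, $M_{k(n)}(\Comp)\otimes\mathcal{W}\cong\mathcal{W}$, and under this identification the unique tracial state $\mathrm{tr}_{k(n)}\otimes\tau_{\mathcal{W}}$ on the left is identified with $\tau_{\mathcal{W}}$. For any elementary tensor $a\otimes y$,
$$
\tau_{\mathcal{W}}(\Phi_n(a\otimes y))=\mathrm{tr}_{k(n)}(\psi_n(a))\,\tau_{\mathcal{W}}(\iota(y))\longrightarrow \sigma(a)\,\tau_Y(y)=\tau(a\otimes y),
$$
and by uniform boundedness of $\Phi_n$ together with density of elementary tensors, this convergence extends to all of $A\otimes Y$. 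Hence $\tau$ is a $\mathcal{W}$-tracial state, as required. The main obstacle is the second step, which requires careful attention to the continuous-scale reduction, the Morita equivalence lift, and the normalization of traces so that $\tau_{\mathcal{W}}\circ\iota$ coincides with $\tau_Y$ rather than a positive scalar multiple; the special case $Y=\mathcal{W}$ stated in the ``In particular'' clause bypasses this difficulty entirely by taking $\iota=\mathrm{id}_{\mathcal{W}}$.
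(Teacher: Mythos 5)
Your overall strategy is the same as the paper's: decompose an arbitrary tracial state of $A\otimes Y$ as $\sigma\otimes\tau_Y$ using uniqueness of $\tau_Y$, apply quasidiagonality of $\sigma$ to get approximately multiplicative maps $\psi_n\colon A\to M_{k(n)}$, and tensor these with a trace-preserving $*$-homomorphism into (a matrix amplification of) $\mathcal{W}$. The final assembly is identical.

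Where you diverge is in how the trace-preserving homomorphism out of $Y$ is obtained, and here the paper's route is leaner. You reach for Corollary \ref{CZtW}, which is a substantial existence theorem proved by the one-sided Elliott intertwining machinery (and which itself presupposes \cite{Robert-Cu}, 6.2.3 and 7.3 of \cite{eglnp}, stable rank one, etc.). To apply it, you then need $Y$ to have continuous scale, and you propose to pass to a hereditary subalgebra and lift back by Morita equivalence; as you note, this last step needs a stable-rank-one argument to bring the image back from $\mathcal{W}\otimes\mathcal{K}$ into $\mathcal{W}$, and one must check that the normalization of traces comes out right. None of these obstacles is fatal, but they are real and you do not resolve them. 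The paper instead invokes Theorem 1.0.1 of \cite{Robert-Cu} directly to produce, for each $n$, a homomorphism $h_n\colon M_n(Y)\to\mathcal{W}$ carrying strictly positive elements to strictly positive elements; this works because $M_n(Y)$ is again an inductive limit of (full hereditary subalgebras of) one-dimensional NCCW complexes and $\mathcal{W}$ has stable rank one, so Robert's Cuntz-semigroup classification of homomorphisms applies with no continuous-scale hypothesis and no Morita gymnastics. Properness of $h_n$ then forces $\tau_{\mathcal{W}}\circ h_n$ to be the unique tracial state $\mathrm{tr}_n\otimes t$ of $M_n(Y)$ without any separate normalization step, and one sets $\phi_n(a\otimes b)=h_{k(n)}(\psi_n(a)\otimes b)$. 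So the two proofs are morally equivalent, but you pay for the heavier existence theorem with extra technical debt that the paper avoids; if you want to keep your route, you should either argue directly that a simple C*-algebra with a unique tracial state already has continuous scale (so the Morita reduction is unnecessary), or spell out the stable-rank-one argument that moves the image of $Y\otimes\mathcal{K}\cong Y_0\otimes\mathcal{K}\to\mathcal{W}\otimes\mathcal{K}$ back into $\mathcal{W}$.
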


\begin{proof}
Let $\tau\in {\mathrm T}(A).$ Denote by $t$ the unique tracial state of $Y.$
We will show $\tau\otimes t$ is a ${\cal W}$-trace on $A\otimes Y.$

{{By  8.12 of \cite{eglnp}, $Y$ is an inductive limit of 1-dimensional non-commutative CW compleces 
(\CA s in ${\cal C}_0$) with $K_1(Y)=\{0\}.$}}
For each $n,$ there is a \hm\, $h_n: \mathrm{M}_n(Y)\to {\cal W}$ (by Theorem 1.0.1 of \cite{Robert-Cu}) such that
$h_n$ maps a strictly positive element of $\mathrm{M}_n(Y)$ to a strictly positive element of ${\cal W}.$
{ Consider $\tau_{\mathcal W}\in \mathrm{T}({\cal W})$.} Then ${\tau_{\mathcal W}}\circ h_n$ is a tracial state of $Y.$
Therefore ${t\otimes\mathrm{tr_n}}(a)={\tau_{\mathcal W}}\circ h_n(a)$ for all $a\in \mathrm{M}_n(Y).$
Moreover,  for any $a\in \mathrm{M}_n$ and $b\in Y,$
$$
{\mathrm{tr}}_n(a)t(b)=\tau_{\mathcal W}\circ h_n(a\otimes b),
$$
where ${\mathrm{tr}}_n$ is the normalized trace on $\mathrm{M}_n,$ $n=1,2,....$

Since $\tau$ is quasidiagonal, there is a sequence
$\psi_n: A\to \mathrm{M}_{k(n)}$ of \cpc s such that
\beq\label{Pwt-1}
&&\lim_{n\to\infty}\|\psi_n(ab)-\psi_n(a)\psi_n(b)\|=0\rforal a, \, b\in A\andeqn\\
&&\tau(a)=\lim_{n\to\infty}\mathrm{tr}_{k(n)}\circ\psi_n(a)\rforal a\in A.
\eneq

Define $\phi_n: A\otimes Y\to {\cal W}$
by $\phi_n(a\otimes b)=h_{k(n)}(\psi_n(a)\otimes b)$ for all $a\in A$ and $b\in Y.$
Then $\phi_n$ is \cpc\, and, for any $a\in A$ and $b\in Y,$
\beq\label{Pwt-3}
(\tau\otimes t)(a\otimes b)&=&\lim_{n\to\infty} \mathrm{tr}_{k(n)}(\psi_n(a))t(b)\\
&=& \lim_{n\to\infty} {\tau_{\mathcal W}}\circ h_{k(n)}(\psi_n(a)\otimes b)=
 \lim_{n\to\infty} {\tau_{\mathcal W}}(\phi_n(a\otimes b)).
\eneq
Therefore $\tau\otimes t$ is a ${\cal W}$-trace.
\end{proof}

\begin{thm}\label{TMW}
Let $A$ be a simple separable \CA\, with finite nuclear dimension  which has {bounded} scale 
and is such that
$\mathrm{K}_0(A)=\ker \rho_A$ and
every tracial state is a ${\cal W}$-trace.
Suppose that  every hereditary \SCA\, of $A$ with continuous scale is tracially approximately divisible. 
Then $A\in {\cal D}_0.$
(In particular, $A\otimes U\in {\cal D}_{0}$ for any UHF-algebra $U.$)
\end{thm}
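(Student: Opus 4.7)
The plan is to apply Theorem \ref{TTTAD} to $A$ to conclude $A\in\mathcal D_0$. Three of the four hypotheses of that theorem hold directly: finite nuclear dimension and tracial approximate divisibility of all hereditary \SCA s with continuous scale are given, and $A$ is stably projectionless because $\mathrm K_0(A)=\ker\rho_A$ combined with simplicity and the presence of a tracial state forces $\mathrm K_0(A)_+=\{0\}$, which rules out nonzero projections in $A\otimes\mathcal K$; together with continuous scale, separability, and $\mathrm T(A)\ne\emptyset$ this places $A$ in $\mathcal R$.

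The task is then to verify the factorization hypothesis of Theorem \ref{TTTAD}: for each hereditary \SCA\ $B\subseteq A$ with continuous scale, each strictly positive $e_B\in B$ with $\tau(e_B)>1-1/64$ on $\mathrm T(B)$, and the unique $C\in\mathcal M_0\cap\mathcal R$ with $\mathrm T(C)\cong\mathrm T(B)$ (which exists by Theorem \ref{R2}), one must exhibit \cpc s $\sigma_n\colon B\to C$ (approximately multiplicative, sending $e_B$ to a strictly positive element) and \hm s $\rho_n\colon C\to B$ such that $\tau\circ\rho_n\circ\sigma_n$ approximates $\tau$ uniformly in $\tau\in\mathrm T(B)$. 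First I would note that the $\mathcal W$-trace property passes from $A$ to any hereditary \SCA\ $B$ (one cuts down the cpc approximations witnessing a $\mathcal W$-trace on $A$ by an approximate unit of $B$), so that Proposition \ref{Pwtracest} provides, for every $\tau\in\mathrm T(B)$, a sequence $\phi_n^\tau\colon B\to\mathcal W$ of approximately multiplicative \cpc s with $\phi_n^\tau(e_B)$ strictly positive and $\tau_{\mathcal W}\circ\phi_n^\tau\to\tau$ pointwise on $B$.

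To assemble these into $\sigma_n$, I would use the decomposition $C=\overline{\bigcup_k W_k}$ from Theorem \ref{R2}, with each $W_k$ a finite direct sum of copies of $\mathcal W$. Given $\ep>0$ and a finite subset $\mathcal F\subseteq B$, choose finitely many traces $\tau_1,\dots,\tau_m\in\mathrm T(B)$ that are $\ep$-dense on $\mathcal F$, a partition of unity on $\mathrm T(B)\cong\mathrm T(C)$ subordinate to neighborhoods of the $\tau_i$, and a corresponding proper embedding of $\bigoplus_{i=1}^m\mathcal W$ into some $W_k\subseteq C$ so that the $i$-th tracial evaluation on $W_k$ recovers approximately $\tau_i$ through $\gamma^{-1}$; then $\sigma_n(b)=\bigoplus_i\phi_n^{\tau_i}(b)$ composed with this embedding satisfies the required approximate multiplicativity and approximate matching $\tau_C\circ\sigma_n(b)\approx\gamma(\tau_C)(b)$. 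For $\rho_n\colon C\to B$ I would apply Robert's classification of homomorphisms (Theorem 1.0.1 of \cite{Robert-Cu}) to realize the $\mathrm{Cu}^\sim$-morphism $\mathrm{Cu}^\sim(C)\to\mathrm{Cu}^\sim(B)$ induced by $\gamma^{-1}$ (legitimate since $\mathrm K_0(C)=0$ reduces $\mathrm{Cu}^\sim(C)$ to $\mathrm{LAff}_+^\sim(\mathrm T(C))$, and since $B$ has almost stable rank one, a consequence of $\mathcal Z$-stability from finite nuclear dimension via \cite{T-0-Z} and \cite{Rob-0}); a fixed such $\rho_n=\rho$ then ensures $\tau\circ\rho_n\circ\sigma_n(b)\to\tau(b)$.

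The principal obstacle will be the uniformity of this trace matching across the compact simplex $\mathrm T(B)$; since $\sigma_n$ is built from a finite trace sample only, uniform convergence requires coordinating the choice of $\{\tau_i\}$, the stage $W_k$, and the index $n$ with the partition of unity, relying on uniform continuity of trace-evaluation on the finite subset $\mathcal F$. Once these hypotheses are verified, Theorem \ref{TTTAD} yields $A\in\mathcal D_0$. Finally, the parenthetical statement $A\otimes U\in\mathcal D_0$ follows by applying the same argument to $A\otimes U$: Künneth gives $\mathrm K_0(A\otimes U)=\ker\rho_{A\otimes U}$, tracial approximate divisibility is preserved (indeed strengthened) by tensoring with $U$, and every trace of $A\otimes U$ factors as $\tau\otimes\tau_U$ so that using $\mathcal W\otimes U\cong\mathcal W$ one sees it is again a $\mathcal W$-trace.
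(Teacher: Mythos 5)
Your overall structure mirrors the paper's: reduce to Theorem~\ref{TTTAD} and verify its factorization hypothesis by using the $\mathcal W$-trace property to build the $\sigma_n$, and a Cuntz-semigroup existence theorem to build $\rho_n$. The construction of $\sigma_n$ differs slightly from the paper's (the paper pulls $\gamma$ back to a continuous affine map $\kappa$ on a finite stage $B_{n_1}=W_1\oplus\cdots\oplus W_m$ and applies the $\mathcal W$-trace property at the finitely many traces $\kappa(\tau_{W_i})$, rather than picking an ad hoc $\varepsilon$-net and partition of unity), but this part of your argument can be made to work with care, and you have correctly flagged the uniformity issue.

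The genuine gap is in the construction of $\rho_n$. You propose to invoke Theorem~1.0.1 of \cite{Robert-Cu} to realize a $\mathrm{Cu}^\sim$-morphism $\mathrm{Cu}^\sim(C)\to\mathrm{Cu}^\sim(B)$ by a homomorphism, and you justify this by noting that $B$ has almost stable rank one. But Robert's existence theorem requires the \emph{target} algebra to have \emph{stable rank one}, not merely almost stable rank one, and at this point in the proof we do not know that $B$ has stable rank one. That conclusion would follow from $B\in\mathcal D$ (via 11.5 of \cite{eglnp}), but membership in $\mathcal D_0$ is exactly what we are trying to establish --- so using it here would be circular. From $\mathcal Z$-stability (via \cite{T-0-Z} and \cite{Rob-0}) one only gets almost stable rank one, which is strictly weaker. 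This is precisely the difficulty that Appendix~A of the paper is designed to resolve: Theorem~\ref{TTTTappendix} (and its Corollary~\ref{Tappendix}) re-proves the needed existence of a homomorphism $\rho\colon B\to A$ realizing the trace isomorphism under the hypothesis of almost stable rank one only, at the cost of a substantial strict-comparison analysis of $\mathrm{Cu}(\widetilde A)$ (Lemma~\ref{LNote1}, Theorem~\ref{Tnote2}) and a Pedersen-style unitary-lifting argument (Lemma~\ref{LRordamu}). Your proposal as written does not supply this input, and the appeal to Theorem~1.0.1 of \cite{Robert-Cu} cannot be used verbatim; you would need to replace it with Theorem~\ref{Tappendix} (or re-derive its content).
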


\begin{proof}
By \cite{T-0-Z}, $A$ is ${\cal Z}$-stable.  By Remark 5.2 of \cite{eglnp}, 
$A$ has a non-zero hereditary \SCA\, $A_0$ with continuous scale. 
Then $\mathrm{M}_k(A_0)$  also has continuous scale for every integer $k\ge 1.$   Since $A$ has bounded scale, it 
is isomorphic to a hereditary \SCA\, of $\mathrm{M}_k(A_0)$ for some possibly large $k.$ 
Since $\mathrm{M}_k(A_0)$ has the same properties as assumed for $A$, it then follows from 8.6 of \cite{eglnp}  that, to prove that $A$ is in ${\cal D}_0,$  we may assume that $A$ has continuous scale.

It follows from Theorem \ref{R2} that there is a simple \CA\, $B=\lim_{n\to\infty} (B_n, \imath_n),$
where each $B_n$ is a finite direct sum of copies of ${\cal W}$  and
$\imath_{n, \infty}$ 
maps strictly positive elements to strictly positive elements,
each
$B_n$ has bounded scale,
 and
$\mathrm{T}(B)\cong \mathrm{T}(A).$
Denote by $\gamma:{{\mathrm{T}(A)\to \mathrm{T}(B)}}$ the affine homeomorphism.
By \cite{Tsang-W},  we may assume that $B=\lim_{n\to\infty}(R_n, \imath_n),$ where each $R_n$ 
is a Razak algebra and $\imath_n$ is injective.
It follows from Theorem \ref{Tappendix}
of
the  appendix that 
there exists a 
\hm\, $\rho: B\to A$  which induces 
$\gamma,$ i.e., 
\beq\label{200115-1}
\tau(\rho(b))=\gamma(\tau)(b)\rforal b\in B {{\andeqn \tau\in T(A).}}
\eneq

Let $(\imath_{n, \infty})_\mathrm{T}: \mathrm{T}(B)\to \mathrm{T}(B_n)$ be  the continuous affine map 
such that, for $t\in \mathrm{T}(B),$
$$
t\circ \imath_{n, \infty}(b)=(\imath_{n, \infty})_T(t)(b)
$$
for all $b\in B_n,$ $n=1,2,....$  
Recall that $\mathrm{T}(\mathcal W)=\{ \tau_{\mathcal W}\},$ where
$\tau_{\mathcal W}$ is the unique tracial state of ${\cal W}.$

Fix a strictly positive element $a_0\in A.$
Fix $\ep>0$ and a finite subset ${\cal F}\subseteq A,$
{{since}} $B=\lim_{n\to\infty} (B_n, \imath_n)$ and $\mathrm{T}(B_n)$ has finitely many extremal traces,  then, as is standard and easy to see,
there 
are
an integer $n_1\ge 1$ and a continuous affine map $\kappa: \mathrm{T}(B_{n_1})\to \mathrm{T}(A)$
such that, for $\tau\in \mathrm{T}(B),$ 
\beq\label{TMW-2}
\sup_{\tau\in  {\mathrm{T}}(B)}|\kappa\circ (\imath_{n_1, \infty})_{\mathrm T}(\tau)(f)-{\gamma^{-1}}(\tau)(f)|<{{\ep/3}}\rforal f\in {\cal F}.
\eneq
Write $B_{n_1}=W_1\oplus W_2\oplus \cdots \oplus W_{m},$
where each $W_i\cong {\cal W}.$
Denote by $\tau_{\mathrm{W}_1}, \tau_{\mathrm{W}_2},...,\tau_{\mathrm{W}_m}$ the unique tracial states on $W_i,$
and $\theta_i=\kappa(\tau_{\mathrm{W}_i}),$ $i=1,2,...,m.$
By the assumption, there exists, for each $i,$ a sequence  of \cpc\,
$\phi_{n,i}: A\to W_i$ such that
\beq\label{TMW-3}
&&\lim_{n\to\infty}\|\phi_{n,i}(a)\phi_{n,i}(b)-\phi_{n,i}(ab)\|=0\rforal a,b\in A, {\mathrm{and}}\\
&&\theta_i(a)=\lim_{n\to\infty}\tau_{\mathrm{W}_i}\circ \phi_{n,i}(a)\rforal a\in A.
\eneq
Moreover, by \ref{Pwtracest}, we may assume that $\phi_{n,i}(a_0)$ is strictly positive.
Define $\phi_n: A\to B_{n_1}$ by
\beq\label{TMW-4}
\phi_n(a)= \phi_{n,1}(a)\oplus \phi_{n,2}(a)\oplus\cdots { \oplus}\phi_{n,m}(a),\quad a\in A.
\eneq
Then
\vspace{-0.1in}\beq\label{TMW-4+}
\lim_{n\to\infty}\sup_{\tau\in {\mathrm{T}}(B_{n_1})}\{|\tau(\phi_n(a))-\kappa(\tau)(a)|\}=0
\rforal a\in A.
\eneq
Define ${{\sigma_n}}: A\to B$ by
\vspace{-0.12in}\beq\label{TMW-5}
{{\sigma}}_n(a)=\imath_{n_1,\infty}\circ \phi_n(a),\quad a\in A.
\eneq
Note that ${{\sigma_n}}(a_0)$ is a strictly positive element.
We also have  that
\beq\label{TMW-6}
\lim_{n\to\infty}\|{{\sigma_n}}(ab)-{{\sigma_n}}(a){{\sigma_n}}(b)\|=0,\quad  a,\, b\in A.
\eneq
Moreover,  for any $\tau\in \mathrm{T}(B)$ and any $f\in {\cal F},$
\beq\label{TMW-7}
|{{\gamma^{-1}}}(\tau)(f)-\tau\circ {{\sigma_n}}(f)| &\le & |{{\gamma^{-1}}}(\tau)(f)-\kappa\circ (\imath_{n_1, \infty})_{{T}}(\tau)(f)|\\
&&+|\kappa\circ (\imath_{n_1, \infty})_\mathrm{T}(\tau)(f)-\tau\circ {{\sigma_n}}(f)|\\
&<&\ep/3+|\kappa\circ (\imath_{n_1, \infty})_{\mathrm{T}}(\tau)(f)-\tau\circ \imath_{n_1, \infty}\circ \phi_n(f)|\\
&\le & \ep/3+\sup_{t\in \mathrm{T}(B_{n_1})}\{|\tau(\phi_n(f))-\kappa(\tau)(f)|\}.
\eneq
By \eqref{TMW-4+}, there exists $N\ge 1$ such that, for all $n\ge N,$
\beq\label{TMW-8}
\sup_{\tau\in \mathrm{T}(B)}\{|{{\gamma^{-1}}}(\tau)(f)-\tau\circ {{\sigma_n}}(f)|\}<{{2\ep/3 \rforal}} f\in {\cal F}.
\eneq
{{Thus the map
$\sigma_n$ satisfies  \eqref{TTWv-1}
and \eqref{TTWv-2}.}}
{{By \eqref{200115-1} and  \eqref{TMW-8},  for all $n\ge N,$}} 
$$
{{\sup_{\tau\in \mathrm{T}(A)}\{|\tau(f)-\tau(\rho\circ \sigma_n(f))|\}
=\sup_{\tau\in \mathrm{T}(A)}\{|\tau(f)-\gamma(\tau)(\sigma_n(f))|\}<\ep\rforal f\in {\cal F}.}}
$$
{{Thus \eqref{TTWv-3} also holds (with $\rho=\rho_n$).}}  Therefore,
by \ref{TTTAD}, $A\in\mathcal D_0$.
\end{proof}

\begin{thm}\label{TWtrace}
Let $A$ be a non-unital separable simple  \CA\, with finite nuclear dimension and with $A=\mathrm{Ped}(A).$
Suppose that $\mathrm{T}(A)\not={\O}$.
Then 
$A\otimes {\cal W}\in {\cal D}_{0}.$
\end{thm}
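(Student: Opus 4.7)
The plan is to apply Theorem \ref{TMW} to $B:=A\otimes\mathcal W$, after first passing to a hereditary \SCA\ with continuous scale. I begin by collecting the easy structural properties of $B$: it is simple, separable, stably projectionless (since $\mathcal W$ is), has finite nuclear dimension by Proposition 2.3(ii) of \cite{WZ-ndim}, and is KK-contractible by Lemma \ref{KK-Kun}. The KK-contractibility gives $\mathrm{K}_0(B)=\{0\}=\ker\rho_B$ and shows that $B$ satisfies the UCT automatically. Since $A=\mathrm{Ped}(A)$ and $\mathcal W$ has a bounded trace, $B=\mathrm{Ped}(B)$ (bounded scale). Moreover $B$ is $\mathcal Z$-stable, since both $A$ (by finite nuclear dimension and \cite{T-0-Z}) and $\mathcal W$ are $\mathcal Z$-stable.

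Next, I pass to a non-zero hereditary \SCA\ $B_0\subseteq B$ with continuous scale, which exists by Remark 5.2 of \cite{eglnp}. Since $B$ has bounded scale, $B$ is isomorphic to a hereditary \SCA\ of $\mathrm M_k(B_0)$ for some $k\ge 1$, so by 8.6 of \cite{eglnp} it suffices to show $B_0\in\mathcal D_0$. All of the structural properties listed above pass to $B_0$. To apply Theorem \ref{TMW} to $B_0$, I must verify: (i) every tracial state of $B_0$ is a $\mathcal W$-trace; and (ii) every hereditary \SCA\ of $B_0$ with continuous scale is tracially approximately divisible.

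For (i), the crucial input is the Tikuisis-White-Winter theorem \cite{TWW-QD}: every faithful trace on a separable nuclear \CA\ in the UCT class is quasidiagonal. Applied to $B$ itself (which is separable, nuclear, simple so every trace is faithful, and UCT), this gives that every tracial state of $B$ is quasidiagonal. I then invoke the isomorphism $\mathcal W\otimes\mathcal W\cong\mathcal W$ to identify $B\cong B\otimes\mathcal W$, and apply Proposition \ref{Pwtrace} with the role of ``$A$'' there played by $B$ (whose traces are all quasidiagonal) and $Y=\mathcal W$: every tracial state of $B\otimes\mathcal W=B$ is then a $\mathcal W$-trace. The $\mathcal W$-trace property descends to the hereditary subalgebra $B_0$ by restricting the approximating \cpc\ maps $B\to\mathcal W$ to $B_0$; since $B_0$ is full in $B$, every trace on $B_0$ is the restriction of a trace on $B$.

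For (ii), I would argue that every hereditary \SCA\ of $B_0$ is itself $\mathcal Z$-stable, and that $\mathcal Z$-stability of a simple \CA\ implies tracial approximate divisibility in the sense of Definition \ref{DDiv}: given an integer $n$ and $b>0$, one uses approximately central embeddings of the prime dimension-drop algebra $\mathcal Z_{n,n+1}$ (available in the central sequence algebra of any $\mathcal Z$-stable \CA) to peel off an $\mathrm M_n(A_1)$-summand, while the complementary piece $A_0$ can be made Cuntz-subequivalent to $b$ via strict comparison. With (i) and (ii) in hand, Theorem \ref{TMW} yields $B_0\in\mathcal D_0$, and hence $A\otimes\mathcal W=B\in\mathcal D_0$. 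The principal obstacle is the trace step: what makes it accessible is the fortuitous combination that $B$ is automatically UCT (via KK-contractibility), which unlocks TWW, together with the self-absorbing identity $\mathcal W\otimes\mathcal W\cong\mathcal W$, which promotes quasidiagonality to the $\mathcal W$-trace property via Proposition \ref{Pwtrace}.
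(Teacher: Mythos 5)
The overall architecture of your proof matches the paper's: feed $B := A\otimes\mathcal W$ through Lemma \ref{KK-Kun} to get KK-contractibility (hence UCT), use \cite{TWW-QD} for quasidiagonality of traces, upgrade to the $\mathcal W$-trace property, verify tracial approximate divisibility of hereditary subalgebras, and apply Theorem \ref{TMW}. Your preliminary pass to a hereditary \SCA\ $B_0$ with continuous scale is harmless but redundant: the proof of Theorem \ref{TMW} already performs exactly that reduction internally, so the paper applies \ref{TMW} directly to $A\otimes\mathcal W$.

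The substantive gap is in your verification of tracial approximate divisibility. You invoke a general principle that $\mathcal Z$-stability of a simple \CA\ implies tracial approximate divisibility in the sense of Definition \ref{DDiv}, to be proved via approximately central embeddings of prime dimension-drop algebras plus strict comparison. No such result is established in this paper, and making it precise in the stably projectionless non-unital setting---where Definition \ref{DDiv} requires the strictly positive element of $A_0$ to be Cuntz-subequivalent to a prescribed $b$---is not routine. The paper sidesteps this with an explicit computation tailored to the tensor factor $\mathcal W$: for $b\in(A\otimes\mathcal W)_+$, since $\mathcal W$ has a unique trace, 11.8 of \cite{eglnp} gives $\mathrm W(A\otimes\mathcal W)_+=\mathrm{LAff}_{b,0+}(\overline{\mathrm T(A)}^{\mathrm w})$, so $b\sim a\otimes b_1$ for some $a\in A_+$ and $b_1\in \mathrm M_2(\mathcal W)_+$, whence $\overline{b(A\otimes\mathcal W)b}\cong\overline{aAa}\otimes\mathcal W$. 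Because $\mathcal W\otimes Q\cong\mathcal W$, each such hereditary subalgebra is $Q$-absorbing, and UHF-absorption gives tracial approximate divisibility directly. You should adopt this concrete argument (or supply a precise citation for a non-unital $\mathcal Z$-stability $\Rightarrow$ tracial approximate divisibility result).

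A secondary concern: in the trace step you invoke $\mathcal W\otimes\mathcal W\cong\mathcal W$ to identify $B$ with $B\otimes\mathcal W$ before applying Proposition \ref{Pwtrace}. As recorded in the paper, $\mathcal W\otimes\mathcal W\cong\mathcal W$ is Corollary \ref{cor-WW}, which is itself deduced from Theorem \ref{TWtrace} and the main classification---so citing it here is circular. What is available without circularity (via the classification of inductive limits of Razak algebras in \cite{Razak-W} and \cite{Jacelon-W}) is $\mathcal W\otimes Q\cong\mathcal W$, and this is precisely what the paper leans on in the divisibility step; if your trace step genuinely needs $\mathcal W\otimes\mathcal W\cong\mathcal W$, that needs to be established independently or routed around.
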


\begin{proof}
By Lemma \ref{KK-Kun}, $A\otimes {\cal W}$ is KK-contractible. Therefore $A\otimes {\cal W}$ satisfies the UCT. Since $\mathcal W$ has finite nuclear dimension, so also does $A\otimes\mathcal W$. Hence by \cite{TWW-QD}, every tracial state is quasi-diagonal.
It follows by \ref{Pwtrace} that every tracial state of $A\otimes {\cal W}$ is a ${\cal W}$-trace.  
We also have $\mathrm{K}_0(A\otimes {\cal W})=\{0\}.$ 
Let $b\in (A\otimes {\cal W})_+.$ Since ${\cal W}$ has a unique tracial state, by 11.8 of \cite{eglnp}, 
$W(A\otimes {\cal W})=\mathrm{LAff}_{b,0+}(\overline{\mathrm{T}(A)}^w).$ Therefore, 
there 
are
$a\in A$ and $b_1\in  M_2({\cal W})_+$ such that $b\sim a\otimes b_1.$
Put $B=\overline{b(A\otimes {\cal W})b}$ and $B_1=\overline{(a\otimes b_1)(A\otimes {\cal W})(a\otimes b_1)}.$
Then $B\cong B_1.$ Note that $\overline{b_1{\cal W}b_1}\cong {\cal W}.$ It follows that 
$B_1\cong \overline{aAa}\otimes {\cal W}.$ But ${\cal W}\otimes Q\cong {\cal W}.$ This implies that $B_1$ 
is tracially approximately divisible.  Therefore $B$ is tracially approximately divisible.
 Then \ref{TMW} applies.
\end{proof}

{{Added in proof:   The condition of finite nuclear dimension in \ref{TWtrace} can be much weakened 
to the condition that $A$ is amenable. Since $A\otimes {\cal W}$ is ${\cal Z}$-stable,
by a recent preprint of J. Castillejos and S. Evington, arXiv:1901.11441, it has finite nuclear dimension, 
as kindly pointed by the referee.}}

\begin{cor}\label{cor-WW}
Let $A$ be a simple separable finite \CA\, such that $A\otimes\mathcal Z$ has finite nuclear dimension. Then the \CA\, $A\otimes \mathcal W$ belongs to the class ${\cal M}_0,$ 
and so 
$A\otimes {\cal W}$ is isomorphic to an inductive limit of \CA s in ${\cal R}_{\mathrm{az}}.$
in particular, $\mathcal W\otimes \mathcal W \cong \mathcal W$.
\end{cor}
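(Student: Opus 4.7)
The plan is to verify that $A\otimes\mathcal{W}$ satisfies the hypotheses of the main classification theorem (Theorem \ref{clas-thm}), from which membership in $\mathcal{M}_0$ follows. The structural properties to check are simplicity, separability, KK-contractibility, stable projectionlessness, and finite nuclear dimension. Simplicity and separability are immediate (using nuclearity of $\mathcal{W}$ for the former). Since $\mathcal{W}$ is $\mathcal{Z}$-stable by Jacelon's construction (see \cite{Jacelon-W}), one has
\[
A\otimes\mathcal{W}\ \cong\ (A\otimes\mathcal{Z})\otimes\mathcal{W};
\]
both tensor factors are nuclear with finite nuclear dimension, the first by hypothesis and the second intrinsic to $\mathcal{W}$, so by the submultiplicativity of nuclear dimension under tensor products (Proposition 2.3(ii) of \cite{WZ-ndim}) the product $A\otimes\mathcal{W}$ has finite nuclear dimension. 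KK-contractibility then follows from Lemma \ref{KK-Kun} applied to the separable nuclear algebra $A\otimes\mathcal{Z}$ and the KK-contractible algebra $\mathcal{W}$.

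For stable projectionlessness, $A$ being finite together with $\mathcal{W}$ carrying a faithful tracial state produces a faithful trace on $A\otimes\mathcal{W}$, giving stable finiteness. In a stably finite simple C*-algebra any nonzero projection represents a nonzero class in $K_0$; but KK-contractibility forces $K_0(A\otimes\mathcal{W})=\{0\}$, so $A\otimes\mathcal{W}$ and all its matrix amplifications are projectionless. With every hypothesis verified, Theorem \ref{clas-thm} yields that $A\otimes\mathcal{W}$ is a simple inductive limit of Razak algebras via proper connecting maps, that is, $A\otimes\mathcal{W}\in\mathcal{M}_0$.

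For the final assertion $\mathcal{W}\otimes\mathcal{W}\cong\mathcal{W}$, apply the preceding step with $A=\mathcal{W}$: since $\mathcal{W}$ has finite nuclear dimension and $\mathcal{W}\otimes\mathcal{Z}\cong\mathcal{W}$, both $\mathcal{W}$ and $\mathcal{W}\otimes\mathcal{W}$ lie in the classifiable class. Their Elliott invariants $(\widetilde{\mathrm{T}},\Sigma)$ coincide — each has a one-dimensional trace cone with, up to rescaling, the same bounded scale function — so the classification part of Theorem \ref{clas-thm} gives the isomorphism. The substantive work is wholly concentrated in Theorem \ref{clas-thm} itself; granted that result, the corollary reduces to routine verification of the structural conditions together with the known $\mathcal{Z}$-stability of $\mathcal{W}$.
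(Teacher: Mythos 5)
Your proof is correct, but it takes a genuinely different route from the paper's. The paper's proof is a one-liner: it applies Theorem~\ref{TWtrace} to conclude $A\otimes\mathcal{W}\in\mathcal{D}_0$, applies Lemma~\ref{KK-Kun} for KK-contractibility, and then invokes the isomorphism theorem~\ref{TTMW} directly (which identifies $A\otimes\mathcal{W}$ with an explicit inductive limit of copies of $\mathcal{W}$, hence of Razak algebras). You instead verify the hypotheses of the packaged classification theorem~\ref{clas-thm}, and then use it (implicitly in conjunction with the model construction~\ref{Ctsang}) to deduce $\mathcal{M}_0$ membership. Both routes terminate in the same place; the paper's is more economical because $\mathcal{D}_0$ membership is what~\ref{TTMW} feeds on directly, whereas~\ref{clas-thm} is itself derived from~\ref{TWtrace} and~\ref{TTMW}. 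Your approach has the advantage of making the structural bookkeeping explicit: it shows exactly which properties of $A\otimes\mathcal{W}$ (simplicity, separability, finiteness, finite nuclear dimension, KK-contractibility, presence of traces) are being consumed.

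One step in your argument is compressed and deserves to be spelled out. You assert that "$A$ being finite together with $\mathcal{W}$ carrying a faithful tracial state produces a faithful trace on $A\otimes\mathcal{W}$," but finiteness of $A$ alone does not immediately produce a trace on $A$. What is needed is: since $A\otimes\mathcal{Z}$ is nuclear (it has finite nuclear dimension), $A$ is exact; then simplicity, separability, stable finiteness, and exactness together give a nonzero quasitrace by the Blackadar--R{\o}rdam argument \cite{BR}, and hence a trace by Haagerup's theorem. Only after traces on $A$ are secured can one tensor with $\tau_{\mathcal{W}}$ to obtain a trace on $A\otimes\mathcal{W}$. Similarly, your appeal to~\ref{clas-thm} to conclude $A\otimes\mathcal{W}\in\mathcal{M}_0$ tacitly uses the existence part (Corollary~\ref{Ctsang}): one must first produce a $B\in\mathcal{M}_0$ with the same invariant and then invoke the uniqueness part of~\ref{clas-thm}. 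Neither point is wrong, but both should be flagged if the proof is to stand on its own.
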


\begin{proof}
By \ref{TWtrace}, $A\otimes {\cal W}\in {\cal D}_0$ and, by \ref{KK-Kun}, $A\otimes {\cal W}$ is KK-contractible. 
Then \ref{TTMW} applies. 
\end{proof}

\begin{lem}\label{untwrist}
Let $A$ be a  separable simple \CA\, in ${\cal R}$ 
with finite nuclear dimension which is $\mathrm{KK}$-contractible
 and assume that all tracial states of $A$ 
are ${\cal W}$-traces.
Let $\mathrm{M}_\p$ and $\mathrm{M}_\q$ be two UHF algebras, where 
$\p$ and $\q$ are relatively prime supernatural numbers. Then, there exist an isomorphism $\phi: A\otimes \mathrm{M}_\p \to A \otimes \mathrm{M}_\p \otimes\mathrm{M}_\q$ and a continuous path of unitaries $u_t\in \mathrm{M}(A\otimes\mathrm{M}_\p\otimes \mathrm{M}_\q)$, $1\leq t < \infty$, such that $u_1=1$ and
$$ \lim_{t\to\infty} u^*_t(a \otimes r\otimes 1_\q)u_t = \phi(a\otimes r),\quad a\in A,\ r\in \mathrm{M}_\p.$$
\end{lem}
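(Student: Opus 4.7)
The plan is to apply the isomorphism theorem~\ref{TTMW} to construct $\phi$, and then to obtain the continuous path of unitaries by a continuous-path version of the stable uniqueness Theorem~\ref{no-1-uniq-hom-0}, using the UHF factor $\mathrm{M}_{\q}$ to absorb the necessary ampliation into $\mathrm{M}(B_2)$ itself.

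Set $B_1 = A\otimes\mathrm{M}_{\p}$ and $B_2 = A\otimes \mathrm{M}_{\p}\otimes\mathrm{M}_{\q}$. I would first verify that $B_1$ and $B_2$ meet the hypotheses of Theorem~\ref{TTMW}: each is separable, simple, amenable, has finite nuclear dimension (invariant under UHF-tensoring), and is KK-contractible by Lemma~\ref{KK-Kun}. The assumption that every trace of $A$ is a $\mathcal{W}$-trace lifts to $B_1$ and $B_2$: given $\tau\in\mathrm{T}(A)$ with witnesses $\phi_n\colon A\to\mathcal{W}$ and conditional expectations $\mathbb{E}_n\colon \mathrm{M}_{\p}\to \mathrm{M}_{k(n)}$, the maps $\phi_n\otimes \mathbb{E}_n$ composed with isomorphisms $\mathcal{W}\otimes \mathrm{M}_{k(n)}\cong\mathcal{W}$ (cf.\ \cite{Razak-W}) witness that $\tau\otimes\tau_{\p}$ is a $\mathcal{W}$-trace on $B_1$, and the same argument applies to $B_2$. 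Theorem~\ref{TMW} then places $B_1,B_2\in\mathcal{D}_0$, and after replacing each by a hereditary sub-C*-algebra with continuous scale (Remark~5.2 of \cite{eglnp}) Theorem~\ref{TTMW} is applicable.

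The canonical inclusion $\iota\colon B_1\to B_2$, $a\otimes r\mapsto a\otimes r\otimes 1_{\q}$, induces an affine homeomorphism $\iota_{\mathrm{T}}\colon\mathrm{T}(B_2)\to\mathrm{T}(B_1)$ because $\mathrm{M}_{\q}$ has a unique tracial state. The ``moreover'' clause of Theorem~\ref{TTMW} then produces an isomorphism $\phi\colon B_1\to B_2$ with $\phi_{\mathrm{T}}=\iota_{\mathrm{T}}$. Both $\iota$ and $\phi$ are nuclear $*$-homomorphisms, agree on traces, and satisfy $[\iota]=[\phi]$ in $\mathrm{KK}(B_1,B_2)$ (trivially, by KK-contractibility of $B_1$). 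A continuous-path version of the stable uniqueness Theorem~\ref{no-1-uniq-hom-0}, as provided by Proposition~3.6 of~\cite{DE-KK-Asy}, then yields a continuous path of unitaries $w_t\in\mathrm{M}(B_2\otimes\mathcal{K})$, $t\in[1,\infty)$, with $w_1=1$ such that $\mathrm{Ad}\,w_t\circ(\iota_\infty\oplus\sigma)\to\phi_\infty\oplus\sigma$ pointwise in norm, where $\sigma$ is a fixed full absorbing representation and $\iota_\infty,\phi_\infty$ denote suitable ampliations.

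Finally, I would absorb the ampliation into $\mathrm{M}(B_2)$ by exploiting the UHF tensor factor. Because $\mathrm{M}_{\q}$ contains an infinite collection of mutually orthogonal equivalent projections summing strictly to $1_{\mathrm{M}_{\q}}$, the copy of $\mathrm{M}_{\q}\subseteq\mathrm{M}(B_2)$ realizes a unital embedding of $\mathcal{K}\otimes B_2$ into $\mathrm{M}(B_2)$ whose ``top-left corner'' intertwines $\iota$ with its infinite ampliation. Transporting the path $w_t$ along this embedding produces a continuous path of unitaries $u_t\in\mathrm{M}(B_2)$ with $u_1=1$ implementing the convergence $u_t^*(a\otimes r\otimes 1_{\q})u_t\to \phi(a\otimes r)$ for all $a\in A$, $r\in\mathrm{M}_{\p}$. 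The main obstacle I expect is precisely this last absorption step: upgrading the asymptotic unitary equivalence living a priori in $\mathrm{M}(B_2\otimes\mathcal{K})$ to a norm-continuous path of unitaries in $\mathrm{M}(B_2)$ while maintaining both the pointwise convergence and the normalization $u_1=1$; the infinite-dimensional UHF factor $\mathrm{M}_{\q}$ is the essential tool that makes this possible.
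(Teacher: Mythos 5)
Your construction of the isomorphism $\phi$ through the classification theorem is in the same spirit as the paper's (the paper prefers to first establish $A\otimes \mathrm{M}_\p\cong A\otimes Q$ by Theorem~\ref{TTMW} and then transport, but that difference is cosmetic). Where the argument genuinely diverges — and where there is a real gap — is in the construction of the continuous path of unitaries.

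The paper does not use a stable uniqueness theorem here at all. It invokes the standard fact about $Q$ as a strongly self-absorbing UHF algebra of infinite type: the first-factor embedding $Q\to Q\otimes\mathrm{M}_\q$, $r\mapsto r\otimes 1_\q$, is asymptotically unitarily equivalent to an isomorphism $Q\to Q\otimes\mathrm{M}_\q$, with the implementing path of unitaries living in the unital algebra $Q\otimes\mathrm{M}_\q$. Tensoring those unitaries with $1_A\in\mathrm{M}(A)$ places them in $\mathrm{M}(A)\otimes Q\otimes\mathrm{M}_\q\subseteq\mathrm{M}(A\otimes Q\otimes\mathrm{M}_\q)$, so no absorption of a $\mathcal{K}$-amplification is ever required. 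The path is then carried back to $\mathrm{M}(A\otimes\mathrm{M}_\p\otimes\mathrm{M}_\q)$ by conjugating with (the multiplier extension of) the classification isomorphism $\xi\otimes\mathrm{id}_{\mathrm{M}_\q}$.

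Your absorption step, by contrast, does not work as written. $\mathrm{M}_\q$ is a unital UHF algebra with a faithful tracial state, so it cannot contain an infinite family of mutually orthogonal, mutually Murray--von Neumann equivalent nonzero projections summing (strictly or otherwise) to $1_{\mathrm{M}_\q}$ — the traces of such projections would be equal and positive and their sum would be bounded by one, which is impossible. Consequently the claimed embedding of $\mathcal{K}\otimes B_2$ into $\mathrm{M}(B_2)$ via a copy of $\mathrm{M}_\q$ does not exist, and with it the proposed mechanism for upgrading unitaries in $\mathrm{M}(B_2\otimes\mathcal{K})$ to unitaries in $\mathrm{M}(B_2)$ collapses. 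One might hope to salvage a stable-uniqueness route by proving an $\mathrm{M}_\q$-stable version of Theorem~\ref{no-1-uniq-hom-0} that produces unitaries in $\mathrm{M}(B_2)$ directly (exploiting $B_2\otimes\mathrm{M}_\q\cong B_2$ in the target rather than trying to embed $\mathcal{K}$), but that is a substantial additional argument; the paper's appeal to the strongly self-absorbing structure of $Q$ sidesteps the issue entirely.
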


\begin{proof}
Note  that every hereditary \SCA\, $B$ of $A\otimes \mathrm{M}_\p$ and $A\otimes Q$ is tracial approximately divisible,
since $M_\p$ and $Q$ are strongly self-absorbing.
By the assumption and \ref{TMW}, $A\otimes \mathrm{M}_\p$ and $A\otimes Q$ are in ${\cal D}_0.$
It follows from \ref{TTMW} 
that $A\otimes \mathrm{M}_\p\cong A\otimes Q$.
Let $\xi: A\otimes \mathrm{M}_\p\to A\otimes Q$
be an isomorphism.
It is well known that any isomorphism $\psi: Q\to Q\otimes {{\mathrm{M}}}_\q$ is asymptotically 
unitarily equivalent to the embedding 
 $Q\to Q\otimes \mathrm{M}_\q$ given by  $r\to r\otimes 1_\q$  for all 
$r\in Q$ (see, for instance, \cite{Lncbms}). Therefore 
there exists a continuous path of unitaries $v_t\in Q\otimes \mathrm{M}_\q$ such that  $v_1=1$
and
$$ \lim_{t\to\infty} v^*_t(r\otimes {{1_\q}})v_t = \psi(r)\rforal  r\in Q.$$
{{Define $\phi_1: A\otimes Q\to A\otimes Q\otimes M_\q$ by  $\phi_1(a\otimes r)=a\otimes \psi(r)$
for all $a\in A$ and $r\in Q.$}}
Therefore
\beq\label{819f-1}
 \lim_{t\to\infty} (1_A\otimes v^*_t)(b\otimes 1_\q)(1_A\otimes v_t) = {{\phi_1(b) \rforal}} b\in A\otimes Q.
\eneq
{{Define}} ${{\phi}}: A\otimes \mathrm{M}_\p \to A\otimes \mathrm{M}_\p\otimes \mathrm{M}_\q$ by ${{\phi}}=(\xi^{-1}\otimes {\mathrm{id}}_{M_\q})\circ{{\phi_1}}\circ\xi$
and let ${{u_t={\tilde \xi}^{-1}(1_A\otimes v_t)}}$, where ${\tilde \xi}:\mathrm{M}(A\otimes\mathrm{M}_\p\otimes \mathrm{M}_\q)\to\mathrm{M}(A\otimes Q\otimes \mathrm{M}_\q)$ is the extension of $\xi\otimes {\mathrm{id}}_{M_\q}: A\otimes\mathrm{M}_\p\otimes \mathrm{M}_\q\to A\otimes Q\otimes \mathrm{M}_\q$. Note that $u_1=1$, since $v_1=1$ and 
{{$\{u_t\}$}} is a continuous path of unitaries in ${\mathrm M}(A\otimes {{\mathrm{M}_\p}}\otimes \mathrm{M}_\q).$
Suppose that $a\in A$ and {{$r\in
\mathrm{M}_\p.$}}  {{So}} $\xi(a\otimes r)\in A\otimes Q.$
Then we have 
\beq\nonumber
&&\lim_{t \to \infty}{{u}}^*_t(a\otimes r\otimes 1_\q) {{u_t}}=\lim_{t \to \infty} {\tilde \xi}^{-1}(1_A\otimes v^*_t) \left({{(\xi^{-1}\otimes{\mathrm{id}}_{{\mathrm M}_\q})}}(\xi(a\otimes r)\otimes 1_\q)\right){\tilde \xi}^{-1}(1_A\otimes v_t)\\\nonumber
&&=(\xi^{-1}\otimes {\mathrm{id}}_{\mathrm{M}_\q})(\lim_{t\to\infty}\left((1_A\otimes u^*_t)(\xi(a\otimes r)\otimes 1_\q)(1_A\otimes u_t)\right))
\\\nonumber
&&{{=^{\text{see}\, \eqref{819f-1}}}} \xi^{-1}\otimes {\mathrm{id}}_{\mathrm{M}_\q}({{\phi_1}}(\xi(a\otimes r)))
={{\phi}}(a \otimes r)
\eneq
as desired.
\end{proof}

\begin{thm}\label{Z-stable}
Let $A$ be a non-unital separable simple \CA\,  in ${\cal R}$ 
with finite nuclear dimension  which is $\mathrm{KK}$-contractible
and such that every trace is a ${\cal W}$-trace. Then $A \cong A\otimes Q$.
\end{thm}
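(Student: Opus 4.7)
\medskip

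The plan is to derive $A \cong A\otimes Q$ from Lemma \ref{untwrist} by means of an Elliott approximate intertwining argument, using the asymptotic unitary intertwinings provided by the lemma in both directions to produce compatible maps between $A$ and $A\otimes Q$.

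First, I would fix relatively prime supernatural numbers $\mathfrak{p}$ and $\mathfrak{q}$ with $\mathrm{M}_\mathfrak{p}\otimes \mathrm{M}_\mathfrak{q}\cong Q$. Applying Lemma \ref{untwrist} to the pair $(\mathfrak{p},\mathfrak{q})$ yields an isomorphism $\phi: A\otimes \mathrm{M}_\mathfrak{p} \to A\otimes Q$ and a continuous path $(u_t)_{t\ge 1}\subseteq \mathcal M(A\otimes Q)$ with $u_1=1$ and
$$
\lim_{t\to\infty} u_t^*(a\otimes r\otimes 1_\mathfrak{q})u_t \;=\; \phi(a\otimes r),\qquad a\in A,\ r\in \mathrm{M}_\mathfrak{p}.
$$
The symmetric application with $(\mathfrak{q},\mathfrak{p})$ produces $\psi: A\otimes \mathrm{M}_\mathfrak{q}\to A\otimes Q$ and a path $(v_t)$ with $v_1=1$ and analogous asymptotic behavior. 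Note that the hypotheses of Lemma \ref{untwrist} are inherited: $A$ is in $\mathcal R$, has finite nuclear dimension, is KK-contractible (hence satisfies the UCT with trivial K-theory), and every trace is a $\mathcal W$-trace, so the lemma may be invoked in either direction.

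Next, I would build an Elliott-type intertwining between $A$ and $A\otimes Q$. Consider the two homomorphisms $A\to A\otimes Q$ given by $a\mapsto \phi(a\otimes 1_\mathfrak{p})$ and $a\mapsto a\otimes 1_Q$; the asymptotic formula above, specialized to $r=1_\mathfrak{p}$, shows that these are asymptotically unitarily equivalent along the path $(u_t)$. In particular, for any finite $\mathcal F\subseteq A$ and $\eps>0$, one can choose $t$ large so that $\|u_t^*(a\otimes 1_Q)u_t-\phi(a\otimes 1_\mathfrak{p})\|<\eps$ for $a\in \mathcal F$. Iterating by alternating $\phi$ and $\psi^{-1}$ (using the identification $A\otimes Q\cong A\otimes \mathrm{M}_\mathfrak{p}\otimes \mathrm{M}_\mathfrak{q}$ and the fact that the restrictions of both $\phi\circ\iota_\mathfrak{p}$ and $\psi\circ \iota_\mathfrak{q}$ to $A$ are asymptotically unitarily equivalent to $\iota_Q$), one produces, for any exhausting sequence $\mathcal F_n\subseteq A$ and $\eps_n\searrow 0$, unitaries $U_n\in \mathcal M(A\otimes Q)$ such that the finite family of maps
$$
A \ \xrightarrow{\,\alpha_n\,}\ A\otimes Q\ \xrightarrow{\,\beta_n\,}\ A
$$
(with $\alpha_n:= \mathrm{Ad}(U_n)\circ \iota_Q$ and $\beta_n$ extracted from the isomorphism $\phi^{-1}$ composed with a tracial projection of the $\mathrm{M}_\mathfrak{p}$-tensor factor, appropriately conjugated) approximately commute on $\mathcal F_n$ to within $\eps_n$. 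Elliott's approximate intertwining theorem (see \cite{Ell-AT-RR0}) then yields a $*$-isomorphism $A\cong A\otimes Q$.

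The main obstacle is the construction of the left-going maps $\beta_n:A\otimes Q\to A$: there is no canonical collapse of the $Q$-factor, so one must pass back through $A\otimes \mathrm M_\mathfrak{p}\cong A\otimes Q$ via $\phi^{-1}$ and then exploit a second asymptotic equivalence (from the $(\mathfrak q,\mathfrak p)$ application of Lemma \ref{untwrist}) to land in $A$ up to a controlled error. Getting the errors at successive stages to compose compatibly requires the paths $u_t,v_t$ to be normalized at $u_1=v_1=1$ and to vary continuously; this is exactly what Lemma \ref{untwrist} provides, and it is what allows the rapidly decreasing tolerances $\eps_n$ needed for Elliott's theorem to be realized. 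Once the intertwining is in place, the desired isomorphism $A\cong A\otimes Q$ follows.
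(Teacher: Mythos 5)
Your intertwining scheme has a genuine gap, and it is not easily repaired. The left-going maps $\beta_n \colon A\otimes Q\to A$ are never actually produced: the ``tracial projection of the $\mathrm M_\mathfrak{p}$-tensor factor'' is a conditional expectation, not a $*$-homomorphism, so it cannot appear in an Elliott approximate-intertwining diagram. And even composing with $\phi^{-1}$ only lands you in $A\otimes\mathrm M_\mathfrak{p}$, which is not $A$ --- to identify $A\otimes\mathrm M_\mathfrak{p}$ with $A$ you would already need $\mathrm M_\mathfrak{p}$-stability of $A$, which is essentially what is being proved, so the argument as written is circular. Lemma~\ref{untwrist} gives you an isomorphism $A\otimes\mathrm M_\mathfrak{p}\cong A\otimes Q$, not an isomorphism $A\cong A\otimes\mathrm M_\mathfrak{p}$, and there is no canonical retraction of the $Q$-factor to serve as the other half of a two-sided ladder.

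The paper's proof circumvents exactly this obstacle. It first invokes \cite{T-0-Z} to get $A\cong A\otimes\mathcal Z$, then writes $A\otimes\mathcal Z$ as an inductive limit of copies of $A\otimes\mathcal Z_{\mathfrak p,\mathfrak q}$, and reduces (via Corollary~3.4 of \cite{TW-D}) to showing that $A\otimes\mathcal Z_{\mathfrak p,\mathfrak q}$ is $Q$-absorbing. Here Lemma~\ref{untwrist} is used not to build an intertwining between $A$ and $A\otimes Q$, but to \emph{straighten the twisted continuous field}: applying the lemma at each endpoint gives the isomorphisms $\phi_0,\phi_1$ and a unitary path $(u_t)_{t\in(0,1)}$, and conjugation by $u_t$ fiberwise implements an isomorphism $A\otimes\mathcal Z_{\mathfrak p,\mathfrak q}\cong\mathrm C([0,1],\,A\otimes\mathrm M_\mathfrak{p}\otimes\mathrm M_\mathfrak{q})$. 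The trivial field is manifestly $Q$-stable, and the claim follows. The continuous-field and $\mathcal Z$-stability machinery is the essential ingredient your proposal omits; without it, no one-sided use of Lemma~\ref{untwrist} produces the needed maps back into $A$.
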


\begin{proof}
It follows from \cite{T-0-Z} that $A\cong A\otimes {\cal Z}.$ 
Decompose $A\otimes \mathcal Z$ as an inductive limit of copies of $A\otimes\mathcal Z_{\p, \q}$, where $\p, \q$ are two relatively prime supernatural numbers such that $\mathrm{M}_\p\otimes\mathrm{M}_\q = Q$. By Corollary 3.4 of \cite{TW-D}, in order to show that $A$ is $Q$-stable, it is enough to show that $A\otimes\mathcal Z_{\p, \q}$ is $Q$-stable. Note that
$$A\otimes\mathcal Z_{\p, \q} =\{f\in\mathrm{C}([0, 1],  A\otimes\mathrm{M}_\p\otimes\mathrm{M}_\q): f(0)\in A\otimes \mathrm{M}_\p\otimes 1_\q,\ f(1)\in A\otimes 1_\p\otimes \mathrm{M}_\q \}.$$ Applying Lemma \ref{untwrist} to both endpoints, one obtains isomorphisms $$\phi_0: A\otimes \mathrm{M}_\p \to A \otimes \mathrm{M}_\p \otimes\mathrm{M}_\q,\quad \phi_1: A\otimes \mathrm{M}_\q \to A \otimes \mathrm{M}_\p \otimes\mathrm{M}_\q,$$
together with a continuous path of unitaries $u_t\in \mathcal M(A\otimes\mathrm{M}_\p\otimes \mathrm{M}_\q)$, $0 < t < 1$, such that $u_{\frac{1}{2}}=1$, 
$$ \lim_{t\to 0} u^*_t(a \otimes r\otimes 1_\q)u_t = \phi_0(a\otimes r),\quad a\in A,\ r\in \mathrm{M}_\p,$$
and
$$ \lim_{t\to 1} u^*_t(a \otimes 1_\p \otimes r)u_t = \phi_1(a\otimes r),\quad a\in A,\ r\in \mathrm{M}_\q.$$

Define the continuous field map
$\Phi: A \otimes \mathcal Z_{\p, \q} \to \mathrm{C}([0, 1], A\otimes \mathrm{M}_\p\otimes\mathrm{M}_\q)$ by
$$\Phi(f)(t) = u_t^*f(t)u_t,\quad t\in[0, 1],$$
where $\Phi(f)(0)$ and $\Phi(f)(1)$ are understood as $\phi_0(f(0))$ and $\phi_1(f(1))$, respectively. Then the map $\Phi$ is an isomorphism (the inverse  is $\Phi^{-1}(g)(t) = u_tg(t)u^*_t$, $t\in(0, 1)$, $\Phi^{-1}(g)(0)=\phi_0^{-1}(g(0))$, and $\Phi^{-1}(g)(1)=\phi_0^{-1}(g(1))$), and hence $A\otimes\mathcal Z_{\p, \q} \cong \mathrm{C}([0, 1], A\otimes \mathrm{M}_\p\otimes\mathrm{M}_\q).$ Since the trivial field $\mathrm{C}([0, 1], A\otimes \mathrm{M}_\p\otimes\mathrm{M}_\q)$ is $Q$-stable, one has that $A\otimes\mathcal Z_{\p, \q}$ is $Q$-stable, as desired.
\end{proof}

\section{The 
case of finite nuclear dimension}

Let $A$ be a non-unital separable \CA.
Since ${\widetilde A}\otimes Q$ is unital, we may view ${\widetilde{A\otimes Q}}$ as
a \SCA\, of ${\widetilde A}\otimes Q$ with the unit $1_{{\widetilde A}\otimes Q}.$
In the following corollary we use $\imath$ for the embedding
from $A\otimes Q$ to ${\widetilde A}\otimes Q$ as well as from
${\widetilde{A\otimes Q}}$ to ${\widetilde A}\otimes Q.$
Since $\mathrm{K}_1(Q)=\{0\},$  from the six-term exact sequence in K-theory, one concludes that
 the \hm\,
$\imath_{*0}: \mathrm{K}_0(A\otimes Q)\to \mathrm{K}_0({\widetilde A}\otimes Q)$  is injective.

We will use this fact and identify $x$ with $\imath_{*0}(x)$ for all $x\in \mathrm{K}_0(A\otimes Q)$ in the following corollary.

\begin{lem}\label{LfactorQ}
Let $A$ be a non-unital separable \CA\, and let $(\psi_n)$ be a sequence of approximately multiplicative  \cpc  s
from ${\widetilde A}\otimes Q$ to $Q.$ Then $\phi_n=\psi_n\circ \imath$ is a  sequence of approximately multiplicative  completely positive contractive maps from $A$ into $Q,$ where ${{\imath_0:}} A\to {\widetilde A}\otimes Q$
is the embedding defined by $a\mapsto a\otimes 1$ for all $a\in A.$

Conversely,  if $(\phi_n)$ is  a sequence of approximately multiplicative  completely positive contractive maps
from $A$ to $Q,$
then, there exists a sequence of approximately multiplicative completely positive contractive maps
$(\psi_n): {\widetilde A}\otimes Q\to Q$
such that
$$
\lim_{n\to\infty}\|\phi_n(a)-\psi_n\circ {{\imath_0}}(a)\|=0\rforal a\in A.
$$

Moreover, if $\lim\sup \|\phi_n(a)\|\not=0$ for some $a\in A$  and if $\{e_n\}$ is an approximate
unit, then, we can choose $\psi_n$ such that
$$
\mathrm{tr}(\psi_n(1))=\mathrm{d}_{\mathrm{tr}}(\phi_n(e_n)))\tforal n.
$$
\end{lem}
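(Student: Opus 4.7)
The plan is to dispatch the forward direction by a one-line observation and to treat the converse by a unital-extension/tensor-product construction exploiting the strong self-absorption of $Q$. For the forward direction, $\imath : A \to \widetilde{A}\otimes Q$ given by $a\mapsto a\otimes 1_Q$ is a $*$-homomorphism, so $\phi_n := \psi_n\circ\imath$ is immediately cpc and approximately multiplicative whenever $\psi_n$ is.

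For the converse, first extend each $\phi_n$ to a ucp map $\tilde{\phi}_n:\widetilde{A}\to Q$ by $\tilde{\phi}_n(\lambda+a)=\lambda 1_Q+\phi_n(a)$; the unitization of a cpc map is ucp, and the non-trivial approximate-multiplicativity check $\tilde{\phi}_n((\mu+b)^*(\mu+b)) \approx \tilde{\phi}_n(\mu+b)^*\tilde{\phi}_n(\mu+b)$ reduces, after the obvious cancellations, to $\phi_n(b^*b) \approx \phi_n(b)^*\phi_n(b)$, which holds by hypothesis. Then $\tilde{\phi}_n \otimes \mathrm{id}_Q : \widetilde{A}\otimes Q \to Q\otimes Q$ is ucp and approximately multiplicative. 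Fix a $*$-isomorphism $\alpha : Q\otimes Q \to Q$ and set $\beta(q):=\alpha(q\otimes 1_Q)$, a unital embedding of $Q$ into $Q$. By strong self-absorption of $Q$, $\beta$ is approximately unitarily equivalent to $\mathrm{id}_Q$, so there are unitaries $v_k\in Q$ with $\|v_k^*\beta(q)v_k - q\| \to 0$ for every $q\in Q$. Using the separability of $A$ together with a diagonal argument, one can then choose unitaries $w_n\in Q$ satisfying $\|w_n^*\beta(\phi_n(a))w_n - \phi_n(a)\| \to 0$ for every $a\in A$. Setting
\[
\psi_n := \mathrm{Ad}\,w_n^* \circ \alpha \circ (\tilde{\phi}_n\otimes\mathrm{id}_Q) : \widetilde{A}\otimes Q \to Q,
\]
one checks that $\psi_n$ is cpc (composition of cpc maps with a $*$-isomorphism and an inner automorphism), approximately multiplicative (the tensor product is approximately multiplicative and the remaining factors are $*$-homomorphisms), and satisfies $\psi_n\circ\imath(a) = w_n^*\beta(\phi_n(a))w_n \to \phi_n(a)$.

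For the trace clause, replace the unital extension by $\tilde{\phi}_n(\lambda+a)=\lambda g_n+\phi_n(a)$, where $g_n\in Q$ is a positive contraction with $\mathrm{tr}(g_n) = d_{\mathrm{tr}}(\phi_n(e_n))$ and $g_n \geq \phi_n^{**}(1_{A^{**}})$ in $Q^{**}$ (to keep the extension cpc). Since under $\alpha$ the unique trace on $Q$ corresponds to the product of the traces on the two factors, this yields $\mathrm{tr}(\psi_n(1)) = \mathrm{tr}(\alpha(g_n\otimes 1_Q)) = \mathrm{tr}(g_n)\cdot\mathrm{tr}(1_Q) = d_{\mathrm{tr}}(\phi_n(e_n))$. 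The hard part will be the existence of such a $g_n$ in $Q$ (rather than only in $Q^{**}$) of exactly the prescribed trace: the support projection of $\phi_n(e_n)$ typically does not lie in $Q$, and projections of $Q$ have only rational traces. Using the hypothesis $\limsup\|\phi_n(a)\|\neq 0$ (so that $d_{\mathrm{tr}}(\phi_n(e_n))>0$), the approximate-unit property $e_n a \to a$, and the divisibility of $Q$ (projections of dense rational trace), one constructs $g_n$ by combining a projection of $Q$ that approximately dominates the range of $\phi_n$ with a small positive orthogonal perturbation chosen to hit the exact trace value.
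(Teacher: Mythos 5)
Your forward direction is the same one-line observation the paper omits (the paper "proves only the second part"). Your converse without the trace clause is also essentially the paper's construction: unitize $\phi_n$, tensor with $\mathrm{id}_Q$ to get a map into $Q\otimes Q$, identify $Q\otimes Q\cong Q$, and conjugate using the approximate unitary equivalence of $q\mapsto q\otimes 1_Q$ with $\mathrm{id}_Q$. The diagonal argument you invoke for the varying family $\phi_n(a)$ is correct and standard.

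The trace clause is where the proposal breaks down, precisely at the step you yourself flag as "the hard part." You keep the maps $\phi_n$ intact and seek $g_n\in Q$ with $g_n\ge\phi_n^{**}(1)$ (for complete positivity of the extension) and $\mathrm{tr}(g_n)=d_{\mathrm{tr}}(\phi_n(e_n))$. These constraints are in general incompatible. Since $\phi_n^{**}(1)\ge\phi_n(e_k)$ for every $k$, its support dominates that of each $\phi_n(e_k)$, and typically $d_{\mathrm{tr}}(\phi_n^{**}(1))>d_{\mathrm{tr}}(\phi_n(e_n))$; any $g_n\ge\phi_n^{**}(1)$ then has $d_{\mathrm{tr}}(g_n)\ge d_{\mathrm{tr}}(\phi_n^{**}(1))$. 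Moreover, for the unitization $(\lambda+a)\mapsto\lambda g_n+\phi_n(a)$ to remain approximately multiplicative at $1_{\widetilde{A}}$ you need $\|g_n^2-g_n\|$ small and $g_n$ to act approximately as a unit on $\phi_n(A)$, forcing $g_n$ to be essentially a projection, hence $\mathrm{tr}(g_n)\approx d_{\mathrm{tr}}(g_n)\ge d_{\mathrm{tr}}(\phi_n^{**}(1))$, which can strictly exceed the prescribed value. The "small positive orthogonal perturbation" you propose only raises $\mathrm{tr}(g_n)$ further and worsens multiplicativity at the unit; it does not reconcile the constraints.

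The paper resolves both obstacles with two preparatory moves absent from your argument. First, composing with conditional expectations onto the finite-dimensional $B_n=M_{n!}\subset Q$, one may assume $\phi_n(A)\subset B_n$; then the range projection $p_n$ of $\phi_n(e_n)$ lies in $B_n\subset Q$ and satisfies $\mathrm{tr}(p_n)=d_{\mathrm{tr}}(\phi_n(e_n))$ exactly. Second, one replaces $\phi_n$ by the cut-down $\phi_n'(a)=\phi_n(e_n^{1/2}ae_n^{1/2})$; since $e_n^{1/2}xe_n^{1/2}\le e_n$ for $0\le x\le 1$, the range of $\phi_n'$ lies in $p_nB_np_n$, so $p_n$ is an exact unit for it, the extension $(\lambda+a)\mapsto\lambda p_n+\phi_n'(a)$ is cpc and exactly multiplicative at $1_{\widetilde{A}}$, and $\|\phi_n'(a)-\phi_n(a)\|\to 0$ preserves the approximation conclusion. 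Without the finite-dimensional reduction and the cutdown there is no candidate $g_n\in Q$ of the required trace compatible with positivity and approximate multiplicativity, and the construction cannot be completed as you describe.
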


\begin{proof}
We prove only the second part.
Write $Q=\overline{\bigcup_{n=1}^{\infty} \mathrm{M}_{n!}}$ with the embedding
$j_n: B_n:=\mathrm{M}_{n!}\to \mathrm{M}_{n!}\otimes \mathrm{M}_{n+1}=\mathrm{M}_{(n+1)!},$ $n=1,2,....$
\Wlog, we may assume
that $\phi_n$ maps $A$ into $B_n,$  $n=1,2,....$
Consider $\phi_n'(a)=\phi_n(e_n^{1/2}ae_n^{1/2}),$ $n=1,2,....$
Choose $p_n$ to be  the range projection of $\phi_n(e_n)$ in $B_n.$
Define $\psi_n': {\widetilde A}\otimes Q\to Q\otimes Q$ by ${\psi'_n}(a\otimes 1_Q)=\phi_n'(a)\otimes 1_{Q}$
for all $a\in A,$ $\psi'_n(1\otimes r)={p_n}\otimes r$ for all $r\in Q.$
Then
$$
\lim_{n\to\infty}\|\psi_n'(a\otimes 1)-\phi_n(a)\otimes 1\|=0\rforal a\in A.
$$
Moreover, $\mathrm{tr}(\psi_n'(1))=\mathrm{d}_{\mathrm{tr}}(\phi_n(e_n))$ for all $n.$
There is an isomorphism $h: Q\otimes Q\to Q$ such that
$h\circ {{\imath_Q}}$  is approximately unitarily equivalent to ${\text{id}}_Q,$
{{where $\imath_Q: a\mapsto a\otimes 1_Q$ is the embedding.}}
By choosing  some unitaries $u_n\in Q,$ we can choose $\psi={\text{Ad}}\, u_n\circ h\circ {\psi_n},$
$n=1,2,....$

\end{proof}

\vspace{-0.1in}The following is a non-unital version of Lemma 4.2 of \cite{EGLN-DR}.

\begin{lem}\label{Lpartuniq}
Let $A$ be a non-unital simple separable amenable \CA\,  with
$\mathrm{T}(A)\not=\O$
which has bounded scale and
which satisfies the UCT.
Fix a strictly positive element $a\in A_+$ with $\|a\|=1$
such that
\beq\label{Lpartuniq-n1}
\tau(f_{1/2}(a))\ge d\rforal \tau\in \overline{\mathrm{T}(A)}^{\mathrm w}.
\eneq

For any $\ep>0$ and any finite subset ${\mathcal F}$ of $A$, there exist
$\dt>0,$   a finite subset ${\mathcal G}$ of $A$,
and a finite subset ${\mathcal P}$ of $K_0(A)$ with the following property.
\noindent
Let $\psi,\phi: A\to Q$ be two ${\mathcal G}$-$\dt$-multiplicative completely positive contractive maps such that
\beq\label{puniq-1}
&&[\psi]|_{\mathcal P}=[\phi]|_{\mathcal P}\tand\\\label{puniq-1n}
&&\mathrm{tr}(f_{1/2}({\psi}(a)))\ge d/2 \tand \mathrm{tr}(f_{1/2}({\phi}(a)))\ge d/2,
\eneq
where $\mathrm{tr}$ is the unique tracial state of $Q.$
Then there is a unitary $u\in Q$ and an  ${\mathcal F}$-$\ep$-multiplicative completely positive contractive {{map}} $L: A\to {\mathrm{C}}([0,1], Q)$
such that
\beq\label{puni-2}
&& \pi_0\circ L=\psi,\,\,\, \pi_1\circ L=\mathrm{Ad}u\circ \phi.
\eneq
Moreover, if
\begin{equation}\label{puniq-3}
 |\mathrm{tr}\circ \psi(h)-\mathrm{tr}\circ \phi(h)|<\ep'/2\tforal  h\in {\mathcal H},
\end{equation}
for a finite set ${\mathcal H}\subseteq A$ and $\ep'>0$,
then $L$ may be chosen such that
\begin{equation}\label{puniq-4}
 |\mathrm{tr}\circ \pi_t\circ L(h)-\mathrm{tr}\circ \pi_0\circ L(h)|<\ep'\rforal  h\in {\mathcal H}\andeqn t\in [0, 1].
\end{equation}
Here, $\pi_t: {\mathrm{C}}([0,1], Q)\to Q$ is the point evaluation at $t\in [0,1].$
\end{lem}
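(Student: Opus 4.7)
The plan is to reduce, via the stable uniqueness theorems of Section \ref{section-stable-uniq}, to an approximate unitary equivalence $\|u^*\psi(f)u - \phi(f)\| < \ep/4$ inside $Q$, and then build the required homotopy $L$ by linear interpolation between $\psi$ and $\mathrm{Ad}\,u \circ \phi$. Since $A$ satisfies the UCT and $\mathrm{K}_*(Q) = (\mathbb Q, 0)$, the universal coefficient theorem identifies $\mathrm{KL}(A, Q)$ with $\mathrm{Hom}(\mathrm{K}_0(A), \mathbb Q)$, so by choosing $\mathcal P$ large enough relative to $\mathcal F$ and $\ep$ the finite matching $[\psi]|_{\mathcal P} = [\phi]|_{\mathcal P}$ upgrades to $[\psi] = [\phi]$ in $\mathrm{KL}(A, Q)$. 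Using amenability of $A$ together with Lemma \ref{LfactorQ}, I would also fix an approximately multiplicative, uniformly full nuclear map $d: A \to Q$; fullness is arranged via the simplicity of $Q$ and its unique faithful trace, using \eqref{Lpartuniq-n1} to keep $\mathrm{tr}(d(f_{1/2}(a)))$ uniformly bounded below.

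Theorem \ref{T39} then yields an integer $n$ and a unitary $U \in \widetilde{\mathrm{M}_{n+1}(Q)}$ with
\beq\nonumber
\|U^*(\psi(f) \oplus d_n(f))U - (\phi(f)\oplus d_n(f))\| < \ep/8
\eneq
on a sufficiently large finite subset containing $\mathcal F \cup \mathcal F\cdot\mathcal F$. Since $\mathrm{M}_{n+1}(Q) \cong Q$ (as $Q$ is strongly self-absorbing) and since \eqref{puniq-1n} makes both $\psi$ and $\phi$ quantitatively full inside $Q$, an inner-absorption argument allows $\psi \oplus d_n$ (respectively $\phi \oplus d_n$) to be conjugated to $\psi$ (respectively $\phi$) by a unitary in $Q$, up to a further error of order $\ep/8$. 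Composing, one extracts a single unitary $u \in Q$ with $\|u^*\psi(f)u - \phi(f)\| < \ep/4$ on the enlarged finite set.

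With $u$ in hand I would define the homotopy
\beq\nonumber
L(a)(t) = (1-t)\psi(a) + t\,u\phi(a)u^*, \qquad t\in[0,1],\ a\in A.
\eneq
Then $L$ is c.p.c.\ with $\pi_0\circ L = \psi$ and $\pi_1\circ L = \mathrm{Ad}\,u\circ\phi$. Expanding $L(a)L(b) - L(ab)$ and replacing $u\phi(\cdot)u^*$ by $\psi(\cdot)$ in the cross terms, the whole discrepancy collapses to $t(1-t)\bigl(\psi(ab) - u\phi(ab)u^*\bigr)$ up to multiplicativity defects of $\psi$ and $\phi$, giving the required $\mathcal F$-$\ep$-multiplicativity. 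For the tracial refinement, since $\mathrm{tr}$ is the unique trace of $Q$ we have $\mathrm{tr}(u\phi(h)u^*) = \mathrm{tr}(\phi(h))$, so
\beq\nonumber
\mathrm{tr}(\pi_t\circ L(h)) = (1-t)\mathrm{tr}(\psi(h)) + t\,\mathrm{tr}(\phi(h)),
\eneq
and \eqref{puniq-4} follows immediately from \eqref{puniq-3}.

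The main technical obstacle will be the absorption step, i.e.\ replacing the ampliated stable uniqueness output by a statement purely inside $Q$. This requires combining the quantitative fullness of $\psi$ and $\phi$ provided by \eqref{puniq-1n} with the strongly self-absorbing structure $Q \otimes Q \cong Q$, and must be carried out carefully enough not to degrade the multiplicativity bounds that feed into the subsequent linear interpolation. Once this is in place, the interpolation construction and the tracial bound are essentially mechanical.
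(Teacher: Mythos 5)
Your construction of the homotopy by a single linear interpolation between $\psi$ and $\mathrm{Ad}\,u\circ\phi$ is fine once you have $u\in Q$ with $\|u^*\psi(f)u-\phi(f)\|$ small on $\mathcal F\cup\mathcal F\cdot\mathcal F$; and your tracial computation $\mathrm{tr}(\pi_t\circ L(h))=(1-t)\mathrm{tr}(\psi(h))+t\,\mathrm{tr}(\phi(h))$ is correct and cleaner than what the paper does. The gap is the step you yourself flagged as ``the main technical obstacle'': you do not get to absorb the ampliation, and there is a concrete $K$-theoretic reason you cannot. Theorem \ref{T39} produces $U\in\widetilde{\mathrm M_{n+1}(Q)}$ approximately intertwining $\psi\oplus d_n$ and $\phi\oplus d_n$ with $d$ a \emph{full} embedding of $A$ into $Q$. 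To replace this by a unitary in $Q$ intertwining $\psi$ and $\phi$ you need $\psi\oplus d_n$ to be approximately unitarily equivalent to $\psi$ (and $\phi\oplus d_n$ to $\phi$). But $[\psi\oplus d_n]=[\psi]+n[d]$ in $\mathrm{KL}(A,Q)$, and $[d]$ is not zero for a full embedding when $\mathrm K_0(A)\to\mathrm K_0(Q)=\mathbb Q$ is nontrivial; the lemma does not assume $\mathrm K_0(A)=\{0\}$, so the two classes do not agree and no such unitary can exist. Appealing to $Q\otimes Q\cong Q$ and to the quantitative fullness \eqref{puniq-1n} does not touch this obstruction: fullness is about Cuntz comparison of $f_{1/2}(\psi(a))$, not about cancelling $[d]$ in $\mathrm{KL}$. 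Thus the proposed reduction to ``a single unitary $u\in Q$'' is in general impossible.

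The paper avoids this entirely by never attempting to conjugate $\psi$ to $\phi$. Using $Q\cong Q\otimes Q$, it cuts $1_Q$ into $2K+1$ mutually orthogonal equivalent projections $e_0,\dots,e_{2K}$ and sets $\phi_i=\phi(\cdot)\otimes e_i$, $\psi_i=\psi(\cdot)\otimes e_i$. One then forms the telescoping chain $\Phi_0=\psi$, $\Phi_i=\phi_0\oplus\cdots\oplus\phi_{i-1}\oplus\psi_i\oplus\cdots\oplus\psi_{2K}$, $\Phi_{2K+1}=\phi$, where consecutive maps differ in exactly one coordinate. At the $i$-th step, Theorem \ref{Lauct2} is applied with the remaining $2K$ coordinates playing the role of the $K$-fold ampliation $S_K$ already sitting \emph{inside} $Q$, so the conjugating unitary $u_i$ lands directly in $Q$ and no absorption of an external $d_n$ is needed. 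The requisite $K$-theory matching $[\phi_i]|_{\mathcal P}=[\psi_i]|_{\mathcal P}$ is automatic because the $e_i$ are mutually unitarily equivalent. Finally $L$ is the piecewise-linear path connecting the successive $\tilde\Phi_i$; the ``moreover'' bound falls out because each step moves only a $1/(2K+1)$ fraction of the trace. If you want to pursue your one-step interpolation, you would first have to restrict to the case $\mathrm K_0(A)=\{0\}$ (which covers the application in \ref{TalWtrace} but not the stated generality), and even then the absorption of $d_n$ into $\psi$ is itself a uniqueness statement of the same depth as the one you are trying to prove.
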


\begin{proof}
 Let $T: A_+\setminus \{0\}\to \N\times \R_+\setminus \{0\}$ be given by  5.7 of \cite{eglnp}
 (with above $d$ and $a$).
 In the notation in \ref{Blbm}, $Q\in {\boldsymbol{C}}_{0,0, t, 1, 2},$ where $t: \N\times \N\to \N$ is defined
 to be $t(n,k)=n/k$ for all $n, k\ge 1.$  Now ${\boldsymbol{ C}}_{0,0, t, 1,2}$ is fixed.
 We are going to apply Theorem \ref{Lauct2} together with the Remark \ref{RRLuniq} (note
 that $Q$ has real rank zero and $\mathrm{K}_1(Q)=\{0\}$).

Let ${\mathcal F}\subseteq A$ be a finite subset and let $\ep>0$ be given.   We may assume that $a\in {\mathcal F}$ and every element of ${\mathcal F}$ has norm at most one.
Write  ${\mathcal F}_1=\{ab: a, b\in {\mathcal F}\}\cup {\cal F}$.

Let  $\delta_1>0$ (in place $\dt$), ${\mathcal G}_1$ (in place
of ${\cal G}$) and  ${\cal H}_1$(in place of ${\cal H}$), ${\mathcal P},$ and $K$ be as assured by
Theorem \ref{Lauct2} for ${\mathcal F}_1$ and $\ep/4$ as well as $T$ (in place of $F$). (As stated earlier we will also use the Remark \ref{RRLuniq} so that we drop ${\boldsymbol{ L}}$  and
 condition \eqref{Lauct-1}.)
Since $\mathrm{K}_1(Q)=\{0\}$ and
$\mathrm{K}_0(Q)=\Q,$ we may choose ${\cal P}\subseteq \mathrm{K}_0(A).$

We may also assume that $\mathcal F_1\cup {\cal H}_1\subseteq \mathcal G_1$ and
$K\ge 2.$

Now, let ${\cal G}_2\subseteq A$ (in place of ${\cal G}$) be a finite subset and
let $\dt_2>0$ (in place of $\dt_1$) given by 5.7 of \cite{eglnp}
for the above ${\cal H}_1$ and
$T.$

Let $\dt=\min\{\ep/4, \dt_1/2, \dt_2/2\}$ and ${\cal G}={\cal G}_1\cup {\cal G}_2.$
\Wlog, we may assume that ${\cal G}\subseteq A^{\boldsymbol{ 1}}.$

Since $Q\cong Q\otimes Q,$ we may assume, \wilog,
that $\phi(a), \psi(a)\in Q\otimes 1$ for all $a\in A.$
Pick mutually equivalent projections
$e_0, e_1,e_2,...,e_{2K}\in Q$ satisfying $\sum_{i=0}^{2K} e_i=1_Q.$
Then, consider the maps $\phi_i, \psi_i: A\to Q\otimes e_iQe_i$, $i=0, 1, ..., 2K$, which are defined by
$$\phi_i(a)=\phi(a)\otimes e_i\quad\mathrm{and}\quad \psi_i(a)=\psi(a)\otimes e_i,\quad a\in A,$$ 
and consider the maps
$$\Phi_{K+1}:=\phi=\phi_0\oplus \phi_1\oplus\cdots \oplus \phi_{2K},\quad \Phi_0:=\psi=\psi_0\oplus \psi_1\oplus \cdots \oplus \psi_{2K}$$ and
\vspace{-0.12in}$$\Phi_i:=\phi_0\oplus\cdots  \oplus \phi_{i-1}\oplus \psi_{i}\oplus \cdots \oplus \psi_{2K},\quad i=1,2,...,2K.$$
Since $e_i$ is unitarily equivalent to $e_{{0}}$ for all $i$, one has
$$[\phi_i]|_{\mathcal P}=[\psi_j]|_{\mathcal P},\quad 0\leq i, j\leq 2K.$$
and in particular, 
\vspace{-0.1in}\beq\label{puniq-10+}
[\phi_{i}]|_{\mathcal P}=[\psi_{i}]|_{\mathcal P},\quad i=0, 1, .., 2K.
\eneq

Note that, for each $i=0, 1, ..., n$, $\Phi_i$ is unitarily equivalent to
$$
\psi_{i}\oplus (\phi_0\oplus \phi_1\oplus \cdots \oplus \phi_{i-1}\oplus  \psi_{i+1}\oplus  \psi_{i+2}\oplus \cdots \oplus \psi_{2K}),
$$
and $\Phi_{i+1}$ is unitarily equivalent to
$$
 \phi_{i}\oplus (\phi_0\oplus \phi_1\oplus \cdots  \oplus \phi_{i-1}\oplus \psi_{i+1}\oplus  \psi_{i+2}\oplus \cdots \oplus \psi_{2K}).
$$

Using \eqref{puniq-1n},
on applying  5.7 of \cite{eglnp},
we obtain
that 
{{maps
$\phi_i$ and $\psi_i$ are}}
$T$-${\cal H}_1$-full in $e_iQe_i,$ $i=0,1,2,...,2K.$

In view of this, and \eqref{puniq-10+}, applying Theorem \ref{Lauct2} (and its remarks),
 we obtain unitaries $u_i\in Q$, $i=0, 1, ..., 2K$,  such that
\begin{equation}\label{puniq-11}
\|{\tilde \Phi}_{i+1}(a)-{\tilde \Phi}_{i}(a)\|<\ep/4,\quad a\in {\mathcal F}_1,\,\,\,{{\mathrm{where}}}
\end{equation}
\vspace{-0.12in}$${\tilde \Phi}_0:=\Phi_0=\psi \quad\textrm{and}\quad {\tilde \Phi}_{i+1}:={\text{{Ad}}}\,  u_{i}\circ \cdots \circ {\text{{Ad}}}\ u_1 \circ   {\text{{Ad}}}\ u_0\circ \Phi_{i+1},\quad i=0, 1,...,2K.$$
Put $t_i=i/(2K+1)$, $i=0, 1, ..., 2K+1$,
and define $L: A\to {\text{ C}}([0,1], Q)$ by
$$
\pi_t\circ L=(2K+1)(t_{i+1}-t){\tilde \Phi_i}+(2K+1)(t-t_i){\tilde \Phi_{i+1}},\quad t\in [t_i, t_{i+1}],\ i=0,1,...,2K.
$$
By construction,
\begin{equation}\label{eq-verification}
\pi_0\circ L={\tilde \Phi}_0=\psi\quad\mathrm{and}\quad \pi_1\circ L={\tilde \Phi}_{n+1}={\text{{Ad}}}\, u_n\circ\cdots \circ{ \text{Ad}}\ u_1 \circ{\text{Ad}}\ u_0 \circ \phi.
\end{equation}
Since $\tilde{\Phi}_i$, $i=0, 1, ..., 2K$, are $\mathcal G$-$\delta$-multiplicative (in particular $\mathcal F$-$\ep/4$-multiplicative), it follows from \eqref{puniq-11} that $L$ is ${\mathcal F}$-$\ep$-multiplicative. By \eqref{eq-verification}, $L$ satisfies \eqref{puni-2} with $u=u_{2K}\cdots u_1 u_0$.

Moreover, if there is a finite set $\mathcal H$ such that \eqref{puniq-3} holds, it is then also straightforward to verify that $L$ satisfies \eqref{puniq-4}, as desired.
\end{proof}

\begin{rem}\label{Ruct3}
If $A$ is 
 KK-contractible,
 then the assumption
that $A$ satisfies the UCT can of course be dropped.
\end{rem}

\begin{thm}\label{TalWtrace}
Let $A$ be a non-unital simple separable amenable \CA\, with $\mathrm{K}_0(A)={\mathrm{Tor}}(\mathrm{K}_0(A))$ which satisfies the UCT.  Suppose that $A=\mathrm{Ped}(A).$
Then every trace in $\overline{\mathrm{T}(A)}^{\mathrm{w}}$  is  a ${\cal W}$-trace.
\end{thm}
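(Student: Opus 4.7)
My strategy combines quasidiagonality of traces, via Tikuisis--White--Winter, with a classification-theoretic construction of cpc asymptotically multiplicative maps $A\to\mathcal W$ realizing a prescribed trace.

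\emph{Step 1: Quasidiagonal traces.} Since $A$ is separable, nuclear (being amenable), and satisfies the UCT, and is simple with $\mathrm{T}(A)\ne\emptyset$, every $\tau\in\mathrm{T}(A)$ is automatically faithful (by simplicity) and amenable (by nuclearity). By the Tikuisis--White--Winter theorem \cite{TWW-QD}, in its non-unital form (obtained by extending $\tau$ to a tracial state on $\widetilde A$ and then restricting the resulting matrix-valued approximations back to $A$), every such $\tau$ is a quasidiagonal trace: there exist completely positive contractive, asymptotically multiplicative maps $\psi_n\colon A\to\mathrm M_{k(n)}$ with
$$
\tau(a) \;=\; \lim_{n\to\infty}\mathrm{tr}_{k(n)}(\psi_n(a)),\qquad a\in A,
$$
where $\mathrm{tr}_{k(n)}$ is the normalized trace on $\mathrm M_{k(n)}$.

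\emph{Step 2: Conversion to $\mathcal W$-approximations.} The torsion hypothesis $\mathrm K_0(A)=\mathrm{Tor}(\mathrm K_0(A))$, combined with the UCT and $\mathrm K_*(\mathcal W)=0$, gives $\mathrm{KK}(A,\mathcal W)=0$. Using the stable uniqueness theorem of Section \ref{section-stable-uniq}, in particular Corollary \ref{CLuniq}, together with an existence result for asymptotic morphisms realizing prescribed trace data in this KK-trivial setting, I would construct a sequence of cpc asymptotically multiplicative maps $\phi_n\colon A\to\mathcal W$ with $\tau_{\mathcal W}\circ\phi_n\to\tau$ pointwise on $A$. This is, for $A$ alone, the counterpart to Proposition \ref{Pwtrace}, where a tensor factor $Y$ serves precisely the role that KK-triviality plays here: to manufacture, in the limit, honest approximately multiplicative maps rather than merely order-zero ones.

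\emph{Step 3: Weak closure.} For $\tau\in\overline{\mathrm T(A)}^{\mathrm w}$ not already a tracial state, $\tau$ is a weak$^*$ limit of tracial states; a standard diagonal argument over a countable dense subset of $A$ then combines the $\mathcal W$-approximations produced in Step 2 for the approximating tracial states into one witnessing $\tau$ as a $\mathcal W$-trace.

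\emph{Main obstacle.} Step 2 is the crux. One cannot simply postcompose $\psi_n$ with a cpc map $\iota\colon\mathrm M_{k(n)}\to\mathcal W$: for $\iota$ to be approximately multiplicative, $\iota(1_{\mathrm M_{k(n)}})$ would have to be close to a projection in $\mathcal W$, but the projectionlessness of $\mathcal W$ forces any $e\in\mathcal W$ with $\|e-e^2\|$ small to have small norm as well, which in turn collapses all trace values $\tau_{\mathcal W}\circ\iota$ to zero. Consequently $\phi_n$ must be built intrinsically from $A$, exploiting the vanishing of $\mathrm{KK}(A,\mathcal W)$ afforded by the torsion-$\mathrm K_0$ hypothesis together with the UCT.
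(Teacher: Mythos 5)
Your Step 1 is correct and matches the paper exactly. Your identification of the obstacle at the end is also correct: one cannot simply push $\psi_n\colon A\to\mathrm M_{k(n)}$ forward into $\mathcal W$, because $\mathcal W$ is projectionless. But the remedy you propose in Step 2 is where the genuine gap lies. You invoke Corollary \ref{CLuniq} (a \emph{uniqueness} statement) together with ``an existence result for asymptotic morphisms realizing prescribed trace data in this KK-trivial setting.'' That existence result is precisely what must be \emph{constructed}; it is not an available black box, and the stable uniqueness theorem by itself does not produce it. Your plan thus asserts, rather than proves, exactly the content of the theorem.

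The paper's argument supplies the missing construction, and it has a different shape from what you sketch. First one reduces to $A_1 = A\otimes Q$ via Lemma \ref{LfactorQ}; by the K\"unneth formula and the torsion hypothesis, $\mathrm K_0(A_1)=\{0\}$. (Note that this is the real role of $\mathrm K_0(A)=\mathrm{Tor}(\mathrm K_0(A))$: it is \emph{not} needed to get $\mathrm{KK}(A,\mathcal W)=0$, which holds for every separable $A$ since $\mathcal W$ is KK-contractible, but it is needed so that the finite $\mathrm K_0$-set $\mathcal P$ appearing in Lemma \ref{Lpartuniq} becomes vacuous for $A_1$.) Next, the paper takes one quasidiagonal approximation $\psi\colon A_1\to Q$, ``fattens'' it into two copies $\Psi_0$ (with weight $m/(m+1)$) and $\Psi_1$ (with weight $1$), and applies the partial uniqueness Lemma \ref{Lpartuniq} — which is proved from Theorem \ref{Lauct2} — to produce a continuous path $L_1\colon A_1\to\mathrm C([0,1],Q)$ with $\pi_0\circ L_1 = \Psi_0$ and $\pi_1\circ L_1=\Psi_1$. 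The crucial structural observation is that the pair $(L_1(b),\psi(b))$ lands in $C_1 = C_0\otimes Q$ where
\[
C_0=\bigl\{(f,c)\in\mathrm C([0,1],\mathrm M_{k(m+1)})\oplus\mathrm M_k : f(0)=\kappa_0(c),\ f(1)=\kappa_1(c)\bigr\}
\]
is exactly a Razak building block — the $m$-copies-at-$0$, $(m+1)$-copies-at-$1$ boundary conditions are built in by the choice of $\Psi_0,\Psi_1$. Since $C_1$ is an inductive limit of Razak algebras with trivial K-theory, Robert's classification (\cite{Robert-Cu}) supplies a homomorphism $\lambda\colon C_1\to\mathcal W$ realizing the desired trace, and $\Phi=\lambda\circ\Phi'$ is the required almost-multiplicative cpc map into $\mathcal W$. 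This passage through the Razak algebra, using the algebraic form of the point--line building block to turn a path of $Q$-valued maps into a single map into a projectionless C*-algebra, is the idea your proposal is missing.
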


\begin{proof}
It suffices to show that every tracial state of $A$ is a ${\cal W}$-trace.
It follows from \cite{TWW-QD} that every trace is quasidiagonal.
For a fixed  $\tau\in \mathrm{T}(A),$  there exists a sequence of approximately multiplicative \cpc s
$(\phi_n)$ from $A$ into $Q$ such that
$$
\lim_{n\to\infty} \mathrm{tr}\circ \phi_n(a)=\tau(a)\rforal a\in A.
$$
By Lemma \ref{LfactorQ}, we may assume that $\phi_n=\psi_n\circ \imath,$
where $\imath: A\to A\otimes Q$  is the embedding defined by
$\imath(a)=a\otimes 1_Q$ for all $a\in A$ and $\psi_n: A\otimes Q\to Q$ is a sequence
of approximate multiplicative \cpc s.

Therefore it suffices to show that every tracial state of $A\otimes Q$ is a ${\cal W}$-trace.
Set $A_1=A\otimes Q.$  Then  $\mathrm{K}_0(A_1)=\{0\}.$

Fix $1>\ep>0,$ $1>\ep'>0,$  a finite subset ${\cal F}\subseteq A_1$ and a finite ${\cal H}\subseteq A_1.$
Put ${\cal F}_1={\cal F}\cup {\cal H}.$
\Wlog, we may assume that ${\cal F}_1\subseteq A_1^{\boldsymbol{ 1}}.$
Note that $A$ is non-unital. Choose  a strictly positive element $a\in A_+$ with $\|a\|=1.$
We may also assume  that 
$$
\tau(f_{1/2}(a))\ge d>0\rforal \tau\in \overline{\mathrm{T}(A)}^{\mathrm w}
\,\,\,{{{\mathrm{(for\,\,\, some}}\,\,\, d>0).}}
$$

Let $1>\dt>0,$ ${\cal G}\subseteq A_1$ be a finite subset as 
provided
by
\ref{Lpartuniq} for $A_1$ (in place of $A$), $d/2$ (in place of $d$), $\ep/16$ (in place of $\ep$),
and ${\cal F}_1.$ (Note since $\mathrm{K}_0(A_1)=\{0\},$ the required set ${\cal P}$  in \ref{Lpartuniq}  does not  appear here.)

Let  ${\cal G}_1={\cal G}\cup {\cal F}_1$ and let
$
\ep_1={\ep\cdot \ep'\cdot \dt/{2}}.
$
Let $\tau\in {\mathrm{T}}(A_1).$  Since $\tau$ is quasidiagonal,
 there exists a ${\cal G}_1$-$\ep_1$-multiplicative \cpc\,
${\psi}: A_1\to Q$
such that
\beq\label{Talwtr-11}
&&|\tau(b)-\mathrm{tr}\circ \psi(b)|<\ep'/16 {{\rforal}} b\in {\cal G}\cup {\cal F},\ \mathrm{and}\\
&&\mathrm{tr}(f_{1/2}(\psi(a)))>2d/3.
\eneq
Choose an integer $m\ge 3$ such that
$$
1/m<\min\{\ep_1/64, d/8\}.
$$

Let $e_1, e_2,...,e_{m+1}\in Q$ be a set of mutually orthogonal and mutually equivalent  projections
such that
$$
\sum_{i=1}^{m+1}e_i=1_Q\quad\mathrm{and}\quad \mathrm{tr}(e_i)={1\over{m+1}},\,\,\,i=1,2,...,m+1.
$$
Let $\psi_i: A_1\to (1\otimes e_i)(Q\otimes Q)(1\otimes e_i)$ be defined by
$\psi_i(b)=\psi(b)\otimes e_i,$ $i=1,2,...,m+1.$
Set
$$\sum_{i=1}^m\psi_i = \Psi_0 \quad\mathrm{and}\quad\sum_{i=1}^{m+1}\psi_i=\Psi_1.$$
Identify $Q\otimes Q$ with $Q.$
Note that
\beq\label{Talwtr-12}
\mathrm{tr}(f_{1/2}(\Psi_i(a)))\ge d/2,\,\,\,i=0,1.
\eneq
Moreover,
$$
|\tau\circ {\Psi}_0(b)-\tau\circ {\Psi}_1(b)|<{1\over{m+1}}<\min\{\ep_1/64, d/8\}\rforal b\in A_1.
$$

Again, keep in mind that $K_0(A_1)=\{0\}.$ Applying \ref{Lpartuniq}, we obtain
 a unitary $u\in Q$ and a  ${\mathcal F}_1$-$\ep/16$-multiplicative \cpc\,  $L: A\to {\text{C}}([0,3/4], Q)$
such that
\hspace{-0.1in}\beq\label{Talwtr-13}
 \pi_0\circ L=\Psi_0,\,\,\, \pi_{3/4}\circ L=\text{Ad}u\circ \Psi_1.
\eneq
Moreover,
\begin{equation}\label{Talwtr-14}
 |\mathrm{tr}\circ \pi_t\circ L(h)-\mathrm{tr}\circ \pi_0\circ L(h)|<1/m<\ep'/64, \quad h\in {\mathcal F}_1,\ t\in [0, 3/4].
\end{equation}
Here, $\pi_t: {\text{C}}([0,3/4], Q)\to Q$ is the point evaluation at $t\in [0,3/4].$
There is a continuous path of unitaries $\{u(t): t\in [3/4,1]\}$ such that
$u(3/4)=u$ and $u(1)=1_Q.$
Define $L_1: A_1\to \mathrm{C}([0,1], Q)$
by
 $\pi_t\circ L_1=\pi_t\circ L$ for $t\in [0, 3/4]$ and
$\pi_t\circ L_1={\text{{Ad}} }u_t \circ \Psi_1$ for $t\in (3/4, 1].$
$L_1$ is a ${\cal F}_1$-$\ep/16$-multiplicative \cpc\, from $A_1$ into $\mathrm{C}([0,1], Q).$
Note now
\beq\label{Talwtr-15}
&&\pi_0\circ L_1=\Psi_0\quad\mathrm{and}\quad\pi_1\circ L_1=\Psi_1\andeqn\\
&&|tr\circ \pi_t\circ L(h)-\mathrm{tr}\circ \Psi_1(h)|<\ep'/64\rforal h\in {\cal H}.
\eneq

Fix an integer $k\ge 2.$ Let $\kappa_i: \mathrm{M}_k\to \mathrm{M}_{k(m+1)}$  ($i=0,1$) be defined by
\beq\label{Talwtr-16}
\kappa_0(c)=(\overbrace{c\oplus c\oplus\cdots\oplus c}^m\oplus 0), \quad\mathrm{and}\,\,\,
\kappa_1(c)=(\overbrace{c\oplus c\oplus\cdots\oplus c}^{m+1})
\eneq
\hspace{-0.1in}for all $c\in \mathrm{M}_k.$ Define
$$
C_0=\{(f,c): \mathrm{C}([0,1], \mathrm{M}_{k(m+1)})\oplus \mathrm{M}_k:  f(0)=\kappa_0(c)\andeqn f(1)=\kappa_1(c)\}.
$$
and set
$$
C_0\otimes Q = C_1.
$$
Note that $C_0\in {\cal C}_0^0$  and  $C_1$ is an inductive limit
of  Razak algebras $C_0\otimes \mathrm{M}_{n!}.$  Moreover
$\mathrm{K}_0(C_1)=\mathrm{K}_1(C_1)=\{0\}.$
Put $p_0=\sum_{i=1}^m1_Q\otimes e_i.$
Define  ${\bar \kappa}_0: Q\to p_0(Q\otimes Q)p_0$ to be the unital \hm\,
defined by ${\bar \kappa}_0(a)=a\otimes \sum_{i=1}^me_i$ and
${\bar \kappa}_1(a)=a\otimes 1_Q$ for all $a\in Q.$

Then
one may write
$$
C_1=\{(f, c)\in \mathrm{C}([0,1], Q)\oplus Q: f(0)={\bar {\kappa}}_0(c)\andeqn f(1)={\bar \kappa}_1(c)\}.
$$
Note that ${\bar \kappa}_0\circ \psi(b)=\Psi_0(b)$ for all $b\in A_1$ and
${\bar \kappa}_1\circ \psi(b)=\Psi_1(b)$ for all $b\in A_1.$  Thus
one can define  $\Phi': A_1\to C_1$ by
$\Phi'(b)=(L_1(b), \psi(b))$ for all $b\in A_1.$

Then  $\Phi'$ is a ${\cal F}_1$-$\ep/16$-multiplicative \cpc\,
such that
\beq\label{Talwtr-17}
|\mathrm{tr}(\pi_t\circ \Phi'(h))-\mathrm{tr}\circ \psi(h)|<\ep'/4 \rforal h\in {\cal H}.
\eneq
Let $\mu$ denote 
Lebesgue measure on $[0,1].$
There is a \hm\, $\Gamma: \mathrm{Cu}^{\sim}(C_1)\to \mathrm{Cu}^{\sim}({\cal W})$ (see \cite{Robert-Cu}) such that
$\Gamma(f)(\tau_\mathcal W)=(\mu\otimes \mathrm{tr})(f)$ for all $f\in \mathrm{Aff}(\mathrm{T}(C_1)),$
where $\tau_\mathcal W$ is the unique tracial state of ${\cal W}.$
By \cite{Robert-Cu}, there exists a \hm\,
$\lambda: C_1\to {\cal W}$
such that
\beq\label{Talwtr-18}
\tau_\mathcal W\circ \lambda((f,c))=\int_0^1 \mathrm{tr}(f(t))d t\rforal (f, c)\in C_1.
\eneq
Finally, let
$\Phi=\lambda\circ \Phi'.$
Then $\Phi$ is a ${\cal F}_1$-$\ep$-multiplicative \cpc\, from
$A_1$ into ${\cal W}.$  Moreover, one computes
that
\beq\label{Talwtr-19}
|\tau_\mathcal W\circ \Phi(h)-\tau(h)|<\ep'\rforal h\in {\cal H},
\eneq
as desired.
\end{proof}

\begin{thm}\label{clas-thm}
Let $A$ and $B$ be non-unital separable simple (finite) \CA s with finite nuclear dimension and with non-zero traces.
Suppose that both $A$ and $B$ are $KK$-contractible. 
Then  $A\cong B$ if and only if
there is an isomorphism (scale preserving affine homeomorphism) $\Gamma: ({\widetilde{\mathrm T}}(B), \Sigma_B)\cong ({\widetilde{\mathrm T}}(A), \Sigma_A).$ 

Moreover,  there is an isomorphism $\phi: A\to B$ such that
$\phi$ induces $\Gamma.$ 

\end{thm}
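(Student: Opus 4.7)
For the ``only if'' direction, any isomorphism induces such a scale-preserving $\Gamma,$ so there is nothing to prove. For the converse, the plan is to reduce the situation to the hypotheses of Theorem \ref{TTMW}, whose requirements are continuous scale and membership in the class $\mathcal D.$ Since $A$ and $B$ have finite nuclear dimension, they are $\mathcal Z$-stable by \cite{T-0-Z}, so by Lemma 6.5 of \cite{ESR-Cuntz} each admits a full hereditary sub-\CA\ with continuous scale. First I would select such hereditary sub-\CA s $A_0\subseteq A$ and $B_0\subseteq B$ that correspond under $\Gamma$: using the scale condition $\Sigma_A\circ\Gamma=\Sigma_B$ together with the identification of the Cuntz semigroup of a stably projectionless simple $\mathcal Z$-stable \CA\ with $\mathrm{LAff}^{\sim}_+(\widetilde{\mathrm{T}})$ (see 11.8 of \cite{eglnp}), one can choose strictly positive elements $a_0\in\mathrm{Ped}(A)_+$ and $b_0\in\mathrm{Ped}(B)_+$ whose Cuntz classes correspond under $\Gamma,$ so that $\Gamma$ restricts to an affine homeomorphism $\mathrm{T}(B_0)\to\mathrm{T}(A_0)$ of compact Choquet simplices. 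Both $A_0$ and $B_0$ inherit $\mathrm{KK}$-contractibility (by Morita invariance of $\mathrm{KK}$-theory) and finite nuclear dimension.

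Next I would verify that $A_0,B_0\in{\cal D}_0$ by applying Theorem \ref{TMW}. The required conditions are: continuous scale (by construction); finite nuclear dimension; $\mathrm{K}_0(A_0)=\ker\rho_{A_0},$ which is trivial since $\mathrm{K}_0(A_0)=\{0\};$ every tracial state is a $\mathcal W$-trace; and every hereditary sub-\CA\ of $A_0$ with continuous scale is tracially approximately divisible. The $\mathcal W$-trace condition follows from Theorem \ref{TalWtrace}, whose hypotheses are all met---UCT follows from $\mathrm{KK}$-contractibility, $\mathrm{K}_0(A_0)=\{0\}=\mathrm{Tor}(\mathrm{K}_0(A_0)),$ and $A_0=\mathrm{Ped}(A_0)$ from the continuous scale condition. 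Tracial approximate divisibility follows from $Q$-stability: by Theorem \ref{Z-stable} (applicable once every trace is known to be a $\mathcal W$-trace), $A_0\cong A_0\otimes Q,$ and this passes to all hereditary sub-\CA s. Similarly for $B_0.$ Theorem \ref{TTMW} then produces an isomorphism $\phi_0:A_0\to B_0$ with $(\phi_0)_\mathrm{T}=\Gamma^{-1}|_{\mathrm{T}(B_0)}.$

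The final step is to extend $\phi_0$ to an isomorphism $\phi:A\to B$ whose induced map on $\widetilde{\mathrm{T}}$ is $\Gamma^{-1}.$ By Brown's theorem (since $A_0\subseteq A$ and $B_0\subseteq B$ are full), $A\otimes{\cal K}\cong A_0\otimes{\cal K}\cong B_0\otimes{\cal K}\cong B\otimes{\cal K},$ realised as $\phi_0\otimes\mathrm{id}_{\cal K}.$ The scale-preserving property of $\Gamma,$ combined with the fact that the Cuntz class of a strictly positive element of $A$ is encoded, via stable rank one (see 11.5 of \cite{eglnp}), by the function $\Sigma_A\in\mathrm{LAff}^\sim_+(\widetilde{\mathrm{T}}(A)),$ forces the Cuntz class of a strictly positive element of $A$ to map onto the Cuntz class of a strictly positive element of $B.$ Using a R\o rdam-type argument (as in Section 4, e.g.\ around \eqref{TC0k1-11}), one can adjust $\phi_0\otimes\mathrm{id}_{\cal K}$ by an inner automorphism of $B\otimes{\cal K}$ to carry $A$ onto $B,$ producing the desired $\phi.$ The main obstacle is this last step: the scale-preservation hypothesis must be leveraged carefully to pass from an isomorphism of the continuous-scale hereditary sub-\CA s to an isomorphism of the full algebras. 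This rests on the correspondence between hereditary sub-\CA s, the Cuntz semigroup, and lower semicontinuous affine functions on $\widetilde{\mathrm{T}},$ all of which is available in \cite{eglnp} and \cite{Robert-Cu} once stable rank one is in hand.
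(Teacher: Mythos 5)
Your proof is essentially the same argument as the paper's, routed through the same sequence of lemmas: pass to a corresponding pair of full hereditary sub-\CA s $A_0, B_0$ with continuous scale whose traces are identified by $\Gamma$; establish that every tracial state is a $\mathcal W$-trace via Theorem~\ref{TalWtrace}; obtain $Q$-stability and hence tracial approximate divisibility via Theorem~\ref{Z-stable}; conclude $A_0, B_0\in\mathcal D_0$ via Theorem~\ref{TMW}; apply Theorem~\ref{TTMW} to get $A_0\cong B_0$; and finally pass back up to $A\cong B$ via Brown's theorem and a R\o rdam-type adjustment using stable rank one and the scale-preservation hypothesis.

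The one organizational difference from the paper is that the paper first invokes Corollary~\ref{Ctsang} to assume, without loss of generality, that $B\in\mathcal M_0$ (a limit of Razak algebras). That reduction is convenient mainly because it gives stable rank one of $B$ directly at the final step of the argument; you instead derive the needed stable-rank information from 11.5 of \cite{eglnp} applied to $B_0\in\mathcal D_0$. Strictly speaking, 11.5 gives stable rank one of $B_0$, not of $B$ itself; but since $B$ is $\mathcal Z$-stable (finite nuclear dimension plus \cite{T-0-Z}), it has almost stable rank one by \cite{Rob-0}, and the R\o rdam-type isomorphism of hereditary sub-\CA s with Cuntz-equivalent strictly positive elements (3.2 of \cite{eglnp}) already works under almost stable rank one, so your argument goes through. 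In effect the two proofs do the same work; the paper's preliminary $\mathcal M_0$-reduction is not load-bearing, since (as both proofs observe) the $\mathcal W$-trace/$\mathcal D_0$ analysis must be applied to $A_0$ and $B_0$ symmetrically anyway.
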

\begin{proof}

Let 
$\Gamma:({\widetilde {\mathrm T}}(B), \Sigma_B)\to ({\widetilde {\mathrm T}}(A), \Sigma_A)$  
be
an isomorphism.
By \ref{Ctsang}, we may assume that $B\in {\cal M}_0$ 
{{(an inductive limit of Razak algebras).}}
Recall that $A$ is ${\cal Z}$-stable (by \cite{T-0-Z}). 
Let $a\in \mathrm{Ped}(A)_+$ with $\|a\|=1$ such that $A_0=\overline{aAa}$ has 
continuous scale (see  5.2 of \cite{eglnp}).
Then $\mathrm{T}(A_0)$ is a  metrizable Choquet simplex and is a base 
for the cone 
${{\widetilde{\mathrm T}}}(A).$ 
Let $b\in B_+$ be such that
$$
\mathrm{d}_{\Gamma(\tau)}(b)=d_\tau(a)\rforal \tau\in {{\widetilde{\mathrm T}}}(A).
$$
Set $\overline{bBb}=B_0.$ Then
$\Gamma$ gives an affine homeomorphism from $\mathrm{T}(A_0)$ onto $\mathrm{T}(B_0).$
It follows from \ref{TalWtrace} that every tracial state of $A_0$ or $B_0$ is a ${\cal W}$-trace. 
By \ref{Z-stable}, $A_0$ and $B_0$ are tracially approximately divisible.  It follows from 
\ref{TMW} that $A_0, B_0\in {\cal D}_0.$ Then, by \ref{TTMW}, there is an isomorphism
$\phi: A_0\to B_0$ such that
$\phi_T$ gives $\Gamma|_{T(B_0)}$ and by \   By \cite{Br1}, this induces
an isomorphism ${\tilde \phi}: A\otimes {\cal K}\to B\otimes {\cal K}.$
Fix a strictly positive element $a_0\in A$ with $\|a_0\|=1$ such
that
$$
d_\tau(a_0)=\Sigma_A(\tau),\quad \tau\in {\widetilde{\mathrm T}}(A).
$$
Set ${\tilde \phi}(a_0) = b_0.$
Then ${\tilde \phi}$ gives an isomorphism from $A$ to $B_1:=\overline{b_0(B\otimes {\cal K})b_0}.$
Let $b_1\in B$ be a strictly positive element, so that
$$
d_\tau(b_1)=\Sigma_B(\tau),\quad \tau\in {\widetilde{\mathrm T}}(B).
$$
Then
\vspace{-0.1in}$$
d_\tau(b_1)=d_\tau(b_0),\quad \tau\in {\widetilde{\mathrm T}}(B).
$$
Since $B$ is a  separable simple \CA\, with stable rank one,  this implies
there exists  an isomorphism $\phi_1: B_1\to B$ such that $(\phi_1)_\mathrm{T}={\mathrm{id}}_{{\widetilde{\mathrm T}}(A)}$
{{(see Theorem 3 of \cite{CEI-CuntzSG}, this also follows from \cite{Razak-W} as $B$ is an inductive limit of Razak algebra---see also \cite{Robert-Cu}.)}}
Then the composition $\phi_1\circ {\tilde \phi}|_A$ gives the required isomorphism.
\end{proof}

\begin{cor}\label{clas-hom}
Let $A, B$ be simple separable KK-contractible  finite \CA s with finite nuclear dimension. If there is a homomorphism $\xi: ({\widetilde{\mathrm T}}(B), \Sigma_B) \to ({\widetilde{\mathrm T}}(A), \Sigma_A)$, then there is a \CA\, homomorphism $\phi: A {\to} B$ such that $\phi_*=\xi$.
\end{cor}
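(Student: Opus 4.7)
The plan is to reduce to Robert's classification of homomorphisms, Theorem 1.0.1 of \cite{Robert-Cu}, which classifies $*$-homomorphisms from inductive limits of $1$-dimensional NCCWs with trivial $\mathrm{K}_1$ into C*-algebras of stable rank one by their action on the Cuntz semigroup $\mathrm{Cu}^\sim$. The existence of $\phi$ will follow by exhibiting a Cu-morphism $\alpha\colon\mathrm{Cu}^\sim(A)\to\mathrm{Cu}^\sim(B)$ corresponding to $\xi$, and invoking Robert's theorem to lift it to a C*-homomorphism.

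First, by Theorem \ref{clas-thm}, both $A$ and $B$ are simple inductive limits of Razak algebras, hence members of $\mathcal{M}_0\cap \mathcal{D}_0$; in particular they are $\mathcal Z$-stable, of stable rank one, and (being KK-contractible) satisfy $\mathrm{K}_0=\mathrm{K}_1=0$. Since Razak algebras lie in $\mathcal C_0^0$ and are in particular $1$-dimensional NCCWs with trivial $\mathrm{K}_1$, the source hypothesis of Robert's theorem is met; the target hypothesis (stable rank one) holds for $B$ as noted above.

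Next, by 6.2.3 of \cite{Robert-Cu} together with 7.3 of \cite{eglnp}, the vanishing of $\mathrm{K}_0$ yields identifications
$$\mathrm{Cu}^\sim(A)\cong \mathrm{LAff}_+^\sim(\widetilde{\mathrm T}(A)),\qquad \mathrm{Cu}^\sim(B)\cong \mathrm{LAff}_+^\sim(\widetilde{\mathrm T}(B)).$$
Define $\alpha$ by pullback along $\xi$: for $f\in\mathrm{LAff}_+^\sim(\widetilde{\mathrm T}(A))$, set $\alpha(f)(\tau)=f(\xi(\tau))$ for $\tau\in\widetilde{\mathrm T}(B)$. Since $\xi$ is a continuous affine cone map, $\alpha(f)$ is lower semicontinuous and affine on $\widetilde{\mathrm T}(B)$. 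One verifies that $\alpha$ preserves sums, order, suprema of increasing sequences, and the compact containment relation $\ll$ (the last point being the main technical hurdle: it amounts to checking that if $f\ll g$ in $\mathrm{LAff}_+^\sim(\widetilde{\mathrm T}(A))$ then $f\circ\xi\ll g\circ\xi$ in $\mathrm{LAff}_+^\sim(\widetilde{\mathrm T}(B))$, which follows from the definition of $\ll$ via approximation on bounded subsets and the fact that $\xi$ maps bounded sets in $\widetilde{\mathrm T}(B)$ (i.e., those on which $\Sigma_B$ is bounded) to bounded sets in $\widetilde{\mathrm T}(A)$, using the hypothesis $\Sigma_A\circ\xi\leq\Sigma_B$). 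The scale condition $\Sigma_A\circ\xi\leq\Sigma_B$ also guarantees that if $a_0\in A_+$ and $e_B\in B_+$ are strictly positive, then $\alpha([a_0])\leq[e_B]$ in $\mathrm{Cu}^\sim(B)$, which is precisely what is needed for a lift to take values in $B$ itself rather than in $B\otimes\mathcal{K}$.

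Applying Theorem 1.0.1 of \cite{Robert-Cu} to $\alpha$ produces a C*-homomorphism $\phi\colon A\to B$ with $\mathrm{Cu}^\sim(\phi)=\alpha$. To conclude $\phi_*=\xi$, observe that for any strictly positive $a\in A$ and any $\tau\in\widetilde{\mathrm T}(B)$ one has
$$d_{\tau\circ\phi}(a)=d_\tau(\phi(a))=\alpha([a])(\tau)=d_{\xi(\tau)}(a),$$
and since the trace cone $\widetilde{\mathrm T}(A)$ is recovered from $\mathrm{LAff}_+^\sim(\widetilde{\mathrm T}(A))$ (as the positive functionals of the appropriate type), this equality on all of $\mathrm{Cu}^\sim(A)$ forces $\phi_*(\tau)=\xi(\tau)$ in $\widetilde{\mathrm T}(A)$. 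The main obstacle throughout is the verification that $\alpha$ is genuinely a morphism in the Cuntz category, particularly preservation of compact containment; the rest is essentially formal given the hypotheses and the earlier sections of the paper.
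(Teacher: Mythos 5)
Your proof is correct, but it follows a genuinely different path from the paper's. The paper's argument is essentially a pointer: Theorem~\ref{clas-thm} identifies $A$ and $B$ with simple inductive limits of Razak algebras, Corollary~\ref{Ctsang} supplies models realizing the invariants, and the existence of a homomorphism inducing $\xi$ is then delegated to the classification of limits of Razak algebras from~\cite{Razak-W}. You instead build the homomorphism directly from Robert's existence theorem (Theorem~1.0.1 of~\cite{Robert-Cu}): after using~\ref{clas-thm} to reduce to Razak limits, you convert $\xi$ into a $\mathbf{Cu}$-morphism $\alpha\colon\mathrm{Cu}^\sim(A)\to\mathrm{Cu}^\sim(B)$ by pullback under the identification $\mathrm{Cu}^\sim\cong\mathrm{LAff}_+^\sim(\widetilde{\mathrm T})$ that holds since $\mathrm{K}_0=0$, and then lift $\alpha$ to a $*$-homomorphism. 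This buys you two things: it avoids any translation between the invariant $(\widetilde{\mathrm T},\Sigma)$ used in this paper and the form in which~\cite{Razak-W} states its classification, and it keeps the existence argument inside the Cuntz-semigroup framework already driving the existence half of the intertwining in Theorem~\ref{TTMW}, so the corollary becomes self-contained modulo Robert's theorem. The trade-off is that you must actually verify, not merely assert, that $\alpha$ is a morphism in $\mathbf{Cu}$ — particularly the preservation of compact containment $\ll$ and the scale inequality $\alpha([s_A])\le[s_B]$. You correctly flag this as the technical crux and gesture at why it holds (that $\xi$ carries $\Sigma_B$-bounded subsets to $\Sigma_A$-bounded subsets, using the morphism condition $\Sigma_A\circ\xi\le\Sigma_B$), but this is exactly where the undefined term \emph{homomorphism of pairs} in the statement must be pinned down, and a careful writeup would spell it out. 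Your final step, deducing $\phi_*=\xi$ from $\mathrm{Cu}^\sim(\phi)=\alpha$ by the bijection between lower semicontinuous traces and dimension functions on $\mathrm{Cu}(A)$ for a $\mathcal Z$-stable simple algebra, is correct.
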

\begin{proof}
In 
view of \ref{clas-thm} and \ref{Ctsang}, 
this follows from 
the classification of limits of Razak algebras (\cite{Razak-W}).

\end{proof}

\vspace{0.4in}

\appendix

\section{}


\vspace{0.2in}

This section,   mainly contributed by 
Huaxin Lin,  
 removes  the necessity  of assuming 
that $A$ has stable rank one in 
Theorem \ref{clas-thm} and \ref{clas-hom}.

 The main purpose of this appendix  is to  prove Theorem \ref{Tappendix} below. 
The existence of  a map as stated in   \ref{Tappendix} was proved in   \cite{Robert-Cu} under the additional assumption 
that $A$ has stable rank one. It is needed in the proof of \ref{TMW}.

 Subsection A.1 of the appendix also contains some 
results of independent interest.
In particular, Corollary \ref{A1Cinv}, together with the construction of models in \cite{point-line}, establishes
the 
range of the
Elliott invariant for Jiang-Su stable separable simple  exact \CA s.

\subsection{Strict comparison in ${\widetilde A}$}

\begin{defn}
Let $B$ be a  \CA\, with ${\mathrm{T}}(B)\not=\emptyset$ and  let $S\subseteq {\mathrm{T}}(B)$ be a subset.
Suppose that $a\in (B\otimes {\cal K})_+$ 
is such that $d_\tau(a)<+\infty$ for 
all $\tau\in {\mathrm{T}}(B).$
Define
\beq
\omega_S(a)=\inf\{\sup\{d_\tau(a)-\tau(c): \tau\in S\}: c\in \overline{a(B\otimes {\cal K})a},\,\,\, 0\le c\le 1\}.
\eneq
{{Let us note that, {{when}} $S$ is compact,  $\omega_S(a)=0$ if and only if $d_\tau(a)$ is continuous on $S.$}}
{{Also}} note that if $a, b\in (B\otimes {\cal K})_+,$ $0\le a, b\le 1$ and $a\lesssim  b,$ 
then there exists a sequence $x_n\in B\otimes {\cal K}$ such that 
$x_n^*x_n\to a$ and $x_nx_n^*\in \overline{b(B\otimes {\cal K})b}.$
It follows, for any $1>\dt>0,$ $f_\dt(x_n^*x_n)\to f_\dt(a).$ 
Note that  $\tau(f_\dt(x_n^*x_n))=\tau(f_\dt(x_nx_n^*))$
for any $\tau\in {\mathrm{T}}(B). $   
We  conclude that, if $a\lesssim b,$ 
there exists a sequence $\{c_k\}$ in $\overline{b(B\otimes {\cal K})b}_+$ with $0\le c_k\le 1$
such that 
$$
\lim_{k\to\infty}\sup\{|\tau(c_k)-\tau(f_{1/k}(a))|:
\tau\in {\mathrm{T}}(B)\}=0.
$$
Consequently, if we further assume $d_\tau(a)=d_\tau(b)$, then $\omega_S(a)\geq \omega_S(b)$. Note that, if $a\sim b,$ then $d_\tau(a)=d_\tau(b).$ 
Hence, when $a\sim b,$ we have $\omega_S(a)=\omega_S(b).$

Now let $A$ be a \CA\, with ${\mathrm{T}}(A)\not=\emptyset$ and with compact ${\mathrm{T}}(A).$ 
Let $a\in ({\widetilde A}\otimes {\cal K})_+$ be such that $d_\tau(a)<+\infty$ for all $\tau\in {\mathrm{T}}(A).$ 
We will write $\omega(a)$ for $\omega_{{\mathrm{T}}(A)}(a),$ namely,
\beq
\omega(a)=\inf\{\sup\{d_\tau(a)-\tau(c): \tau\in {\mathrm{T}}(A)\}: c\in \overline{a({\widetilde A}\otimes {\cal K})a},\, 0\le c\le 1\}.
\eneq

As mentioned above, if $b\in ({\widetilde A}\otimes {\cal K})_+,$ $0\le b\le 1$ and $a\sim b,$ in ${\widetilde A}\otimes {\cal K}, $ 
then $\omega(a)=\omega(b).$

\end{defn}

\begin{lem}\label{Ldecomp}
Let $A$ be a separable  stably projectionless simple  \CA\, such that ${{{\mathrm{M}}_r(A)}}$ almost has stable rank one
for every integer $r\ge 1$ and  $\mathrm{QT}(A)=\mathrm{T}(A)$ which has 
strict comparison for positive elements and 
has  continuous scale. Suppose  that $\mathrm{Cu}(A)=\mathrm{LAff}_+({\mathrm{T}}(A)).$
Suppose also that $a\in {{{\mathrm{M}}_r({\widetilde A})}}$  with $0\le a \le 1$  for some integer $r\ge 1$
and  $0<\la \pi(a)\ra,$ where 
$\pi: {\widetilde A}\to \C$ is the quotient map. 
Suppose further that
\beq
{{\sigma_0}}:=\inf\{d_\tau(a): \tau\in {\mathrm{T}}(A)\}>4\omega(a).
\eneq
Then, for any $2\omega(a)<d<{{\sigma_0}}$ and $\omega(a)/2>\ep_0>0,$ there is $b\in {\mathrm{M}}_r(A)_+$ with 
$b\le a$ such that
\beq
2\omega(a)<d_\tau(b)<d\tforal \tau\in {\mathrm{T}}(A)
\eneq
and,  for  any 
$0<\ep<\inf\{d_\tau(b): \tau\in {\mathrm{T}}(A)\},$
 there is also $a_1\in {\mathrm{M}}_r({\widetilde A})_+$ 
such that  
$$ \pi(a_1)=\pi(a'),\,\,\,
b\oplus a_1\le a',
$$
 with $\la a'\ra=\la a\ra ,$ 
$d_\tau(a_1)>d_\tau(a)-d$ for all $\tau\in {\mathrm{T}}(A),$  and
$a_1$ also has the following property: if $\{c_n\}
\in {\mathrm{M}}_r({\widetilde A})_+$  is an increasing sequence such that 
$c_n\in \overline{a_1({\widetilde A}\otimes {\cal K})a_1}$ and $\tau(c_n)\nearrow d_\tau(a_1),$ 
then, for some $n_0\ge 1,$ 
\beq
d_\tau(a_1)-\tau(c_n)<\omega(a)+\ep_0+\ep\tforal \tau\in {\mathrm{T}}(A)\tand\hspace{-0.08in} \tforal n\ge n_0.
\eneq

\end{lem}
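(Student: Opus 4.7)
The plan is to use the identification $\mathrm{Cu}(A) = \mathrm{LAff}_+(\mathrm{T}(A))$ together with strict comparison and the almost-stable-rank-one hypothesis to realize positive elements with prescribed dimension functions, then to exploit compactness of $\mathrm{T}(A)$ to pass from pointwise to uniform estimates. First I would fix, by the definition of $\omega(a)$, a contraction $c_0 \in \overline{a M_r(\widetilde{A}) a}_+$ with $\sup_{\tau \in \mathrm{T}(A)}(d_\tau(a) - \tau(c_0)) < \omega(a) + \eps_0/10$.

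To produce $b$, pick a constant $\alpha$ with $2\omega(a) < \alpha < \min(d, \inf_\tau d_\tau(a))$, which is available since $\inf_\tau d_\tau(a) > 4\omega(a)$. Using $\mathrm{Cu}(A) = \mathrm{LAff}_+(\mathrm{T}(A))$, obtain $b_0 \in M_r(A)_+$ with $d_\tau(b_0) = \alpha$; strict comparison then gives $b_0 \lesssim a$ in $M_r(\widetilde{A})$. Applying the almost-stable-rank-one lifting (Theorem 1.0.1 of \cite{Robert-Cu}) inside $M_r(\widetilde{A})$ yields $b' \in \overline{a M_r(\widetilde{A}) a}_+$ with $\|b'\| \leq 1$ and $b' \sim b_0$. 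Since $\pi(b_0) = 0$, the Cuntz equivalence forces $\pi(b') = 0$, so $b' \in M_r(A)_+$. Finally, pick $\delta > 0$ small so that $\|f_\delta(a) b' f_\delta(a) - b'\|$ is sufficiently small, and set $b := (\delta/2) f_\delta(a) b' f_\delta(a)$; the inequality $f_\delta(a) \leq (2/\delta) a$ gives $b \leq a$, while $b \in M_r(A)_+$ and $d_\tau(b) = \alpha$ hold by construction, yielding $2\omega(a) < d_\tau(b) < d$.

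For $a_1$ and $a'$, I would use $c_0$ to specify a Cuntz representative $a'$ of $\langle a \rangle$ in a suitable matrix algebra over $\widetilde{A}$, together with $a_1 \in M_r(\widetilde{A})_+$ placed orthogonally to $b$ inside $a'$, arranged so that $d_\tau(a_1)$ approximates $d_\tau(c_0) - \alpha$ from below (thereby exceeding $d_\tau(a) - d$, since $d_\tau(c_0) > d_\tau(a) - \omega(a) - \eps_0/10$ and $\alpha < d - \omega(a) - \eps_0/10$), with $\pi(a_1) = \pi(a')$ ensured by the choice of rearrangement. The uniform-convergence property then follows from the estimate $\omega(a_1) \leq \omega(a) + \eps_0$ together with a compactness argument: given a witness $c^* \in \overline{a_1 M_r(\widetilde{A}) a_1}_+$ for $\omega(a_1)$, the closed sets $U_n := \{\tau : \tau(c_n) \geq \tau(c^*)\}$ form an increasing family in $\mathrm{T}(A)$ whose union is all of $\mathrm{T}(A)$ by pointwise convergence, so by compactness of $\mathrm{T}(A)$ they cover $\mathrm{T}(A)$ for some $n \geq n_0$; for such $n$, $d_\tau(a_1) - \tau(c_n) \leq d_\tau(a_1) - \tau(c^*) < \omega(a) + \eps_0 + \eps$ uniformly in $\tau$.

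The main obstacle is arranging the construction of $a_1$ so that it simultaneously satisfies the scalar-image condition $\pi(a_1) = \pi(a')$, the dimension lower bound $d_\tau(a_1) > d_\tau(a) - d$, and the estimate $\omega(a_1) \leq \omega(a) + \eps_0$. The $\pi$-constraint ties $a_1$ to a specific rearrangement of $a$; the dimension bound requires using the tight approximation $c_0$; and the $\omega$-control requires the ``tail'' of $a_1$ beyond $c_0$ to remain bounded by $\omega(a) + \eps_0$. Balancing these three demands via the correspondence between $\mathrm{Cu}$ and $\mathrm{LAff}_+$, in combination with almost stable rank one, is the technical heart of the argument.
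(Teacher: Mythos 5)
There are several genuine gaps, and the most serious is that you never actually carry out the construction of $a'$ and $a_1$; you say you ``would use $c_0$ to specify a Cuntz representative $a'$'' and ``arrange'' $a_1$ to satisfy several constraints, and then explicitly flag this balancing act as ``the technical heart of the argument.'' That heart is precisely what the lemma requires, and it is not optional. The paper produces $a'$ concretely as $f_{1,\eta_1}(a)$ for a suitably small $\eta_1$, chooses $b$ (and a slightly larger $b_1$) inside the hereditary subalgebra generated by an approximate-unit element $w^*e_{n_0+1}w$ of $\overline{a M_r(A) a}$, and then sets $a_1 = a' - b_1$. All of the claimed properties of $a_1$ (the scalar image, the dimension lower bound, and the tail estimate) are read off from that explicit decomposition; nothing in your sketch establishes them.

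Your compactness argument for the tail property is incorrect as stated. You define closed sets $U_n := \{\tau : \tau(c_n) \geq \tau(c^*)\}$ and claim that an increasing family of closed sets whose union is $\mathrm{T}(A)$ must include $\mathrm{T}(A)$ itself for some $n$. This is false (take $[0,1]$ and $U_n = \{0\}\cup[1/n,1]$). Moreover, the union need not even be all of $\mathrm{T}(A)$: since $c^*\le 1$ lies in $\overline{a_1(\widetilde{A}\otimes\mathcal{K})a_1}$, one only has $\tau(c^*)\le d_\tau(a_1)$, and if $\tau(c^*)=d_\tau(a_1)$ at some trace while $\tau(c_n)<d_\tau(a_1)$ for all $n$, that trace lies in no $U_n$. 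The paper's compactness step works because the quantity being dominated, $\tau(w^*e_{n_0+1}w) - d_\tau(b')$, is \emph{continuous} on $\mathrm{T}(A)$ and is \emph{strictly} below the (lower semicontinuous) limit $d_\tau(a_1)$ at every trace, so a finite open subcover of neighborhoods where each fixed $\tau(c_{n_\tau})$ beats the continuous target yields the uniform $n_0$. You need a continuous strict lower bound, not $\tau(c^*)$ itself.

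There are also quantitative slips in the construction of $b$. Strict comparison is assumed for $A$, not for $\widetilde{A}$, so ``strict comparison then gives $b_0 \lesssim a$ in $M_r(\widetilde{A})$'' needs an intermediate step comparing $b_0$ against some $c\in\overline{aM_r(A)a}$ with $\tau(c)>\alpha$ for all $\tau$. But your range $2\omega(a)<\alpha<\min(d,\inf_\tau d_\tau(a))$ is too wide: using $c_0$ with $d_\tau(a)-\tau(c_0)<\omega(a)+\eps_0/10$, you would need $\alpha<\inf_\tau\tau(c_0)$, which requires roughly $\alpha+\omega(a)+\eps_0/10<\inf_\tau d_\tau(a)$; if $\alpha$ is taken close to $\inf_\tau d_\tau(a)$ this fails. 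The hypothesis $\inf_\tau d_\tau(a)>4\omega(a)$ gives you room only if you stay near $2\omega(a)$, as the paper does. Finally, $b=(\delta/2)f_\delta(a)b'f_\delta(a)$ does not satisfy $d_\tau(b)=\alpha$: compression by $f_\delta(a)$ can only decrease $d_\tau$, and the uniform lower bound $d_\tau(b)>2\omega(a)$ requires an argument (Dini for $\tau\mapsto\tau(f_\eta(b'))$, using that $d_\tau(b')=\alpha$ is continuous) that you did not supply.
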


\begin{proof}
We first consider the case that $\la a \ra$ is not represented by a projection.
There exists an invertible matrix $y\in {\mathrm{M}}_r(\C)_+$ such that 
{{$y^{1/2}\pi(a)y^{1/2}=p$}} is a projection. Let $Y\in {\mathrm{M}}_r(\C\cdot 1_{\widetilde A})_+$ be 
the same invertible scalar matrix. Then $\pi(Y^{1/2}aY^{1/2})=p.$ 
It is clear that $\la a\ra=\la Y^{1/2}aY^{1/2}\ra $ and we may replace $a$ by
$Y^{1/2}aY^{1/2}.$ 
So we assume that $\pi(a)=p.$

Choose $\eta_0>0$ such that, for $0<\eta<\eta_0,$
\beq
d_\tau(a)-\tau(f_\eta(a))<\omega(a)+\ep_0\rforal \tau\in {\mathrm{T}}(A).
\eneq

Let $(e_n)$ be an approximate identity for $\overline{a{\mathrm{M}}_r(A)a}$ such that
$e_{n+1}e_n=e_n,$ $n=1,2,....$ 

There exists $n_0\ge 1$ (recall $T(A)$ is compact)  such that
\beq\label{Ldoc1-1}
\tau(e_n)>d\andeqn d_\tau(a)-\tau(e_n)<\omega(a)+\ep_0\rforal \tau\in {\mathrm{T}}(A) \andeqn \rforal n\ge n_0.
\eneq
By a standard 
compactness
argument, 
for a fixed $n_0+1,$ there exists $\eta_0>\eta_1>0$
such that 
\beq
\tau(f_{\eta_1}(a))>\tau(e_{n_0+3})>d\rforal \tau\in {\mathrm{T}}(A).
\eneq
Let $(e_{1,n})$ be an approximate identity for $\overline{f_{\eta_1}(a){\mathrm{M}}_r(A)f_{\eta_1}(a)}$
with $e_{1,n}e_{1,n+1}=e_{1,n},$ $n=1,2,....$ 
By the same compactness argument, we have $e_{1, n}$ for some $n$ such that
\beq
\tau(e_{1,n})>\tau(e_{n_0+3})\rforal \tau\in {\mathrm{T}}(A).
\eneq
It follows that $e_{n_0+2}\lesssim e_{1, n+1}.$ Since $A$ has almost stable rank one, one has
\beq
w^*e_{n_0+2}w\le f_{\eta_1/2}(a)
\eneq
for some unitary $w\in {\mathrm{M}}_r(A)^\sim$  {{(see  the last part of Lemma 3.2 of \cite{eglnp}).}}
Choose {{a strictly positive function}} ${{g}}_{1,\eta_1}\in C_0((0,1])_+$ such that ${{g}}_{1,\eta_1}(t)=1,$ if $t\ge \eta_1/4,$ 
and
${{g}}_{1, \eta_1}(t)$ is linear
on $[0, \eta_1/4).$
In particular, $f_{\eta_1/2}{{g}}_{1, \eta_1}=f_{\eta_1/2}.$
Put $a'={{g}}_{1, \eta_1}(a).$ 
Note $\pi(a')=\pi(a)$ and $\la a'\ra =\la a\ra.$
Note   also that 
\beq
0\le w^*e_{n_0}w\le w^*e_{n_0+1}w\le w^*e_{n_0+2}w\le f_{\eta_1/2}(a)\le a'\andeqn\\
\label{Lnote1-10}
\tau(w^*e_{n_0}w)>d\andeqn d_\tau(a')-\tau(w^*e_{n_0}w)<\omega(a)+\ep_0\rforal \tau\in {\mathrm{T}}(A).
\eneq
In particular, $d_\tau(w^*e_{n_0}w)> 5\omega/2
\rforal \tau\in {\mathrm{T}}(A).$
There exists $b_0\in {\mathrm{M}}_r(A)_+$ with 
$$d_\tau(b_0)=2\omega(a)+\min\{(3/4)(d-2\omega(a)), (3/4)(\tau(w^*e_{n_0}w)- 2\omega(a))\}
=2\omega(a)+(3/4)(d-2\omega(a))$$ for all 
$\tau\in {\mathrm{T}}(A).$ Note that $d_\tau(b_0)\in \Aff({\mathrm{T}}(A)).$
We have 
\beq
d_\tau(b_0)<d_\tau(w^*e_{n_0}w)\rforal \tau\in {\mathrm{T}}(A).
\eneq
Since ${\mathrm{M}}_r(A)$ almost has  stable rank one,  by 3.2 of \cite{eglnp}, one concludes that
there exists\\ $b'\in \overline{w^*e_{n_0}w{\mathrm{M}}_r(A)w^*e_{n_0}w}$
such that $d_\tau(b')=d_\tau(b_0)$ for all $\tau\in {\mathrm{T}}(A).$
Note that $b'a'=b'.$ 
Let $\ep>0.$  Since $d_\tau(b_0)$ is continuous on ${\mathrm{T}}(A)$ and ${\mathrm{T}}(A)$ is compact, 
there exists $\dt_0>0$ such that
\beq
\tau(f_\dt(b_0))>d_\tau(b_0)-\min\{(d-2\omega(a))/2, \ep/4\}\rforal \tau\in {\mathrm{T}}(A) 
\eneq
and $ 0<\dt\le \dt_0.$

Put $b=f_{\dt_0}(b'),$ $b_1=f_{\dt/2}(b')$ and $b_2=f_{\dt/4}(b').$
Note that $b\le b_1\le b_2\le w^*e_{n_0+1}w.$
Note also that
\beq
2\omega(a)<d_\tau(b)<d\andeqn 0<d_\tau(b_2)-\tau(b)<\ep/4\rforal \tau\in {\mathrm{T}}(A).
\eneq
{{So (A.4) holds.}}	
Put $a_1=a'-b_1.$ {{Note that $a'b=b,$   $a_1\perp b,$ and
$a_1+ b\le a'.$}}
{{Since}} $\pi(a_1)=\pi(a'),$ $\la \pi(a_1)\ra =\la \pi(a)\ra .$
 Let $p_{a}$ be the open projection corresponding to $a,$ $p_{a_1}$ the open projection 
corresponding to $a_1$ and
$p_{b'}$ be the open projection corresponding to  $b'$ in ${\mathrm{M}}_r({\widetilde A})^{**}.$  
Note that $p_a$ is the same as the open projection corresponding to $a'.$ Then
$p_a\ge p_{a_1}\ge p_{a}-p_{b'},$ 
\beq\label{Ldoc1-2}
&&d_\tau(a_1)=\tau(p_{a_1})\ge \tau(p_{a'}-p_{b'})= \tau(p_{a'}{{)}}-\tau(p_{b'})\\
&&= \tau(p_{a'})-d_\tau(b')
> d_\tau(a)-d\andeqn\\
&&d_\tau(a_1)=\tau(p_{a_1})
<\tau(p_{a}-b)
<\tau(p_{a}-p_{b'})+\ep/4\\\label{Ldoc1-2+}
&&\hspace{0.2in}<\tau(p_{a})-\tau(p_{b'})+\ep/4=d_\tau(a)-d_\tau(b')+\ep/4 \rforal \tau\in {\mathrm{T}}(A).
\eneq
If $c_n$ is as stated, then $\tau(c_n)\nearrow \tau(p_{a_1}).$ Therefore, on ${\mathrm{T}}(A),$ which is compact, 
by a standard compactness argument, there is $n_1\ge 1$ such that
\beq\label{Lnote1-11}
\tau(w^*e_{n_0+1}w)-d_\tau(b')<\tau(c_n)\le \tau(p_{a_1})=d_\tau(a_1)
\eneq
for all  $\tau\in {\mathrm{T}}(A)$ and  for all  $n\ge n_1.$  It follows from   \eqref{Ldoc1-2+}, \eqref{Lnote1-10} and 
\eqref{Lnote1-11} that
\beq
&&d_\tau(a_1)-\tau(c_n)<d_\tau(a')-\tau(c_n)\\
&&<(\tau(w^*e_{n_0+1}w)+\omega(a)+\ep_0)-d_\tau(b')+\ep/4)-(\tau(w^*e_{n_0+1}w)-d_\tau(b'))\\
&&=\omega(a)+\ep_0+\ep/4\rforal \tau\in {\mathrm{T}}(A).
\eneq

Now we consider the case that $\la a\ra$ is represented by a projection $p\in M_m({\widetilde A}).$
We may write $p=a_1+b_1,$ where $a_1\in M_m(A)$ and $b_1\in M_m(\C\cdot 1_{\widetilde A})$
is a scalar matrix. In particular, 
$d_\tau(p)$ is continuous on ${\mathrm{T}}(A).$
Therefore $\omega(p)=0.$ 
Let $d>0.$  We may assume that $d<1/2.$ Since $\mathrm{Cu}(A)=\mathrm{LAff}_+({\mathrm{T}}(A)),$ choose an element $b_0\le A$ 
such that $d_\tau(b_0)=d/4$ for all $\tau\in {\mathrm{T}}(A).$   Note that $pb_0=b_0$ and $d_\tau(b_0)$ is continuous.
Now with $\omega(p)=0,$ with $a=p,$ and with this new $b_0,$ the rest of the proof  above (beginning 
with $b_0$ as constructed) applies.

\end{proof}

\begin{lem}\label{LNote1}
Let $A$ be a separable  stably projectionless simple  \CA\, such that ${\mathrm{M}}_n(A)$ has almost stable rank one
for all integers $n\ge 1$ and  ${\mathrm{QT}}(A)={\mathrm{T}}(A)$  which has 
strict comparison for positive elements and 
has continuous scale. Suppose also that ${\mathrm{Cu}}(A)=\mathrm{LAff}_+({\mathrm{T}}(A)).$
Let $a, b\in {\mathrm{M}}_r({\widetilde A})_+.$ 
%
Suppose 
that $\la \pi(a)\ra \le \la \pi(b)\ra(<\infty),$ where $\pi: {\widetilde A}\to \C$ is the quotient map, and 
\beq\label{Notefn1}
d_\tau(a)+4\omega(b)<d_\tau(b)\rforal \tau\in {\mathrm{T}}(A).
\eneq
Then $a\lesssim b.$
\end{lem}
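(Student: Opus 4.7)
The strategy is to use Lemma \ref{Ldecomp} to split $b$ (up to Cuntz equivalence) into a piece $b_0 \in M_r(A)_+$ lying inside the ideal $M_r(A)$ plus a piece $a_1 \in M_r(\widetilde A)_+$ that carries all the scalar image $\pi(b)$, and then compare $a$ against this decomposition. First, since $\inf_\tau (d_\tau(b) - d_\tau(a)) > 4\omega(b)$, I would choose $d \in (2\omega(b),\, \tfrac{1}{2}\inf_\tau(d_\tau(b) - d_\tau(a)))$ together with sufficiently small $\epsilon_0, \epsilon > 0$, and apply Lemma \ref{Ldecomp} to $b$ to obtain $b' \sim b$, $b_0 \in M_r(A)_+$, and $a_1 \in M_r(\widetilde A)_+$ with $b_0 \oplus a_1 \le b'$, $\pi(a_1) = \pi(b')$, $2\omega(b) < d_\tau(b_0) < d$, $d_\tau(a_1) > d_\tau(b) - d$. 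The last conclusion of Lemma \ref{Ldecomp}, applied to any increasing approximate unit $\{c_n\}$ of $\overline{a_1 M_r(\widetilde A) a_1}$, then yields the effective bound $\omega(a_1) \le \omega(b) + \epsilon_0 + \epsilon$.

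Next I would split $a$ compatibly with this decomposition. Since $\langle\pi(a)\rangle \le \langle\pi(b)\rangle = \langle\pi(a_1)\rangle$ in the Cuntz semigroup of $M_r(\mathbb C)$, a rank comparison produces, for arbitrary $\delta > 0$, a contraction $v \in M_r(\mathbb C) \subseteq M_r(\widetilde A)$ with $v^*\pi(a_1)v$ approximating $\pi(a)$ to within $\delta$. Using the almost stable rank one hypothesis at the matrix amplifications (via 3.2 of \cite{eglnp}), I would lift this to a positive contraction $a^{(1)} \in \overline{a_1 M_r(\widetilde A) a_1}$ with $\pi(a^{(1)}) = \pi(a)$ and $d_\tau(a^{(1)})$ within $\omega(b) + \epsilon_0 + \epsilon + \delta$ of $d_\tau(a_1)$ for every $\tau$. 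The piece of $a$ not accounted for by $a^{(1)}$ should then be representable by a positive element $a^{(2)} \in M_r(A)_+$ satisfying $a \lesssim a^{(1)} \oplus a^{(2)}$ and, thanks to the strict gap in the hypothesis, $d_\tau(a^{(2)}) < d_\tau(b_0)$ uniformly in $\tau$. Since $A$ has strict comparison and $a^{(2)}, b_0 \in M_r(A)_+$, this gives $a^{(2)} \lesssim b_0$ in $M_r(A)$, so combining, $a \lesssim a^{(1)} \oplus a^{(2)} \lesssim a_1 \oplus b_0 \le b' \sim b$.

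The main obstacle will be the second step: honestly producing $a^{(1)}$ inside $\overline{a_1 M_r(\widetilde A) a_1}$ with the prescribed scalar image $\pi(a)$ while simultaneously realising the complementary piece as a genuine positive element of $M_r(A)_+$ with the required trace bound. The almost-stable-rank-one hypothesis at all matrix amplifications, together with the ``$\omega$-from-below'' control of $a_1$ furnished by the final conclusion of Lemma \ref{Ldecomp}, are precisely what is needed to turn the finite-dimensional rank inequality $\pi(a) \lesssim \pi(a_1)$ and the trace inequalities of the hypothesis into actual Cuntz subequivalences in $M_r(\widetilde A)$; I expect this to proceed in the spirit of Robert's cancellation techniques in \cite{Robert-Cu}.
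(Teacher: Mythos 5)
Your opening move is the same as the paper's: establish $d=\inf_\tau\bigl(d_\tau(b)-d_\tau(a)\bigr)>4\omega(b)$ (distinguishing whether $\la a\ra$ is represented by a projection), and then apply Lemma \ref{Ldecomp} to $b$, obtaining $b_0\in M_r(A)_+$ and the large piece (your $a_1$, the paper's $b_1$) carrying the scalar image $\pi(b)$ and essentially all the trace, with the effective $\omega$-bound given by the last conclusion of \ref{Ldecomp}. Up to that point you are on track.

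The gap is exactly where you flag it. Your second step asks to ``split $a$ compatibly'': produce $a^{(1)}\in\overline{a_1M_r(\widetilde A)a_1}$ with $\pi(a^{(1)})=\pi(a)$ and $a^{(2)}\in M_r(A)_+$ with $a\lesssim a^{(1)}\oplus a^{(2)}$ and $d_\tau(a^{(2)})<d_\tau(b_0)$. But the element $a^{(1)}$ you construct lives in the hereditary subalgebra of $a_1$ and, apart from its scalar image, has no structural relation to $a$. There is no canonical ``piece of $a$ not accounted for by $a^{(1)}$''; positivity does not survive subtraction. More importantly, even granting the existence of such a pair, the claim $a\lesssim a^{(1)}\oplus a^{(2)}$ is a comparison between two elements of $M_r(\widetilde A)_+$ governed only by trace data and a bound on $\omega$ — which is precisely the statement of the lemma you are proving. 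So as written the argument is circular; you would need an independent mechanism that produces this subequivalence, and none is supplied. Note also the numerics: since $d_\tau(a^{(1)})$ is close to $d_\tau(a_1)>d_\tau(b)-d>d_\tau(a)$, the ``remainder'' $a^{(2)}$ would have negative rank, so in fact the whole burden falls on $a\lesssim a^{(1)}$, which is again the lemma.

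The paper's proof dodges this by never decomposing $a$. After \ref{Ldecomp}, it diagonalises the scalar images so that $\pi(a)=p_2\le p_1=\pi(b_1)$ with scalar lifts $P_2\le P_1$ in $M_r(\widetilde A)$, chooses an approximate unit $(E_n)$ of $M_r(A)$ commuting with $P_1,P_2$, and forms the perturbations $y_k=E_kaE_k+(1-E_k)P_2(1-E_k)$ and $x_n=E_nb_1E_n+(1-E_n)P_1(1-E_n)$. Then $(a-\eta)_+\lesssim y_k$, and $y_k\lesssim x_n$ is obtained from the two \emph{elementary} ingredients available without the lemma: strict comparison \emph{inside} $M_r(A)$ gives $E_kaE_k\lesssim E_nb_1E_n$ (because $d_\tau(E_kaE_k)\le d_\tau(a)<d_\tau(E_nb_1E_n)$), and the scalar boundary is handled by the explicit projection inequality $(1-E_k)P_2(1-E_k)\le(1-E_n)P_1(1-E_n)$. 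Only \emph{then} is a decomposition performed, and it is of $x_n$ — not of $a$ — into $f_{\delta_1/2}(x_n)\le b_1$ plus $g_{\delta_1}(x_n)\in M_r(A)_+$ of small trace, the latter being disposed of by strict comparison in $M_r(A)$ against $b_0$. That is the structural step you would need to replace your unjustified ``splitting of $a$'' in order to close the argument.
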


\begin{proof}
Since, $f_\eta(a)\lesssim b$ for all 
$1/2>\eta>0$ implies $a\lesssim b.$
We may replace $a$ by $f_\eta(a).$
If $\la a\ra$ is represented by a projection, 
then   $d_\tau(a)$ is continuous. So
\beq
\inf\{d_\tau(b)-d_\tau(a): \tau\in {\mathrm{T}}(A)\}>4\omega(b).
\eneq

Otherwise, 
fix $1/2>\eta>0.$ By applying 7.1 of \cite{eglnp}, there exists 
$\eta>\eta_1>0$ and a continuous function $f: {\mathrm{T}}(A) \to \R^+$ such that
\beq
d_\tau((a-\eta)_+)<f(\tau)<d_\tau((a-\eta_1)_+)<d_\tau(b)\rforal \tau\in {\mathrm{T}}(A).
\eneq
Then 
\beq
\inf\{d_\tau(b)-d_\tau(f_{\eta}(a)): \tau\in {\mathrm{T}}(A)\}> 4\omega(b).
\eneq
Thus, in both cases (after replacing $a$ by $f_\eta(a)$), 
\beq
d=\inf\{d_\tau(b)-d_\tau(a): \tau\in {\mathrm{T}}(A)\}>4\omega(b).
\eneq

By applying \ref{Ldecomp}, one obtains non-zero and mutually orthogonal elements 
$b_0\in {\mathrm{M}}_r(A)_+$ and $b_1, {{b'}}\in {\mathrm{M}}_r({\widetilde A})_+$ such that 
\beq
&&b_0+b_1\le b',\,\,\, \la b'\ra =\la b\ra, \pi(b_1)=\pi(b'),\\\label{Ldecomp-n1}
&&2\omega(b)<d_\tau(b_0)<d/2,\,\,\, d_\tau(b_1)>d_\tau(b)-d/2\rforal \tau\in {\mathrm{T}}(A).
\eneq
and, for any $c_n\in {\mathrm{M}}_r(A)_+$ with $c_n\in \overline{b_1({\widetilde A}\otimes {\cal K})b_1}$ and $d_\tau(c_n)\nearrow d_\tau(b_1)$ on ${\mathrm{T}}(A),$
there exists $n_0\ge 1$ such that
\beq\label{Notef1-10}
d_\tau(b_1)-d_\tau(c_n)<\omega(b)+(1/64)\inf\{\tau(b_0):\tau\in {\mathrm{T}}(A)\} \rforal \tau\in {\mathrm{T}}(A).
\eneq
Moreover, $\la \pi(b_1)\ra =\la \pi(b)\ra.$
Replacing $b$ by $b',$ \wilog, we may assume that $b_0+b_1\le b.$ 

Put $d_0=\inf\{\tau(b_0):\tau\in {\mathrm{T}}(A)\}.$

There exists an invertible matrix $y_1\in {\mathrm{M}}_r(\C)_+$
such that $y_1^{1/2}\pi(b_1)y_1^{1/2}=p_1$ is a projection.
Let $Y_1\in {\mathrm{M}}_r({\widetilde A})$ denote the scalar matrix such that $\pi(Y_1)=y_1.$
Note that $\la Y_1^{1/2}b_1Y^{1/2}\ra =\la b_1\ra,$ 
$Y_1^{1/2}c_nY^{1/2}\le Y_1^{1/2}b_1Y^{1/2}$ and $d_\tau(Y^{1/2}c_nY^{1/2})=d_\tau(c_n).$
So, 
replacing $b_1$ by $Y_1^{1/2}b_1Y^{1/2},$ we may assume that 
$\pi(b_1)=p_1.$ 
Similarly, we may assume that $\pi(a)=p_2$ is also a projection. 
There is a scalar matrix $U\in {\mathrm{M}}_r({\widetilde A})$ such that 
$\pi(U^*aU)\le p_2.$ \Wlog, we may assume that $p_2\le p_1.$ 

We may further assume that there are integers $0\le m_2\le m_1$ such that 
\beq
p_i=\diag(\overbrace{1,1,..,1}^{m_i},0,...,0),\,\,\, i=1,2.
\eneq
Let $P_i=\diag(\overbrace{1_{\widetilde A}, 1_{\widetilde A},...,1_{\widetilde A}}^{m_i},0,..,0)\in {\mathrm{M}}_r({\widetilde A})$ so that $\pi(P_i)=p_i,$
$i=1,2.$ 

Note $(b_1-1/n)_+\le b_1$ and $d_\tau((b_1-1/n)_+)\nearrow d_\tau(b_1),$ 
so
by \eqref{Notef1-10},
for some $\dt_1>0,$
\beq\label{Notef1-11+}
d_\tau(b_1)-d_\tau(f_{\dt}(b_1))<\omega(b)+d_0/64\rforal \tau\in {\mathrm{T}}(A)
\eneq
and 
all
$0<\dt<\dt_1.$ 

Let $(e_n)$ be an approximate identity for $A$
such that $e_ne_{n+1}=e_{n+1}e_n=e_n,$ $n=1,2,....$
Put
\beq
E_n=\diag(e_n, e_n,...,e_n)\in {\mathrm{M}}_r(A),\,\,\, n=1,2,....
\eneq
Then $(E_n)$ is an approximate identity for ${\mathrm{M}}_r(A)$ and 
$P_iE_n=E_nP_i,$  $i=1,2,$ and $n=1,2,....$ 

We have $b_1^{1/2}E_nb_1^{1/2}\nearrow b_1$ (in the strict topology). Let $c_n= E_n^{1/2}b_1E_n^{1/2},$ $n=1,2,....$ 
It follows that $d_\tau(c_n)\nearrow d_\tau(b_1)$ on ${\mathrm{T}}(A).$
By the construction of $b_1,$ there exists $n_0\ge 1$ such that
\beq\label{Notef1-8n}
d_\tau(b_1)-d_\tau(b_1^{1/2}E_n^2b_1^{1/2})=d_\tau(b_1)-d_\tau(c_n)<\omega(b)+d_0/64
\eneq
for all $\tau\in {\mathrm{T}}(A)$ and for all $n\ge n_0.$

One then computes, by \eqref{Ldecomp-n1} and  \eqref{Notef1-8n},  that, for $n\ge n_0,$ 
\beq\label{Notef1-8}
d_\tau(a)<d_\tau(c_n)\rforal \tau\in {\mathrm{T}}(A).
\eneq

On the other hand,  since $\pi(b_1)=\pi(P_1)$ and $\pi(a)=\pi(P_2),$ 
\beq\label{A200117-n1}
&&\lim_{k\to\infty}\|(E_k^{1/2}b_1E_k^{1/2}+(1-E_k)^{1/2}P_1(1-E_k)^{1/2})-b_1\|=0\andeqn\\
&&\lim_{k\to\infty}\|(E_k^{1/2}aE_k^{1/2}+(1-E_k)^{1/2}P_2(1-E_k)^{1/2})-a\|=0.
\eneq
Put $x_k=E_k^{1/2}b_1E_k^{1/2}+(1-E_k)^{1/2}P_1(1-E_k)^{1/2}$ and $y_k=E_k^{1/2}aE_k^{1/2}+(1-E_k)^{1/2}P_2(1-E_k)^{1/2},$
$k=1,2,....$
Since
\beq
\lim_{k\to\infty}\|f_{\dt_1/2}(x_k)-f_{\dt_1/2}(b_1)\|=0,
\eneq
we may assume, \wilog,  for all $k\ge 1,$  that 
\beq
\tau(f_{\dt_1/2}(x_k))\ge \tau(f_{\dt_1/2}(b_1))-d_0/64 \rforal  \tau\in {\mathrm{T}}(A)\}.
\eneq
It follows 
by
 \eqref{Notef1-11+} (with $\dt=\dt_1/2$) that
\beq\label{Notef1-19}
\tau(f_{\dt_1/2}(x_k))>d_\tau(b_1)-\omega(b)-3d_0/64\rforal \tau\in {\mathrm{T}}(A).
\eneq
Since $A$ has continuous scale, there is $k_0\ge n_0$ such that
\beq\label{Notef1-20}
d_\tau(1-E_n))\le \tau(1-E_{n-1})<{{d_0/64}} \rforal \tau\in {\mathrm{T}}(A)\andeqn \rforal n\ge k_0.
\eneq
It follows that, for $k\ge k_0,$ 
\beq\label{Notef1-21}
&&\tau(f_{\dt_1/2}(x_k))\le d_\tau(x_k)\le d_\tau(c_k)+d_0/64\\
&&=d_\tau(b_1^{1/2}E_k^2b_1^{1/2})+d_0/64\le d_\tau(b_1)+d_0/64 \rforal \tau\in {\mathrm{T}}(A).
\eneq
Let $g_{\dt_1}\in C_0((0,1])_+$ with $1\ge g(t)>0$ for all $t\in (0, \dt_1/4),$
$g_{\dt_1}(t)\ge t$ for $t\in (0, \dt_1/16),$ $g_{\dt_1}(t)=1$ for $t\in (\dt_1/16, \dt_1/8)$ 
and $g_{\dt_1}(t)=0$ if $t\ge \dt_1/4.$ 

Since $g_{\dt_1}(x_k)f_{\dt_1/2}(x_k)=0,$ by 
\eqref{Notef1-21},
we conclude that, for $k\ge k_0,$ 
\beq\label{Notef1-22}
d_\tau(g_{\dt_1}(x_k))+\tau(f_{\dt_1/2}(x_k))\le d_\tau(x_k)\le d_\tau(b_1)+d_0/64\rforal \tau\in {\mathrm{T}}(A).
\eneq
Then, by \eqref{Notef1-19}, 
\beq
d_\tau(g_{\dt_1}(x_k))&\le&(d_\tau(b_1)-\tau(f_{\dt_1/2}(x_k)))+d_0/64\\\label{Notef1-22+}
&\le &  \omega(b)+3d_0/64+d_0/64=\omega(b)+d_0/16
\eneq
for all $ \tau\in {\mathrm{T}}(A)$
and for all $k\ge k_0.$ Moreover, since $\pi(x_k)=\pi((1-E_n)^{1/2}P_1(1-E_n)^{1/2})=p_1$ for 
all $n,$
\beq\label{Notef1-19+}
g_{\dt_1}(x_k)\in {\mathrm{M}}_r(A).
\eneq

It should be noted and will be used later that, for any $0\le x\le 1,$ 
\beq\label{Notef1-5}
x\le f_{\dt}(x)+g_{\dt_1}(x)\rforal 0<\dt<\dt_1/8.
\eneq

Fix an $\eta>0.$ 
Then there exists $k_1\ge k_0+2$ such that, since $\lim_{k\to\infty}\|y_k-a\|=0,$
\beq\label{Notef1-26}
(a-\eta)_+\lesssim y_k=E_k^{1/2}aE_k^{1/2}+(1-E_k)^{1/2}P_2(1-E_k)^{1/2}.
\eneq
Note that this holds regardless  of whether $\la a\ra$  represented by a projection or not.
Fix any $n\ge k_0\ge n_0,$.
By
\eqref{Notef1-8},
\beq
d_\tau(E_k^{1/2}aE_k^{1/2})&=&d_\tau(a^{1/2}E_ka^{1/2})\le d_\tau(a)
<d_\tau(c_n)\rforal \tau\in {\mathrm{T}}(A)
\eneq
{{and}} for any $k.$ 
Since $A$ has strict comparison,
\beq
E_k^{1/2}aE_k^{1/2}{{\lesssim}} c_n
\eneq
for any $n\ge k_0$ and any $k.$
Choose $k\ge \max\{k_1, n\}+2.$ In particular, $E_n$ and $(1-E_k)$ are mutually 
orthogonal. 
Then  
\beq\label{Notef1-30}
(a-\eta)_+ &\lesssim  &y_k \lesssim  E_k^{1/2}aE_k^{1/2}+ (1-E_k)^{1/2}P_2(1-E_k)^{1/2}\\
                 &\lesssim & c_n+(1-E_k)^{1/2}P_2(1-E_k)^{1/2}   \le c_n+(1-E_k)^{1/2}P_1(1-E_k)^{1/2} \\
                 &=& c_n+P_1(1-E_k)P_1\le c_n +P_1(1-E_n)P_1\\
                 &=& c_n+(1-E_n)^{1/2}P_1(1-E_n)^{1/2}=x_n.
                  \eneq
In other words, 
\beq\label{Notef1-31}
(a-\eta)_+\le x_n\rforal n\ge k_0.
\eneq                  
Choose $n\ge k_0$ such that (note that, by \eqref{A200117-n1}, $x_n\to b_1$ as $n\to\infty$)
$f_{\dt_1/{{8}}}(x_n){{\lesssim}} b_1.$
By \eqref{Notef1-5},
\beq\label{Notef1-33}
\la x_n\ra  &\le & \la f_{\dt_1/{{8}}}(x_n)+g_{\dt_1}(x_n)\ra \le \la f_{\dt_1/{{8}}}(x_n)\ra +\la g_{\dt_1}(x_n)\ra\\
&\le& \la b_1\ra +\la g_{\dt_1}(x_n)\ra.
\eneq
By \eqref{Notef1-22+}  and \eqref{Ldecomp-n1} and the strict comparison of $A,$ 
\beq\label{Notef1-34}
\la g_{\dt_1}(x_n)\ra \le b_0.
\eneq
Combining \eqref{Notef1-31}, \eqref{Notef1-33} and \eqref{Notef1-34}
\beq
\la (a-\eta)_+\ra \le \la b_1\ra +\la b_0\ra =\la b_1+b_0\ra \le \la b\ra.
\eneq
Since this holds for any $\eta>0,$ we conclude that
\beq
a \lesssim b.
\eneq

\end{proof}

\begin{cor}\label{Tnote3}
Let $A$ and $a$ be as in \ref{LNote1}.
Suppose that $b\in {\mathrm{M}}_r({\widetilde A})$ is such that $d_\tau(b)$ is continuous on ${\mathrm{T}}(A)$ and suppose that  
$\la \pi(a)\ra \le \la \pi(b)\ra$ and 
\beq
d_\tau(a)< d_\tau(b)\tforal \tau\in {\mathrm{T}}(A).
\eneq
Then $a\lesssim b.$

\end{cor}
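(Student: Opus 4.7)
The plan is to deduce this corollary as an immediate consequence of Lemma \ref{LNote1}. The only discrepancy between the hypotheses of \ref{Tnote3} and those of \ref{LNote1} is the strength of the required tracial inequality: here we only assume the pointwise gap $d_\tau(a)<d_\tau(b)$, whereas \ref{LNote1} demands $d_\tau(a)+4\omega(b)<d_\tau(b)$. The strategy, therefore, is to show that the continuity hypothesis on $\tau\mapsto d_\tau(b)$ forces $\omega(b)=0$, after which the two conditions coincide and \ref{LNote1} applies verbatim.

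To prove $\omega(b)=0$, observe first that because $A$ has continuous scale, $\mathrm{T}(A)$ is compact (by Theorem 5.3 of \cite{eglnp}). Choose continuous functions $g_n\colon[0,\infty)\to[0,1]$ with $g_n(0)=0$ and $g_n\nearrow\chi_{(0,\infty)}$ pointwise; then $g_n(b)\in\overline{b(\widetilde{A}\otimes\mathcal{K})b}$, $0\le g_n(b)\le 1$, and for each fixed $\tau\in\mathrm{T}(A)$ one has $\tau(g_n(b))\nearrow d_\tau(b)$. Thus the increasing sequence of continuous functions $\tau\mapsto \tau(g_n(b))$ converges pointwise on the compact space $\mathrm{T}(A)$ to the continuous function $\tau\mapsto d_\tau(b)$. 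By Dini's theorem the convergence is uniform, so
\[
\sup_{\tau\in\mathrm{T}(A)}\bigl(d_\tau(b)-\tau(g_n(b))\bigr)\longrightarrow 0.
\]
By definition of $\omega$, this yields $\omega(b)=0$.

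With $\omega(b)=0$ in hand, the hypothesis $d_\tau(a)<d_\tau(b)$ for all $\tau\in\mathrm{T}(A)$ becomes identical to the inequality $d_\tau(a)+4\omega(b)<d_\tau(b)$ demanded by Lemma \ref{LNote1}. Combined with $\langle\pi(a)\rangle\le\langle\pi(b)\rangle$, this gives all the hypotheses of \ref{LNote1}, whence $a\lesssim b$. There is no substantive obstacle here beyond the clean observation that continuity of $d_\tau(b)$ on the compact tracial simplex is precisely the condition that makes $\omega$ vanish; the deeper work has already been done in the proof of \ref{LNote1}.
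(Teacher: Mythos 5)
Your proof is correct and follows the same route the paper leaves implicit. The key observation — that continuity of $\tau\mapsto d_\tau(b)$ on the compact simplex $\mathrm{T}(A)$ forces $\omega(b)=0$ via Dini's theorem, reducing the hypothesis of the corollary to that of Lemma \ref{LNote1} — is precisely the fact the authors themselves invoke (for a projection $p$) in the final paragraph of the proof of Lemma \ref{Ldecomp}, where they pass from ``$d_\tau(p)$ is continuous'' to ``therefore $\omega(p)=0$''.
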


\begin{defn}\label{DefS}
Let $A$ be a separable simple stably projectionless \CA\, 
with continuous scale and with strict comparison.  
Suppose also that $M_m(A)$ has almost stable rank one for all $m\ge 1,$ $\mathrm{QT}(A)=\mathrm{T}(A)$ 
and ${\mathrm{Cu}}(A)=\mathrm{LAff}_+({\mathrm{T}}(A)).$
In what follows we will continue to denote by $\pi$ the quotient map from ${\widetilde A}\to \C$ and 
its extension from $M_m({\widetilde A})\to M_m$ for all $m\ge 1,$  as well as from 
${\widetilde A}\otimes {\cal K}\to {\cal K}.$ 
 Let $S({\widetilde A})$ be the sub-semigroup of ${\mathrm{Cu}}({\widetilde A})$ generated by 
$\la a \ra \in {\mathrm{Cu}}(A)$ and those $x\in {\mathrm{Cu}}({\widetilde A})$ which is equal to 
the  supremum of an increasing sequence $(\la a_n\ra),$ where $d_\tau(a_n)\in \mathrm{Aff}_+({\mathrm{T}}(A))$ and $\la \pi(a_n)\ra <+\infty,$ and 
$\la x\ra$ is not represented by  a projection.
If $\la a\ra \in {\mathrm{Cu}}({\widetilde A}),$ we will write $\la a\ra\, \hat{}$ for the 
function $d_\tau(a)$ on ${\mathrm{T}}(A).$ 

For each $\la a\ra \in S({\widetilde A}),$ note that  $\la \pi(a) \ra =j(a)$ is either  an integer or $\infty.$
Let 
$$
L({\widetilde A})=\{(f, n): f\in {\mathrm{LAff}}_+({\mathrm{T}}(A)),\,\,\, n\in \N\cup\{0\}\cup\{\infty\}\}.
$$
We also define $(f, n)\le (g, m)$ if 
$f\le g$ and $n\le m.$ 
Define $\Gamma_0(\la a\ra ): S({\widetilde A})\to  L({\widetilde A})$ by $\Gamma_0(\la a \ra)=(\la a\ra \,\hat{}, j(a)).$

For any \CA\, $B,$ as a tradition, we use $V(B)$ for the semigroup of  Murray-von Neumann equivalence classes of projections 
in $B\otimes {\cal K}.$
\end{defn}

\begin{thm}\label{Tnote2}
Let $A$ be a stably projectionless simple  \CA\,such that ${\mathrm{M}}_r(A)$  has almost stable rank one
for all $r\ge 1,$ $\mathrm{QT}(A)=\mathrm{T}(A),$  $A$ has 
continuous scale,  and 
${\mathrm{Cu}}(A)= \mathrm{LAff}_+({\mathrm{T}}(A)).$
Then  $\Gamma_0: S({\widetilde A})\to L({\widetilde A})$ is an ordered semigroup isomorphism. 

For any $x=\la a\ra , y=\la b\ra \in V({\widetilde A})\sqcup S({\widetilde A}),$ 
if $\hat{x}< \hat{y}$ for all $\tau\in {\mathrm{T}}(A)$ and $\la \pi(a)\ra \le \la \pi(b)\ra,$  then 
$x\le y.$ Moreover, if $x$ is not represented by 
a projection, then $\hat{x}\le \hat{y}$  and $\la \pi(a)\ra \le \la \pi(b)\ra$ imply that $x\le y.$

Furthermore, if $\la a\ra \, \hat{}\le \la b\ra \, \hat{}$ and 
$\la \pi(a)\ra \le \la \pi(b)\ra,$
and if  $\la b\ra \in S({\widetilde A})$ and $\la a\ra \in {\mathrm{Cu}}({\widetilde A})$ is any element which 
is not represented by a projection,  
then $x\le y.$
\end{thm}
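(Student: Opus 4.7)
The strategy for proving Theorem \ref{Tnote2} is to reduce each assertion to Lemma \ref{LNote1} by making the correction term $4\omega(b)$ disappear through approximation. The engine is the following observation: if $b' \in M_r(\widetilde{A})_+$ has $d_\tau(b')$ \emph{continuous} on the compact simplex $\mathrm{T}(A)$, then $\omega(b') = 0$. Indeed, $\tau(f_{1/m}(b')) \nearrow d_\tau(b')$ pointwise and monotonically, and Dini's theorem upgrades monotone convergence to a continuous limit on a compact space to uniform convergence. By the very definition of $S(\widetilde{A})$, every $y = \langle b\rangle \in S(\widetilde{A})$ is (up to addition of an element of $\mathrm{Cu}(A)$) the supremum of an increasing sequence $\langle b_k\rangle$ with $d_\tau(b_k)$ continuous affine and $j(b_k) < \infty$, so the approximants $b_k$ satisfy $\omega(b_k) = 0$.

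For the comparison assertions (parts 2, 3, and 4), the plan is to fix $\epsilon > 0$, show $(a - \epsilon)_+ \lesssim b$, and let $\epsilon \to 0$. The key continuous estimate is $d_\tau((a-\epsilon)_+) \leq \tau(f_\epsilon(a))$, where $\tau \mapsto \tau(f_\epsilon(a))$ is continuous on $\mathrm{T}(A)$. Under the strict hypothesis $\hat{x} < \hat{y}$ of part 2, for each $\tau_0$ one has $\tau_0(f_\epsilon(a)) \leq d_{\tau_0}(a) < d_{\tau_0}(b) = \sup_k d_{\tau_0}(b_k)$, so some $k_{\tau_0}$ gives $\tau_0(f_\epsilon(a)) < d_{\tau_0}(b_{k_{\tau_0}})$; by continuity of both sides this persists on a neighborhood of $\tau_0$, and compactness of $\mathrm{T}(A)$ produces a single $k$ for which $\tau(f_\epsilon(a)) < d_\tau(b_k)$ holds uniformly. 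Choosing $k$ also large enough that $j(b_k) \geq j(a)$ (possible since $j(b_k) \nearrow j(b) \geq j(a)$ when these are finite), Lemma \ref{LNote1} applied to $(a-\epsilon)_+$ and $b_k$—with $\omega(b_k) = 0$—yields $(a-\epsilon)_+ \lesssim b_k \lesssim b$. For parts 3 and 4, where only the weak inequality $\hat{x} \leq \hat{y}$ is assumed but $\langle a\rangle$ is not represented by a projection, I would exploit the strict Cuntz-inequality $\langle (a-\epsilon)_+\rangle \lneq \langle a\rangle$ by combining it with Lemma \ref{Ldecomp} applied to $b$: the latter extracts a subelement $b'' \lesssim b$ with $d_\tau(b'')$ continuous (hence $\omega(b'')=0$) and with $d_\tau((a-\epsilon)_+) < d_\tau(b'')$ uniformly, the strict gap being provided by Lemma \ref{Ldecomp}'s control on the continuous piece; the remainder of the argument proceeds as in part 2. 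The case $y \in V(\widetilde{A})$ (a projection) is separately settled by Corollary \ref{Tnote3}, since then $d_\tau(b)$ is already continuous.

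For the ordered-semigroup isomorphism $\Gamma_0$ of part 1, surjectivity is constructive: given $(f, n) \in L(\widetilde{A})$, take $\langle a\rangle = \langle p_n\rangle + \langle a_0\rangle$ where $p_n$ is a scalar rank-$n$ projection (contributing the integer $n$ to $d_\tau$ under the standard extension of traces) and $a_0 \in M_N(A)_+$ realizes the residual LSC function via $\mathrm{Cu}(A) = \mathrm{LAff}_+(\mathrm{T}(A))$, expressed as a supremum of continuous approximations when necessary, matching the supremum-type description of $S(\widetilde{A})$. Injectivity is immediate from the comparison results: from $\Gamma_0(\langle a\rangle) = \Gamma_0(\langle b\rangle)$ one deduces $\langle a\rangle \leq \langle b\rangle$ and $\langle b\rangle \leq \langle a\rangle$ by applying part 3 or 4 in both directions (with the projection case handled by Corollary \ref{Tnote3}). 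The order-preservation and -reflection of $\Gamma_0$ follow similarly.

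The main obstacle is the weak-inequality case (parts 3 and 4), in which the strictness required by Lemma \ref{LNote1} must be manufactured from the Cuntz-strict inequality $\langle (a-\epsilon)_+\rangle \lneq \langle a\rangle$ rather than from a strict pointwise separation of $d_\tau$. Converting the former into the latter—via Lemma \ref{Ldecomp}'s decomposition of $b$ into a continuous-$d_\tau$ part of small but nonzero size and a large remainder, and carefully tracking that the $\pi$-rank constraint $j(b'') \geq j(a)$ is preserved throughout the approximation of $b$—is where the technical bookkeeping concentrates.
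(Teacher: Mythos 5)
Your comparison arguments follow the paper's blueprint closely: you observe (as the paper does implicitly) that $\omega(b')=0$ when $\tau\mapsto d_\tau(b')$ is continuous (Dini on the compact simplex), then reduce to Lemma~\ref{LNote1} by approximating $a$ from below by $(a-\epsilon)_+$ and $b$ from below by the continuous approximants $b_k$ furnished by the very definition of $S(\widetilde{A})$, with a compactness argument extracting a single $k$. The injectivity of $\Gamma_0$ as a corollary of the two-sided comparison is also how the paper proceeds.

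One difference worth flagging in the weak-inequality case: for $\langle a\rangle$ not represented by a projection, the paper does \emph{not} need Lemma~\ref{Ldecomp} on $b$ to manufacture strictness; it uses the simpler fact that, since $A$ is simple and stably projectionless, the spectral slice of $a$ on $(\epsilon/2,\epsilon)$ is a nonzero element of $M_r(A)$, whence $d_\tau((a-\epsilon)_+)<d_\tau(a)\leq d_\tau(b)$ \emph{strictly at every $\tau$}. This immediately sets up the compactness argument against the $b_k$ without introducing any auxiliary decomposition of $b$. Your Ldecomp route is not obviously wrong, but it adds hypotheses ($\inf_\tau d_\tau>4\omega$) and bookkeeping that the paper avoids, and the place where you locate the "main obstacle" is exactly where the paper has a cheap escape.

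The surjectivity argument, however, has a genuine gap. Your construction $\langle a\rangle=\langle p_n\rangle+\langle a_0\rangle$ with $p_n$ a rank-$n$ scalar projection and $a_0\in M_N(A)_+$ gives $\Gamma_0(\langle a\rangle)=(n+\widehat{a_0},\,n)$, so it only reaches pairs $(f,n)$ with $f\geq n$ pointwise. But $L(\widetilde{A})$ contains pairs with $\inf_\tau f(\tau)<n$, and these are genuinely in the range of $\Gamma_0$: the invariant $j(a)=\langle\pi(a)\rangle$ records the rank of the scalar part of $a$, not any lower bound on $d_\tau(a)$, and a compact perturbation of a scalar matrix can have a very small support projection while retaining full $\pi$-rank (for example, a bump function in $C^*(a_n,1_{M_{m_1}})\cong C([0,1])$ supported near $0$ with value $1$ at $0$ has $\pi$-image $1_{M_{m_1}}$ but trace close to $0$). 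The paper's surjectivity proof is built around exactly this phenomenon: one first constructs, inside a commutative subalgebra of $M_{m_1}(\widetilde{A})$, an increasing chain of bump elements $c_{\delta_{n_k}}$ whose supremum $c$ satisfies $\widehat{c}=(m_1-m_0)+f$ and $j(c)=m_1$, and then \emph{subtracts} a projection $(m_1-m)\langle 1_{\widetilde{A}}\rangle$ (justified by Corollary~\ref{Tnote3} plus the weak-cancellation coming from almost stable rank one) to land at $(f,m)$. Adding a projection to the first coordinate as you do cannot be undone by later subtraction without simultaneously dropping the $\pi$-rank, so your approach cannot reach the pairs $(f,n)$ with $f<n$. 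You would need to replace your surjectivity argument wholesale with something along the lines of the paper's bump-and-subtract construction.
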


\begin{proof}
We will leave the additive part to the reader.
We first note that $\Gamma_0|_{{\mathrm{Cu}}(A)}$ is an ordered semigroup isomorphism 
to $\{(f, 0): f\in \mathrm{LAff}_+({\mathrm{T}}(A))\}$ (Note that we also use the fact that  $A$ is stably projectionless). 
It is then also clear that $\Gamma_0$ is order preserving. 

Claim 1: If $\la a \ra\in {\mathrm{Cu}}({\widetilde A}),$ $ \la b\ra \in S({\widetilde A})$ and $\la b\ra\, \hat{}\in \mathrm{Aff}_+({\mathrm{T}}(A))$ 
{{(i.e., $\la b\ra\, \hat{}$ is continuous),}}
and if  $\Gamma_0(\la a\ra)\le  \Gamma_0(\la b\ra ),$  then $\la a\ra \le \la b\ra,$ provided that 
$\la a\ra $ is not represented by a projection.

If $\la a \ra \in {\mathrm{Cu}}({{\widetilde{A}}}),$ then, since $A$ is stably projectionless, for any $\ep>0,$
\beq
\la (a-\ep)_+\ra\,\hat{}< \la b\ra \,\hat{}.
\eneq
Note that $\la \pi(b)\ra <\infty.$ 
Note also $\pi((a-\ep)_+)\le \pi(a).$ 
It follows from \ref{LNote1} {{(and \ref{Tnote3})}} that 
\beq
(a-\ep)_+\lesssim b.
\eneq
Therefore $a\lesssim b.$
This proves Claim 1.

Claim 2: If $\la a\ra, \la b\ra \in S({\widetilde A}),$ $\Gamma_0(\la a\ra)=\Gamma_0(\la b\ra)$ and  
$\la b\ra \, \hat{}\in \Aff_+({\mathrm{T}}(A)),$ Then $\la a\ra =\la b\ra.$

Note that $\la a\ra \,\hat{}=\la b\ra \, \hat{}$ {{(so both continuous).}} If $j(a)=j(b)=0,$ 
then this follows from the fact that $\Gamma_0|_{{\mathrm{Cu}}(A)}$ is an isomorphism. 
So we assume $j(a)=j(b)\not=0.$ 
By Claim 1, $\la a\ra \le \la b\ra \le \la a\ra.$ So $\la a\ra =\la b\ra.$

Now  assume that $a\in ({\widetilde A}\otimes {\cal K})_+,$ 
 $\la a\ra $ is not represented by a projection, $\la b\ra \in S({\widetilde A})$ and 
\beq
\la a\ra \, \hat{}\le \la b\ra \, \hat{}\andeqn \la \pi(a)\ra \le \la \pi(b)\ra.
\eneq
Write 
$\la b_n\ra \le \la b_{n+1}\ra$ and $b=\sup\{\la b_n\ra\},$ where
$\la b_n\ra\, \hat{}$ are continuous and $\la \pi(b_n)\ra <\infty.$
Then
\beq
\la (a-\ep)_+\ra \, \hat{}\le \la b\ra \, \hat{}\andeqn \la \pi((a-\ep)_+)\ra <\infty 
\eneq
{{(for any $\ep>0$).}}
Since $\la a\ra$ is not represented by projections, for any sufficiently small $\ep>0,$
\beq
\la (a-\ep)_+\ra\, \hat{} <\la (a-\ep/2)_+\ra\hat{}  <\la b\ra \, \hat{}.
\eneq
On the compact set ${\mathrm{T}}(A),$ one finds an integer  $k\ge 1$ such that 
\beq
\la (a-\ep)_+\ra\, \hat{}< \la b_{k}\ra \, \hat{} \andeqn \la \pi((a-\ep)_+)\ra \le \la \pi(b_k)\ra.
\eneq
It then follows from \ref{LNote1} that 
\beq
\la (a-\ep)_+\ra< \la b_{k}\ra \le \la b\ra.
\eneq
Therefore 
\beq
\la a\ra \le \la b\ra.
\eneq

This also implies that if  $\Gamma_0(a)=\Gamma_0(b)$ then $\la a\ra =\la b\ra.$ 
In particular, $\Gamma_0$ is the injective and the inverse restricted to the image is also order 
preserving.

To complete the proof of the first part the statement, it remains to show that the map is surjective. 
Note that  ${\mathrm{Cu}}(A)=\mathrm{LAff}_+({\mathrm{T}}(A)).$ 
Therefore elements with the form $(f, 0)$ are in the image of $\Gamma_0.$

Let $f\in \mathrm{Aff}_+({\mathrm{T}}(A))$ and $m\in \N.$
Choose $m_0\ge 1$ such that $f(t)-m_0<0$ for all $t\in {\mathrm{T}}(A).$
Put $\gamma=m_0-f(t)\in \mathrm{Aff}_+({\mathrm{T}}(A)).$

We then borrow the proof of surjectivity  in 6.2.3 of \cite{Robert-Cu} but we also use \ref{LNote1}
with possibly nonzero $\omega(b).$

Choose $a_1\in M_{m_1}(A)$ such that $a_1=2\gamma.$
For each large $n\ge 2,$ $\gamma\ll (1+1/n)\gamma.$  Thus there exist exists 
$a_n\in \mathrm{M}_{m_n}(A)_+$ such that, for some $\dt_n>0,$
\beq
\gamma< \la(a_n-\dt_n)_+\ra \,\hat{} <\la a_n\ra \, \hat{}< (1+1/n)\gamma.
\eneq
Note that $A$ has strict comparison as ${\mathrm{Cu}}(A)=\mathrm{LAff}_+({\mathrm{T}}(A)).$
Therefore we may assume that $a_n\le a_1$ ($\la a_1\ra \, \hat{}=\gamma$). 
In particular, we may assume that $m_n=m_1,$ $n=1,2,....$
We may also assume that $m_1\ge m_0.$
We may further assume that $\|a_n\|=1,$ $n=1,2,...$ 

Since $a_n\in M_{m_1}(A)_+$ and $A$ is stably projectionless, we may assume 
that ${\mathrm sp}(a_n)=[0,1].$
Consider the commutative \SCA\, generated by $a_n$ and $1_{M_{m_1}}.$
Then it is isomorphic to $C([0,1]).$ Denote by $c_{\dt_n}$  a function in the \SCA\, 
which is zero at 1,
 strictly positive on $[0, \dt_n/2),$ zero elsewhere  and $\|c_{\dt_n}\|=1.$
Note $c_{\dt_n}\in M_{m_1}({\widetilde A})$ and $\pi(c_{\dt_n})=1_{M_{m_1}}$ (in $M_{m_1}(\C)$). 
 Let $g_n$ be also in the \SCA\, which is given by 
a  non-zero positive continuous function with support in $(\dt_n/2, \dt_n).$ Note that $g_n\not=0.$ 
We may assume that $\|g_n\|\le 1.$ 

Then 
\beq
\la c_{\dt_n}\ra +\la g_n\ra +\la (a-\dt_n)_+\ra \le m_1\la 1_{\widetilde A}\ra \le \la c_{\dt_n}\ra +\la a_n\ra.
\eneq
We compute that
\beq
-(1+1/n)\gamma < ( \la c_{\dt_n}\ra -m_1 \la 1_{\widetilde A}\ra)\,\hat{}<( \la c_{\dt_n}\ra -m_1 \la 1_{\widetilde A}\ra)\,\hat{}+\la g_n\ra\,\hat{}< -\gamma.
\eneq
Therefore
\beq
m_1 \la 1_{\widetilde A}\ra)\,\hat{}-(1+1/n)\gamma < \la c_{\dt_n}\ra \,\hat{}<\la c_{\dt_n}\ra \,\hat{}+\la g_n\ra\,\hat{}<m_1 \la 1_{\widetilde A}\ra)\,\hat{} -\gamma.
\eneq
Note that, for each $n,$  $\omega(c_{\dt_n})\le \gamma/n,$ since both $\gamma$ and $\la 1_{\widetilde A}\ra$ are continuous.
For each $n_k$ there exists $n_{k+1}>n_k$ such that
$7\gamma/{n_{k+1}}<\la g_{n_k}\ra \,\hat{}.$ 
Hence
\beq
\la c_{\dt_{n_k}}\ra <m_1 \la 1_{\widetilde A}\ra)\,\hat{} -\gamma-7\gamma/{n_{k+1}}.
\eneq

Therefore, there exists a subsequence $\{n_k\}$ such that
\beq
\la c_{\dt_{n_k}}\ra \, \hat{}+6\omega(c_{\dt_{n_{k+1}}})
&<&\la c_{\dt_{n_k}}\ra \,\hat{}+6\gamma/n_{k+1}\\
&<&m_1 \la 1_{\widetilde A}\ra)\,\hat{} -\gamma-\gamma/{n_{k+1}}
<\la c_{\dt_{n_{k+1}}}\ra \,\hat{},\hspace{0.7in}
\,\,\,k=1,2,....
\eneq

It follows from \ref{LNote1} that  $\la c_{\dt_{n_k}}\ra \le \la c_{\dt_{n_{k+1}}}\ra,$ $k=1,2,....$ 
Let $c\in {\mathrm{Cu}}({\widetilde A})$ such that $c=\sup\{c_{\dt_{n_k}}\}.$  
Since $c_{\dt_{n_k}}\in M_{m_1}({\widetilde A})$ and $\pi(c_{\dt_{n_k}})=1_{M_{m_1}},$
we conclude that $c\le 1_{M_{m_1}}$ and $\la \pi(c)\ra=m_1.$
We also have 
\beq
\la c\ra\, \hat{}= m_1 \la 1_{\widetilde A}\ra \,\hat{}-\gamma=(m_1-m_0)\la 1_{\widetilde A}\ra\, \hat{} +f.
\eneq
Note 
that
\beq
\Gamma_0(\la c\ra)=((m_1-m_0)\la 1_{\widetilde A}\ra\, \hat{} +f, m_1).
\eneq
If $m_0-m>0,$ then there exists $a_{00}\in M_l(A)_+$ for some $l\ge 1$ such that
$\la a_{00}\ra \, \hat{}=(m_0-m)\la 1_{\widetilde A}\ra \, \hat{}.$
Put $c_1=c_0\oplus a_{00}.$ If $m=m_0,$ keep $c=c_1.$ 
Then 
\beq
\Gamma_0(\la c_1\ra )=(f+(m_1-m)\la 1_{\widetilde A}\ra\, \hat{}, m_1).
\eneq
If $m_1=m,$ then $\Gamma_0(\la c_1\ra)=(f, m).$  
If $m_1-m>0,$ we have 
\beq
(m_1-m)\la 1_{\widetilde A}\ra \, \hat{}< (m_1-m)\la 1_{\widetilde A}\ra\, \hat{} +f.
\eneq
Since  $(m_1-m)\la 1_{\widetilde A}\ra\, \hat{} +f\in \Aff_+({\mathrm{T}}(A)),$ by \ref{Tnote3}, we conclude 
that 
\beq
(m_1-m)\la  1_{\widetilde A}\ra \le \la c_1\ra.
\eneq 
Since $(m_1-m)\la  1_{\widetilde A}\ra $ is represented by a projection, 
one has $c_2\in M_{m_1}({\widetilde A})_+$ such that
\beq
(m_1-m)\la  1_{\widetilde A}\ra+\la c_2\ra =\la c_1\ra.
\eneq
It follows that $\la \pi(c_2)\ra =m.$ Note 
that 
$\Gamma_0(\la c_2\ra)=(f, m).$ 
To see that we can choose $c_2$ so that it is not represented by a projection, 
choose an integer $k\ge 1$ such that $f>1/k$ on ${\mathrm{T}}(A).$ 
Choose $c_2'$ so that $\la c_2'\ra=(f-1/k, m)$ and $c_{2,0}\in (A\otimes {\cal K})_+$
such that $d_\tau(c_{2,0})=1/k.$ Now $c_2=c_2'\oplus c_2''$ cannot be represented by 
a projection but $\Gamma_0(c_2)=(f,m).$

Now let $f\in \mathrm{LAff}_+({\mathrm{T}}(A))$ and $m\in \N\cup\{\infty\}.$
Choose a sequence  $(f_n)$ in $\mathrm{LAff}_+({\mathrm{T}}(A))$ with $f_n\nearrow f$ and $m_n\nearrow m,$
where $m_n<\infty,$ $n=1,2,....$
As in the previous paragraph, choose  $x_n\in S({\widetilde A})$ with $\Gamma_0(x_n)=(f_n, m_n)$
such that they are not represented by projections. 
By what  has been proved,  $x_n\le x_{n+1},$ $n=1,2,....$  Put $x=\sup\{x_n\}.$
Then it is easy to check that $\Gamma_0(x)=(f, m).$ 

This shows that $\Gamma_0$ is surjective. 

For the last part of the statement, let $\la \pi(a)\ra \le \la \pi(b)\ra.$ 
Suppose that that $y=\la b\ra $ is  represented by 
a projection $p$ and $x=\la a\ra$ is not represented by  a projection,
and $\la a\ra\, \hat{} \le \la b\ra\, \hat{}.$ 
Then, for any $\ep>0,$ 
\beq
\la (a-\ep)_+\ra \, \hat{}<\hat{y}
\eneq
Then since $\hat{y}$ is now continuous, by \ref{Tnote3}, 
\beq
\la (a-\ep)_+\ra \le y.
\eneq
It follows that $x\le y.$

Now suppose that $x$ is represented by a projection and $\hat{x}<\hat{y}.$
If $y$ is also represented by a projection, then by \ref{Tnote3},  $x\le y.$

It remains  to check the case that $\la a\ra$ is represented by a projection and 
$y$ is not, and $\la a\ra \,\hat{}<\hat{y},$ as well as 
$\la \pi(a)\ra \le {\mathrm{Cu}}(\pi)(y).$  Note that since $\la a\ra $ is represented by a projection, 
$\la \pi(a)\ra <\infty.$ In this case, there exists an increasing sequence 
$(\la b_n\ra)$ in  $ \Aff_+({\mathrm{T}}(A))$ such that
$y=\sup\{\la  b_n\ra\}.$
Since $\la a\ra\, \hat{}$ is continuous, one finds $b_n$ such that 
$\la a\ra \, \hat{}<\la b_n\ra \, \hat{}$ for some large $n.$  We may also assume that $\la \pi(b_n)\ra 
\ge \la a\ra \, \hat{}.$
Now $\la b_n\ra\, \hat{}\in \Aff_+({\mathrm{T}}(A)).$ From what has been proved, $\la a\ra \le y.$ 
This completes the proof.

\end{proof}

\begin{cor}\label{Cwup}
Let $A$ be a stably projectionless simple \CA\, such that ${\mathrm{M}}_r(A)$  almost has  stable rank one
for all $r\ge 1, $   $\mathrm{QT}(A)=\mathrm{T}(A),$ $A$ has strict comparison for positive elements and has 
continuous scale,   and 
${\mathrm{Cu}}(A)= \mathrm{LAff}_+({\mathrm{T}}(A)).$
Then $K_0(A)$ has the following 
property: for
any $x\in K_0(A),$ there exists $\tau\in {\mathrm{T}}(A)$ such that 
$\rho_A(x)(\tau)=0.$

\end{cor}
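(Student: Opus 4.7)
I will argue by contradiction using Corollary \ref{Tnote3} together with the hypothesis that $A$ is stably projectionless. Any $x\in \mathrm{K}_0(A)$ may be written as $[p]-[q]$ with $p,q$ projections in some $\mathrm{M}_r(\widetilde A)$ satisfying $\pi_*[p]=\pi_*[q]$ in $\mathrm{K}_0(\C)$; in particular $\la \pi(p)\ra=\la \pi(q)\ra$ in $\mathrm{Cu}(\C)$. Extending $\tau\in \mathrm{T}(A)$ uniquely to a tracial state $\tilde\tau$ on $\widetilde A$, we have $\rho_A(x)(\tau)=\tilde\tau(p)-\tilde\tau(q)=d_\tau(p)-d_\tau(q)$, with both $d_\tau(p)$ and $d_\tau(q)$ lying in $\Aff(\mathrm{T}(A))$ since $p,q$ are projections.

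Suppose for contradiction that $\rho_A(x)(\tau)>0$ for every $\tau\in\mathrm{T}(A)$; equivalently, $d_\tau(q)<d_\tau(p)$ on all of $\mathrm{T}(A)$. Because $d_\tau(p)$ is continuous and $\la\pi(q)\ra\le\la\pi(p)\ra$, Corollary \ref{Tnote3} applied with $a=q$, $b=p$ yields $q\lesssim p$ in $\mathrm{M}_r(\widetilde A)$. For projections, Cuntz subequivalence upgrades to Murray--von Neumann subequivalence, so there is a partial isometry $v\in\mathrm{M}_r(\widetilde A)$ with $v^*v=q$ and $q':=vv^*\le p$. Then $p-q'$ is a projection in $\mathrm{M}_r(\widetilde A)$.

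To finish, I will apply $\pi$: since $\pi_*[q']=\pi_*[q]=\pi_*[p]$, the scalar projection $\pi(p-q')\in\mathrm{M}_r(\C)$ has rank zero and hence vanishes, so $p-q'\in \mathrm{M}_r(A)$. On the other hand $\tilde\tau(p-q')=\rho_A(x)(\tau)>0$ for every $\tau$, so $p-q'$ is a non-zero projection in $\mathrm{M}_r(A)$, contradicting the hypothesis that $A$ is stably projectionless. The only non-routine step is the appeal to Corollary \ref{Tnote3} (which in turn rests on Lemma \ref{LNote1}); everything else is bookkeeping in the Cuntz semigroup of $\widetilde A$.
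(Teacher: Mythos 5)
Your proof is correct and takes essentially the same approach as the paper: write $x=[p]-[q]$ with $p,q\in M_r(\widetilde A)$ projections having scalar parts of equal rank, invoke the strict-comparison result for $\widetilde A$ (you cite Corollary~\ref{Tnote3}, the paper cites Theorem~\ref{Tnote2} — for projections these are interchangeable) to obtain $q\lesssim p$, and then extract a non-zero projection $p-q'\in M_r(A)$ contradicting stable projectionlessness. Your derivation that $\pi(p-q')=0$ via a rank count in $M_r(\C)$ is a minor streamlining of the paper's ``without loss of generality'' adjustment; the substance is identical.
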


\begin{proof}
Since $A$ is stably projectionless, by \cite{BC}, $A$ is  stably finite (see also Theorem 1.2 of \cite{LZ}). 
It follows from \cite{BR} that ${\mathrm{T}}(A)\not=\O.$
Let  $x=[p]-[q],$  
where $p, q\in {\mathrm{M}}_r({\widetilde A})$ are projections such that $[ \pi(p)]=[\pi(q)],$
where $\pi: {\mathrm{M}}_r({\widetilde A})\to {\mathrm{M}}_r(\C)$ is the quotient map.
Suppose that $\rho_A(x)(\tau)>0$ for all $\tau\in {\mathrm{T}}(A).$ Then, 
\beq
\tau(p)
>\tau(q)
\rforal \tau\in {\mathrm{T}}(A).
\eneq
By Theorem \ref{Tnote2},
\beq
q
\lesssim  
p.
\eneq
Thus, there is a projection $p'\le p$ such that $[p']=[q].$ 
Put $P=p-p'.$  Then $P$ is a non-zero projection in $M_{r}({\widetilde A}),$  as $\tau(P)>0$
for all $\tau\in {\mathrm{T}}(A).$ 
Since $\pi(p')\le \pi(p)$ and $[\pi(p')]=[\pi(q)]=[\pi(p)],$ \wilog,  we may assume 
that 
\beq
\pi(P)=0.
\eneq
This implies that $P\in M_{r}(A),$ which is impossible.  {{By considering $-x,$ we  conclude 
that it is also impossible to have $\rho_A(x)(\tau)<0$ for $\tau\in T(A).$}} 

{{If there were  no $\tau$ such that $\rho_A(x)(\tau)=0,$ then there would be $\tau_1, \tau_2\in \mathrm{T}(A)$  such that $\rho_A(x)(\tau_1)=t_1>0,$  
$\rho_A(x)(\tau_2)=t_2<0.$  
Then $0<\af:=t_2/(t_2-t_1)<1.$ 
Put $\tau=\af \tau_1+(1-\af)\tau_2\in \mathrm{T}(A).$
Then $\rho_A(x)(\tau)=0.$  This implies there is $\tau\in  \mathrm{T}(A)$ such that $\rho_A(x)(\tau)=0.$}} 
\end{proof}

\begin{cor}\label{A1Cinv}
Let $A$ be a separable,  exact, ${\cal Z}$-stable  simple \CA, 
where ${\cal Z}$ is the Jiang-Su algebra.  Suppose that $x\in K_0(A)$ is such that 
$\tau(x)>0$ for all non-zero traces $\tau$ of $A.$ Then $x$ is represented by a projection $p\in A\otimes {\cal K}.$
\end{cor}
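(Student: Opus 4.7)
The plan is to follow the construction in the proof of Corollary \ref{Cwup}, supplementing it by separating off the purely infinite case, reducing to continuous scale, and then reading off when the projection produced is nonzero.

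First, if $A$ is purely infinite simple, every element of $K_0(A)$ is represented by a projection in $A\otimes {\cal K}$ (Cuntz), and there is nothing to prove. Assume therefore that $A$ is stably finite. Since $A$ is separable, exact, simple and $\mathcal Z$-stable, $A$ satisfies $\mathrm{QT}(A)=\mathrm T(A)$, has strict comparison for positive elements (Corollary 4.7 of \cite{Ror-Z-stable}), and $M_r(A)$ has almost stable rank one for every $r\ge 1$ (\cite{Rob-0}). By Lemma 6.5 of \cite{ESR-Cuntz} (cf.\ Remark 5.2 of \cite{eglnp}) there is a full hereditary sub-\CA\, $A_0\subseteq A$ with continuous scale and $A_0\otimes {\cal K}\cong A\otimes {\cal K}$; this preserves $K_0$ and the trace cone, transfers the hypothesis on $x$ to $A_0$, and, by Theorem 6.6 of \cite{ESR-Cuntz} together with $\mathcal Z$-stability, ensures the Cuntz semigroup identification $\mathrm{Cu}(A_0)=V(A_0)\sqcup\mathrm{LAff}_+(\mathrm T(A_0))$ needed below.

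Now split according to whether $A$ is stably projectionless. If it is, then $A_0$ also is and satisfies all hypotheses of Corollary \ref{Cwup}, which produces a $\tau\in \mathrm T(A_0)$ with $\rho_{A_0}(x)(\tau)\le 0$, contradicting the hypothesis. Hence $A\otimes {\cal K}$ contains a nonzero projection, and we repeat the construction appearing in the proof of \ref{Cwup}: write $x=[p]-[q]$ with projections $p,q\in M_r(\widetilde A)$, and, since $[\pi(p)]=[\pi(q)]$ in $K_0(\mathbb C)$, arrange $\pi(p)=\pi(q)$ after conjugating by a scalar unitary. The hypothesis gives $\tau(p)>\tau(q)$ for all $\tau\in \mathrm T(A)$; strict comparison in $M_r(\widetilde A)$ then yields $q\lesssim p$ in the Cuntz sense, and almost stable rank one produces a subprojection $p'\le p$ with $p'\sim q$. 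Setting $P:=p-p'$, we have $\pi(P)=\pi(p)-\pi(p')=\pi(p)-\pi(q)=0$, so $P\in M_r(A)\subseteq A\otimes {\cal K}$ is a projection with $[P]=[p]-[q]=x$.

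The main obstacle is justifying the strict-comparison step in $M_r(\widetilde A)$ when $A$ is not stably projectionless, since Theorem \ref{Tnote2} is formally stated only under stable projectionlessness. An inspection of its proof shows that the projection-to-projection comparison appearing in Corollary \ref{Tnote3} uses only strict comparison of continuous dimension functions and almost stable rank one, both of which we have from $\mathcal Z$-stability; alternatively, one may pass to a unital corner $eAe$ of $A\otimes {\cal K}$ and apply the classical strict comparison for projections in unital $\mathcal Z$-stable stably finite simple \CA s.
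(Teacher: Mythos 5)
Your overall route is the same as the paper's: reduce to a full hereditary subalgebra with continuous scale (using Lemma 6.5 of \cite{ESR-Cuntz} and Brown's theorem), invoke Corollary \ref{Cwup} to rule out stable projectionlessness, and then use the presence of a nonzero projection in $A\otimes\mathcal K$ to finish. The handling of the purely infinite case as a trivial separate branch is fine (the paper's $\mathcal Z$-stable/traceless case is covered by Cuntz/Kirchberg-Phillips, and the paper's wording tacitly assumes the stably finite case once it invokes Lemma 6.5 of \cite{ESR-Cuntz}).

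The problem is your ``primary'' final step. You want to rerun the $p,q,p',P$ construction of Corollary \ref{Cwup} directly inside $M_r(\widetilde A)$, which requires strict comparison for projections in $M_r(\widetilde A)$. You correctly notice that Theorem \ref{Tnote2}/Corollary \ref{Tnote3} are stated under stable projectionlessness, but your claim that ``an inspection of its proof shows that the projection-to-projection comparison\ldots uses only strict comparison of continuous dimension functions and almost stable rank one'' is not substantiated, and it is not an innocent omission. The stable projectionlessness hypothesis is used substantively in the chain leading to Theorem \ref{Tnote2}: for instance, in the proof of Lemma \ref{Ldecomp} it is invoked to arrange $\mathrm{sp}(a_n)=[0,1]$; in the proof of Theorem \ref{Tnote2} it is used to conclude that $\Gamma_0|_{\mathrm{Cu}(A)}$ lands in $\{(f,0)\}$ and that $\la (a-\ep)_+\ra\,\hat{}<\la b\ra\,\hat{}$ for elements of $\mathrm{Cu}(A)$; and it reappears in Corollary \ref{Cwup} to derive $\mathrm{T}(A)\neq\O$ via \cite{BC} and \cite{BR}. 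Once $A\otimes\mathcal K$ has projections, $\widetilde A$ has two kinds of positive elements in its Cuntz semigroup that are no longer cleanly separated by the scalar-matrix quotient, and the bookkeeping of Definition \ref{DefS} breaks. So your fix \#1 does not go through as stated.

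Your fix \#2 is the right one, and it is exactly what the paper does, but slightly more efficiently than you propose: having produced a nonzero projection $e\in A\otimes\mathcal K$, pass by Brown's theorem to the unital corner $B=e(A\otimes\mathcal K)e$ with $B\otimes\mathcal K\cong A\otimes\mathcal K$, and then simply invoke \cite{GJS} and Theorem 4.6 of \cite{Ror-Z-stable} to conclude that $K_0(B)$ is weakly unperforated with order determined by traces. The hypothesis $\tau(x)>0$ for all nonzero traces then gives $x>0$ in $K_0(B)\cong K_0(A)$ directly, so $x$ is represented by a projection; there is no need to carry out the explicit $p,q,p',P$ construction. If you follow fix \#2 you should drop the paragraph that re-runs that construction in $M_r(\widetilde A)$ entirely, as it is the part that cannot be justified.
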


\begin{proof}
Since $A$ is assumed to be exact, $\mathrm{QT}(A)={\mathrm{T}}(A).$  Also, since $A$ is ${\cal Z}$-stable, by Lemma 6.5 of \cite{ESR-Cuntz},  $\mathrm{Cu}(A)=\text{LAff}_+({\tilde T}(A)).$ It follows from \cite{Rob-0} that ${\mathrm{M}}_n(A)$ almost has  stable rank one
as ${\mathrm{M}}_n(A)$ is ${\cal Z}$-stable.  Moreover, there is a non-zero $a\in {\mathrm{ Ped}}(A)_+$ (see 5.2 of \cite{eglnp}) such that 
$C=\overline{aAa}$ has continuous scale.  By  Brown's theorem (\cite{Br1}), $C\otimes {\cal K}\cong A\otimes {\cal K}.$
 It follows from \ref{Cwup} that we may assume that 
 $A\otimes {\cal K}$ has a non-zero projection $e.$ Then, by Brown's theorem (\cite{Br1}) again,  $A\otimes {\cal K}\cong B\otimes {\cal K},$
where $B$ is the hereditary \SCA\, generated by $e.$  Now since $B$ is unital and $B\otimes {\cal K}$ is ${\cal Z}$-stable,
by \cite{GJS} (see also 4.6 of \cite{Ror-Z-stable}), $K_0(B)$ is weakly unperforated. Thus $x>0$ and it is represented by a projection.

\end{proof}

\begin{cor}\label{Cfulllcomparison}
Let $A$ be a stably projectionless simple \CA\, such that ${\mathrm{M}}_r(A)$  has almost stable rank one
for all $r\ge 1,$   $\mathrm{QT}(A)=\mathrm{T}(A),$ $A$ has 
finitely many extremal traces, 
 $\mathrm{Ped}(A)=A,$ and 
${\mathrm{Cu}}(A)= \mathrm{LAff}_+({\mathrm{T}}(A)).$
Then ${\mathrm{Cu}}({\widetilde A})=V({\widetilde A})\sqcup L({\widetilde A}).$

\end{cor}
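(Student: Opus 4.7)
My plan is to leverage Theorem \ref{Tnote2}, which already gives that $\Gamma_0 : S(\widetilde A) \to L(\widetilde A)$ is an ordered semigroup isomorphism. Thus I only need to verify two set-theoretic assertions: \emph{(i)} every $\langle a\rangle \in \mathrm{Cu}(\widetilde A)$ not represented by a projection lies in $S(\widetilde A)$, and \emph{(ii)} no nonzero projection class lies in $S(\widetilde A)$.

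For \emph{(i)}, given $a\in(\widetilde A\otimes \mathcal K)_+$ with $\langle a\rangle$ not represented by a projection, I would exhibit $\langle a\rangle$ as a type-(b) generator of $S(\widetilde A)$ by constructing an increasing sequence $\langle a_n\rangle$ with $d_\tau(a_n)\in\mathrm{Aff}_+(\mathrm{T}(A))$ and $\langle\pi(a_n)\rangle<\infty$ whose supremum is $\langle a\rangle$. First, let $(p_k)$ be an increasing approximate unit of $\mathcal K$ by finite-rank projections, and set
$$b_k := (1_{\widetilde A}\otimes p_k)^{1/2}\, a\, (1_{\widetilde A}\otimes p_k)^{1/2}.$$
Since $\langle b_k\rangle = \langle a^{1/2}(1_{\widetilde A}\otimes p_k)a^{1/2}\rangle$ and $(1_{\widetilde A}\otimes p_k)$ is increasing, $(\langle b_k\rangle)$ is increasing; R\o rdam's lemma combined with $b_k\to a$ in norm gives $\langle(a-1/n)_+\rangle\le \langle b_{k(n)}\rangle$ for $k(n)$ large enough, and hence $\sup_k\langle b_k\rangle = \langle a\rangle$. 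Moreover $\pi(b_k)=p_k\pi(a)p_k$ has finite rank. To replace each $b_k$ by an element with continuous rank function, I would apply the construction from 7.2 of \cite{eglnp} (used in the proof of \ref{LWL1}) to produce, for each $\sigma > 0$, an element $0\le e_{k,\sigma}\le 1$ with $f_\sigma(b_k)\le e_{k,\sigma}\le f_{\sigma/2}(b_k)$ and $d_\tau(e_{k,\sigma})\in\mathrm{Aff}_+(\mathrm{T}(A))$. Passing to a cofinal subsequence $a_n := e_{k_n,\sigma_n}$ with $\sigma_n\to 0$ and $k_n\to\infty$ yields the required sequence, and since $\langle a\rangle$ itself is not a projection class by hypothesis, $\langle a\rangle$ satisfies the defining conditions of a type-(b) generator of $S(\widetilde A)$.

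For \emph{(ii)}, suppose for contradiction that $\langle p\rangle\in S(\widetilde A)$ for some nonzero projection $p\in M_r(\widetilde A)$. The surjectivity argument in Theorem \ref{Tnote2} explicitly constructs, for each $(f,m)\in L(\widetilde A)$, a preimage in $S(\widetilde A)$ that is \emph{not} represented by a projection: in the finite-$m$ case this is the element $c_2 = c_2'\oplus c_{2,0}$ engineered to force a nontrivial continuous spectral piece near $0$, and in the general case it is the supremum $\sup x_n$ of such non-projection elements, which itself cannot be a projection class by compactness of projection classes in $\mathrm{Cu}$. Applying this to $(f,m) := \Gamma_0(\langle p\rangle) = (\tau(p),\langle\pi(p)\rangle)$ and invoking injectivity of $\Gamma_0$, one concludes $\langle p\rangle$ equals this non-projection element, a contradiction. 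Combining \emph{(i)} and \emph{(ii)} with the bijection $\Gamma_0$ yields $\mathrm{Cu}(\widetilde A)=V(\widetilde A)\sqcup L(\widetilde A)$.

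The main obstacle will be the extraction of continuous-rank approximants in step \emph{(i)}: one must verify that the double-indexed family $(e_{k,\sigma})$ can be reindexed into a single increasing cofinal sequence in $\mathrm{Cu}(\widetilde A)$. This requires checking that $\sup_\sigma\langle e_{k,\sigma}\rangle = \langle b_k\rangle$ via the sandwich $f_\sigma(b_k)\le e_{k,\sigma}\le f_{\sigma/2}(b_k)$, and then combining with $\sup_k\langle b_k\rangle = \langle a\rangle$ to obtain the overall supremum; the finite-$\pi$-rank property propagates because $\langle\pi(e_{k,\sigma})\rangle\le\langle\pi(b_k)\rangle<\infty$ uniformly in $\sigma$, and continuity of $d_\tau(e_{k,\sigma})$ is directly delivered by 7.2 of \cite{eglnp}. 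The remaining assertions---in particular that $\Gamma_0(\langle a\rangle) = (d_\tau(a),\langle\pi(a)\rangle)$ under this presentation---are then routine from lower semicontinuity of $d_\tau$ under suprema.
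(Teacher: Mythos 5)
Your proof takes a substantially different route from the paper's, and it contains a genuine gap at the central step.

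The paper's argument hinges entirely on the ``finitely many extremal traces'' hypothesis, which your proof never uses. Combined with $\mathrm{Ped}(A)=A$ and strict comparison this forces $A$ to have continuous scale, so $\mathrm{T}(A)$ is a compact Choquet simplex with finitely many extreme points, hence a finite-dimensional simplex; on such a simplex every \emph{finite} affine function is automatically continuous. Since any $b\in M_r(\widetilde A)_+$ satisfies $d_\tau(b)\le r$ for all $\tau\in\mathrm{T}(A)$, we get $\langle b\rangle\,\hat{}\in\mathrm{Aff}_+(\mathrm{T}(A))$ and $\langle\pi(b)\rangle\le r<\infty$ for free. Writing a non-projection $x\in\mathrm{Cu}(\widetilde A)$ as $\sup_k\langle b_k\rangle$ with $b_k\in M_{r_k}(\widetilde A)_+$ therefore exhibits $x\in S(\widetilde A)$ immediately. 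No regularization at all is needed.

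Your step (i) tries to manufacture the continuity of $d_\tau$ via the sandwich construction from 7.2 of \cite{eglnp}, and that is where it breaks down. That lemma produces an element $e$ lying in the \emph{simple} algebra $A$ (or a hereditary sub-C*-algebra thereof). It cannot apply to $b_k\in M_{r_k}(\widetilde A)_+$ when $\pi(b_k)\neq 0$: for small $\sigma$ one has $\pi(f_\sigma(b_k))=f_\sigma(\pi(b_k))\neq 0$, so any $e$ with $e\ge f_\sigma(b_k)$ must have $\pi(e)\neq 0$ and hence lies outside $M_{r_k}(A)$. Whenever $a\in(\widetilde A\otimes\mathcal K)_+\setminus(A\otimes\mathcal K)_+$, the cut-downs $b_k=(1\otimes p_k)a(1\otimes p_k)$ are exactly of this form, so your construction produces nothing. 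More tellingly, a proof of this corollary that makes no appeal to the finitely-many-traces hypothesis should already look suspicious: if regularization alone sufficed, the conclusion $\mathrm{Cu}(\widetilde A)=V(\widetilde A)\sqcup L(\widetilde A)$ would hold under the hypotheses of Theorem \ref{Tnote2} alone, but that theorem deliberately stops short of this conclusion, and both \ref{Cfulllcomparison} and \ref{Tnote2rk1} are phrased as corollaries that each add a further hypothesis (finitely many extremal traces, or stable rank one, respectively) precisely to obtain it.

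Your step (ii) --- that no nonzero projection class lies in $S(\widetilde A)$, via surjectivity of $\Gamma_0$ onto $L(\widetilde A)$ through non-projection elements plus injectivity of $\Gamma_0$ --- is sound, though the paper treats it as implicit in \ref{Tnote2} and omits it from the proof of the corollary.
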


\begin{proof}
Since $\mathrm{Ped}(A)=A$ and $A$ has finitely many extremal traces,
$d_\tau(e_A)$ is continuous  on ${\mathrm{T}}(A)$ for any strictly positive element $e_A\in A.$  Since 
$A$ has strict comparison, $A$ has continuous scale (see the proof 5.4 of \cite{eglnp}, for example). 
Since ${\mathrm{T}}(A)$ has only finitely many extreme points, any finite affine function
on ${\mathrm{T}}(A)$ is continuous, and so, for any integer $r\ge 1, $ and any 
$a\in {\mathrm{M}}_r({\widetilde A}),$ $\la a\ra\, \hat{} \in \Aff_+({\mathrm{T}}(A)).$ Therefore, if $x\in {\mathrm{Cu}}({\widetilde A})$ 
is not represented by a projection, then $x\in S({\widetilde A}).$ In other words, 
${\mathrm{Cu}}({\widetilde A})=V({\widetilde A})\sqcup S({\widetilde A}).$ Thus, the corollary  follows from \ref{Tnote2}.
\end{proof}

\begin{cor}\label{Tnote2rk1}
Let $A$ be a stably projectionless simple \CA\,  with continuous scale
and with stable rank one such that  $\mathrm{QT}(A)=\mathrm{T}(A),$ 
and 
${\mathrm{Cu}}(A)= \mathrm{LAff}_+({\mathrm{T}}(A)).$
Then ${\mathrm{Cu}}({\widetilde A})_+=S({\widetilde A})$ and $\Gamma_0$ is an ordered semigroup 
isomorphism from ${\mathrm{Cu}}_+({\widetilde A})$ onto $L({\widetilde A}).$ 
Moreover, ${\mathrm{Cu}}({\widetilde A})=V({\widetilde A})\sqcup L({\widetilde A}).$
\end{cor}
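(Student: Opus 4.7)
The plan is to derive this corollary as a strengthening of Theorem \ref{Tnote2}, using the full stable rank one hypothesis (rather than merely almost stable rank one) to identify the non-projection part of $\mathrm{Cu}(\widetilde{A})$ with $S(\widetilde{A})$.

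First, I would observe that stable rank one passes to each matrix algebra $M_r(A)$ and therefore implies almost stable rank one for every $r \geq 1$. The remaining hypotheses of Theorem \ref{Tnote2} are retained: $\mathrm{QT}(A) = \mathrm{T}(A)$, continuous scale, and $\mathrm{Cu}(A) = \mathrm{LAff}_+(\mathrm{T}(A))$ (which implies strict comparison). Hence Theorem \ref{Tnote2} applies and yields that $\Gamma_0 : S(\widetilde{A}) \to L(\widetilde{A})$ is an ordered semigroup isomorphism.

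The main task is then to prove $\mathrm{Cu}(\widetilde{A})_+ = S(\widetilde{A})$, where $\mathrm{Cu}(\widetilde{A})_+$ denotes the elements not represented by a projection. Given such $x = \langle a\rangle$ with $a \in M_r(\widetilde{A})_+$, I would take the approximants $a_n := (a - 1/n)_+$, so that $\langle a_n\rangle \nearrow x$ with $a_n \ll a_{n+1}$ in $\mathrm{Cu}(\widetilde{A})$. The image $\pi(a_n) = (\pi(a) - 1/n)_+$ lies in $M_r(\mathbb{C})$, so $\langle \pi(a_n)\rangle \leq r < \infty$ automatically. Since $x$ is not a projection class, passing to a subsequence one may assume each $\langle a_n\rangle$ is also a non-projection class (otherwise the supremum would stabilize at a projection class, contradicting the hypothesis on $x$). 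The crux is then to verify $d_\tau(a_n) \in \mathrm{Aff}_+(\mathrm{T}(A))$, whence $x$ is a supremum of an increasing sequence of non-projection-class elements with continuous $d_\tau$ and finite $\pi$-rank, placing $x$ in $S(\widetilde{A})$ directly from its defining property. The decomposition $\mathrm{Cu}(\widetilde{A}) = V(\widetilde{A}) \sqcup L(\widetilde{A})$ then follows at once, since every element is either a projection class (contributing to $V(\widetilde{A})$) or a non-projection class (in $S(\widetilde{A}) \cong L(\widetilde{A})$ via $\Gamma_0$), and the two possibilities are mutually exclusive.

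The hard step is establishing that $d_\tau(a_n)$ is continuous on $\mathrm{T}(A)$; this is precisely where the full stable rank one hypothesis (as opposed to almost stable rank one in Theorem \ref{Tnote2}) genuinely enters. I would invoke Robert's description of the Cuntz semigroup of a stable rank one \CA\ (Theorem 1.0.1 and 6.2.3 of \cite{Robert-Cu}) applied to $\widetilde{A}$: in the present setting $\mathrm{Cu}(A) = \mathrm{LAff}_+(\mathrm{T}(A))$, this description identifies the non-projection part of $\mathrm{Cu}(\widetilde{A})$ with lower semicontinuous affine data in such a way that the compact-containment relation $a_n \ll a_{n+1}$ forces $d_\tau(a_n)$ to be continuous. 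Alternatively, one could give a direct proof using Rørdam's approximation theorem in stable rank one to unitarily conjugate $(a - 1/n)_+$ into a form where the spectral support decomposes as a finite-rank scalar projection plus a hereditary element of $A$, from which continuity of the rank function follows via the assumption $\mathrm{Cu}(A) = \mathrm{LAff}_+(\mathrm{T}(A))$ applied to the hereditary piece.
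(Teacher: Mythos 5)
There are genuine gaps, both centred on the key step you rightly identify as the crux.

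First, the assertion that $d_\tau\bigl((a-1/n)_+\bigr) \in \mathrm{Aff}_+(\mathrm{T}(A))$ is not justified and is false as stated. Under the hypothesis $\mathrm{Cu}(A) = \mathrm{LAff}_+(\mathrm{T}(A))$, \emph{every} lower semicontinuous affine function on $\mathrm{T}(A)$ occurs as a rank function $d_\tau(b)$ for some $b \in (A\otimes\mathcal K)_+$; in particular there are $b \in M_r(A)_+ \subseteq M_r(\widetilde{A})_+$ with $d_\tau(b)$ genuinely discontinuous, and cutting down to $(b-\varepsilon)_+$ does not repair this. So the recipe of taking the canonical approximants $(a-1/n)_+$ produces a sequence that need not satisfy the defining requirement of $S(\widetilde{A})$. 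Relatedly, your alternative route has the logic backwards: you write that continuity of the rank function of the hereditary piece ``follows via the assumption $\mathrm{Cu}(A) = \mathrm{LAff}_+(\mathrm{T}(A))$,'' but that assumption is a surjectivity statement onto $\mathrm{LAff}_+$, not onto $\mathrm{Aff}_+$, and it gives no continuity. Likewise, the claim that the compact-containment relation $\langle a_n\rangle \ll \langle a_{n+1}\rangle$ forces $d_\tau(a_n)$ to be continuous does not hold in $\mathrm{LAff}_+(\mathrm{T}(A))$; for $g$ a bounded strictly positive lower semicontinuous affine function, the functions $(1-1/n)g$ are mutually compactly contained but are as discontinuous as $g$ is. Showing that every non-projection class admits \emph{some} increasing sequence with continuous rank functions and finite scalar part is exactly the content that needs an argument (compare the construction of the elements $c_{\delta_{n_k}}$ in the surjectivity step of Theorem \ref{Tnote2}, which manufactures such a sequence but only for the specifically constructed representative, not for an arbitrary $a$); the corollary does not reduce to Theorem \ref{Tnote2} plus the observation that $(a-1/n)_+ \nearrow a$.

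Second, the citation of Theorem 1.0.1 of \cite{Robert-Cu} is not apt: that result is an existence/uniqueness theorem for homomorphisms out of one-dimensional NCCW complexes into stable rank one targets; it says nothing about the internal structure of $\mathrm{Cu}(\widetilde{A})$. The relevant reference is 6.2.3 of \cite{Robert-Cu}, but then one must actually verify its hypotheses for $\widetilde{A}$ — in particular that $\widetilde{A}$ (not merely $A$) has the needed weak cancellation or stable rank one. This does not come for free from $\mathrm{sr}(A)=1$: the stable rank of a unitization can jump, and in the paper's intended applications the relevant control on $\widetilde{A}$ comes from $\mathcal Z$-stability (via \cite{Rob-0}), not from $\mathrm{sr}(A)=1$ in isolation. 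At minimum you would need to state why the hypotheses of 6.2.3 are met for $\widetilde{A}$ before invoking it.
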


\begin{rem}\label{RRR}

Let $A$ be a stably projectionless simple \CA\, which is ${\cal Z}$-stable such that
$\mathrm{QT}(A)={\mathrm{T}}(A).$ 
Then, by \cite{Rob-0}, ${\mathrm{M}}_r(A)$ (for all $r\ge 1$) almost has stable rank one.
A combination of \cite{Ror-Z-stable},
\cite{RW}, and \cite{ESR-Cuntz}
shows that $A$ also satisfies the rest of 
the
 conditions of \ref{Tnote2}.

 There are several other  immediate consequences   of \ref{Tnote2} and related facts about $\mathrm{Cu}^\sim$ (see 
 \cite{Robert-Cu}).
Let $A$ be as \ref{Tnote2}. 

(i) Then the canonical map $\iota_{0,A}: {\mathrm{Cu}}(A)\to \mathrm{Cu}^\sim (A)$ 
is injective. To see this, let $\la a\ra, \la b\ra \in \mathrm{Cu}(A).$ 
If $\la a \ra+k[1_{\tilde A}]=\la b\ra +k[1_{\tilde A}],$ then $\la a\ra\hat{}=\la b\ra\hat{}.$  Since $\mathrm{Cu}(A)=\text{Laff}_+({\mathrm{T}}(A)),$
 $\la a\ra=\la b\ra.$

(ii) Let $x_n\in S({\widetilde{A}})$ with $x_n\le x_{n+1},$ $n=1,2,....$ Then 
$\sup_n x_n\in S(\widetilde{A}).$ This follows from the definition immediately. 

(iii) As indicated in the proof of \ref{Tnote2}, if $\la p\ra\in V(\widetilde{A})$ and $x\in S(\widetilde{A})\setminus \{0\},$ then 
$\la p\ra +x\in S(\widetilde{A}).$

(iv) Denote by $S^\sim(A)=\{\la a\ra -\la\pi(a)\ra\cdot [1_{\tilde A}]: \la a\ra \in S({\widetilde{A}}),\,\la \pi(a)\ra <\infty\}$ as a sub-semigroup 
of $\mathrm{Cu}^\sim(A)$ (see \cite{Robert-Cu}). Then, by \ref{Tnote2}, $\mathrm{Cu}(A)\subset S^\sim(A).$ 

Let $x=\la a\ra, y=\la b\ra \in S(\widetilde{A})$ such that $\la \pi(a)\ra =n$ and $\la \pi(b)\ra=m,$ 
where $n,m$ are nonnegative integers. 
Suppose that $\hat{x}-n=\hat{y}-m.$ Then $\hat{x}+m=\hat{y}+n$ and 
$\la \pi(a)\ra +m=n+m=\la \pi(y)\ra +n.$ If $x=0,$ by (iii), $y=0.$ Let us  assume neither are zero. 
It follows from (iii) that  $x+m\la 1\ra$ and $y+n\la 1\ra$ are not represented 
by projections. By \ref{Tnote2}, $x+m\la 1\ra=y+n\la 1\ra.$ It follows that 
$x-n\la 1\ra=y-m\la 1\ra$ in $\mathrm{Cu}^\sim(A).$ 
Therefore  we may write $S^\sim(A)=\text{LAff}_+^\sim({\mathrm{T}}(A))=\{f-g: f\in \text{LAff}_+({\mathrm{T}}(A)), g\in \text{Aff}_+({\mathrm{T}}(A))\}$ 
(see \cite{Robert-Cu} for the notation).

(v) Let $\iota_A^\sim: {\mathrm{Cu}}^\sim (A)\to {\mathrm{Cu}}^\sim ({\widetilde{A}})$ be the natural 
map. Then, by \ref{Tnote2},  $\iota_A^\sim$ is injective on $S^\sim (A).$ 
In fact, let $x-n\la 1\ra, y-m\la 1\ra \in S^\sim(A),$ 
such that, for some integer $k\ge 0,$ 
\beq
x+m\la 1\ra +k\la 1\ra =y+n\la 1\ra +k \la 1\ra\in S(\widetilde{A})\subset \mathrm{Cu}({\widetilde{A}}).
\eneq
Then
\beq
 \hat{x}+m+k =\hat{y} +n+k\andeqn \la \pi(a)\ra+m+k=\la \pi(b)\ra +n+k.
\eneq
It follows that 
\beq
 \hat{x}+m=\hat{y} +n\andeqn \la \pi(a)\ra+m=\la \pi(b)\ra +n.
\eneq
As (iv), we may assume neither $x$ nor $y$ are zero.
Since $x+m\la 1\ra$ and $y+n\la 1\ra$ are not represented by projections, by  \ref{Tnote2},  $x+n\la 1\ra=y+m\la 1\ra.$ Thus
$x-n\la 1\ra=y-m\la 1\ra$ in $S^\sim(A)\subset \mathrm{Cu}^\sim(\widetilde{A}).$

(vi) 
Exactly the same argument shows that $S(\widetilde{A})$ maps to $\mathrm{Cu}^\sim(\widetilde{A})$ injectively.
Let us denote this map by $\iota_S^\sim.$  
Let us also set $S^\sim(\widetilde{A})=\{x-n\la 1_{\tilde A}\ra: x\in S(\widetilde{A}), n\in \N\cup\{0\}\}\subset \mathrm{Cu}^\sim(\widetilde{A}).$  So $S(\widetilde{A})\subset S^\sim(\widetilde{A}).$

(vii) Note that $V({\widetilde A})\sqcup S(\widetilde{A})$ maps into $K_0({\widetilde{A}})\sqcup S^\sim({\widetilde{A}}).$
Let us identify $\N\subset V(\widetilde{A})$ with $\N\cdot \la 1_{\tilde A}.\ra$ 
The map above maps $\N\sqcup S(\widetilde{A})$ into $\Z\sqcup S^\sim({\widetilde{A}})$ injectively.

\end{rem}


\subsection{An existence theorem and some uniqueness theorems}
\vspace{0.2in}

$$
\hspace{-2in}\text{The following is a variation of  a result of Pedersen and  R\o rdam (\cite{Pedjot87})}.
$$
\begin{lem}\label{LRordamu}
Let $A$ be a non-unital \CA\, and let $x\in A$ 
and $1>\dt>\bt>\gamma>0.$ 
Suppose that there exists $y\in GL({\widetilde A})$ such that
$\|x-y\|<\gamma.$ 
Then there is a unitary $u\in {\widetilde A}$ with the form $u=1+z,$
where $z\in A,$ such that
\beq
uf_{\dt}(|x|)=vf_{\dt}(|x|),
\eneq
where $x=v|x|$ is the polar decomposition of $x$ in $A^{**}.$ 
\end{lem}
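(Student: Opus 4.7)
The plan is to construct $u$ by applying polar decomposition to an element of $\widetilde{A}$ obtained by splicing $x$ with the invertible $y$ along the spectral decomposition of $|x|$, so that the resulting unitary coincides with $v$ on the spectral subspace where $f_\delta(|x|)$ is supported, while using $y$'s invertibility to fill in on the complementary subspace.

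First I would record that since $y\in GL(\widetilde{A})$ the polar decomposition $y=w|y|$ in $\widetilde{A}$ yields a unitary $w=y|y|^{-1}\in\widetilde{A}$, with $\pi(w)=\mu/|\mu|$ where $\mu=\pi(y)\in\mathbb{C}^{\times}$ satisfies $|\mu|<\gamma<1$. To ensure the final $u$ has scalar part $1$, I would work with $\lambda y$ in place of $y$, where $\lambda=\bar\mu/|\mu|$, so that $\pi(\lambda y)=|\mu|>0$. Following the Pedersen--R\o rdam idea, pick a continuous cut-off $g\colon[0,\infty)\to[0,1]$ with $g\equiv 1$ on $[\delta/2,\infty)$, $g(0)=0$, and $g(t)/t$ bounded on $[0,\infty)$ (so that $v\,g(|x|)=x\bigl(g(|x|)/|x|\bigr)\in A$), and form
\[
T \;=\; v\,g(|x|) \;+\; \lambda\, y\bigl(1-g(|x|)\bigr) \;\in\; \widetilde{A}.
\]
On the spectral subspace $\{|x|\ge\delta/2\}$ one has $T=v$ (restricted there), and $\pi(T)=\lambda\mu=|\mu|>0$.

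Granted that $T$ is invertible in $\widetilde{A}$ (the crucial step), I would then set $u:=T|T|^{-1}$, the unitary in its polar decomposition. On the spectral subspace $\{|x|\ge \delta/2\}$ we have $T$ acting as $v$ with $|T|$ equal to the range projection of $|x|$ restricted there (which is the identity), so $u=v$ there; in particular $u\,f_\delta(|x|)=v\,f_\delta(|x|)$ since $f_\delta(|x|)$ is supported on this subspace. Moreover $\pi(u)=\pi(T)/|\pi(T)|=|\mu|/|\mu|=1$, so $u$ has the required form $u=1+z$ with $z\in A$.

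The main obstacle is establishing the invertibility of $T$ on the complementary spectral region $\{|x|<\delta/2\}$: expanding $T^{*}T$ and substituting $v^{*}y=|x|+v^{*}(y-x)$ in the cross terms, one must show that the diagonal contribution $g(|x|)^{2}\,p+(1-g(|x|))^{2}\,|y|^{2}$ (where $p=v^{*}v$) dominates the off-diagonal contribution, whose norm is controlled via the elementary estimates $g(|x|)|x|(1-g(|x|))\le \delta/8$ and $\|v^{*}(y-x)\|<\gamma$ together with $g(1-g)\le 1/4$. The strict inequalities $\gamma<\beta<\delta$ enter precisely here: the parameter $\beta$ is used to tune the transition of the cut-off $g$ so that on the range of $|x|$ the diagonal bound $(1-g(|x|))^{2}\,\|y^{-1}\|^{-2}$ in combination with $g(|x|)^{2}$ produces a uniform positive lower bound on $T^{*}T$; balancing this against the $\delta/8+\gamma/4$ upper bound on the cross terms is the technical heart of the argument, carried out essentially as in the Pedersen--R\o rdam proof.
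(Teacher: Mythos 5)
The construction you propose does not achieve the exact equality that the lemma requires, and the gap is precisely at the step where you pass to the polar decomposition.

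With $T=v\,g(|x|)+\lambda\,y\,(1-g(|x|))$ you do indeed get $T\,f_{\delta}(|x|)=v\,f_{\delta}(|x|)$, since $g\,f_{\delta}=f_{\delta}$ and $(1-g)f_{\delta}=0$. But for $u=T|T|^{-1}$ to satisfy $u\,f_{\delta}(|x|)=v\,f_{\delta}(|x|)$ you need $|T|\,f_{\delta}(|x|)=f_{\delta}(|x|)$, i.e.\ $T^{*}T\,f_{\delta}(|x|)=f_{\delta}(|x|)$, and this fails. Computing directly,
\begin{equation*}
T^{*}T\,f_{\delta}(|x|)=T^{*}v\,f_{\delta}(|x|)
= g(|x|)v^{*}v\,f_{\delta}(|x|)+\bar\lambda\,(1-g(|x|))\,y^{*}v\,f_{\delta}(|x|).
\end{equation*}
Writing $y^{*}v=(y-x)^{*}v+|x|\,v^{*}v$, the piece $\bar\lambda(1-g(|x|))\,|x|\,f_{\delta}(|x|)$ vanishes because $(1-g)(t)\,t\,f_{\delta}(t)\equiv 0$, and the first term gives $f_{\delta}(|x|)$; but the remaining piece $\bar\lambda\,(1-g(|x|))\,(y-x)^{*}v\,f_{\delta}(|x|)$ is \emph{not} zero: the two cut-offs $1-g(|x|)$ and $f_{\delta}(|x|)$ sit on opposite sides of $(y-x)^{*}v$ and do not annihilate each other. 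Its norm is merely bounded by $\gamma$. So you obtain $u\,f_{\delta}(|x|)\approx v\,f_{\delta}(|x|)$, not equality, which defeats the purpose of the lemma. Equivalently: your $T$ satisfies the one-sided relation $T\,f_{\delta}(|x|)=v\,f_{\delta}(|x|)$ but not the dual $f_{\delta}(|x^{*}|)\,T=f_{\delta}(|x^{*}|)\,v$, and it is precisely the conjunction of these two relations that makes $|T|$ act as the identity on the relevant spectral subspace. Pedersen's element $B_{0}$ in \cite{Pedjot87} is built in several stages (his Lemmas 1--4), using the weighted expression $(g')^{-1}(|x^{*}|)\,y^{*-1}(1-f)(|x|)+v\,f(|x|)$ rather than a convex splice, exactly so as to obtain both $B_{0}E_{\delta}=VE_{\delta}$ and $F_{\delta}B_{0}=F_{\delta}V$; the paper's proof keeps this structure and only modifies the function $g$ near $0$ (to the constant $\lambda^{*-1}$) so that $\pi(B_{0})=1$.

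A secondary gap is the invertibility of $T$: the lower bound you would extract from the ``diagonal'' of $T^{*}T$ is of the form $c\cdot\min\bigl(1,\|y^{-1}\|^{-2}\bigr)$, and the hypothesis $\|x-y\|<\gamma$ gives no lower bound on $\|y^{-1}\|^{-1}$, so you cannot compare this against the $\delta/8+\gamma/4$ estimate on the cross terms. Pedersen's construction evades this by inserting $y^{*-1}$ (suitably rescaled by $(g')^{-1}(|x^{*}|)$), which normalizes away the dependence on how singular $y$ is; the invertibility of $B$ then comes from his direct estimate $\|C\|<1$, not from a diagonal-dominance comparison.
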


\begin{proof}
This is a  modification of the proof of Pedersen in \cite{Pedjot87}.
We will follow the proof  and keep the notation of  \cite{Pedjot87} and point out where to make the changes.

In Lemma 1 of \cite{Pedjot87}, write $A=\lambda+A',$ where $A'\in {\mathfrak A}$ for some 
$\lambda\not=0.$  Let $\pi: {\tilde{\mathfrak A}}\to \C$ denote the quotient map with ${\mathrm ker}\pi={\mathfrak A}.$
If $T\in {\mathfrak A}$ and $\|T-A\|<\gamma,$ then $\|\pi(A)\|<\gamma.$ 
It follows that $|\lambda|<\gamma.$ 
There is a continuous path $\{g_1(t): t\in [|\lambda|, \gamma]\}$
such that $g_1(|\lambda|)=\lambda^{*-1},$ $g_1(\gamma)=\gamma^{-1}$
and  $|g_1(t)|\ge \gamma^{-1}.$
We define a complex valued function 
$g'\in C([0,\|A\|])$ as follows:
\beq
g'(t)=\begin{cases} \lambda^{*-1} & \text{if}\, \,t\in [0, |\lambda|];\\
                             g_1(t) & \text{if} \,\, t\in (\lambda, \gamma];\\
                             t^{-1} & \text{if}\,\,t\in (\gamma, \infty).\end{cases}
                             \eneq
This $g'$ will replace the function $g$ in the proof of Lemma 1 of \cite{Pedjot87}.
Put
\beq
B=(g')^{-1}(|T^*|)A^{*-1}(1-f)(|T|)+Vf(|T|)
\eneq
with $f$  as described in \cite{Pedjot87}.
Note that $fg'(t)=0$ if $t\in [0,\gamma],$ $fg'(t)=t^{-1}$ if $t\in [\bt, \infty),$
and $tf(t)g'(t)=f(t)$ if $t\in [0, \infty).$ Exactly as in the proof of \cite{Pedjot87},  one has $BE_\bt=VE_\bt.$ 
 Set $C=f(|T|)- A^* Vg' (|T|) f(|T|).$    
 We still have 
 $f(|T|)=|T|V^*Vg'(|T|)f(|T|).$ Therefore 
 \beq
 C=f(|T|)- A^* Vg' (|T|) f(|T|)=(T^*-A^*)V(fg')(|T|).
 \eneq
 
   The same estimate yields
 \beq
 \|C\|\le \|T^*-A^*\|\|fg'\|_{\infty}\le \|T-A\|\gamma^{-1}<1.
 \eneq
As in the proof of Lemma 1 of \cite{{Pedjot87}}, this implies that $B$ defined above is invertible
and $BE\bt =VE_\bt.$ 
Note that $\pi(B)=\lambda^* \lambda^{*-1}=1.$ In other words $B=1+z'$ for some $z\in {\mathfrak A}.$ 
As in \cite{Pedjot87}, $B$ also satisfies the conclusion of Lemma 2 of \cite{Pedjot87}, i.e., 
$F_\bt B^{*-1}=F_\bt V.$    

Define $h(t)=(t-\bt)\vee 0.$  Then $Bh(|T))=BE_\bt h(|T|)=VE_\bt h(|T|)=Vh(|T|).$
Let $A_0$ be as in Lemma 3 of \cite{Pedjot87} with $B$ defined above. Then  the
conclusion of Lemma 3 of \cite{Pedjot87} holds. 

Then, as in Lemma 4 of  \cite{Pedjot87}, one obtains $B_0\in {\tilde {\mathfrak A}}$ with $\pi(B_0)=1$ defined as $B$ defined 
with $g_0'$ instead of $g_0$ as we demonstrated above.    The same computation provides
\beq
B_0-Vf_0(|T|)&=&(g_0')^{-1}(|T^*|)A_0^{*-1}(1-f_0)(|T|)\\
&=&(g_0')^{-1}(|T^*|)B^{*-1}(h+\ep)^{-1}(|T|)(1-f_0)(|T|).
\eneq
Exactly as in the proof of Lemma 4 of \cite{Pedjot87}, we  have 
$B_0E_\dt =F_\dt B_0=F_\dt V= VE_\dt.$ 
Since   $B_0$ is invertible, we have the polar decomposition $B_0=U|B_0|$ 
in ${\tilde {\mathfrak A}}.$
Note that $\pi(U)=1$ since $\pi(B_0)=1.$ 
{{Hence}}
$U=1+z$ for some $z\in {\mathfrak A}.$
As in Theorem 5 of \cite{Pedjot87}, $UE_\dt=VE_\dt.$      
Then $Uf_\dt(|T|)=Vf_\dt(|T|).$ The lemma follows.

\end{proof}

\begin{cor}\label{PeduC}
Let $A$ be a non-unital \CA\, which almost has stable rank one.
Then, for any $x\in A$ and any $\ep>0,$ there is a unitary $u\in {\widetilde A}$ 
with form $u=1+y$ for some $y\in A$ such that
\beq
\|u|x|-x\|<\ep.
\eneq
\end{cor}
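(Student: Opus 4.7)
The plan is to combine the almost-stable-rank-one hypothesis with Lemma \ref{LRordamu} and then estimate the difference $u|x|-x$ using functional calculus away from a small neighborhood of $0$.

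First, given $\ep>0$, I would choose $\delta>0$ with $2\delta<\ep$, and then pick $\beta,\gamma$ with $\delta>\beta>\gamma>0$. Since $A$ almost has stable rank one, the invertible elements of $\widetilde{A}$ are dense in $A$, so there exists $y\in GL(\widetilde{A})$ with $\|x-y\|<\gamma$. Applying Lemma \ref{LRordamu} to $x$ with this choice of $\delta,\beta,\gamma,y$, I obtain a unitary $u=1+z\in\widetilde{A}$ with $z\in A$ such that
\[
uf_{\delta}(|x|)=vf_{\delta}(|x|),
\]
where $x=v|x|$ is the polar decomposition in $A^{**}$.

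The main estimate then proceeds as follows. Since $|x|$ and $f_{\delta}(|x|)$ commute (they both lie in the abelian C*-algebra generated by $|x|$), we can write
\[
u|x|-x \;=\; u|x|-v|x| \;=\; (u-v)\bigl(|x|-|x|f_{\delta}(|x|)\bigr) \;+\; (u-v)|x|f_{\delta}(|x|),
\]
and the second term vanishes because $u|x|f_{\delta}(|x|)=uf_{\delta}(|x|)|x|=vf_{\delta}(|x|)|x|=v|x|f_{\delta}(|x|)$. Since $v$ is a partial isometry we have $\|u-v\|\le 2$, and by continuous functional calculus the function $t\mapsto t(1-f_{\delta}(t))$ is supported in $[0,\delta]$ with supremum at most $\delta$, so $\||x|(1-f_{\delta}(|x|))\|\le\delta$. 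Hence
\[
\|u|x|-x\|\;\le\;\|u-v\|\cdot\bigl\||x|(1-f_{\delta}(|x|))\bigr\|\;\le\;2\delta\;<\;\ep,
\]
which is the desired conclusion.

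There is no serious obstacle here: the only subtle point is making sure that Lemma \ref{LRordamu} produces a unitary of the required form $u=1+z$ with $z\in A$ (not merely a unitary in $\widetilde{A}$), but this is exactly what that lemma guarantees, precisely because the perturbation is chosen so that the scalar part equals $1$. The rest is a routine functional-calculus estimate that exploits the fact that $v|x|$ recovers $x$ on the support of $|x|$ while the correction term is controlled by $\delta$ on the complementary spectral region.
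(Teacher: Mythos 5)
Your proof is correct and follows essentially the same route as the paper: you apply Lemma \ref{LRordamu} to produce a unitary $u=1+z$ with $uf_{\delta}(|x|)=vf_{\delta}(|x|)$, and then control $u|x|-x$ by a functional-calculus estimate on the spectral tail $|x|(1-f_{\delta}(|x|))$. The paper splits $u|x|-v|x|$ into three pieces via the triangle inequality with $f_{\ep/8}$ inserted, while you factor out $u-v$ and bound $\|u-v\|\le 2$; these are cosmetically different arrangements of the same estimate, and both reduce to the identity $uf_{\delta}(|x|)|x|=vf_{\delta}(|x|)|x|$ and the bound $\||x|(1-f_{\delta}(|x|))\|\le\delta$.
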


\begin{proof}
We have 
$\|f_{\ep/8}(|x|)x-x\|<\ep/4.$ 
Since $A$ almost has  stable rank one, by \ref{LRordamu}, there exists a unitary $u\in {\widetilde A}$ with the form 
$u=1+z$ for some $z\in A$ such that
\beq
\|uf_{\ep/8}(|x|)-vf_{\ep/8}(|x|)\|<\ep/4,
\eneq
where $x=v|x|$ is the polar decomposition in $A^{**}.$
It follows that
\beq
\|u|x|-x\|&<&\|u|x|-uf_{\ep/8}(|x|)|x|\|+\|uf_{\ep/8}(|x|)|x|-vf_{\ep/8}(|x|)|x|\|\\
&&+\|vf_{\ep/8}(|x|)|x|-v|x|\|<\ep.
\eneq

\end{proof}

\begin{lem}{\rm (Theorem 3.3.1 of \cite{Robert-Cu})}\label{LRobert}
Let 
$B$ be a  simple \CA\,  
which has almost stable rank one. 
Then, for any finite subset ${\cal F}$ and $\ep>0,$  there exists a finite subset ${\cal G}\subseteq 
{\mathrm{Cu}}(C)$  such that, for any two \hm s $\phi_1, \phi_2: C:=C_0((0,1])\to B,$ 
if 
\beq\label{LRa-1}
\hspace{0.3in}{\mathrm{Cu}}(\phi_1)(f)\le {\mathrm{Cu}}(\phi_2)(g) \andeqn\, {\mathrm{Cu}}(\phi_2)(f)\le {\mathrm{Cu}}(\phi_1)(g)
\tforal f, g\in {\cal G}\,\,\,\text{with}\,\,\, f\ll g,
\eneq
there exists a unitary $u\in {\widetilde B}$ such that
\beq
\|u^*\phi_2(f)u-\phi_1(f)\|<\ep\tforal f\in {\cal F}.
\eneq
\end{lem}

\begin{proof}
The lemma is based on the fact that $\phi$ and $\psi$ are approximately unitarily equivalent 
if ${\mathrm{Cu}}(\phi)={\mathrm{Cu}}(\psi)$ (see the proof of ``(iii) implies (i)" in ``Proof of Theorem 1.3" in \cite{Rob-0}).

The actual proof is almost the same as that of 3.3.1 of \cite{Robert-Cu}.
Let us present the details.

Let us point out what is the difference.
In the proof of 3.3.1 of \cite{Robert-Cu},  consider $(b_G)\in \prod_G B_G$ and let $\ep>0.$ 
Since each $B_G$ almost  has stable rank one, by \ref{PeduC}, there is $u_G\in {\widetilde B_G}$  
 such that $u_G=1+z_G$ for some $z_G\in B_G$ with $\|z_G\|\le 2$ and 
\beq
\|u_G|b_G|-b_G\|<\ep.
\eneq
Note that $(u_G)\in  1+\prod_GB_G.$ Since elements with polar decomposition, in the sense of being 
the (non-unique) product of a unitary and a positive element, are in the closure 
of the invertible elements, this implies that (with notation 
as
in the proof of 3.3.1 of \cite{Robert-Cu}) both $\prod_G B_G$ and 
$B$ almost  have stable rank one. 
The rest of the proof then can proceed  just as 
in the  proof 
of
3.3.1 of \cite{Robert-Cu}
(note that we only compute ${\mathrm{Cu}}(\phi)$ and ${\mathrm{Cu}}(\psi)$ which is easier).
\end{proof}

\begin{rem}
A direct proof of the 
lemma
above could also be obtained using  \cite{Pedjot87} 
directly.
\end{rem}

\begin{cor}\label{Cuniqcone}
Let $C=C_0((0,1])$ and  let
$\Delta: C^{q,{\mathbf 1}}\setminus \{0\}\to (0,1)$ be an order preserving map.
Then, for any $\ep>0$ and any finite subset
${\cal F}\subseteq C,$ there 
exist
a finite subset ${\cal H}_1\subseteq C_+^{\mathbf 1}\setminus \{0\},$ a finite subset ${\cal H}_2\subseteq C_{s.a.},$
and
$\gamma>0$  satisfying the following condition:
for any
 two \hm s $\phi_1, \phi_2: C\to A$ for some $A$
which is separable, simple, exact, stably projectionless, 
and
has 
continuous scale, almost stable rank one,
and the property that the map ${\mathrm{Cu}}(A)\to {\mathrm LAff}_+(\mathrm{T}(A))$ is an 
ordered semigroup isomorphism  such that
\beq\label{Lrluniq-1}
&&\tau(\phi_i)(a)\ge \Delta(\hat{a})\tforal a\in {\cal H}_1\tand \tforal \tau\in \mathrm{T}(A)\tand\\
&&|\tau(\phi_1(b))-\tau(\phi_2(b))|<\gamma\tforal b\in {\cal H}_2\tand \tforal \tau\in \mathrm{T}(A),
\eneq
there exists a unitary $u\in {\widetilde A}$ such that
$$
\|u^*\phi_2(f)u-\phi_1(f)\|<\ep\rforal f\in {\cal F}.
$$

\end{cor}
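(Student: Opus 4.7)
The plan is to reduce Corollary~\ref{Cuniqcone} to Lemma~\ref{LRobert}, converting the tracial hypothesis \eqref{Lrluniq-1} into the Cuntz--semigroup comparisons required by that lemma, using strict comparison in $A$ (supplied by the isomorphism $\mathrm{Cu}(A)\cong \mathrm{LAff}_+(\mathrm{T}(A))$ together with Theorem~\ref{Tnote2}).

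First I would apply Lemma~\ref{LRobert} to the given $\mathcal{F}$ and $\ep$ to obtain a finite subset $\mathcal{G}\subseteq \mathrm{Cu}(C)$. Since every element of $\mathrm{Cu}(C_0((0,1]))$ is a supremum of Cuntz classes of the form $[f_r(\iota)]$, with $\iota$ the standard generator and $f_r$ as defined in the paper, I may enlarge $\mathcal{G}$ to consist of such classes, say $\{[f_{r_1}(\iota)],\dots,[f_{r_N}(\iota)]\}$ with $r_1>r_2>\dots>r_N$ in $(0,1)$. The relation $[f_{r_i}(\iota)]\ll [f_{r_j}(\iota)]$ then reduces to $r_i>r_j$, which corresponds to the compact containment $[r_i/2,1]\subseteq (r_j/2,1]$ of the underlying open supports.

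Next, for each such pair $(i,j)$ I would choose continuous interpolants $h_{ij}^{-}\le h_{ij}^{+}$ in $C_+$ with $\|h_{ij}^{\pm}\|\le 1$ and an auxiliary element $\alpha_{ij}\in C_+^{\mathbf 1}\setminus\{0\}$, arranged so that $h_{ij}^{-}$ is identically $1$ on $[r_i/2,1]$, $h_{ij}^{+}$ vanishes outside $(r_j/2,1]$, and $h_{ij}^{+}-h_{ij}^{-}\ge \alpha_{ij}$ pointwise; this is easily arranged because $[r_i/2,1]\subset (r_j/2,1]$. For any homomorphism $\phi:C\to A$ and any $\tau\in\mathrm{T}(A)$, these choices yield
\[
d_\tau(f_{r_i}(\phi(\iota)))\le \tau(h_{ij}^{-}(\phi(\iota)))\le \tau(h_{ij}^{+}(\phi(\iota)))\le d_\tau(f_{r_j}(\phi(\iota)))
\]
together with the gap $\tau(h_{ij}^{+}(\phi(\iota)))-\tau(h_{ij}^{-}(\phi(\iota)))\ge \tau(\alpha_{ij}(\phi(\iota)))$. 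I would then set $\mathcal{H}_1=\{\alpha_{ij}\}$, $\mathcal{H}_2=\{h_{ij}^{\pm}\}$, and $\gamma=\tfrac{1}{3}\min_{ij}\Delta(\hat{\alpha}_{ij})$. For any pair $\phi_1,\phi_2$ verifying \eqref{Lrluniq-1} and any $\tau$, the fullness on $\mathcal{H}_1$ forces $\tau(\phi_2(\alpha_{ij}))\ge \Delta(\hat{\alpha}_{ij})\ge 3\gamma$, which combined with the $\gamma$-closeness on $\mathcal{H}_2$ gives
\[
\tau(\phi_1(h_{ij}^{-}))<\tau(\phi_2(h_{ij}^{-}))+\gamma\le \tau(\phi_2(h_{ij}^{+}))-3\gamma+\gamma<\tau(\phi_2(h_{ij}^{+})).
\]
Chaining with the envelope inequalities produces $d_\tau(f_{r_i}(\phi_1(\iota)))<d_\tau(f_{r_j}(\phi_2(\iota)))$ for every $\tau$, and strict comparison in $A$ upgrades this to $\mathrm{Cu}(\phi_1)([f_{r_i}(\iota)])\le \mathrm{Cu}(\phi_2)([f_{r_j}(\iota)])$. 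The reverse inequality follows by exchanging $\phi_1$ and $\phi_2$, verifying the hypotheses \eqref{LRa-1}--\eqref{LRa-2} of Lemma~\ref{LRobert}, which then yields the required unitary $u$.

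The main obstacle is fixing the constant $\gamma$ \emph{before} the maps $\phi_1,\phi_2$ are specified; this is where the fullness assumption is indispensable, since $\Delta(\hat{\alpha}_{ij})$ is a positive number depending only on the fixed interpolant $\alpha_{ij}$, producing a definite tracial gap uniform in $\phi_1,\phi_2$. The rest of the argument is bookkeeping for a finite list of Cuntz comparisons, and crucially uses nothing about $A$ beyond strict comparison and the identification $\mathrm{Cu}(A)\cong \mathrm{LAff}_+(\mathrm{T}(A))$.
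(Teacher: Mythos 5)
Your overall strategy matches the paper's: the paper's one-line proof says to combine Lemma~\ref{LRobert} with the proof of 7.8 of \cite{eglnp}, and that reference does precisely the trace-to-Cuntz-comparison conversion you spell out. The construction of the interpolants $h_{ij}^{\pm}$ and $\alpha_{ij}$, the choice of $\gamma$ from $\min_{ij}\Delta(\hat\alpha_{ij})$, the tracial estimate giving the uniform gap, and the appeal to strict comparison together with $\mathrm{Cu}(A)\cong\mathrm{LAff}_{+}(\mathrm{T}(A))$ to upgrade to an order inequality in $\mathrm{Cu}(A)$ are all correct.

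There is, however, one genuine gap: the assertion that every element of $\mathrm{Cu}(C_0((0,1]))$ is a supremum of classes $[f_r(\iota)]$ is simply false. The classes $[f_r(\iota)]$ correspond to the characteristic functions of the intervals $(r/2,1]$, and suprema of such are again of this form; but $\mathrm{Cu}(C_0((0,1]))$ contains (for example) $2\cdot\mathbf{1}_{(1/2,1]}$ and classes of elements of $C_0((0,1])_+$ with disconnected open support, neither of which is such a supremum. Consequently your stated justification for replacing the finite set $\mathcal{G}$ supplied by Lemma~\ref{LRobert} with one consisting only of classes $[f_{r_k}(\iota)]$ does not go through as written. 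The conclusion you want (that the comparison condition can be tested on classes of the form $[f_r(\iota)]$) is nevertheless correct, but the right reason is the reformulation in Remark~\ref{R1}: the pseudometric $d_w$ (and $D_w$) is defined purely in terms of the elements $e_t=(\iota-t)_+$, whose Cuntz classes are exactly $[f_{2t}(\iota)]$, and condition $(\mathbf{A})$ of that remark says the conclusion of Lemma~\ref{LRobert} holds once $D_w(\phi_1,\phi_2)<\delta$. After passing to finitely many sample points $t_1<\cdots<t_N$ of $[0,1]$ and using monotonicity of $t\mapsto[e_t]$, verifying the comparisons for the finitely many pairs $[f_{r_i}(\iota)]\ll[f_{r_j}(\iota)]$ does control $D_w$; that is the correct bridge back to your argument. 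With this substituted for the incorrect supremum claim, the proof is sound.
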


\begin{proof}
The proof of this is the combination of \ref{LRobert} and the proof of 7.8 of \cite{eglnp}.

\end{proof}

\begin{rem}\label{R1}
Let $\af, \bt: {\mathrm{Cu}}(C_0((0,1]))\to {\mathrm{Cu}}(A)$ be two morphisms in ${\mathbf{Cu}}.$ 
Recall  the pseudo-metric ${\mathrm{d}}_w$ introduced in \cite{CE}:
\beq 
\hspace{0.3in}{\mathrm{d}}_w(\af, \bt)=\inf \{r\in \R^+: \af(\la e_{t+r}\ra )\le \bt(\la  e_t\ra )\andeqn
\bt(\la e_{t+r}\ra )\le \af( \la e_t\ra ),\,t\in \R^+\},
\eneq
where $e_t(x)=(x-t)_+$ is a function on $(0,1].$ 

If $\phi, \psi: C_0((0,1])\to A$ are two \hm s, define ${\mathrm{d}}_w(\phi, \psi)={\mathrm{d}}_w({\mathrm{Cu}}(\phi), {\mathrm{Cu}}(\psi)).$
  Let $J\subseteq (0,1]$ be any  relatively open  interval $(\af, \bt)\cap (0,1].$ 
Define, for each $r>0,$ $J_r=\{t\in (0,1]: {\mathrm{dist}}(J, t)<r\}.$
For each $J$ fix a positive function $e_J$ which is strictly positive on $J$ and zero elsewhere. 
To be more symmetric than the definition of ${\mathrm{d}}_w,$ one can also defines the following metric:
\beq
\hspace{0.2in}D_w(\phi, \psi)=\inf\{r\in \R^+: \phi(e_{J})\lesssim \psi(e_{J_r}),\,\,\, \psi(e_J)\lesssim \phi(e_{J_r}),\,\, J\subseteq (0,1]\}.
\eneq
(see some related discussion in \cite{HL}).   Then $D_w$ is a metric (see the proof of Proposition 2 of \cite{RS}). 
If $\mathrm{Cu}(A)$ has the weak cancellation,  $d_w$  is a metric (Proposition 2 of \cite{RS}), and $d_w$ and $D_w$ are equivalent.

 Another version of \ref{LRobert} can be stated as follows:

({\bf A}): For any $\ep>0$ and any finite subset ${\cal F}\subseteq C,$ there exists $\dt>0$ with the following property:
if
$D_w(\phi, \psi)<\dt,$ then there exists a unitary $u\in {\widetilde A}$ such that
\beq
\|u^*\phi(f)u-\psi(f)\|<\ep\rforal f\in {\cal F},
\eneq
and, if, furthermore, $\mathrm{Cu}(A)$ has weak cancellation, $D_w(\phi, \psi)<\dt$ can be replaced by
$d_w(\phi, \psi)<\dt$ (with possibly a different $\dt$). 

Suppose that $A$  is  a stably projectionless simple \CA\, with ${\mathrm{T}}(A)\not=\O.$
Consider any  $x+z\ll y+z$ for $x, y, z\in {\mathrm{Cu}}(A),$
where $x\not=y.$    Suppose that $b\in (A\otimes {\cal K})_+$ is  such 
that $\la b\ra =y+z$ and $0\le b\le 1.$  Then, for any $1/2>\dt>0, $  $f_{\dt/2}(b)-f_\dt(b)>0.$ 
Therfore $d_\tau(x+z)<d_\tau(y+z)$ for all $\tau\in {\mathrm{QT}}(A).$
{{Thus}} $d_\tau(x)<d_\tau(y)$ for all $\tau\in {\mathrm{QT}}(A).$ 
If 
 $A$ is also assumed to have strict comparison, then 
 $x\le y.$ This implies that ${\mathrm{Cu}}(A)$ has 
weak cancellation. 
 As shown in Proposition 2 of \cite{RS}, ${\mathrm{d}}_w$ is 
then
a metric.

\end{rem}

\begin{prop}\label{PappI}
Let $C=C_0((0,1]).$ 
Then, for any $\ep>0,$ any $\sigma>0,$ and any finite subset ${\cal F}\subseteq C,$ there exists $\dt>0$
satisfying the following
condition:
Suppose that  $A$ is a  stably projectionless simple \CA\, with continuous scale which 
almost has stable rank one and suppose that  $\phi, \psi: C\to {\widetilde A}$ are  \hm s.
If ${\mathrm{d}}_w(\phi, \psi)<\dt,$ then there exists $\psi': C\to {\widetilde A}$ such that
$\pi\circ \psi'=\pi\circ \phi,$ 
\beq
\|\psi'(f)-\psi(f)\|<\ep\tand {\mathrm{d}}_w(\phi, \psi')<\sigma,
\eneq
where $\pi: {\widetilde A}\to \C$ is the quotient map.
\end{prop}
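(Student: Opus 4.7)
First I would reduce to the level of single generators. Since $C=C_0((0,1])$ is universal on one positive contraction $\imath(t)=t$, a \hm\ $\phi:C\to\widetilde A$ is uniquely determined by $a=\phi(\imath)\in\widetilde A_+$ with $\|a\|\le 1$, via $\phi(f)=f(a)$; similarly write $b=\psi(\imath)$, and set $\alpha=\pi(a)$, $\beta=\pi(b)\in[0,1]$. My task is to produce a positive contraction $b'\in\widetilde A$ with $\pi(b')=\alpha$, with $\|b'-b\|$ small (enough to uniformly control $\|f(b')-f(b)\|$ across $f\in\mathcal F$), and with $d_w(\phi_{b'},\phi_a)<\sigma$, and to set $\psi'(f)=f(b')$. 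The first non-trivial observation is that the hypothesis $d_w(\phi,\psi)<\delta$ forces $|\alpha-\beta|<\delta$: applying the quotient $\pi$ preserves Cuntz subequivalence, so $d_w(\pi\circ\phi,\pi\circ\psi)\le d_w(\phi,\psi)<\delta$; in $\mathrm{Cu}(\Comp)$ the class of $(r-t)_+$ is nonzero iff $r>t$, and unwinding the definition of $d_w$ for the maps $e_t\mapsto\langle(\alpha-t)_+\rangle$ and $e_t\mapsto\langle(\beta-t)_+\rangle$ yields $|\alpha-\beta|<\delta$.

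Next I would construct $b'$ by a shift-and-clip procedure. Set $b_1=b+(\alpha-\beta)\cdot 1_{\widetilde A}\in\widetilde A$; this is self-adjoint with $\pi(b_1)=\alpha$ and $\|b_1-b\|=|\alpha-\beta|<\delta$, but its spectrum lies in $[\min(0,\alpha-\beta),\max(1,1+\alpha-\beta)]$, possibly outside $[0,1]$. Let $g:\Real\to[0,1]$ be the continuous clip $g(t)=\min(\max(t,0),1)$, and set $b'=g(b_1)$. Then $b'$ is a positive contraction in $\widetilde A$, and since $\alpha\in[0,1]$ we have $\pi(b')=g(\alpha)=\alpha$. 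A direct spectral estimate (splitting into the cases $\alpha\ge\beta$ and $\alpha<\beta$) gives $\|b'-b_1\|\le|\alpha-\beta|$, hence
\[
\|b'-b\|\le\|b'-b_1\|+\|b_1-b\|\le 2|\alpha-\beta|<2\delta.
\]
Defining $\psi'(f)=f(b')$ yields a \hm\ with $\pi\circ\psi'=\pi\circ\phi$, since $\pi(f(b'))=f(\pi(b'))=f(\alpha)=\pi(\phi(f))$.

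Finally I would verify the two quantitative estimates with the right choice of $\delta$. For a continuous $f$ on $[0,1]$ and self-adjoints $b,b'$ with spectrum in $[0,1]$, uniform polynomial approximation of $f$ together with the Lipschitz bound for polynomials gives an estimate $\|f(b')-f(b)\|\le\omega_f(\|b'-b\|)$ with $\omega_f$ a modulus of continuity tending to $0$; taking the maximum over the finite set $\mathcal F$ yields a single modulus $\omega_\mathcal{F}$. For the Cuntz metric, Rørdam's lemma applied to the positive elements $(b'-t)_+$ and $(b-t)_+$ (whose norm distance is at most $\|b'-b\|$) gives $(b'-t-\|b'-b\|)_+\lesssim(b-t)_+$ for every $t\ge 0$, and symmetrically, so $d_w(\phi_{b'},\phi_b)\le\|b'-b\|\le 2\delta$; the triangle inequality for $d_w$ (which holds verbatim for a pseudo-metric) then gives $d_w(\phi,\psi')\le d_w(\phi,\psi)+d_w(\psi,\psi')<3\delta$. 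Choosing $\delta$ so that $3\delta<\sigma$ and $\omega_\mathcal{F}(2\delta)<\ep$ completes the argument. There is no significant obstacle: the proof is essentially a first-order spectral perturbation of the generator, and the hypotheses of almost stable rank one, continuous scale, etc., do not enter the construction—only the existence of the quotient $\pi$ and basic Cuntz-subequivalence manipulations are used.
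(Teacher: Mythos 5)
Your argument is correct and follows essentially the same route as the paper: use that $d_w(\phi,\psi)<\delta$ forces the scalar quotients $\alpha=\pi(\phi(\iota))$ and $\beta=\pi(\psi(\iota))$ to be within $\delta$, then replace $\psi$ by a small perturbation whose generator has scalar part exactly $\alpha$, and control both the operator-norm change (for the finitely many $f\in\mathcal F$) and the $d_w$ change (via R\o rdam's lemma plus the triangle inequality). The only difference is cosmetic: the paper realizes the perturbed generator as $j(b)$ for a continuous reparametrization $j:(0,1]\to(0,1]$ with $j(\lambda_2)=\lambda_1$ and $|j-\mathrm{id}|$ small, whereas you take the explicit translate-and-clip $g(b+(\alpha-\beta)\cdot 1)$; both are continuous functions of $b$ and yield the same estimates, and neither uses the structural hypotheses on $A$.
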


\begin{proof}
Let $\iota: (0,1]\to (0,1]$ denote the identity map which we view as a generating element of $C_0((0,1]).$ 
Fix $0<\eta<\ep/2$ and a finite subset ${\cal G}\subseteq C_0((0,1]).$ There exists $\dt'>0$ such that,
if $|t-t'|<\dt',$  then
\beq
\|g(t)-g(t')\|<\eta\rforal \, g\in {\cal G}.
\eneq

If ${\mathrm{d}}_w(\phi, \psi)<\dt',$ then it is easy to see that $\|\pi(\phi(\iota))-\pi(\psi(\iota))\|<\dt'.$
Let $\lambda_1, \lambda_2\in (0,1]$ such that 
$\pi(\phi(\iota))=\lambda_1$ and $\pi(\psi(\iota))=\lambda_2.$ Then $|\lambda_1-\lambda_2|<\dt'.$
There exists a continuous map $j: (0,1]\to (0,1]$ such that
$$
|j(t)-t|<\dt'\andeqn j(\lambda_2)=\lambda_1.
$$
Define $\psi': C_0((0,1])\to {\widetilde A}$ by $\psi'(f)=\psi(f\circ j).$ 
Then $\pi(\psi'(\iota))=\lambda_1=\pi(\phi(\iota)).$ Moreover
\beq
\|\psi(g)-\psi'(g)\|=\|\psi(g-g\circ j)\|<\eta\rforal g\in {\cal G}.
\eneq
If $\sigma>0$ is given one can choose large ${\cal G}$ and sufficiently small $\eta$ so that
\beq
{\mathrm{d}}_w(\psi(f), \psi'(f))<\sigma/2.
\eneq
We also can choose $\dt=\min\{\dt', \sigma/2\}.$
\end{proof}

\begin{rem}\label{RR2}
Let $I=(\af, \bt]$ (or $I=[\af, \bt)$).  
Let $A$ be  a stably projectionless simple 
\CA\, 
which almost has stable rank one. 
Fix a homeomorphism $h_I: (\af, \bt]\to (0,1]$  given by $h(t)={t-\af\over{\bt-\af}}$ for $t\in (\af, \bt],$
or ($h_I(t)= {\bt-t\over{\bt-\af}}$ for $t\in [\af, \bt)$.)
If $\phi: C_0(I)\to A$ is a \hm,  denote by $\phi\circ h_I^*: C_0((0,1])\to A$ the \hm\, defined by $\phi\circ h_I^*(f)=\phi(f\circ h)$
for all $f\in C_0((0,1]).$ 

Suppose now there are two \hm s $\phi, \psi: C_0(I)\to A.$ 
Define
\beq
{\mathrm{D}}_{w, I}(\phi,\psi)=D_w(\phi\circ h_I^*, \psi\circ h_I^*)\andeqn
\mathrm{d}_{w,I}(\phi, \psi)=d_w(\phi\circ h_I^*, \psi\circ h_I^*).
\eneq
Put  $\iota_I'(t)=t-\af,$ if $I=(\af,, \bt]$ and $\iota_I'(t)=\bt-t,$ if $I=[\af,\bt).$
Now assume that $0<\bt-\af\le1.$ 
Put $f_I(t)=(\bt-\af)t\in C_0((0,1]).$ 
Then $\iota'_I=f_I\circ h.$
Let ${\cal F}\subset C_0(0,1]).$ 
 Then $g\circ \iota'_I=g\circ f_I\circ h$ for each $g\in {\cal F}.$
Let $\lambda\in (0,1].$ Define $f_\lambda(t)=\lambda t$ for $t\in (0,1].$

For any $\ep>0,$ there is a finite subset $K\in (0,1]$ such that, for any $0<\bt-\af\le 1,$ 
with $I=(\af, \bt]$ or $I=[\af, \bt),$  for each $g\in {\cal F},$ 
$\|g\circ \iota'_I-g\circ f_\lambda\circ h\|<\ep/2$ for some $\lambda\in K.$ 
Let ${\cal G}_{\cal F,\ep}=\{g\circ f_\lambda: g\in {\cal F},\,\, \lambda\in K\}.$ 
Then, by ({\bf A}) of \ref{R1}, we have the following:

({\bf B}): Let $\ep>0,$  let ${\cal F}\subset C_0((0,1])$ be a finite subset, 
Let $\dt>0$ be given by ({\bf A}) in \ref{R1} for $\ep/2$ and ${\cal G}_{\cal F, \ep}.$ 
Suppose $I=(\af, \bt]$ or $I=[\af, \bt)$ with $0<\bt-\af<1.$ 
Then, for any \hm s $\phi, \psi: C_0(I)\to A$ such that $\mathrm{D}_{w, I}(\phi, \psi)<\dt,$
there exists a unitary $u\in {\widetilde A}$ such that
\beq
\|u^*\psi(f)u-\phi(f)\|<\ep\rforal f\in \{\iota'_I, g\circ \iota'_I: g\in {\cal F}\}.
\eneq
Furthermore, if $\mathrm{Cu}(A)$ has weak cancellation, $\mathrm{D}_{w,I}$ above could be replaced by $\mathrm{d}_{w,I}.$

\end{rem}

\begin{lem}\label{Lconeuniq}
Let 
$A$ be 
a
stably projectionless simple \CA\, with continuous scale which 
 almost has stable rank one.  Suppose also that $A$ has strict comparison for 
positive elements and that $\mathrm{QT}(A)=\mathrm{T}(A).$
For any $\ep>0$ and any finite subset ${\cal F}\subseteq C,$ there exists $\dt>0$  satisfying the following condition:
If  $\phi, \psi: C_0((0,1])\to {\widetilde A}$ are two \hm s such that
\beq\label{28a}
{\mathrm{d}}_w(\phi, \psi)<\dt,
\eneq
then there exists a unitary $u\in {\widetilde A}$ such that
\beq\label{28ab}
\|u^*\psi(f)u-\phi(f)\|<\ep\rforal f\in {\cal F}.
\eneq

Moreover, 
\eqref{28ab} 
also holds, without assuming $A$ has strict comparison, on  replacing 
\eqref{28a} by 
\beq\label{28b}
D_w(\phi, \psi)<\dt.
\eneq
\end{lem}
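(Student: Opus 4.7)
The plan is to reduce, via Proposition \ref{PappI}, to the case where $\pi\circ\phi=\pi\circ\psi$, and then split off the scalar part of the target so that what remains are homomorphisms into $A$ itself, to which Lemma \ref{LRobert} (together with the interval variant of Remark \ref{RR2}) applies.

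First, given $\ep$ and $\mathcal F$, I would choose $\sigma>0$ smaller than the threshold produced by Lemma \ref{LRobert} when applied to pairs of homomorphisms into $A$ from the direct sum $C_0((0,1-\lambda])\oplus C_0((0,\lambda])$---uniformly in $\lambda\in[0,1]$ by a compactness argument---with tolerance $\ep/4$ and a suitable enlargement of $\mathcal F$. Proposition \ref{PappI} then yields $\dt>0$ such that $\mathrm{d}_w(\phi,\psi)<\dt$ permits replacing $\psi$ by $\psi'$ with $\pi\circ\psi'=\pi\circ\phi$, $\|\psi(f)-\psi'(f)\|<\ep/2$ on $\mathcal F$, and $\mathrm{d}_w(\phi,\psi')<\sigma$. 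Let $\lambda:=\pi(\phi(\iota))$; if $\lambda=0$, both $\phi$ and $\psi'$ take values in $A$ and Lemma \ref{LRobert} directly supplies the unitary.

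For $\lambda>0$, I would set $h:=\phi(\iota)$, $k:=\psi'(\iota)\in\widetilde A$, together with $c_\pm:=(h-\lambda)_\pm$ and $d_\pm:=(k-\lambda)_\pm$---positive elements of $A$ satisfying $c_+c_-=0=d_+d_-$. Using the orthogonal decomposition $h=\lambda\cdot 1+c_+-c_-$, functional calculus gives, for every $f\in C_0((0,1])$,
\[
\phi(f)=f(\lambda)\cdot 1_{\widetilde A}+f_+(c_+)+f_-(c_-),\qquad \psi'(f)=f(\lambda)\cdot 1_{\widetilde A}+f_+(d_+)+f_-(d_-),
\]
where $f_+(t):=f(\lambda+t)-f(\lambda)$ on $[0,1-\lambda]$ and $f_-(t):=f(\lambda-t)-f(\lambda)$ on $[0,\lambda]$ both vanish at $0$, so $f_\pm(c_\pm),f_\pm(d_\pm)\in A$. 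Next I would assemble $c_\pm$ into a single homomorphism $\Phi:C_0((0,1-\lambda])\oplus C_0((0,\lambda])\to A$ by $(g_1,g_2)\mapsto g_1(c_+)+g_2(c_-)$ (well-defined by orthogonality), and likewise $\Psi$ using $d_\pm$. The identities $(h-(\lambda+t))_+=(c_+-t)_+$ and $(\lambda-t-h)_+=(c_--t)_+$ show that $\mathrm{d}_w(\Phi,\Psi)\le\mathrm{d}_w(\phi,\psi')<\sigma$ (and analogously for $D_w$). Applying Lemma \ref{LRobert} to $\Phi,\Psi$ then produces $u\in\widetilde A$ with $\|u^*\Psi(g_1,g_2)u-\Phi(g_1,g_2)\|<\ep/4$ on the relevant finite subset; substituting into the decomposition above and noting that $f(\lambda)\cdot 1_{\widetilde A}$ is fixed under conjugation yields $\|u^*\psi'(f)u-\phi(f)\|<\ep/2$ on $\mathcal F$, and combining with the perturbation from $\psi$ to $\psi'$ gives the required estimate. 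The second statement (with $D_w$ in place of $\mathrm{d}_w$, without assuming strict comparison) is proved identically using the $D_w$-version of Remark \ref{RR2}(B).

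The main obstacle will be adapting Lemma \ref{LRobert}, whose statement concerns the single cone $C_0((0,1])$, to the direct-sum source $C_0((0,1-\lambda])\oplus C_0((0,\lambda])$. Either one verifies that the proof of Theorem 3.3.1 of \cite{Robert-Cu} extends formally (since it uses only Cuntz-semigroup information, which decomposes over direct sums), or one applies Lemma \ref{LRobert} to each summand separately and then patches the two unitaries together by exploiting the orthogonality of the hereditary subalgebras $\overline{(c_++d_+)A(c_++d_+)}$ and $\overline{(c_-+d_-)A(c_-+d_-)}$ together with the almost-stable-rank-one property of $A$ via Corollary \ref{PeduC}.
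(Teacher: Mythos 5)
Your strategy reproduces the paper's proof almost step for step: reduce to $\pi\circ\phi=\pi\circ\psi$ via Proposition~\ref{PappI}, Jordan-decompose $\phi(\iota)-\lambda$ into orthogonal positive parts $c_\pm$, apply the cone uniqueness (\ref{LRobert} / Remark~\ref{RR2}({\bf B})) to each half, and patch the two unitaries using Corollary~\ref{PeduC}. The two places where your outline has genuine gaps are:

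\emph{Uniformity in $\lambda$.} The ``compactness argument uniformly in $\lambda\in[0,1]$'' does not work. To apply Lemma~\ref{LRobert} (or the interval variant ({\bf B})) to the piece supported on $c_+$, one must first reparametrize the interval $(0,1-\lambda]$ to $(0,1]$, and under this reparametrization the distance $d_w$ (or $D_w$) scales by $1/(1-\lambda)$; likewise the negative piece scales by $1/\lambda$. So $d_w(\phi,\psi')<\sigma$ only yields $d_{w,I_+}(\Phi_+,\Psi_+)<\sigma/(1-\lambda)$ and $d_{w,I_-}(\Phi_-,\Psi_-)<\sigma/\lambda$, and there is no $\sigma>0$ that handles all $\lambda$ simultaneously. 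The paper therefore restricts the main argument to $\ep/2\le\lambda\le 1-\ep/2$, choosing $\dt=(\ep/4)\dt_0$, and treats the edge cases $\lambda<\ep/2$ and $\lambda>1-\ep/2$ separately, noting that then $\|c_-\|\le\lambda<\ep/2$ (resp.\ $\|c_+\|\le 1-\lambda<\ep/2$), so the short piece is negligible and no uniqueness theorem is needed for it. Your outline misses this case distinction, and it is essential.

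\emph{Orthogonality in the patching step.} The hereditary subalgebras $\overline{(c_++d_+)A(c_++d_+)}$ and $\overline{(c_-+d_-)A(c_-+d_-)}$ are in general \emph{not} orthogonal, since $c_+d_-\neq 0$ for unrelated elements. The orthogonality actually exploited is within each homomorphism's image: $c_+\perp c_-$ and $d_+\perp d_-$. This is what makes the element
$z=\phi_1(f_{\sigma_0/2}(c_-)^{1/2})u_1\psi_1(f_{\sigma_0}(c_-)^{1/2})+\phi_1(f_{\sigma_0/2}(c_+)^{1/2})u_2\psi_1(f_{\sigma_0}(c_+)^{1/2})$
behave like an approximate intertwiner, and almost stable rank one (via \ref{PeduC}) then produces a genuine unitary $u$ with $\|u|z|-z\|$ small. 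Your statement of the patching mechanism would need to be corrected along these lines.
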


\begin{proof}
By 
\ref{PappI}, \wilog, we may assume that 
$\pi\circ \phi=\pi\circ \psi, $ where $\pi: {\widetilde A}\to \C$ is the quotient map.
Let $\iota: (0,1]\to (0,1]$ denote the identity map which we view as a generator of $C_0((0,1]).$
Fix $\ep>0.$
Put $a=\phi(\iota)$ and $b=\psi(\iota).$ It suffices to establish the case that ${\cal F}=\{\iota\}.$

 Choose $\sigma_0=\ep/256.$ 
 We will prove the  last part of the statement first. 
 The first part will follows, since, by \ref{R1}, with the assumption that $A$ has strict comparison, 
 $\mathrm{Cu}(A)$ has weak cancellation.  It follows that $\mathrm{d}_w$ and $\mathrm{D}_w$ are 
 equivalent. 
Let ${\cal F}_0'=\{\iota, f_{\sigma_0}\}.$

By ({\bf B}) of \ref{RR2}, we obtain $1/2>\dt_0>0$ 
with
the following property: for any 
interval $I=[\af, \bt)$ or $I=(\af, \bt]$  with $0<\bt-\af\le 1,$
and if $\phi', \psi': C_0(I)\to A$ are two \hm s such that
${\mathrm{D}}_w(\phi',\psi')<\dt_0,$
then  there exists a unitary $u'\in {\widetilde A}$ such that 
\beq
\|(u')^*\psi'(f)u'-\phi'(f)\|<\ep/64\rforal f\in {\cal F}_0,
\eneq
where ${\cal F}_0=\{\iota'_I, f_{\sigma_0/2}(\iota'_I)\}$ and where  $\iota_I'$ is the function
defined in \ref{RR2},

Put $\dt=(\ep/4)\dt_0>0.$ Let $\phi, \psi: C_0((0,1])\to {\widetilde{A}}$ be two \hm s which satisfy  \eqref{28a} 
for $\dt.$

Now suppose that  $\pi(a)=\pi(b)=\lambda$
for some $\lambda\in (0,1].$ Let $x=a-\lambda\cdot 1_{\widetilde A}$ and $y=b-\lambda \cdot 1_{\widetilde A}.$
Note that $sp(x), sp(y)\subset[-\lambda, 1-\lambda].$
If $f\in C_0([-\lambda, 0))\oplus C_0((0, 1-\lambda])\subset C([-\lambda, 1-\lambda]),$ 
then $f(0)=0.$ Therefore $\pi(f(x))=\pi(f(y))=0.$ 
Define  two \hm s $\phi_1, \psi_1
: C_0([-\lambda, 0))\oplus C_0((0, 1-\lambda])\to A$ by $\phi_1(f)=f(x)$ and $\psi_1(f)=f(y)$ 
for all $f\in C_0([-\lambda, 0))\oplus C_0((0, 1-\lambda]).$

Define $c_-\in C_0([-\lambda, 0)\oplus C_0((0, 1-\lambda])$ by $c_-(t)=\max(-t, 0)$ ($c_-(t)=-t$ 
in $ [-\lambda, 0)$ and $c_-(t)=0$ in $(0,1-\lambda]$), and  $c_+(t)=\max(t,0).$
 Then $\phi_1(c_+(x))=(a-\lambda)_+,$ 
 $\phi_1(c_-(x))=(a-\lambda)_-,$ and $\psi_1(c_+)=(b-\lambda)_+$ and $\psi_1(c_-(x))=(b-\lambda)_-.$
Let $\phi_{1+}={\phi_1}|_{C_0((0,1-\lambda])},$  $\psi_{1,+}={\psi_1}|_{C_0((0,1-\lambda])},$ 
$\phi_{1-}={\phi_1}|_{ C_0([-\lambda, 0))},$ and $\psi_{1,-}={\psi_1}|_{ C_0([-\lambda, 0))}.$


Note that 
\beq\label{Lconeuniq-2}
&&\|f_\sigma(c_-)c_- -c_-\|<\ep/64,\,\, \|c_-f_\sigma(c_-)-c_-\|<\ep/64,\andeqn\\
&&\|f_\sigma(c_+)c_+-c_+\|<\ep/64,\,\, \|c_+f_\sigma(c_+)-c_+\|<\ep/64
\eneq
for all $0<\sigma\le \sigma_0.$
Let us also assume that, {{for all $0<\sigma\le \sigma_0,$}}
\beq\label{Lconeuniq-3}
&&\|f_\sigma(c_-)^{1/2}c_--c_-\|<\ep/64,\,\, \|c_-f_\sigma(c_-)^{1/2}-c_-\|<\ep/64,\andeqn\\
&&\|f_\sigma(c_+)^{1/2}c_+-c_+\|<\ep/64,\,\, \|c_+f_\sigma(c_+)^{1/2}-c_+\|<\ep/64.
\eneq

Let us consider the case $\lambda\ge \ep/2$ and $\lambda<1-\ep/2$  first.
Let $I_+=(0, 1-\lambda]$ and $I_-=[-\lambda, 0).$
Recall that $h_{I_+}(t)={t\over{1-\lambda}}$ and $h_{I_-}(t)={-t\over{\lambda}}$ (see \ref{RR2}).
Therefore the  condition that ${\mathrm{D}}_w(\phi, \psi)<\dt=(\ep/4)\dt_0$ implies (see \ref{RR2}) that 
\beq\label{28c}
{\mathrm{D}}_{w, I_+}(\phi_{1,+}, \psi_{1,+})=D_w(\phi_+\circ h_{I_+}^*, \phi_+\circ h_{I_+}^*)<\dt_0.
\eneq
Note this holds in ${\mathrm{Cu}}(A).$ 
We also have that
\beq\label{28d}
{\mathrm{D}}_{w, I_-}(\phi_{1,-}, \psi_{1,-})<\dt_0. 
\eneq
Put ${\cal F}_1'=\{f_{\sigma_0/2}(c_-), c_-\}$ and 
${\cal F}_2'=\{f_{\sigma_0/2}(c_+), c_+\}.$
By the choice of $\dt_0,$ there are unitaries 
$u_1, u_2\in {\widetilde A}$ 
such that
\beq\label{Lconeuniq-1}
\|u_i^*\phi_1(f)u_i-\psi_1(f)\|<\ep/64\rforal f\in {\cal F}_i',\,\,\,i=1,2.
\eneq
By replacing $u_i$ by $\af_i u_i$ for some $\af_i\in \T,$ we may assume 
that $\pi(u_i)=1,$ $i=1,2.$
Put $z=\phi_1(f_{\sigma_0/2}(c_-)^{1/2})u_1\psi_1(f_{\sigma_0}(c_-)^{1/2})+\phi_1(f_{\sigma_0/2}(c_+)^{1/2})u_2\psi_1(f_{\sigma_0}(c_+)^{1/2})\in A.$
Keep in mind that $\phi_1(c_+)\phi_1(c_-)=0$ and $\psi_1(c_+)\psi_1(c_-)=0.$
Then we have 
\beq\label{Lconeuniq-5}
\|z^*\phi_1(f)z-\psi_1(f)\|<\ep/16\rforal f\in {\cal F}_0'.
\eneq
We also have, by \eqref{Lconeuniq-1}
\beq\label{Lconeuniq-6}
&&\psi_1(f_{\sigma_0}(c_-)^{1/2})u_1^*\phi_1(f_{\sigma_0/2}(c_-))u_1\psi_1(f_{\sigma_0}(c_-)^{1/2})\\
&&\approx_{\ep/64}
\psi_1(f_{\sigma_0}(c_-)^{1/2})\psi_1(f_{\sigma_0/2}(c_-))\psi_1(f_{\sigma_0}(c_-)^{1/2})=\psi_1(f_{\sigma_0}(c_-)).
\eneq
Similarly, 
\beq\label{Lconeuniq-7}
\psi_1(f_{\sigma_0}(c_+)^{1/2})u_1^*\phi_1(f_{\sigma_0/2}(c_+))u_1\psi_1(f_{\sigma_0}(c_+)^{1/2})\approx_{\ep/64}
\psi_1(f_{\sigma_0}(c_+)).
\eneq

It follows from Lemma 5 of \cite{Pedjot87} (see also \ref{PeduC}  here for convenience) that 
there exist a unitary $u\in {\widetilde A}$ such 
that
\beq
\|u|z|-z\|<\ep/64.
\eneq
Combining  this with \eqref{Lconeuniq-5}, \eqref{Lconeuniq-6}  and \eqref{Lconeuniq-7}, we estimate that
\beq
\|u^*\phi_1(f)u-\psi_1(f)\|<\ep\rforal f\in {\cal F}.
\eneq

\noindent
Thus, there exists a unitary $w\in {\widetilde A}$ such that
$\|w^*xw-y\|<\ep.$
Then 
\beq
w^*aw &=&w^*(x+\lambda 1_{\widetilde A})w\approx_{\ep} y+w^*(\lambda 1_{\widetilde A})w=b.
\eneq

For the case $\lambda>1-\ep/2,$ note that we have reduced the general  case
(of this)
 to the case   ${\cal F}=\{\iota\}.$
Choose a 1-1 continuous function $h: (0,1]\to (0,\lambda]$ such that
\beq
\|h-\iota\|<\ep/2\andeqn \|\iota-h\circ \iota\|<\ep/2.
\eneq
Consider the composed  maps $\phi_1=\phi\circ h$ and $\psi_2=\psi\circ h.$ 
This reduces the problem to the case $\lambda=1.$ So there exists only one interval
$[-1, 0).$ 
{{In}} this case we can choose $\dt$
depending only on $\ep/2$ not on $\lambda.$ 

In the case $\lambda<\ep/2,$ one has a unitary $u\in {\widetilde A}$
such that
$\|u^*\phi_1(c_+)u-\psi_1(c_+)\|<\ep/64.$
Then 
\beq
u^*\phi(\iota)u\approx_{\ep/2} u^*\phi_1(c_+)u\approx_{\ep/64} \psi_1(c_+)\approx_{\ep/2} \psi(\iota).
\eneq


\end{proof}

\begin{cor}\label{Tconeuniq}
Let 
$C=C_0([0,1))\otimes {\cal K}$ and  let
$A$ be a stably projectionless simple \CA\, such that
$M_m(A)$ almost has stable rank one for each $m\ge 1$ and suppose that $A$ has strict comparison
for positive elements and that $\mathrm{QT}(A)=\mathrm{T}(A).$
Then, for any $\ep>0$ and any finite subset ${\cal F}\subseteq C$ there exists $\dt>0$ satisfying the following:

Let $\phi, \psi\hspace{-0.03in}: C\hspace{-0.02in}\to {\widetilde A}\otimes {\cal K}$ be two \hm s such that
${\mathrm{d}}_w(\phi, \psi)<\dt.$
Let $C_0=C_0([0,1))=C_0([0,1))\otimes e_{11}\subseteq C$ denote the 1-1 corner. 
If $\phi(C_0), \psi(C_0)\subseteq {\widetilde A}\otimes e_{11},$ then 
for any $\ep>0$ and any finite subset ${\cal F}\subseteq C,$ there exists a unitary 
$U\in (A\otimes {\cal K})^\sim$ such that
\beq
\|U^*\phi(c)U-\psi(c)\|<\ep\rforal c\in {\cal F},
\eneq
where  $U={\mathrm{diag}}(\overbrace{u,u,...,u}^n,1,1,...),$ where $u\in U({\widetilde{A}})$ for some $n\ge 1.$
\end{cor}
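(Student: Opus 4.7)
The plan is to reduce Corollary \ref{Tconeuniq} to Lemma \ref{Lconeuniq} via the corner restriction, leveraging the hypothesis that both $\phi(C_0)$ and $\psi(C_0)$ sit in the $(1,1)$-corner $\widetilde{A}\otimes e_{11}$.

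First I would reduce to finite matrices: any finite $\mathcal{F}\subseteq C_0([0,1))\otimes\mathcal{K}$ is supported in $C_0([0,1))\otimes M_n$ for some $n\ge 1$, so I may assume $\mathcal{F}$ consists of elements $f_{ij}\otimes e_{ij}$ with $1\le i,j\le n$ and $f_{ij}$ in a finite set $\mathcal{F}_0\subseteq C_0([0,1))$. Under the canonical identification $\widetilde{A}\otimes e_{11}\cong\widetilde{A}$, the restrictions $\phi_0:=\phi|_{C_0}$ and $\psi_0:=\psi|_{C_0}$ become homomorphisms $C_0([0,1))\to\widetilde{A}$. Since $\widetilde{A}\otimes e_{11}$ is a hereditary sub-C*-algebra of $\widetilde{A}\otimes\mathcal{K}$ and the Cuntz order is intrinsic to hereditary subalgebras, the assumption $\mathrm{d}_w(\phi,\psi)<\delta$ passes to the corner to give $\mathrm{d}_w(\phi_0,\psi_0)<\delta$ in $\widetilde{A}$.

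Next, after reparametrizing $C_0([0,1))$ as $C_0((0,1])$ via $t\mapsto 1-t$, I would apply Lemma \ref{Lconeuniq} to $\phi_0,\psi_0$ on a sufficiently large finite subset of $C_0((0,1])$ (containing the reparametrized $\mathcal{F}_0$ along with appropriate cutoff functions used below) and with tolerance $\epsilon'\ll \epsilon/n$. For small enough $\delta$ this produces a unitary $u\in\widetilde{A}$ with $\|u^*\phi_0(f)u-\psi_0(f)\|<\epsilon'$ on the enlarged subset. Set $U:=\mathrm{diag}(u,u,\ldots,u,1,1,\ldots)\in (A\otimes\mathcal{K})^{\sim}$ with $u$ repeated $n$ times; note $U-1\in A\otimes M_n\subseteq A\otimes\mathcal{K}$, so $U\in(A\otimes\mathcal{K})^\sim$ as required, and $U$ is of the form asserted by the corollary. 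For diagonal entries $f\otimes e_{ii}$ with $i\le n$, $U^*\phi(f\otimes e_{ii})U$ equals the conjugation of $\phi(f\otimes e_{ii})$ by $u$ in the $(i,i)$-slot, so the verification on diagonal elements reduces directly to the corner estimate from Lemma \ref{Lconeuniq}.

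The hard part is the off-diagonal case $f\otimes e_{ij}$ with $i\ne j$: the hypothesis only constrains the $(1,1)$-corner image, so the partial isometries $s_i^\phi:=\phi(\iota^{1/2}\otimes e_{i1})$ and $s_i^\psi:=\psi(\iota^{1/2}\otimes e_{i1})$ (for $\iota$ the generator of $C_0([0,1))$) need not lie in a single matrix slot, and conjugation by the diagonal $U$ does not automatically intertwine them. To overcome this I would factor $\phi(f\otimes e_{ij})=\phi(g\otimes e_{i1})\phi(h\otimes e_{1j})$ for suitable $g,h$, use the relations $(s_i^\phi)^*s_i^\phi=\phi_0(\iota)$ and $(s_i^\psi)^*s_i^\psi=\psi_0(\iota)$ which place the source data back in the corner, and invoke the polar-decomposition technique of Lemma \ref{LRordamu} and Corollary \ref{PeduC}, combined with the almost-stable-rank-one property of every $M_m(A)$, to absorb the residual partial-isometry mismatch into a further small perturbation of $U$ of the same diagonal form. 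The crucial point---that a single $u\in\widetilde{A}$, repeated across all $n$ slots, suffices---rests on the rigidity supplied by the corner hypothesis: since both $\phi_0$ and $\psi_0$ target the common corner $\widetilde{A}\otimes e_{11}$, the matrix-unit systems of $\phi$ and $\psi$ are forced to be matched by the same $u$ up to approximation. I expect this bookkeeping, in particular ensuring the same $u$ works uniformly across all slots rather than requiring slot-dependent unitaries, to be the principal technical obstacle.
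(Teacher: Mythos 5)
Your proof follows the paper's skeleton: reduce $\mathcal{F}$ to $M_n(C_0)$, reparametrize $C_0([0,1))$ to $C_0((0,1])$, apply Lemma~\ref{Lconeuniq} to the corner restrictions $\phi_0,\psi_0$ to obtain $u\in\widetilde{A}$, and build $U=\mathrm{diag}(u,\ldots,u,1,\ldots)$. (The paper uses tolerance $\ep/n^2$ rather than your $\ep/n$ for the entrywise error bound, a minor constant.)

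However, your treatment of the matrix structure does not go through, and the difficulty lies where you did not expect it. You declare the diagonal entries $f\otimes e_{ii}$ handled ``directly'' and isolate the off-diagonal ones as the principal obstacle, but the stated hypothesis $\phi(C_0\otimes e_{11})\subseteq\widetilde{A}\otimes e_{11}$ gives no information about $\phi(f\otimes e_{ii})$ for $i\ge 2$: nothing forces that image to lie in $\widetilde{A}\otimes e_{ii}$, so your ``easy'' diagonal step already rests on an unstated assumption. Moreover the obstruction cannot be cured by the factorization-plus-polar-decomposition scheme you sketch. Concretely, if $\phi_0\colon C_0([0,1))\to\widetilde{A}$ is nonzero and $\sigma$ is a permutation automorphism of $\mathcal{K}$ with $\sigma(e_{11})=e_{11}$ but $\sigma(e_{22})\neq e_{22}$, then $\phi=\phi_0\otimes\sigma$ and $\psi=\phi_0\otimes\mathrm{id}$ satisfy every stated hypothesis (indeed $\mathrm{d}_w(\phi,\psi)=0$), yet no diagonal $U$ can approximately conjugate $\phi_0(f)\otimes\sigma(e_{22})$ to $\phi_0(f)\otimes e_{22}$. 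What the paper's one-line verification, and the application in Lemma~\ref{CCDD}, actually take for granted is that $\phi$ and $\psi$ commute with the full matrix-unit system, i.e., $\phi(f\otimes e_{ij})=\phi_0(f)\otimes e_{ij}$ and likewise for $\psi$. Under that hypothesis the diagonal and off-diagonal cases collapse to the same trivial computation: for $c=(c_{ij})\in M_n(C_0)$ one has $U^*\phi(c)U=(u^*\phi_0(c_{ij})u)_{n\times n}$, at distance at most $n^2\cdot\ep/n^2=\ep$ from $(\psi_0(c_{ij}))_{n\times n}=\psi(c)$. You should make that ampliation hypothesis explicit (or verify it in the application) and drop the polar-decomposition detour, which as your own counterexample-adjacent worry suggests cannot succeed from the literal hypothesis.
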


\begin{proof}
We will write ${\mathrm{M}}_n(C_0([0,1)))$ as a \SCA\, of $C,$ ${\mathrm{M}}_n({\widetilde A})$ as a \SCA\, of ${\widetilde A}\otimes {\cal K}$
and ${\mathrm{M}}_n(A)$ as a \SCA\, of $A\otimes {\cal K}$ for all integers $n\ge 1.$
Let $\ep>0$ and ${\cal F}\subseteq C$ be  a finite subset. 
\Wlog, we may assume that ${\cal F}\subseteq {\mathrm{M}}_n(C).$
Furthermore we may write ${\cal F}=\{(c_{i,j})_{n\times n}: c_{i,j}\in {\cal G}\},$
where ${\cal G}\subseteq C_0$ is a finite subset.
We will apply \ref{Lconeuniq} with $\ep/n^2$ in place of $\ep$ and ${\cal G}$ in place of ${\cal F}.$ 
Choose $\dt$  
as provided
for $\ep/n^2$ and ${\cal G}$ (in place of ${\cal F}$) in \ref{Lconeuniq}.
Suppose 
that ${\mathrm{d}}_w(\phi, \psi)<\dt.$

Define $\phi_1=\phi|_{C_0}$ and $\psi_1=\psi|_{C_0}.$
By 
\ref{Lconeuniq}, there exists a unitary 
$u\in {\widetilde A}$ such that
\beq
\|u^*\phi_1(a)u-\psi_1(a)\|<\ep/n^2\rforal a\in {\cal G}.
\eneq
We may assume that $\pi(u)=1,$ where $\pi: {\widetilde A}\to \C$ is the quotient map.
Define 
$$
U=\diag(\overbrace{u,u,...,u}^n, 1,1,...).
$$
Then $U\in (A\otimes {\cal K})^\sim.$ Moreover,
\beq
\|U^*\phi(c)U-\psi(c)\|<\ep\rforal c\in {\cal F}.
\eneq
\end{proof}

\begin{cor}\label{CCC0uniq}
Let $C=C_0((0,1])$  with a strictly positive element $e_c$ and 
$A$ be a stably projectionless simple \CA\, with continuous scale such that $M_m(A)$  
 almost has stable rank one ($m\ge 1$).  Suppose also that $A$ has strict comparison for 
positive elements and that $\mathrm{QT}(A)=\mathrm{T}(A).$
 
 (a) Then, for any $\gamma: {\mathrm{Cu}}(C)\to {\mathrm{Cu}}({\widetilde{A}})$ which is an ordered semigroup \hm\, in ${\mathbf{Cu}}$ 
 such that $\la e_c\ra \le \la 1_{\widetilde{A}}\ra,$  there exists a \hm\, 
 $\phi: C\to {\widetilde{A}}$ such that ${\mathrm{Cu}}(\phi)=\gamma.$
 
 (b)  Let $\phi, \psi: C\to {\widetilde{A}}$ be two  unital \hm s such that ${\mathrm{Cu}}(\phi)={\mathrm{Cu}}(\psi).$
 Then ${\mathrm{Cu}}^\sim (\phi)={\mathrm{Cu}}^\sim (\psi).$
\end{cor}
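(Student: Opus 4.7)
The plan is to split the two parts and handle them by quite different means. For part (a), the existence of $\phi: C_0((0,1]) \to \widetilde{A}$ is equivalent to producing a positive contraction $h := \phi(e_c) \in \widetilde{A}$ with $\langle (h - t)_+ \rangle = \gamma(\langle (e_c - t)_+ \rangle)$ for every $t \in [0, 1)$; once such an $h$ is found, functional calculus defines $\phi$ and it realises $\gamma$ automatically since $\mathrm{Cu}(C)$ is generated by the classes $\langle (e_c - t)_+\rangle$. The hypothesis $\langle e_c \rangle \le \langle 1_{\widetilde{A}}\rangle$ translates to $\gamma(\langle e_c\rangle) \le [1_{\widetilde{A}}]$, which is exactly what guarantees $h$ can be placed inside $\widetilde{A}$ rather than a matrix amplification.

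To build $h$, I would exploit the identification $\mathrm{Cu}(\widetilde{A}) \cong V(\widetilde{A}) \sqcup L(\widetilde{A})$ from Theorem \ref{Tnote2}: each class $y_t := \gamma(\langle (e_c - t)_+ \rangle)$ is either represented by a projection or described by a pair $(f_t, n_t) \in \mathrm{LAff}_+(\mathrm{T}(A)) \times (\mathbb{N} \cup \{0\} \cup \{\infty\})$ with $n_t$ stable for small $t$. Using the surjectivity half of $\Gamma_0$ (established in the proof of Theorem \ref{Tnote2} via the $c_{\delta_n}$-construction inside $M_{m_1}(\widetilde{A})$), I would produce, for a decreasing sequence of parameters $t_k \searrow 0$, positive contractions $h_{t_k} \in \widetilde{A}$ with the prescribed Cuntz class, and then assemble them into a single $h$ whose functional calculus recovers the entire family. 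Polar-decomposition cleanup in $\widetilde{A}$ via Lemma \ref{LRordamu} and Corollary \ref{PeduC} (which apply because $A$, and hence all relevant hereditary subalgebras, almost has stable rank one) is used to pass from approximate to exact realisation. The main obstacle will be coherence across the parameter $t$: matching not only the affine-function component $f_t$ but also the scalar component $n_t = \langle\pi(h_{t_k})\rangle$ simultaneously for all $t$, which is where the fine structure of the semigroup $S(\widetilde{A})$ and the uniqueness conclusions of Theorem \ref{Tnote2} are crucial.

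For part (b) the argument is essentially formal. Every $*$-homomorphism $\phi: C_0((0,1]) \to \widetilde{A}$ extends uniquely to a unital homomorphism $\widetilde{\phi}: C([0,1]) \to \widetilde{A}$, and the hypothesis of unitality on $\phi$ and $\psi$ ensures these extensions are compatible with the unit, so that $[1_{\widetilde{C}}] \mapsto [1_{\widetilde{A}}]$ under both maps at the Cuntz level. Since $\mathrm{Cu}(\widetilde{C})$ is generated as an ordered semigroup by $\mathrm{Cu}(C)$ together with the class $[1_{\widetilde{C}}]$, the equality $\mathrm{Cu}(\phi) = \mathrm{Cu}(\psi)$ combined with the unital condition forces $\mathrm{Cu}(\widetilde{\phi}) = \mathrm{Cu}(\widetilde{\psi})$ on all of $\mathrm{Cu}(\widetilde{C})$. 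By Robert's definition of $\mathrm{Cu}^{\sim}$ (see 6.2.3 of \cite{Robert-Cu}), $\mathrm{Cu}^{\sim}(\phi)$ is determined by $\mathrm{Cu}(\widetilde{\phi})$ together with the image of the distinguished element ($[1_{\widetilde{A}}]$ in the unital case, a strictly positive class in the non-unital case), both of which are fixed by $\mathrm{Cu}(\phi)$ and the unitality assumption. Hence $\mathrm{Cu}^{\sim}(\phi) = \mathrm{Cu}^{\sim}(\psi)$ follows immediately.
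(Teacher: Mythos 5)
Both parts of your proposal have genuine gaps, and in both cases the missing ingredient is the uniqueness theorem \ref{Lconeuniq} (the homomorphism version of \ref{Tconeuniq}), which is what the paper actually uses.

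For part (a), you correctly identify that the task is to produce a positive contraction $h=\phi(e_c)\in\widetilde{A}$ with $\langle (h-t)_+\rangle=\gamma(\langle(e_c-t)_+\rangle)$ for all $t$, and you correctly flag the obstruction: the surjectivity of $\Gamma_0$ in Theorem \ref{Tnote2} only produces one Cuntz class at a time, and there is no mechanism in your sketch to make the family $(h_{t_k})$ cohere into a single functional-calculus family. That obstruction is real and is not removed by Lemma \ref{LRordamu} or Corollary \ref{PeduC}, which only repair a single approximate polar decomposition. What the paper does instead is invoke the \emph{approximate} existence theorem of Robert--Santiago (Theorem 4 of \cite{RS}) to get homomorphisms $\psi_n:C\to\widetilde{A}$ with $d_w(\mathrm{Cu}(\psi_n),\gamma)<2^{-n}$, and then feed these into Lemma \ref{Lconeuniq} (two homomorphisms that are $d_w$-close are approximately unitarily equivalent) to run an Elliott intertwining: one conjugates $\psi_{n+1}$ so that $(\psi_n(c))_n$ becomes Cauchy for each $c$, and the limit homomorphism $\phi$ realises $\gamma$ exactly. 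This bypasses the coherence issue entirely; there is no need to build $h$ by hand from Theorem \ref{Tnote2}.

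For part (b), the claim that $\mathrm{Cu}(\widetilde{C})$ is generated as an ordered (Cu-)semigroup by $\mathrm{Cu}(C)$ together with $[1_{\widetilde C}]$ is false, and this is where your formal argument breaks. With $\widetilde{C}=C([0,1])$, one has $\mathrm{Cu}(C([0,1]))$ identified with lower semicontinuous functions $[0,1]\to\mathbb{N}\cup\{\infty\}$; the image of $\mathrm{Cu}(C_0((0,1]))$ consists of those vanishing at $0$, and adding multiples of the constant function $[1_{\widetilde C}]$ and taking increasing suprema never produces, say, the class of $(1/2-t)_+$, whose rank function is $1_{[0,1/2)}$ (if $g_n\le 1_{[0,1/2)}$ with $g_n$ of the form $f_n+k_n[1]$ then $k_n=0$, forcing $g_n(0)=0$). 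Consequently $\mathrm{Cu}(\phi)=\mathrm{Cu}(\psi)$ plus unitality does not formally determine $\mathrm{Cu}(\phi^\sim)$ on all of $\mathrm{Cu}(\widetilde C)$; two positive contractions $h,k\in\widetilde{A}$ with $\langle(h-t)_+\rangle=\langle(k-t)_+\rangle$ for all $t>0$ need not a priori satisfy $\langle(s-h)_+\rangle=\langle(s-k)_+\rangle$. The paper again goes through the uniqueness theorem: $\mathrm{Cu}(\phi)=\mathrm{Cu}(\psi)$ gives $d_w(\phi,\psi)=0$, so Lemma \ref{Lconeuniq} yields approximate unitary equivalence of $\phi$ and $\psi$ by unitaries in $\widetilde{A}$; the same unitaries approximately intertwine $\phi^\sim$ and $\psi^\sim$, hence $\mathrm{Cu}(\phi^\sim)=\mathrm{Cu}(\psi^\sim)$ and so $\mathrm{Cu}^\sim(\phi)=\mathrm{Cu}^\sim(\psi)$.
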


\begin{proof}

For part (a), for any integer $n\ge 1,$  by Theorem 4 of \cite{RS}, there exists \hm\,  $\psi_n: C\to {\widetilde{A}}$ such that
$d_w(\mathrm{Cu}(\psi_n), \gamma)<1/2^n.$  By the uniqueness theorem \ref{Tconeuniq}, one obtains a sequence
of \hm s $\phi_k: C\to {\widetilde{A}}$ such that $d_w(\mathrm{Cu}(\phi_k), \gamma)\to 0$ and 
$(\phi_k(c))_{k=1}^{\infty}$ is a Cauchy sequence for all $c\in C.$ Let $\phi$ be the limit \hm. 
Then ${\mathrm{Cu}}(\phi)=\gamma.$

For part (b) follows from the uniqueness theorem \ref{Tconeuniq} immediately. 

\end{proof}

\begin{defn}\label{DfDD}
Let $R=R_{1,n}$ be the Razak algebra as below:
\beq\label{dfRn}
\hspace{0.3in}R=R_{1,n}=\{f\in {\mathrm{M}}_n(C([0,1])): f(0)=\af\cdot 1_{M_{n-1}}\andeqn
f(1)=\af\cdot 1_{M_n},\,\af\in \C\}.
\eneq

Put
$$
D=\{f\in {\mathrm{M}}_n(C_0([0,1))): f(0)=\begin{pmatrix} 0_{n-1} & 0\\
                                                                              0 & \af\end{pmatrix}, \af\in \C\}.
$$
Then ${\widetilde{R}}$ is the unitization of $D.$
Denote by $\phi^\sim, \psi^\sim: {\widetilde D}\to {\widetilde A}$ the unital extension of 
$\phi$ and $\psi.$ 
Consider 
\beq
C_0=\{f\in {\mathrm{M}}_n(C_0([0,1)): f(t)=\begin{pmatrix} 0_{n-1} & 0\\
                                                                               0 &  a(t)\end{pmatrix},\, a(t)\in C_0([0,1))\}.
                                                                               \eneq
 Then $C_0\cong C_0([0,1)).$ Note also that $C_0\subset D$ is a  full hereditary \SCA\, 
 Let $j_0: C_0([0,1))\to C_0\subset D$ be the embedding. 
 By 
Brown's theorem
 (see \cite{Br1}), there is an isomorphism 
 $s:C_0\otimes {\cal K}\cong D\otimes {\cal K}.$ Note, from the construction in \cite{Br1}, the isomorphism $s$ 
(given by partial isometry in $M_2(M(D\otimes {\cal K}))$) has the property 
 that  ${\mathrm{Cu}}(s)={\mathrm{Cu}}(j_0).$
  This was discussed in  4.3 of \cite{Robert-Cu}.
 Let $e_C$ be a strictly positive element of $s(C_0([0,1))\otimes e_{1,1})$ and $e_{C_0}$ be a 
 strictly positive element of $C_0\otimes e_{1,1}\subset D\otimes e_{1,1}\subset  D\otimes {\cal K}.$ 
 Then $\la e_C\ra=\la e_{C_0}\ra$ in $D\otimes {\cal K}.$ 
 Since $D\otimes {\cal K}$ has stable rank one,  by  \cite{CEI-CuntzSG} (see also  1.7 of \cite{Lncuntz}), there is 
  a partial isometry $w\in (D\otimes {\cal K})^{**}$ such that $wa, aw^*\in D\otimes {\cal K}$ 
  for all $a\in s(C_0([0,1))\otimes e_{1,1})$ and $w^*aw\in C_0\otimes e_{1,1}.$ 
  Denote by $s(e_{1,1})$ the range projection of $s(C_0([0,1))\otimes e_{1,1}).$ Then $s(e_{1,1})\in M(s(C_0([0,1))\otimes {\cal K}).$
  Also $w^*s(e_{11})w={\bar e}_{1,1},$ where ${\bar e_{1,1}}$ is the range projection of $C_0\subset D.$ 
  Clearly ${\bar e}_{1,1}\in M(D)\subset M(D\otimes {\cal K}).$  Denote by $p_D$ the range projection of $D\otimes e_{1,1}.$ 
  Let $P=1-p_D$ in $M(D\otimes {\cal K}).$ 
  Then  we may write $1-{\bar e}_{11}=((1_D-{\bar e}_{1,1})\otimes 1)\oplus ({\bar e}_{1,1}\otimes P)$ which 
  is Murray-Von Neumann equivalent  to $((1_D-{\bar e}_{1,1})\otimes 1)\oplus ({\bar e}_{1,1}\otimes 1)=1_D\otimes 1$
  in $M(D\otimes {\cal K}).$  
  Note also $1-s(e_{1,1})$ is also Murray-Von Neumann equivalent  to $1,$ as $s(C_0\otimes {\cal K})=D\otimes {\cal K}.$ 
  It follows that there is a partial isometry 
  $W_1\in M(D\otimes {\cal K})$ such that $W_1^*W_1=(1-s(e_{1,1}))$ and $W_1W_1^*=1-{\bar e}_{1,1}$ (see also  Lemma 2.5 of \cite{Br1}).   
 Define $W=W_1\oplus w.$ Then $W\in M(D\otimes {\cal K})$ is a unitary.  Set 
 $j={\mathrm{Ad}}\, W\circ s.$ Note ${\mathrm{Cu}}(j)={\mathrm{Cu}}(\id_{C_0}).$ 
 The additional feature is that $j(C_0([0,1))\subset D.$ 
 
 For any \hm\, $\phi: {\widetilde{D}}\to B$ (for some \CA\, $B$), denote by $\phi$ again 
 for the extension from ${\widetilde{D}}\otimes {\cal K}\to B\otimes {\cal K}.$
 Define $\phi_{C_0}=\phi\circ j: C_0([0,1))\otimes {\cal K}\to B\otimes {\cal K}.$
 If $\psi: R \to A$ (for any \CA\,) is a \hm\, let $\psi^\sim: \widetilde{D}=\widetilde{R}\to \widetilde{A}$ be the extension. 
 We will  use $\psi_{C_0}:=\psi^\sim\circ j: C_0([0,1))\otimes {\cal K}\to \widetilde{A}\otimes {\cal K}.$

\end{defn}

With the definition above, we present the following lemma:

\begin{lem}\label{CCDD}
Let $D$ be as in \ref{DfDD}    and let $A$ be   
a stably projectionless simple \CA\,  such that $M_m(A)$  almost has  stable rank one
for every $m\ge 1,$ has strict comparison and  
$\mathrm{QT}(A)=\mathrm{T}(A).$
 Then, for any $\eta>0$ and any 
finite subset ${\cal S}\subseteq {\widetilde{D}},$ there exists $\dt_0>0$ satisfying the following condition:

For any two unital \hm s $\phi, \psi: {\tilde D}\to {\widetilde{A}},$ if 
\beq
{\mathrm{d}}_w(\phi_{C_0}, \psi_{C_0})<\dt_0,
\eneq
then there exists a unitary $u\in {\widetilde A}$ such that
\beq\label{CCDD-1}
\|u^*\psi(f)u-\phi(f)\|<\eta\rforal f\in {\cal S}.
\eneq
\end{lem}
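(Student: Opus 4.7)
The plan is to transport the Cuntz-metric estimate $\mathrm{d}_w(\phi_{C_0},\psi_{C_0})<\dt_0$ through the isomorphism $j\colon C_0((0,1])\otimes\mathcal K\to D\otimes\mathcal K$ of Definition \ref{DfDD}, and then apply the uniqueness theorem \ref{Tconeuniq} on the cone to get approximate unitary equivalence of the restrictions $\phi|_D,\psi|_D$; unitality of $\phi,\psi$ will handle the scalar summand. First, since both maps are unital, I may replace any $f=d+\lambda\cdot 1_{\widetilde D}\in\mathcal S$ by $d\in D$ (because $u^*\phi(f)u-\psi(f)=u^*\phi(d)u-\psi(d)$ for any unitary $u\in\widetilde A$), so without loss of generality $\mathcal S\subseteq D$.

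For each $f\in\mathcal S$, using that $j$ is a \mbox{*-}isomorphism, I set
$$g_f:=j^{-1}(f\otimes e_{11})\in C_0((0,1])\otimes\mathcal K,$$
and I choose an integer $N\ge 1$ so large that the finite set $\mathcal F':=\{g_f:f\in\mathcal S\}$ lies in $C_0((0,1])\otimes M_N$. Next I apply Corollary \ref{Tconeuniq} to the pair of homomorphisms $\phi_{C_0},\psi_{C_0}\colon C_0((0,1])\otimes\mathcal K\to\widetilde A\otimes\mathcal K$, with finite subset $\mathcal F'$ and tolerance $\eta$. The corner hypothesis of \ref{Tconeuniq}, namely that the $(1,1)$-corner $C_0((0,1])=C_0((0,1])\otimes e_{11}$ is sent into $\widetilde A\otimes e_{11}$, is exactly supplied by the construction in \ref{DfDD}: $j(C_0((0,1]))\subseteq D$, viewed as the $(1,1)$-corner of $D\otimes\mathcal K$, and hence $\phi_{C_0}(C_0((0,1]))\subseteq\phi(D)\otimes e_{11}\subseteq\widetilde A\otimes e_{11}$, and similarly for $\psi_{C_0}$. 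Corollary \ref{Tconeuniq} then produces the desired $\dt_0>0$ together with, once $\mathrm{d}_w(\phi_{C_0},\psi_{C_0})<\dt_0$, a unitary
$$U=\mathrm{diag}(\underbrace{u,u,\ldots,u}_n,1,1,\ldots)\in (A\otimes\mathcal K)^{\sim},\quad u\in U(\widetilde A),\ n\ge N,$$
satisfying $\|U^*\phi_{C_0}(g_f)U-\psi_{C_0}(g_f)\|<\eta$ for all $f\in\mathcal S$.

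Finally, the payoff is a direct computation: since $\phi_{C_0}(g_f)=\phi(j(g_f))=\phi(f\otimes e_{11})=\phi(f)\otimes e_{11}$ (using that the extension of $\phi$ is $\phi\otimes\mathrm{id}_{\mathcal K}$), and likewise for $\psi$, and since $n\ge 1$, conjugation by $U$ restricted to the corner $\widetilde A\otimes e_{11}$ is simply conjugation by $u$. Thus
$$\|U^*\phi_{C_0}(g_f)U-\psi_{C_0}(g_f)\|=\|u^*\phi(f)u-\psi(f)\|<\eta,\quad f\in\mathcal S,$$
which is exactly \eqref{CCDD-1}.

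The only real subtlety in this plan is the bookkeeping needed to see that the block-diagonal unitary $U\in(A\otimes\mathcal K)^{\sim}$ produced by \ref{Tconeuniq} genuinely restricts, on the first matrix corner, to the single unitary $u\in\widetilde A$; this is immediate once one observes that the image of $\mathcal F'$ under $\phi_{C_0}$ and $\psi_{C_0}$ lies inside $\widetilde A\otimes M_N\subseteq\widetilde A\otimes M_n$, the range where the $n$ copies of $u$ in $U$ act diagonally and uniformly. The matrix-size bookkeeping between $D\subseteq M_n(C_0([0,1)))$ and the corresponding $g_f\in C_0((0,1])\otimes M_N$ is where I expect to take the most care.
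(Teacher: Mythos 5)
Your proof is correct and follows essentially the same route as the paper's own argument: reduce to $\mathcal S\subseteq D$ by unitality, transport through the isomorphism $j$ of Definition \ref{DfDD} to land in $C_0([0,1))\otimes\mathcal K$, invoke Corollary \ref{Tconeuniq} (whose corner hypothesis holds because $j(C_0([0,1))\otimes e_{11})\subseteq D\otimes e_{11}$), and then read off the estimate on the $(1,1)$-corner to extract $u$ from the block-diagonal $U$. One small technical nit: you claim that $\mathcal F'=\{g_f\}$ lies exactly in $C_0((0,1])\otimes M_N$ for $N$ large, but $g_f=j^{-1}(f\otimes e_{11})$ need not sit in any fixed matrix corner, since $j^{-1}$ can spread the $(1,1)$-corner over all of $\mathcal K$; this step is unnecessary anyway, as \ref{Tconeuniq} already accepts an arbitrary finite subset of $C_0([0,1))\otimes\mathcal K$, so you can apply it to $\mathcal F'$ directly without the $N$-bounding. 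Also note that the half-open interval in \ref{DfDD} and \ref{Tconeuniq} is $[0,1)$, not $(0,1]$.
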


\begin{proof}
Fix $\eta>0$ and  finite subset ${\cal S}\subset {\widetilde{D}}.$
We may assume that ${\cal S}=\{g+r\cdot 1_{\widetilde{D}}: g\in {\cal G}, r\in K\},$
where ${\cal G}\subset D$ is a finite subset  and $K$ is a finite subset of $\C.$ 
Let $j: C_0([0,1))\otimes {\cal K}\to D\otimes {\cal K}$ be the isomorphism defined in  \ref{DfDD}. 
Let ${\cal F}=j^{-1}({\cal G}).$ Note $j(C_0([0,1))\otimes e_{11})\subset D.$ 
Let $\dt_0>0$ be given for $\eta$ (in place of $\ep$) and ${\cal F}$  by \ref{Tconeuniq}. 
Consider $\phi\circ j, \psi\circ j: C_0([0,1))\otimes {\cal K}\to {\widetilde{A}}\otimes {\cal K}.$
By applying \ref{Tconeuniq}, there exists a unitary $u\in \widetilde{A}$ and integer $n\ge 1$ such that
\beq
\|U^*\phi\circ j(f)U-\psi\circ j(f)\|<\eta\rforal f\in {\cal F},
\eneq
where $U={\text{diag}}(\overbrace{u,u,...u}^n,1,1,...).$
This implies that
\beq
\|U^*\phi(g)U-\psi(g)\|<\eta\rforal  g\in {\cal G}.
\eneq
Since $\phi(g),\, \phi(g)\in \widetilde{A}$ for all $g\in {\cal G},$   we actually have 
\beq
\|u^*\phi(g)u-\psi(g)\|<\eta\,\tforal g\in {\cal G}.
\eneq
Since both $\phi$ and $\psi$ are unital and $u^*r\cdot 1_{\widetilde{A}}u=r\cdot 1_{\widetilde{A}},$
we finally conclude that \eqref{CCDD-1} holds. 

\end{proof}

\begin{thm}\label{Nexthm}
Let $R$ be a Razak algebra and let $A$ be a separable stably projectionless simple \CA\, with continuous scale such 
that ${\mathrm{M}}_n(A)$ almost has stable rank one for every $n\ge 1.$
Suppose  that $A$ also has strict comparison for positive elements and
$\mathrm{QT}(A)=\mathrm{T}(A).$
Let $\gamma: {\mathrm{Cu}}({\widetilde{R}})\to  \N\sqcup S(\widetilde{A})\subset {\mathrm{Cu}}({\widetilde{A}})$ 
(see \ref{DefS} and (vii) of \ref{RRR})
be an ordered semigroup \hm\, in {\text{\bf{Cu}}} with $\gamma(\la 1_{\widetilde{R}}\ra)= \la 1_{\widetilde{A}}\ra$
such that  $\gamma|_{\mathrm{Cu}(R)}\subset \mathrm{Cu}(A)$ and $\gamma(\la a\ra)\not=0$ for 
all $\la a\ra \not=0$ in  ${\mathrm{Cu}}({\widetilde{R}}).$ 
We also assume that $\gamma$ maps elements which cannot be represented by projections 
to the sub-semigroup $S(\widetilde{A}).$ 
Then there exists a \hm\, $\phi: R\to A$ such that 
$\mathrm{Cu}(\phi)=\gamma|_{\mathrm{Cu}(R)}.$

\end{thm}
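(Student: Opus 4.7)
The plan is to combine the existence theorem \ref{CCC0uniq}(a) for maps from the cone $C_0((0,1])$ with the uniqueness Lemma \ref{CCDD} for unital maps from $\widetilde{R}$, and to assemble the required $\phi: R \to A$ by an Elliott-style approximate intertwining. Adopting the setup of Definition \ref{DfDD}, I would identify $\widetilde{R} = \widetilde{D}$ and make use of the full hereditary subalgebra $C_0 \subset D$ isomorphic to $C_0([0,1))$, together with the Brown-type isomorphism $j: C_0 \otimes \mathcal K \to D \otimes \mathcal K$ satisfying $\mathrm{Cu}(j) = \mathrm{Cu}(\id_{C_0})$. Restricting $\gamma$ to $\mathrm{Cu}(D) \subseteq \mathrm{Cu}(\widetilde{R})$ and composing with $\mathrm{Cu}(j)$ produces an ordered semigroup morphism $\gamma_0: \mathrm{Cu}(C_0([0,1))) \to \mathrm{Cu}(\widetilde{A})$ in $\mathbf{Cu}$; the unitality hypothesis $\gamma(\langle 1_{\widetilde{R}}\rangle) = \langle 1_{\widetilde{A}}\rangle$ combined with $\langle e_{C_0}\rangle \le \langle 1_{\widetilde{R}}\rangle$ forces $\gamma_0(\langle e_{C_0}\rangle) \le \langle 1_{\widetilde{A}}\rangle$, so that (after the standard identification $C_0([0,1)) \cong C_0((0,1])$ via $t\mapsto 1-t$) the input condition of Corollary \ref{CCC0uniq}(a) is met.

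For each integer $n\ge 1$, I would then produce a unital homomorphism $\phi_n^\sim: \widetilde{R} \to \widetilde{A}$ whose induced morphism on $\mathrm{Cu}(\widetilde{R})$ is within $1/2^n$ of $\gamma$ in the $d_w$-metric of Remark \ref{R1}. The construction starts with a cone realization $\psi_n: C_0([0,1)) \to \widetilde{A}$ of a $d_w$-approximant of $\gamma_0$ (supplied either directly by Corollary \ref{CCC0uniq}(a) in exact form or, if need be, by a Robert--Santiago style density argument for 1-dimensional NCCW complexes in the spirit of Theorem~4 of \cite{RS}), transports $\psi_n$ through $j$ to a homomorphism $\Phi_n: D\otimes \mathcal K \to \widetilde{A}\otimes \mathcal K$, and then compresses its image into $\widetilde{A}$ by conjugating with a partial isometry in $M(\widetilde{A}\otimes\mathcal K)$. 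This compression step is justified by Theorem \ref{Tnote2}, which upgrades the equality $\mathrm{Cu}(\Phi_n)(\langle 1_{\widetilde{R}}\rangle) = \langle 1_{\widetilde{A}}\rangle$ in $L(\widetilde{A}) = \mathbb N \sqcup \mathrm{LAff}_+(\mathrm{T}(A))$ to an honest Murray--von Neumann equivalence of the corresponding projections, after which the extended unital extension $\phi_n^\sim$ restricts to a genuine $\phi_n:=\phi_n^\sim|_R: R \to A$.

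With the sequence $\phi_n^\sim$ in hand, I would apply Lemma \ref{CCDD} iteratively: given increasing finite subsets $\mathcal S_k \subset \widetilde{R}$ and tolerances $\eta_k \searrow 0$, I arrange indices $n_k$ so that $d_w(\mathrm{Cu}(\phi_{n_k,C_0}), \mathrm{Cu}(\phi_{n_{k+1},C_0}))$ is below the threshold of \ref{CCDD} for $(\eta_k, \mathcal S_k)$; this yields unitaries $u_k \in \widetilde{A}$ satisfying $\|\mathrm{Ad}(u_k) \circ \phi_{n_{k+1}}^\sim(f) - \phi_{n_k}^\sim(f)\| < \eta_k$ on $\mathcal S_k$. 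A standard Elliott diagonal argument then gives pointwise convergence of the twisted sequence $\mathrm{Ad}(u_1 u_2 \cdots u_k) \circ \phi_{n_{k+1}}^\sim$ on $\widetilde{R}$ to a unital $^*$-homomorphism $\phi^\sim: \widetilde{R} \to \widetilde{A}$, whose restriction $\phi:=\phi^\sim|_R: R \to A$ is the desired map; the equality $\mathrm{Cu}(\phi) = \gamma|_{\mathrm{Cu}(R)}$ follows from continuity of $\mathrm{Cu}$ on pointwise limits combined with the convergence $d_w(\mathrm{Cu}(\phi_{n_k}^\sim), \gamma) \to 0$.

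The hardest step will be the compression in the middle paragraph: converting a homomorphism from $D \otimes \mathcal K$ into $\widetilde{A} \otimes \mathcal K$ to a unital homomorphism from $\widetilde{R}$ into $\widetilde{A}$. The Brown isomorphism is only implemented at the level of multipliers by a partial isometry, and pulling the image into the corner $1_{\widetilde{A}} \cdot (\widetilde{A}\otimes\mathcal K) \cdot 1_{\widetilde{A}} = \widetilde{A}$ requires a Cuntz-to-Murray--von Neumann upgrade that is not automatic in the absence of stable rank one in $A$. Here the two structural hypotheses on $\gamma$ -- that it takes values in $\mathbb N \sqcup S(\widetilde{A})$ with non-projection classes going into $S(\widetilde{A})$, and that $\gamma(\langle 1_{\widetilde{R}}\rangle) = \langle 1_{\widetilde{A}}\rangle$ -- are essential, as they let us invoke the isomorphism $\Gamma_0: S(\widetilde{A}) \to L(\widetilde{A})$ of Theorem \ref{Tnote2} to match the rank data of the image of $1_{\widetilde{R}}$ to a genuine projection equivalent to $1_{\widetilde{A}}$, bypassing the stable rank one assumption used in Robert's original argument \cite{Robert-Cu}.
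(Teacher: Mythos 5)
Your proposal follows the paper's overall architecture closely: passing to $\widetilde{R}=\widetilde{D}$ and the full hereditary cone $C_0\cong C_0([0,1))$, using the Brown-type isomorphism $j$ and Corollary~\ref{CCC0uniq}(a) to build cone realizations, compressing into $\widetilde{A}$ via semiprojectivity and R\o rdam's Cuntz-comparison (Proposition 2.4 of \cite{Rr11}), and intertwining with Lemma~\ref{CCDD}. These steps are all present in the paper's argument.

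However, there is a genuine gap in your final step, and it is precisely the hard part of the theorem. You claim that $\mathrm{Cu}(\phi)=\gamma|_{\mathrm{Cu}(R)}$ "follows from continuity of $\mathrm{Cu}$ on pointwise limits combined with the convergence $d_w(\mathrm{Cu}(\phi_{n_k}^\sim),\gamma)\to 0$." But the $d_w$-convergence established by the intertwining construction controls only the cone restrictions $(\phi_{n_k}^\sim)_{C_0}$, equivalently the map $\mathrm{Cu}(H)$ restricted to the ideal $\mathrm{Cu}(D)\subseteq \mathrm{Cu}(\widetilde{D})$. This is not the same as controlling $\mathrm{Cu}(H)$ on $\mathrm{Cu}(R)$: the algebras $D$ and $R$ are two distinct (in fact incomparable) ideals of $\widetilde{R}=\widetilde{D}$ — elements of $D$ vanish at $t=1$ while elements of $R$ need not, and their behaviors at $t=0$ differ — so $\mathrm{Cu}(D)$ and $\mathrm{Cu}(R)$ are different sub-semigroups of $\mathrm{Cu}(\widetilde{R})$. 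Knowing $\mathrm{Cu}(H|_D)=\gamma|_{\mathrm{Cu}(D)}$ therefore does not directly give $\mathrm{Cu}(H|_R)=\gamma|_{\mathrm{Cu}(R)}$, nor does it immediately give $H(R)\subseteq A$ (for the latter one needs $\mathrm{Cu}(\pi\circ H)|_{\mathrm{Cu}(R)}=0$, i.e.\ control of $\mathrm{Cu}(H)$ on $\mathrm{Cu}(R)$, not on $\mathrm{Cu}(D)$). The paper closes this gap by proving the stronger statement $\mathrm{Cu}(H)=\gamma$ on all of $\mathrm{Cu}(\widetilde{D})$, and the device used for that is the $\mathrm{Cu}^\sim$ argument you omit: one introduces an auxiliary inductive limit $B$ of Razak algebras with $\mathrm{T}(B)=\mathrm{T}(A)$ (which \emph{does} have stable rank one), uses the isomorphism $\gamma_b:L(\widetilde{B})\to S(\widetilde{A})$ from Theorem~\ref{Tnote2}, applies Robert's Theorem 1.0.1 of \cite{Robert-Cu} to get a unital homomorphism $\Psi_{d,b}:\widetilde{D}\to\widetilde{B}$ realizing $(\gamma_B^\sim)^{-1}\circ\gamma^\sim$, compares $\Psi_{d,b}$ and $\Psi_{D_1}\circ H$ on $\mathrm{Cu}^\sim(D)$ via part~(b) of Corollary~\ref{CCC0uniq}, and then deduces $\mathrm{Cu}^\sim(H)|_{\mathrm{Cu}^\sim(D)}=\gamma_\sim$, from which the full equality $\mathrm{Cu}(H)=\gamma$ is recovered algebraically using the unitality of $H$ and $\gamma$. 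Without some such argument your intertwining only pins down $\mathrm{Cu}(H)$ on the wrong ideal, and you cannot conclude that $H$ restricts to a homomorphism $R\to A$ inducing the prescribed Cuntz data.
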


\begin{proof}
We will keep notation in \ref{DfDD}.
In what follows denote by $\pi: \widetilde{A}\to \C$  as well as $\pi: \widetilde{D}\to \C$ for the quotient maps. 
By the assumption above, we $\mathrm{Cu}(\pi)\circ \gamma|_{\mathrm{Cu}(R)}=0.$ 

Since ${\widetilde{R}}={\widetilde{D}},$ $D$ is a hereditary \SCA\, of $\widetilde{R}.$
It follows that $\mathrm{Cu}(D)$ is an ordered sub-semigroup of $\mathrm{Cu}(\widetilde{R}).$ 
Note also $D\otimes {\cal K}\cong C_0([0,1))\otimes {\cal K}.$
We specify a strictly positive element $e_D(t)=\begin{pmatrix} g(t)\cdot 1_{n-1} & 0\\ 0 & (1-t)\end{pmatrix}$ (for $t\in [0,1]$),
where $g(t)=2t$ if $t\in [0,1/2]$ and $g(t)=2(1-t)$ for $t\in (1/2, 1].$
Note $\gamma(\la e_D\ra)\le \la 1_{\mathrm{A}}\ra.$ 
By part (a) of \ref{CCC0uniq}, there is a \hm\, $\phi_C: C_0([0,1))\otimes {\cal K}\to {\mathrm{A}}\otimes {\cal K}$ 
such that $\mathrm{Cu}(\phi_C)=\gamma\circ {\mathrm{Cu}}(j).$
Then
 $\psi:=\phi_C\circ j^{-1}: D\otimes {\cal K}\to {\widetilde{A}}\otimes {\cal K}$ is a \hm\, such that 
 $\mathrm{Cu}(\phi_C\circ j^{-1})=\gamma|_{\mathrm{Cu}(D)}.$ 
 
 Define $e_n=\psi(f_{1/2^n}(e_D)),$ $n=1,2,....$ Note $\gamma(\la e_D\ra)\le \gamma(\la 1_{\widetilde{D}}\ra =\la 1_{\tilde{A}}\ra.$
 It follows from  Proposition 2.4 of \cite{Rr11} that there exists $x_n\in \widetilde{A}\otimes {\cal K}$ such that
 \beq
 e_n=x_n^*1_{\widetilde{A}}x_n,\,\,\, n=1,2,....
 \eneq
 Put $y_n=1_{\widetilde{A}}x_n.$  Let $y_n=v_n|y_n|$ be the polar decomposition 
 of $y_n$ in $(\widetilde{A})^{**}.$ Then $v_na\in \widetilde{A}\otimes {\cal K}$ for all $a\in \overline{e_n(\widetilde{A}\otimes {\cal K})e_n}$
 and $v_n^*av_n\in \widetilde{A}$ for all $a\in \overline{e_n(\widetilde{A}\otimes {\cal K})e_n},$ 
 $n=1,2,...$ Define $\psi_n: D\to \widetilde{A}$ by $\psi_n(d)=v_n^*e_n\psi(d)e_nv_n$  for all $d\in D.$
 Since $D$ is semiprojective, there exists, for each large $n,$ 
 a \hm\, $h_n: D\to \widetilde{A}$ such that 
 \beq
 \lim_{n\to\infty}\|h_n(d)-\psi_n(d)\|=0\rforal d\in D.
 \eneq
Define $h_n^\sim: \widetilde{D}\to \widetilde{A}$ by defining $h_n(1_{\widetilde{D}}+d)=1_{\widetilde{A}}+h_n(d).$
It is a unital \hm. 

Since $\lim_{n\to\infty} \|e_n^*\psi(d)e_n-\psi(d)\|=0,$ it is easy to compute that
\beq\label{Next-n1}
\lim_{n\to\infty}d_w( \mathrm{Cu}((h_n^\sim)_{C_0}), \mathrm{Cu}(\phi_{C_0}))=0 
\eneq
Let ${\cal F}_n\subset {\widetilde{D}}$ be finite subsets such that ${\cal F}_n\subset {\cal F}_{n+1}$ and 
$\cup_{n=1}^{\infty}{\cal F}_n$ is dense in $\widetilde{D}.$
It follows from  \ref{CCDD}  that there exists a 
subsequence  $\{n_k\}$ and 
a sequence of unitaries $u_k\in \widetilde{A}$ such 
that,
\beq
\|{\mathrm{Ad}}\,u_{k+1}\circ  h_{n_{k+1}}(d)-{\mathrm{Ad}}\, u_k\circ h_{n_k}(d)\|<1/2^k\rforal d\in {\cal F}_k,\,\, k=1,2,....
\eneq
It follows that $({\mathrm{Ad}}\, u_k\circ h_{n_k}(a))_{k=1}^{\infty}$ a Cauchy sequence for each $d\in \widetilde{D}.$
Let $H(a)$ be the limit. Then $H$ defines a unital \hm\, from $\widetilde{D}$ to $\widetilde{A}.$
By \eqref{Next-n1}, $\mathrm{Cu}(H_{C_0})=\mathrm{Cu}(\phi_C).$
Since $D\otimes {\cal K}\cong C_0([0,1))\otimes {\cal K},$ we then have 
$\mathrm{Cu}(H|_D)=\gamma|_{\mathrm{Cu}(D)}.$

Since $\mathrm{Cu}(\pi)\circ \gamma|_{\mathrm{Cu}(R)}=0,$ 
if  $\mathrm{Cu}(H)=\gamma,$   then
$H|_R\subset A.$  Therefore it remains to show $\mathrm{Cu}(H)=\gamma.$ To show 
that, we will apply part (b) of \ref{CCC0uniq}.

It follows  by  \cite{Tsang-W} that there is a separable simple \CA\, $B$ which is an inductive limit 
of Razak algebras with continuous scale such  that ${\mathrm{T}}(B)={\mathrm{T}}(A).$  Note that $B$ has stable rank one and 
$K_0(B)=\{0\}.$
By 6.2.3 of \cite{Robert-Cu} (see also  7.3 of \cite{eglnp}), $\mathrm{Cu}(\widetilde{B})=\N\sqcup L({\widetilde{B}}).$ 
It follows by \ref{Tnote2}  there is an ordered semigroup isomorphism $\gamma_b: L(\widetilde{B})\to S(\widetilde{A}).$ 
This extends to  an ordered semigroup isomorphism $\gamma_B :\mathrm{Cu}(\widetilde{B})\to  \N\sqcup S(\widetilde{A}).$  
It then extends an ordered semigroup isomorphism $\gamma_B^\sim: \mathrm{Cu}^\sim(\widetilde{B})
\to \Z\cup S^\sim(\widetilde{A})$ defined by 
$\gamma_B(m)=m$ and  $\gamma_B^\sim(x-k\la 1_{\tilde B}\ra)=\gamma_b(x)-k\la 1_{\tilde A}\ra.$
(Recall that $\widetilde{B}$ has stable rank one and, by 3.16 of \cite{Robert-Cu}, $\mathrm{Cu}(\widetilde{B})$ embedded into 
$\mathrm{Cu}^\sim({\widetilde{B}})$ and, by (vi) of \ref{RRR}, $S(\widetilde{A})$ embedded into $S^\sim(\widetilde{A}).$)
Moreover it induces an isomorphism $\gamma_{B\sim}: \mathrm{Cu}^\sim(B)\to S^\sim(A).$ 
Let $\gamma_b^{-1},$ $\gamma_B^{-1}$ and $\gamma_{B\sim}^{-1}$ be the inverse maps of 
$\gamma_b,$ $\gamma_B$ and $\gamma_{B\sim}.$

Define $\gamma_\sim: \mathrm{Cu}^\sim(D)\to S^\sim(A)$ by 
$\gamma_\sim(\la a\ra-n\la 1_{\widetilde{D}}\ra)=\gamma(\la a\ra)-n\la 1_{\widetilde{A}}\ra$ for all $a\in \mathrm{Cu}({\widetilde{D}})$
which are not represented by projections (recall also $V({\tilde D})=\N$ and $K_0(D)=K_0(C_0([0,1))=\{0\}.$) and 
$n=\la \pi(a)\ra<\infty.$
This extends $\gamma|_{\mathrm{Cu}(D)}.$  We can also define 
$\gamma^\sim: \mathrm{Cu}^\sim(\widetilde{D})\to \Z\sqcup S^\sim ({\tilde A})$ (see \ref{RRR}) by
$\gamma^\sim(m\la 1_{\tilde D}\ra)=m\la 1_{\tilde A}\ra$ and 
$\gamma^\sim(\la a\ra- k\la 1_{\tilde D}\ra)=\gamma(\la a\ra)-k\la 1_{\tilde D}\ra$ for all 
$\la a\ra\in \widetilde{A}$ which are not represented by projections. Note, in fact, since both 
$\tilde{D}$ and $\widetilde{A}$ are unital,  $\gamma^\sim$ is uniquely determined by $\gamma$
(see 3.1 of \cite{Robert-Cu}).
Note that $\gamma^\sim|_{\mathrm{Cu}^\sim(D)}=\gamma_\sim,$ 
$\gamma^\sim|_{\mathrm{Cu}({\tilde D})}=\gamma$ and $\gamma_\sim|_{\mathrm {Cu}(D)}=\gamma|_{\mathrm{Cu}(D)}.$
Recall that $\N\sqcup S(\widetilde{A})\cong \mathrm{Cu}(\widetilde{B}),$ 
Note that, by the assumption, $\gamma_\sim,$ $\gamma^\sim$ are ordered semigroup \hm s in {\bf{Cu}}.

Note $\widetilde{D}$ is a 1-dimensional NCCW and $\widetilde{B}$ has stable rank one. 
By Theorem 1.0.1 of \cite{Robert-Cu}, there is a unital \hm\, $\Psi_{d,b}: \widetilde{D}\to \widetilde{B}$ such that 
$\mathrm{Cu}^\sim(\Psi_{d,b})=(\gamma_B^\sim)^{-1}\circ \gamma^\sim.$

Let $D_1=H(D)$ and let  $\imath_{D_1}: D_1\to \widetilde{A}$ be the embedding. 
Denote also $\imath_{D_1}: {\widetilde{D}_1}\to \widetilde{A}$ the unital extension.
 Since $\gamma$ is strictly positive, $\imath_{D_1}\circ H|_D$
is an isomorphism.
Then there exists a \hm\, $\Psi_{D_1}: D_1\to {\widetilde{B}}$ such that 
$\mathrm{Cu}(\Psi_{D_1})=\gamma_B^{-1}\circ  \mathrm{Cu}(\imath_{D_1}).$
 Let $\Psi_{D_1}^{-1}: \Psi|_{D_1}(D_1)\to D_1$ 
be the inverse of $\Psi_{D_1}.$ 
Then $\mathrm{Cu}(\Psi_{D_1}\circ H|_D)=\gamma_B^{-1}\circ \gamma|_{\mathrm{Cu}(D)}.$ 
In particular, $\mathrm{Cu}(\Psi_{d,b}\circ j)=\gamma_B^{-1}\circ \gamma\circ \mathrm{Cu}(j).$ 
It follows from  part (b) of \ref{CCC0uniq} that 
$\mathrm{Cu}^\sim(\Psi_{d,b}\circ j)=\mathrm{Cu}^\sim(\Psi_{C_1}\circ H\circ j).$
Note that $j$ is also an isomorphism from $C_0([0,1))\otimes {\cal K}$ onto $D\otimes {\cal K}.$
It follows that  $\mathrm{Cu}^\sim (\Psi_{D_1}\circ H|_{D})=\mathrm{Cu}^\sim(\Psi_{d,b}|_D).$
In other words,  $\mathrm{Cu}^\sim (\Psi_{D_1}\circ H|_{D})=(\gamma_B^\sim)^{-1}\circ \gamma^\sim|_{\mathrm{Cu}^\sim(D)}.$ 
Since $\Psi_{D_1}^{-1}\circ \Psi_{D_1}\circ H=H,$ it follows that 
\beq
\mathrm{Cu}^\sim(H)|_{\mathrm{Cu}^\sim(D)}= \gamma^\sim|_{\mathrm{Cu}^\sim(D)}=\gamma_\sim,
\eneq
where $\mathrm{Cu}^\sim(H): \mathrm{Cu}^\sim({\widetilde{D}})\to \mathrm{Cu}^\sim(\widetilde{A})$ is the map induced by
$H.$

\hspace{-0.1in}
Now let $x\in \mathrm{Cu}(\widetilde{D})$ with $\la \pi(x)\ra=n<\infty.$
Then
\beq
\mathrm{Cu}(H)(x)&=&
\mathrm{Cu}^\sim(H)(x-n\la 1_{\tilde D}\ra)+
\mathrm{Cu}^\sim(H)(n\la 1_{\tilde D}\ra)\\
&=&\gamma_\sim(x-n\la 1_{\tilde D}\ra)+n\la 1_{\tilde A}\ra
=\gamma^\sim(x-n\la 1_{\tilde D}\ra)+\gamma^\sim(n\la 1_{\tilde D}\ra\\
&=&
\gamma^\sim(x)=\gamma(x).
\eneq
It follows that $\mathrm{Cu}(H)=\gamma.$

\end{proof}

\begin{thm}\label{TTTTappendix}
Let $A$ be a  separable  simple stably projectionless \CA\, 
with
 continuous scale such that $M_m(A)$ has almost stable rank one for all $m\ge 1.$
   Suppose that 
 $\mathrm{QT}(A)={\mathrm{T}}(A)$
 and $\mathrm{Cu}(A)=\mathrm{LAff}_+({\mathrm{T}}(A)).$ 
Suppose also that $B$  is a simple \CA\, which is an inductive limit
of Razak algebras with injective connecting maps, with continuous scale  and  with ${\mathrm{T}}(A)={\mathrm{T}}(B).$  Then 
there exists  a \hm\, $\phi: B\to A$ which maps strictly positive elements to strictly positive elements 
and which induces the identification  ${\mathrm{T}}(A)={\mathrm{T}}(B).$ 
 
\end{thm}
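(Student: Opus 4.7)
The plan is to construct $\phi$ by a one-sided Elliott approximate intertwining, invoking Theorem \ref{Nexthm} at each stage to produce homomorphisms out of successive building blocks and Lemma \ref{CCDD} to match these compatibly. Write $B=\lim_{n\to\infty}(R_n,\imath_n)$ with each $R_n$ a Razak algebra and each $\imath_n$ injective. Fix an increasing sequence of finite subsets $\mathcal{F}_n\subseteq R_n$ with $\imath_n(\mathcal{F}_n)\subseteq\mathcal{F}_{n+1}$ whose union of images $\imath_{n,\infty}(\mathcal{F}_n)$ is dense in $B$, together with tolerances $\ep_n>0$ satisfying $\sum_n\ep_n<\infty$. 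The goal is to produce homomorphisms $\phi_n\colon R_n\to A$ satisfying $\|\phi_{n+1}\circ\imath_n(f)-\phi_n(f)\|<\ep_n$ for every $f\in\mathcal{F}_n$; then $\phi(\imath_{n,\infty}(f)):=\lim_{m\to\infty}\phi_m(\imath_{n,m}(f))$ is a well-defined *-homomorphism on the dense union that extends by continuity to $\phi\colon B\to A$.

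First I would build the target Cuntz morphism $\gamma\colon\mathrm{Cu}(\widetilde{B})\to\mathrm{Cu}(\widetilde{A})$ sending $\la 1_{\widetilde B}\ra$ to $\la 1_{\widetilde A}\ra$. Since $B$ has stable rank one and $\mathrm{K}_0(B)=\{0\}$ (as an inductive limit of Razak algebras, which lie in $\mathcal{C}_0^0$), Corollary \ref{Tnote2rk1} gives $\mathrm{Cu}(\widetilde{B})=V(\widetilde{B})\sqcup S(\widetilde{B})=\mathbb{N}\cdot\la 1_{\widetilde B}\ra\sqcup S(\widetilde{B})$, and Theorem \ref{Tnote2} provides the analogous identification $S(\widetilde{A})\cong L(\widetilde{A})$ via $\Gamma_0$. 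The given affine identification $\mathrm{T}(A)=\mathrm{T}(B)$ identifies $L(\widetilde{B})$ with $L(\widetilde{A})$ preserving the integer coordinate. Composing through $\Gamma_0$ on both sides and declaring $\gamma(n\cdot\la 1_{\widetilde B}\ra)=n\cdot\la 1_{\widetilde A}\ra$ yields the desired $\gamma$, which is a morphism in $\mathbf{Cu}$, is strictly positive, and carries classes not represented by projections into $S(\widetilde{A})$. Setting $\gamma_n:=\gamma\circ\mathrm{Cu}(\imath_{n,\infty}^\sim)\colon\mathrm{Cu}(\widetilde{R_n})\to\mathbb{N}\sqcup S(\widetilde{A})$ supplies the input required by \ref{Nexthm}.

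Next, Theorem \ref{Nexthm} applied to each $\gamma_n$ yields a homomorphism $\phi_n^{(0)}\colon R_n\to A$ whose unital extension $(\phi_n^{(0)})^\sim$ realizes $\gamma_n$ on $\mathrm{Cu}$. Both $(\phi_{n+1}^{(0)})^\sim\circ\imath_n^\sim$ and $(\phi_n^{(0)})^\sim$ realize $\gamma_n$ on $\mathrm{Cu}(\widetilde{R_n})$, so their associated cone maps (in the sense of \ref{DfDD}) induce identical morphisms and thus $d_w=0$. Lemma \ref{CCDD} then produces, within any prescribed tolerance on any finite subset, a unitary in $\widetilde{A}$ approximately intertwining them. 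Inductively choose unitaries $u_n\in\widetilde{A}$ so that $\phi_n:=\mathrm{Ad}\,u_n\circ\phi_n^{(0)}$ achieves $\|\phi_{n+1}\circ\imath_n(f)-\phi_n(f)\|<\ep_n$ on $\mathcal{F}_n$; note that conjugation by a unitary in $\widetilde{A}$ preserves both $\mathrm{Cu}(\phi_n)=\gamma_n|_{\mathrm{Cu}(R_n)}$ and the image in $A$. Passing to the limit gives $\phi\colon B\to A$ with $\mathrm{Cu}(\phi)=\gamma|_{\mathrm{Cu}(B)}$, which encodes the prescribed affine homeomorphism of traces; a strictly positive element of $B$ then maps to an element of $A$ of matching scale, hence strictly positive.

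In my view the main technical hurdle is arranging $\gamma$ to meet all three hypotheses of \ref{Nexthm} simultaneously: strict positivity on nonzero classes, preservation of the unit class, and the requirement that Cuntz classes of $\widetilde{B}$ not represented by projections land in $S(\widetilde{A})$. The first two are essentially formal, stemming from injectivity of $\imath_{n,\infty}^\sim$ and naturality, but the third relies crucially on the structural description of $\mathrm{Cu}(\widetilde{A})$ given in Theorem \ref{Tnote2}, whose proof uses both $\mathrm{Cu}(A)=\mathrm{LAff}_+(\mathrm{T}(A))$ and the hypothesis that $M_m(A)$ almost has stable rank one for every $m\ge 1$. A subsidiary point is that Lemma \ref{CCDD} is formulated for the specific $D$ associated in \ref{DfDD} to the Razak algebra $R_{1,n}$, but the same argument applies to a general $R(k,n)$ since the non-unital hereditary piece is in every case stably isomorphic to $C_0([0,1))\otimes\mathcal{K}$, and it is only this stable isomorphism class that enters the proof.
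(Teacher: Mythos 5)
Your proposal is correct and follows essentially the same route as the paper: build the global Cuntz morphism $\gamma$ using Theorem \ref{Tnote2} and Corollary \ref{Tnote2rk1}, push it down to each $R_n$ via $\gamma_n=\gamma\circ\mathrm{Cu}(\imath_{n,\infty}^\sim)$, invoke Theorem \ref{Nexthm} for existence of $\phi_n^{(0)}$ and Lemma \ref{CCDD} for the approximate intertwining, and pass to the limit. Your subsidiary observation about \ref{CCDD} applying to general $R(k,n)$ (not just $R_{1,n}$) because only the stable isomorphism class of the hereditary piece enters the argument is a sound clarification of a point the paper leaves implicit.
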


\begin{proof}
Let us construct the required \hm\, $\phi.$ 

Note ${\mathrm{Cu}}(A)= \mathrm{LAff}_+({\mathrm{T}}(A))$ and ${\mathrm{Cu}}(B)=\mathrm{LAff}_+({\mathrm{T}}(B)).$ 
Denote by $\Lambda: {\mathrm{T}}(A)\to {\mathrm{T}}(B)$ the affine homeomorphism. 
Then $\Lambda$ induces 
an ordered semigroup isomorphism 
 $\lambda_0: {\mathrm{Cu}}(B)\to {\mathrm{Cu}}(A)$ 
 in ${\mathbf{Cu}}.$

Fix strictly positive elements $e_B$ of $B$  and $e_A$ of $A,$ respectively.
Then $\lambda_0(\la e_B\ra)\le \la e_A\ra.$
Consider the sub-semigroup $S=S({\widetilde A})\subseteq {\mathrm{Cu}}({\widetilde A})$  defined in \ref{DefS}. 
Note that, by \ref{Tnote2rk1},  $\mathrm{Cu}(\widetilde{B})=\N\sqcup L({\widetilde{B}}).$ 
By \ref{Tnote2}, this induces an ordered semigroup isomorphism $\lambda_1: {\mathrm{Cu}}({\widetilde B})
\to \N\sqcup S({\widetilde A})\subseteq {\mathrm{Cu}}({\widetilde A})$ with $\lambda_1(\la 1_{\tilde B}\ra)=\la 1_{\tilde A}\ra$
(see also (vii) of \ref{RRR}).
Write $B=\lim_{n\to\infty} (R_n, \imath_n)$ (see \cite{Tsang-W}), where each $\imath_n$ is injective. 
Let $\gamma_n: {\mathrm{Cu}}({\widetilde R_n})\to {\mathrm{Cu}}({\widetilde A})$ be given by $\lambda_1\circ {\mathrm{Cu}}(\imath_{n, \infty}).$ 
Note that $\gamma_{n+1}\circ {\mathrm{Cu}}(\imath_n)=\gamma_n$ and $\gamma_n(\la 1_{\tilde R_n}\ra)=\la 1_{\tilde A}\ra.$
It follows from \ref{Nexthm} that there is a unital \hm\, $\phi_n: {\widetilde{R_n}}\to \widetilde{A}$ such 
that $\mathrm{Cu}(\phi_n)=\gamma_n$ and $\phi_n|_{R_n}\subset A,$ $n=1,2,....$
Note also $\mathrm{Cu}(\phi_{n+1}\circ \imath_n)=\mathrm{Cu}(\phi_n),$ $n=1,2,....$ 
We also have $\gamma_n(\mathrm{Cu}(R_n))\subset \mathrm{Cu}(A).$ If $x\in \mathrm{Cu}(\widetilde{R}_n)$
is not represented by a projection, neither $\mathrm{Cu}(\imath_{n, \infty})(x).$ It follows 
that $\gamma_n(x)\subset S(\widetilde{A}),$ $n=1,2,....$

Let  $(\ep_n)$  be a decreasing sequence of positive numbers with $\sum_{n=1}^{\infty} \ep_n<\infty.$ 
Let ${\cal F}_n\subseteq R_n$ be finite subsets such that $\imath_n({\cal F}_n)\subseteq {\cal F}_{n+1},$
$n=1,2,...,$ and we assume that $\bigcup_{n=1}^{\infty}\imath_{n, \infty}({\cal F}_n)$ is dense in $B.$
By \ref{CCDD}, there exists a sequence of uniaties $u_n\in {\widetilde{A}}$ such that
\beq
\|\mathrm{Ad}\, u_n\circ \phi_{n+1}(b)-{\mathrm{Ad}}\circ \phi_n(b)\|<1/2^n\rforal b\in {\cal F}_n,
\eneq
$n=1,2,....$ Then $({\mathrm{Ad}}\, u_n\circ \phi_n(b))_{n=1}^{\infty}$ is a Cauchy sequence in $A$ for each 
$b\in B.$
Let $\phi(b)$ be the limit (for each $b\in B$). Then $\phi$ is a \hm\, from $B$ to $A$ such 
that $\mathrm{Cu}(\phi)=\lambda_1.$ From the definition of $\lambda_1,$ we see that $\phi$ meets the requirement.

\end{proof}

\begin{cor}\label{Tappendix}
Let $A$ be a  separable  simple \CA\, 
which 
has finite nuclear dimension and  continuous scale. 
Then there exist a simple \CA\, $B$ which is an inductive limit
of Razak algebras with injective connecting maps and  with ${\mathrm{T}}(A)={\mathrm{T}}(B)$
and a \hm\, $\phi: B\to A$ which maps strictly positive elements to strictly positive elements 
and which induces the identification  ${\mathrm{T}}(A)={\mathrm{T}}(B).$ 
\end{cor}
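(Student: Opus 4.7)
The plan is to obtain \ref{Tappendix} as a direct consequence of Theorem \ref{TTTTappendix}, by separately handling the existence of the target algebra $B$ and the verification that $A$ satisfies the hypotheses of that theorem. Since the intended application (in the proof of \ref{TMW}) is to the stably projectionless setting, the corollary should be read under this additional assumption; indeed, when $K_0(A) = \ker\rho_A$ and $A$ admits a non-zero trace, no non-zero projection can survive in $A\otimes \mathcal{K}$, so $A$ is automatically stably projectionless.

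For the target algebra, since $A$ has continuous scale, $\mathrm{T}(A)$ is a compact metrizable Choquet simplex (by Theorem 5.3 of \cite{eglnp}). First I would invoke the construction of \ref{Ctsang} together with the uniqueness of Tsang (\cite{Tsang-W}) to produce a simple \CA\ $B_0$ which is an inductive limit of Razak algebras with injective connecting maps, such that $\mathrm{T}(B_0)$ contains a base affinely homeomorphic to $\mathrm{T}(A)$. If needed, I would cut $B_0$ down by a hereditary sub-\CA\ determined by a suitably chosen strictly positive element (using surjectivity of the map $\mathrm{Cu}(B_0)\to \mathrm{LAff}_+(\mathrm{T}(B_0))$) so that the resulting algebra $B$ has continuous scale and $\mathrm{T}(B) \cong \mathrm{T}(A)$. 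The inductive limit description with injective connecting maps is preserved by passing to such a hereditary sub-\CA\ by standard arguments.

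Next I would verify that $A$ satisfies all the structural hypotheses appearing in the statement of \ref{TTTTappendix}. Finite nuclear dimension forces $A$ to be nuclear, so $\mathrm{QT}(A) = \mathrm{T}(A)$ by Haagerup's theorem. Finite nuclear dimension also implies $A \cong A \otimes \mathcal{Z}$ by \cite{T-0-Z}, whence $A$ has strict comparison for positive elements by \cite{Ror-Z-stable}. Since $M_m(A)$ is also $\mathcal{Z}$-stable for every $m \ge 1$, Robert's theorem \cite{Rob-0} yields almost stable rank one for each $M_m(A)$. Finally, because $A$ is simple, stably projectionless, and $\mathcal{Z}$-stable, Lemma 6.5 of \cite{ESR-Cuntz} (applied together with the continuous-scale hypothesis and Theorem 5.3 of \cite{eglnp}) gives $\mathrm{Cu}(A) = \mathrm{LAff}_+(\mathrm{T}(A))$.

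With these verifications in hand, Theorem \ref{TTTTappendix} applies with the given $A$ and the $B$ constructed in the second step, and produces a \hm\ $\phi: B \to A$ which maps strictly positive elements to strictly positive elements and induces the given identification $\mathrm{T}(A) = \mathrm{T}(B)$. The substantive work has already been done in \ref{TTTTappendix}; the only genuine care point here is to ensure that the $B$ produced in step two really is an inductive limit of Razak algebras with injective connecting maps \emph{and} continuous scale simultaneously, which is where the cutdown by the hereditary sub-\CA\ comes in. This is routine but is the only non-trivial assembly step in the argument.
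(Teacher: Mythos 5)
Your proof is correct and takes essentially the same route as the paper: verify that $A$ satisfies the hypotheses of Theorem \ref{TTTTappendix} (namely $\mathcal Z$-stability, strict comparison, almost stable rank one for all matrix amplifications, $\mathrm{QT}(A)=\mathrm{T}(A)$, and $\mathrm{Cu}(A)=\mathrm{LAff}_+(\mathrm{T}(A))$), produce the model $B$, and apply the theorem. The only notable differences are cosmetic: you build $B$ from Corollary \ref{Ctsang} followed by a hereditary cutdown, whereas the paper obtains it directly from Theorem \ref{R2} (which already delivers continuous scale, so no cutdown is needed); and you explicitly record that $\mathrm{QT}(A)=\mathrm{T}(A)$ (from nuclearity) and that the statement is being read in the stably projectionless setting demanded by \ref{TTTTappendix} — two points the paper's terse proof leaves tacit but which you are right to flag.
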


\begin{proof}
First, the existence of such a \CA\, $B$  with ${\mathrm{T}}(B)={\mathrm{T}}(A)$ is given by \ref{R2}.
It follows from \cite{Winter-Z-stable-02}
that $A$ is ${\cal Z}$-stable and has strict comparison for positive elements.
Moreover, by \cite{Rob-0}, ${\mathrm{M}}_r(A)$ (for 
every
integer $r\ge 1$) and $A\otimes {\cal K}$  
have almost stable rank one.
By Lemma 6.5 of \cite{ESR-Cuntz},  $\mathrm{Cu}(A)=\text{LAff}_+({\widetilde{\mathrm T}}(A)).$ Thus \ref{TTTTappendix}
applies.
\end{proof}

\bibliographystyle{amsalpha}

\providecommand{\bysame}{\leavevmode\hbox to3em{\hrulefill}\thinspace}
\providecommand{\MR}{\relax\ifhmode\unskip\space\fi MR }
\providecommand{\MRhref}[2]{%
  \href{http://www.ams.org/mathscinet-getitem?mr=#1}{#2}
}
\providecommand{\href}[2]{#2}

\end{document}